\documentclass[11pt]{amsart}
\usepackage[utf8]{inputenc}
\usepackage{accents}
	
\usepackage{subfiles}
\usepackage{lipsum}
\usepackage[font=itshape]{quoting}
\usepackage{tabu, fullpage,diagbox, booktabs}
\usepackage{placeins}
\usepackage[margin=1in]{geometry}
\usepackage[dvipsnames]{xcolor}   
\usepackage{graphicx,comment}			

\usepackage{csquotes} 
\usepackage{dsfont}
\usepackage{mathrsfs}
\usepackage{amsthm}
\usepackage{amsmath}
\usepackage{fdsymbol}

\usepackage{stmaryrd}
\usepackage{tikz,framed}
\usepackage{tikz-cd}
\usepackage{enumitem}
\usepackage{bm}
\usepackage{mathtools}
\usepackage[all]{xy}
\usepackage{caption}

\usepackage{url}
\usepackage{float}
\usepackage{todonotes}
\usepackage{bbm}
\usepackage{easyeqn}

\usepackage{textcomp}
\usepackage[colorlinks,
			citecolor=citation, 
			urlcolor=citation, 
			linkcolor=reference, 
            backref = page]{hyperref}

\definecolor{citation}{rgb}{0,.40,.80}
\definecolor{reference}{rgb}{.80,0,.40}
\definecolor{todogray}{HTML}{E4E4E4}

\newcommand{\PKHcomment}[1]{{\color{red}PKH: #1}}

\newcommand{\XYHcomment}[1]{\textcolor{brown}{[XYH: #1]}}
\newcommand{\MScomment}[1]{\textcolor{OliveGreen}{[MS: #1]}}

\tikzset{
	commutative diagrams/.cd, 
	arrow style=tikz, 
	diagrams={>=stealth}
}

\theoremstyle{definition}

\makeatletter
\newcommand{\colim@}[2]{%
	\vtop{\m@th\ialign{##\cr
			\hfil$#1\operator@font colim$\hfil\cr
			\noalign{\nointerlineskip\kern1.5\ex@}#2\cr
			\noalign{\nointerlineskip\kern-\ex@}\cr}}%
}
\newcommand{\colim}{%
	\mathop{\mathpalette\colim@{\rightarrowfill@\textstyle}}\nmlimits@
}
\makeatother

\makeatletter
\def\@tocline#1#2#3#4#5#6#7{\relax
	\ifnum #1>\c@tocdepth 
	\else
	\par \addpenalty\@secpenalty\addvspace{#2}%
	\begingroup \hyphenpenalty\@M
	\@ifempty{#4}{%
		\@tempdima\csname r@tocindent\number#1\endcsname\relax
	}{%
		\@tempdima#4\relax
	}%
	\parindent\z@ \leftskip#3\relax \advance\leftskip\@tempdima\relax
	\rightskip\@pnumwidth plus4em \parfillskip-\@pnumwidth
	#5\leavevmode\hskip-\@tempdima
	\ifcase #1
	\or\or \hskip 1em \or \hskip 2em \else \hskip 3em \fi%
	#6\nobreak\relax
	\dotfill\hbox to\@pnumwidth{\@tocpagenum{#7}}\par
	\nobreak
	\endgroup
	\fi}
\makeatother

\usetikzlibrary{calc}
\usetikzlibrary{fadings}
\usetikzlibrary{decorations.pathmorphing}
\usetikzlibrary{decorations.pathreplacing}

\newcounter{marginnote}
\DeclareMathAlphabet{\mathpzc}{OT1}{pzc}{m}{it}

\hypersetup{
	colorlinks   = true,          
	urlcolor     = blue,          
	linkcolor    = purple,          
	citecolor   = blue             
}

\theoremstyle{definition}
\newtheorem{maintheorem}{Theorem}
\newtheorem{maincorollary}[maintheorem]{Corollary}

\newtheorem{theorem}{Theorem}[subsection]

\newtheorem{claim}[theorem]{Claim}

\newtheorem{corollary}[theorem]{Corollary}
\newtheorem{lemma}[theorem]{Lemma}
\newtheorem{proposition}[theorem]{Proposition}

\newtheorem{remark}[theorem]{Remark}

\newtheorem*{runningexample*}{Running example}

\newtheorem{construction}[theorem]{Construction}

\newtheorem{definition}[theorem]{Definition}
\newtheorem{example}[theorem]{Example}

\newtheorem{proposition-definition}[theorem]{Proposition-Definition}

\newtheorem{outcome}{Outcome}

\usepackage{etoolbox}
\newtheorem*{theorem*}{Theorem}

\DeclareMathOperator{\arrowup}{\!\Uparrow \!}
\DeclareMathOperator{\arrowdown}{\!\Downarrow\!}

\newcommand{\cA}{\mathcal{A}}

\newcommand{\cC}{\mathcal{C}}

\newcommand{\cE}{\mathcal{E}}
\newcommand{\cF}{\mathcal{F}}
\newcommand{\cG}{\mathcal{G}}

\newcommand{\cI}{\mathcal{I}}

\newcommand{\cL}{\mathcal{L}}
\newcommand{\cM}{\mathcal{M}}
\newcommand{\cN}{\mathcal{N}}
\newcommand{\cO}{\mathcal{O}}

\newcommand{\cT}{\mathcal{T}}
\newcommand{\cU}{\mathcal{U}}
\newcommand{\cV}{\mathcal{V}}

\newcommand{\cX}{\mathcal{X}}
\newcommand{\cY}{\mathcal{Y}}

\newcommand{\LogO}[1]{\mathcal{O}_{{#1}_\text{l\'et}}}

 \newcommand{\bcd}{\begin{center}\begin{tikzcd}}
	\newcommand{\ecd}{\end{tikzcd}\end{center}}

\usepackage{accents}

\newcommand{\ul}[1]{\underline{#1}}

\DeclareMathAlphabet{\mathpzc}{OT1}{pzc}{m}{it}

\NewDocumentCommand{\compatibilitydatum}{m m m m m m O{} O{} O{}}{
	\begin{equation*} \begin{tikzcd}[ampersand replacement=\&]
	\: \arrow{r} \& {#1} \arrow{r} \arrow{d}{#7} \& {#2} \arrow{r} \arrow{d}{#8} \& {#3} \arrow{r}{[1]} \arrow{d}{#9} \& \: \\
	\: \arrow{r} \& {#4} \arrow{r} \& {#5} \arrow{r} \& {#6} \arrow{r} \& \:
	\end{tikzcd} \end{equation*}}

\NewDocumentCommand{\commutingsquare}{m m m m o O{} O{} O{} O{}}{
	\begin{equation}\begin{tikzcd}[ampersand replacement=\&] \label{#5}
	#1 \arrow{r}{#6} \arrow{d}{#7} \& #2 \arrow{d}{#8} \\
	#3 \arrow{r}{#9} \& #4
	\end{tikzcd}\IfValueTF{#5}{\label{#5}}{} \end{equation}}

\NewDocumentCommand{\cartesiansquarelabel}{m m m m m O{} O{} O{} O{}}{
	\begin{tikzcd}[ampersand replacement=\&]
	#1 \arrow{r}{#6} \arrow{d}{#7} \arrow[dr, phantom, "\square"] \& #2 \arrow{d}{#8} \\
	#3 \arrow{r}{#9} \& #4
	\end{tikzcd}\IfValueTF{#5}{\label{#5}}{}
}

\NewDocumentCommand{\triangleofspaces}{m m m O{} O{} O{}}{
	\begin{tikzcd} [ampersand replacement=\&]
	#1 \arrow{r}{#4} \arrow[bend right]{rr}{#5} \& #2 \arrow{r}{#6} \& #3
	\end{tikzcd}}

 \newcommand{\cal}{\mathcal}

\def\cA{{\cal A}}

\def\cC{{\cal C}}

\def\cE{{\cal E}}
\def\cF{{\cal F}}

\def\cL{{\cal L}}
\def\cM{{\cal M}}
\def\cN{{\cal N}}
\def\cO{{\cal O}}

\def\cT{{\cal T}}
\def\cU{{\cal U}}
\def\cV{{\cal V}}

\def\cX{{\cal X}}
\def\cY{{\cal Y}}

\docsvlist{A,B,C,D,E,F,G,H,I,J,K,L,M,N,O,P,Q,R,S,T,U,V,W,X,Y,Z} 

\docsvlist{A,B,C,D,E,F,G,H,I,J,K,L,M,N,O,P,Q,R,S,T,U,V,W,X,Y,Z} 

\newcommand{\logqcoh}[1]{\cL \text{QCoh}(#1)}
\newcommand{\logcoh}[1]{\cL \text{Coh}(#1)}

\newcommand{\rt}[1]{{#1}_{\text{l\'et}}^r}
\newcommand{\longleftrightarrows}{\mathrel{\substack{\textstyle\longrightarrow \\[-0.3ex] \textstyle\longleftarrow}}}
\makeatletter
\newcommand{\longtwoheadrightarrow}{\mathrel{\begin{tikzpicture}[baseline=-0.5ex, evar/.style={font=\scriptsize}]
    \draw [->>, line width=0.06ex] (0,0) -- (1.5em,0);
\end{tikzpicture}}}
\makeatother

\usetikzlibrary{arrows.meta}

\usepackage[normalem]{ulem}
\usepackage{cleveref}

\title{Coherent Sheaves in Logarithmic Geometry}

    \author{Hannah Dell}
	\address{Mathematisches Institut, Rheinische Friedrich-Wilhelms-Universität Bonn, Bonn, Germany}
	\email{\href{mailto:hdell@math.uni-bonn.de}{hdell@math.uni-bonn.de}}
	\author{Xianyu Hu}
	\address{Fakult\"at f\"ur Mathematik,
	Technische Universit\"at M\"unchen, M\"unchen, Germany}
	\email{\href{mailto:xianyu.hu@tum.de}{xianyu.hu@tum.de}}
    \author{Patrick Kennedy-Hunt}
	\address{Max Planck Institute for Mathematics, Bonn, Germany}
	\email{\href{mailto:pfk21@cam.ac.uk}{pfk21@cam.ac.uk}}
    \author{Kabeer Manali Rahul}
    \address{Max Planck Institute for Mathematics, Bonn, Germany}
    \email{\href{mailto:manalirahul@mpim-bonn.mpg.de}{manalirahul@mpim-bonn.mpg.de}}
	\author{Maximilian Schimpf}
	\address{Max Planck Institute for Mathematics, Bonn, Germany}
	\email{\href{mailto:mschimpf@mpim-bonn.mpg.de}{schimpf@mpim-bonn.mpg.de}}
\thanks{This document was obtained by merging two manuscripts which independently produced similar results. One project was joint work of Dell, Kennedy-Hunt, and Manali Rahul. The other project was joint work of Hu and Schimpf.}
\begin{document}
\begin{abstract}
This paper introduces an abelian category of logarithmic coherent sheaves that arranges coherent sheaves across all expansions and root stacks of a simple normal crossing degeneration. Formally, logarithmic coherent sheaves are coherent sheaves in the full logarithmic \'etale topology.
We develop a suite of tools that reduces the evaluation of the basic functors of homological algebra to the conventional calculation on a computable logarithmic alteration. A second paper will establish good properties of the associated logarithmic derived category.

We thus offer a unified perspective on logarithmic moduli spaces of coherent sheaves: The logarithmic Quot spaces motivated by Maulik and Ranganathan's logarithmic Donaldson--Thomas theory, the logarithmic Picard group constructed by Molcho and Wise, and moduli spaces of logarithmic parabolic sheaves as developed by Borne, Talpo, and Vistoli. In establishing the connection with logarithmic Picard groups, we offer a new interpretation of chip firing as the combinatorial shadow to a logarithmic version of S-equivalence.
\end{abstract}

\maketitle

\tableofcontents

\section*{Introduction}
Moduli spaces of coherent sheaves on a smooth projective variety $X$ behave unpredictably as $X$ undergoes a simple normal crossing degeneration to a singular scheme $X_0$. For example, the relative Picard scheme may not be universally closed \cite{AltmanKleiman1979,AltmanKleiman1980,DSouza1979Compactification,Ishida1978,OdaSeshadri1979}; for $X$ a surface, the relative Hilbert scheme of points is not a simple normal crossing degeneration \cite{GulbrandsenHalleHulek2019,Wu-Li-Good-degeneration-of-Quot-schemes-and-coherent-systems,shafi2025logarithmic,tschanz2023expansions,tschanz2024good}; and degeneration formulae as used in enumerative geometry are not available \cite{KuhnGluing,Wu-Li-Good-degeneration-of-Quot-schemes-and-coherent-systems,maulik2024logarithmic,maulik2025logarithmic}. 

A single geometric idea, first employed by Gieseker \cite{Gieseker1984ADO}, has become a central tool for constructing good degenerations of these moduli spaces: study a larger moduli stack that tracks coherent sheaves across all \textit{expansions}\footnote{See Definition \ref{definition-log-modification}.} of $X_0$ (see, for instance, \cite{GiesekerLi1994,halpern2025quantum,kennedy2023logarithmic,kennedy2025logarithmic,KuhnGluing,Wu-Li-Good-degeneration-of-Quot-schemes-and-coherent-systems,martens2016compactifications,maulik2025logarithmic,nadler2023automorphicgluingfunctorbetti,Nagaraj-Seshadri-Degenerations-of-the-moduli-I,Nagaraj-Seshadri-Degenerations-of-the-moduli-II,shafi2025logarithmic}). However, a setting for homological algebra that captures this geometry has not emerged. This paper addresses the following basic question.
\begin{center} 
\noindent\fbox{%
\parbox{\linewidth - 2\fboxsep}{%
$$\text{Is there a well-behaved abelian category of coherent sheaves across all expansions of $X_0$?}$$ }%
} 
\end{center}

We address this question by studying the category of \textit{logarithmic coherent sheaves,} which we define to be coherent sheaves in the logarithmic \'etale topology. This definition arranges conventional coherent sheaves across all expansions and logarithmic root stacks of $X_0$. In particular, if $X_0$ has trivial logarithmic structure, then logarithmic coherent sheaves are the same as conventional coherent sheaves. Here, root stacks are included because one needs to apply toric semistable reduction in order to avoid non-reduced expansions. In doing so, two basic issues must be overcome. First, identifying an abelian category whose objects are precisely those of geometric origin requires us to address well-known subtleties concerning logarithmic \'etale descent \cite{molcho2023remarks}. Second, it is not clear a priori how to compute the natural derived functors (sections, pullbacks, pushforwards, tensor products, and hom sheaves).

An earlier notion of coherent sheaves on logarithmic schemes, called \textit{parabolic sheaves}, was pioneered in \cite{Parabolic-sheaves-on-logarithmic-schemes,MatsukiOlsson2005,Olsson2007TwistedCurves,talpo2018infinite}. We were unable to recover the geometry of expansions using the framework of parabolic sheaves. However, we expect that all of the moduli spaces of sheaves on expansions discussed above are captured by moduli stacks of logarithmic coherent sheaves, which contain open substacks of logarithmically flat parabolic sheaves \cite{talpo2017moduli}.


Indeed, up to subtleties detailed in \cite{Kennedy-Hunt-Poiret-Song:Log-Vector-bundle}, logarithmic Quot spaces \cite{kennedy2023logarithmic,maulik2024logarithmic}, and in particular the logarithmic Donaldson--Thomas theory of ideal sheaves and stable pairs \cite{maulik2024logarithmic,maulik2025logarithmic, maulik2026logarithmic}, are moduli spaces of quotients of a fixed logarithmic coherent sheaf. 
Whilst this theory is now well-developed, 
higher rank logarithmic Donaldson--Thomas theory is an important missing part of the story. 
The results we present suggest that such a theory should exist, and offer first steps towards its construction. 
Access to a logarithmic derived category offers a route to studying logarithmic moduli of complexes, whose conventional counterparts have been used to celebrated effect \cite{feyzbakhsh2023rank,feyzbakhsh2023curve,feyzbakhsh2024rank}.

In order to use the category of logarithmic coherent sheaves to study moduli spaces under degeneration, we develop a suite of tools to compute elementary operations.

\begin{outcome}[The yoga of ups and downs\footnote{The calculus proceeds via an up functor and a down functor. See Sections \ref{section: log coh intro} and \ref{subsection ob log etale ringed topoi}.}]\label{OutcomeI}
Develop a calculus to compute
    \[
        L\pi^\ast,\;R\pi_\ast,\;  \otimes^\mathbb{L}, \; \text{and } R\mathcal{H}om,
    \]
    by reducing to the corresponding conventional (i.e. non-logarithmic) computation on a root stack of an expansion. 
\end{outcome}

In particular, the above functors preserve logarithmic (quasi)-coherence as in the conventional setting. In the companion paper \cite{Logarithmic-derived-category}, we study the associated \textit{logarithmic derived category}, and show that it satisfies Grothendieck-Verdier duality along proper maps, and a weak version of categorical smoothness along logarithmically smooth morphisms. Moduli of logarithmic coherent sheaves are systematically developed elsewhere \cite{Kennedy-Hunt-Poiret-Song:Log-Vector-bundle}.

In contrast to logarithmic Quot spaces, the connection between the moduli stack of invertible logarithmic coherent sheaves on a logarithmically smooth family of curves $C/S$, denoted $\mathfrak{M}_1(C/S)$, and logarithmic Picard groups \cite{MolchoWise} is subtle. In particular, points of the logarithmic Picard group only parametrize equivalence classes of invertible logarithmic coherent sheaves up to the action of piecewise linear functions \cite{ModStckTropCurve,HolmesSchwarz2022,MolchoRanganathan2024}.
\begin{outcome}\label{Outcome:BeholdLogPic}
    We construct a morphism $$\varpi\colon \mathfrak{M}_1(C/S)\longrightarrow \operatorname{LogPic}(C/S)$$ and establish that $\varpi$ satisfies, in an appropriate category, the universal mapping property shared by fine, coarse, good, and adequate moduli spaces. Thus, $\operatorname{LogPic}(C/S)$ is a categorical moduli space for $\mathfrak{M}_1(C/S)$.
\end{outcome}
In Section \ref{section:log picard groups} we unpack the perspective that Outcome \ref{Outcome:BeholdLogPic} reinterprets the combinatorics of chip firing as \textit{a logarithmic version of S-equivalence}. The two differing notions (compare \cite{KavehManonTropVB,KavehManon2025,KhanMaclagan2026} with \cite{GrossKaurWernerTropical,GrossZakharov2022}) of tropical vector bundles considered elsewhere can be understood as generalizations of the objects parameterized by the tropicalizations of $\mathfrak{M}_1$ and $\operatorname{LogPic}$ respectively to the higher rank setting; see Section~\ref{sec:theInterval} for a summary. Together, Outcomes \ref{OutcomeI} and \ref{Outcome:BeholdLogPic} indicate that logarithmic coherent sheaves, as defined in this paper, provide a natural replacement for coherent sheaves in logarithmic geometry.

We now detail Outcomes \ref{OutcomeI} and \ref{Outcome:BeholdLogPic} assuming that $X$ is a Noetherian, fine, and saturated logarithmic scheme over a field $k$ with underlying scheme $\underline{X}$. Theorem~\ref{main theorem on log coherent}~-~\ref{main thm: main theorem push forward} address Outcome \ref{OutcomeI}, while Theorem \ref{mainthm:LogPic} concerns Outcome \ref{Outcome:BeholdLogPic}. We adopt the terminology that a \textit{logarithmic alteration} $X'\rightarrow X$ is any morphism that can be expressed as a composition of expansion and root stack morphisms (see Section \ref{section: log alteration}). 

\subsection{Logarithmic coherent sheaves}\label{section: log coh intro}

Our tool for organizing sheaves across all logarithmic alterations of $X$ is the \textit{logarithmic \'etale topology} \cite{m-type-topology,Overview-of-the-work-Kato-School,Log-etale-cohomology-Chikara-Nakayama}. As observed in \cite{MolchoWise,molcho2023remarks}, this topology can be defined by declaring that logarithmic alterations are covers, in addition to the (strict) \'etale covers inherited from conventional algebraic geometry. We warn the reader that logarithmic alterations are not flat and can contract entire irreducible components.

As detailed in Section \ref{section-on-log-etale-topoi}, the small logarithmic \'etale site $X_\text{l\'et}$ of $X$ carries a natural structure sheaf $\LogO{X}$. The bridge between conventional $\mathcal{O}_{\underline{X}{}'_\text{\'et}}$-modules on a logarithmic alteration $X'\rightarrow X$ and $\LogO{X}$-modules is provided by an adjoint pair of functors. These functors, introduced in Section~\ref{subsection ob log etale ringed topoi}, are the \textit{up functor} and its right adjoint, the \textit{down functor}:
$$\Uparrow\colon \operatorname{Mod}(\mathcal{O}_{\underline{X}{}'_\text{\'et}})\longleftrightarrows \operatorname{Mod}(\LogO{X})\colon \Downarrow_{X'}.$$
Whilst the up functor depends on the choice of $X'$, we omit any reference to $X'$ from our notation since it is clear from context. 

We call an $\LogO{X}$-module $\mathcal{F}$ \textit{coherent} if, logarithmic \'etale locally, it admits a presentation of the form
$$\LogO{X}^{\oplus k}\longrightarrow \LogO{X}^{\oplus \ell} \longrightarrow \mathcal{F} \longrightarrow 0$$ and denote $\logcoh{X}\subset \text{Mod}(\LogO{X})$ the full subcategory spanned by such logarithmic coherent sheaves. The following result is Proposition~\ref{proposition on finite presentation} together with Theorem~\ref{thm: FPisCoherent}.
\begin{maintheorem}\label{main theorem on log coherent}
    The category $\logcoh{X}$ is abelian. Moreover, an $\LogO{X}$-module $\mathcal{F}$ is coherent if and only if there is a conventional coherent sheaf $F$ on some logarithmic alteration $X'\rightarrow X$ such that $$\mathcal{F}\cong \;\Uparrow \! \! F.$$
\end{maintheorem}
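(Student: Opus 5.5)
The plan is to prove the characterization first and then deduce abelianness from it.

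\emph{Step 1 (up of coherent is coherent).} Let $F$ be a coherent sheaf on $\underline{X}'$ for a logarithmic alteration $X'\to X$. The functor $\Uparrow$ is a left adjoint, so it is right exact. It also sends $\mathcal{O}_{\underline{X}{}'_\text{\'et}}$ to $\LogO{X}$, since $\Uparrow$ is a pullback of ringed topoi composed with the identification $X'_\text{l\'et}\simeq X_\text{l\'et}/X'$, and $X'\to X$ is a log \'etale cover. Take a strict \'etale local presentation $\mathcal{O}^{k}\to\mathcal{O}^{\ell}\to F\to 0$ on $X'$. Applying $\Uparrow$ gives a presentation of $\Uparrow F$ on a log \'etale cover of $X$, so $\Uparrow F$ is finitely presented.

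\emph{Step 2 (finitely presented comes from an alteration).} Let $\mathcal{F}$ be finitely presented, with presentation $\LogO{X}^{k}\xrightarrow{A}\LogO{X}^{\ell}\to\mathcal{F}\to 0$ on a log \'etale cover $\{U_i\to X\}$.

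Since $X$ is Noetherian (hence quasi-compact), every log \'etale cover is refined by a strict \'etale cover of a single logarithmic alteration $X'\to X$. I would cite the cofinality statement from the section on log alterations for this.

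Sections of $\LogO{X}$ over $U_i$ are sections of $\mathcal{O}$ on some alteration of $U_i$. So after further refining by an alteration, the matrix $A$ is represented by a matrix $A'$ of functions on the strict \'etale cover of $X'$. The cokernel $F_i=\operatorname{coker}(A')$ is coherent locally, and $\Uparrow F_i\cong\mathcal{F}|_{U_i}$ by right exactness of $\Uparrow$.

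\emph{Step 3 (gluing).} The glueing isomorphisms $\Uparrow F_i\cong\Uparrow F_j$ on overlaps are maps between up's of coherent sheaves. I need the full faithfulness statement
\[
\operatorname{Hom}(\Uparrow F,\Uparrow G)=\varinjlim_{X''\to X'}\operatorname{Hom}(F|_{X''},G|_{X''}),
\]
so that, after passing to a further alteration, these maps are realized by maps of conventional coherent sheaves. This should follow from $\Downarrow\Uparrow F\cong\varinjlim_{X''} \pi_\ast\pi^\ast F$ and coherence. The cocycle condition also holds after one more alteration, again by quasi-compactness. Strict \'etale descent on $\underline{X}''$ then produces a coherent sheaf $F$ with $\Uparrow F\cong\mathcal{F}$.

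I expect this step to be the main obstacle, because of the known failure of naive log \'etale descent. The key point is that the colimit over alterations absorbs the non-flatness: pullbacks to further alterations are not exact, but they are taken before gluing.

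\emph{Step 4 (abelian).} Let $\varphi\colon\mathcal{F}\to\mathcal{G}$ be a map between finitely presented sheaves. By Steps 2 and 3 there is a common alteration $X'$ with $\mathcal{F}=\Uparrow F$, $\mathcal{G}=\Uparrow G$, and $\varphi=\Uparrow f$. The cokernel is $\Uparrow\operatorname{coker} f$ by right exactness, hence coherent.

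For the kernel, $\Uparrow$ is not exact, since $\pi^\ast$ is not flat. I would instead show that
\[
\ker\varphi=\varinjlim_{X''\to X'}\Uparrow\ker\bigl(f|_{X''}\bigr)
\]
and that this system stabilizes. The stabilization would use a flattening result: by Raynaud--Gruson, or log flattening via toric modifications, there is a log alteration $X''$ on which $\operatorname{coker}(f)|_{X''}$ modulo torsion becomes log flat, with further alterations acting exactly on it. On such $X''$ the kernel is $\Uparrow\ker(f|_{X''})$, which is coherent. Alternatively, one can show $\LogO{X}$ is a coherent sheaf of rings by the same flattening argument applied to $\LogO{X}^k\to\LogO{X}$; then finitely presented modules form an abelian category by the standard argument.
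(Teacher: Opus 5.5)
Your Steps 1--3 are essentially the paper's proof of Proposition \ref{proposition on finite presentation}, and they are fine. Full faithfulness comes from $\Downarrow_{X'}\Uparrow G=\operatorname{colim}_q\underline{q}_*\underline{q}^*G$ together with the fact that $F$ is finitely presented on a qcqs stack. Essential surjectivity comes from lifting local presentations, gluing isomorphisms and the cocycle condition to one common alteration. That is not where the difficulty lies, however.

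\textbf{The gap is in Step 4.} Your criterion, ``$\operatorname{coker}(f)|_{X''}$ modulo torsion is log flat'', does not make $\Uparrow$ commute with kernels. Take $X=\mathbb{A}^2$ with its toric log structure and $f\colon I=(x,y)\hookrightarrow\mathcal{O}_{\mathbb{A}^2}$.
The cokernel $\mathcal{O}_0$ is entirely torsion, and so is its pullback to any alteration. So your condition holds on $X$ itself and at every later stage, and on $X$ it would give $\ker(\Uparrow f)=\Uparrow\ker f=0$.
But on $q\colon\mathrm{Bl}_0\mathbb{A}^2\to\mathbb{A}^2$ one has $\ker(\underline{q}^*f)=L_1\underline{q}^*\mathcal{O}_0\cong\mathcal{O}_E(-1)$. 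By Example \ref{example: up not exact}, $\ker(\Uparrow f)=\Uparrow\mathcal{O}_E(-1)\neq 0$.
So the condition holds at stages whose candidate kernels $\Uparrow\ker(\underline{q}^*f)$ differ, and it cannot certify stabilization. The same happens for $\mathcal{O}^2\xrightarrow{(x,y)}\mathcal{O}$, so your alternative route via coherence of $\mathcal{O}_{X_{\text{l\'et}}}$ fails for the same reason.
The obstruction is $L_1\Uparrow$ of the cokernel and of the image. Torsion of Tor dimension $\geq 2$ over the Artin fan, such as $\mathcal{O}_0$, contributes to it, and passing to strict transforms discards exactly this.

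\textbf{The missing idea is staticity}, i.e.\ Tor dimension $\le 1$ over the Artin fan.
For static $H$ and log flat $q$ one has $L_1\underline{q}^*H=0$ (Proposition \ref{proposition-on-derived-pullback-vanishing-static}), hence $L_1\Uparrow H=0$ (Corollary \ref{corollary-DM-stacks-static-pullback}).
Suppose you pass to an alteration on which $\underline{q}^*F$, $\underline{q}^*G$ and $\underline{q}^*\operatorname{coker}(f)$ are static. Then the image is static too, being the kernel of a surjection between statics.
Hence $\Uparrow$ preserves $0\to\ker\to\underline{q}^*F\to\mathrm{Im}\to 0$ and $0\to\mathrm{Im}\to\underline{q}^*G\to\operatorname{coker}\to 0$. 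This gives $\ker(\Uparrow f)=\Uparrow\ker(\underline{q}^*f)$, which is finitely presented because $\underline{X}''$ is Noetherian (Proposition \ref{prop: static complex prop}, Theorem \ref{thm: FPisCoherent}). It yields coherent $=$ finitely presented, and abelianness follows.

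Flattening is still the engine, but it must be applied differently. Take the kernel $K$ of a presentation $0\to K\to\mathcal{O}^n\to\underline{i}_*C\to 0$ after a strict closed embedding into $\operatorname{Spec}(P\to A[P])$. Gabber--Thompson makes the strict transform of $K$ log flat, which makes $\underline{q}^*C$ static (Theorem \ref{thm-pullback-static}). Applying flattening to $C$ itself, as you propose, controls only its part away from the boundary torsion.
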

A closely related theorem is recorded in the PhD thesis of Thompson \cite{on-toric-log-schemes}, who acknowledges substantial contributions from Gabber \cite{GabberToricFlattening} (see Section \ref{sec:origin Static}).

The central difficulty in proving Theorem \ref{main theorem on log coherent} is establishing that the category of logarithmic coherent sheaves is closed under kernels. Indeed, $\Uparrow$ does not commute with taking kernels since logarithmic alterations are not always flat. We isolate the subtleties that must be addressed in order to establish the yoga of ups and downs described in Outcome \ref{OutcomeI}.
\begin{enumerate}
    \item The up functor is neither left exact, nor full, nor faithful; see Example \ref{example: up not exact}.
    \item For $F$ a coherent sheaf, it is not a priori clear that the $\mathcal{O}_{\underline{X}}$-module $\Downarrow\!\!_X\arrowup F$ is coherent, even in the case where $F$ is the structure sheaf \cite[Question 1.7.1]{molcho2023remarks}. 
\end{enumerate}
We proceed by reducing to a class of conventional coherent sheaves which do not suffer from these issues.

\subsection{Static quasi-coherent sheaves} 
A conventional quasi-coherent sheaf on $\underline{X}$ is said to be \textit{static} if it has Tor dimension at most one over the Artin fan of $X$. See Section \ref{relative Tor dimension} for a precise definition of Tor dimension. This notion is closely related to \cite[Definition 2.1.2]{on-toric-log-schemes}, see Section \ref{sec:origin Static}. More concretely, a conventional quasi-coherent sheaf $F$ on an affine smooth toric variety $X$ is static if and only if for some (equivalently any) short exact sequence of the form $$0 \longrightarrow \mathcal{K}\longrightarrow \mathcal{O}_{\underline{X}}^{\oplus \ell}\longrightarrow F \longrightarrow 0,$$
the equations of the toric boundary divisors of $X$ form a $\mathcal{K}$-regular sequence. We denote the (not necessarily abelian) category of static quasi-coherent sheaves by $\text{QCoh}_{\text{sta}}(\underline{X})$. The next result follows from Remark~\ref{remark: up is exact on statics}, Proposition \ref{prop: Ext groups static}, Corollary~\ref{corollary-DM-stacks-static-pullback}, and Corollary~\ref{corollary-sheaf-cohomology-of-static-sheaves}.
\begin{maintheorem}\label{mainthm: Static Good}
    After restricting its domain, the up functor
    $$
        \arrowup\colon \operatorname{QCoh}_{\text{sta}}(\underline{X}) \longrightarrow \operatorname{Mod}(\LogO{X})
    $$
    becomes fully faithful and preserves exact sequences bounded on the right. Moreover, for any static conventional quasi-coherent sheaf on $\ul{X}$ and any logarithmic alteration $q\colon X'\rightarrow X$ we have $$\Downarrow_{X'} \arrowup F = \ul{q}^\ast F.$$ Finally, there is an equality of sheaf cohomology groups:
    $$H^{i}(\underline{X},F) = H^{i}(X_{\text{l\'et}}, \arrowup F).$$
\end{maintheorem}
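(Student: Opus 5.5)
The strategy is to make everything flow from one claim: for a static quasi-coherent sheaf $F$ on $\underline{X}$ and any logarithmic alteration $q\colon X'\to X$, the pullback $\underline{q}^\ast F$ is again static and agrees with the derived pullback, i.e. $L\underline{q}^\ast F=\underline{q}^\ast F$.

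\textbf{Step 1: static sheaves pull back without higher derived terms.} Since $F$ has Tor dimension at most one over the Artin fan $\mathcal{A}_X$, I work étale locally on a toric chart. There a presentation $0\to\mathcal{K}\to\mathcal{O}^{\oplus\ell}\to F\to0$ has the boundary equations forming a $\mathcal{K}$-regular sequence. Logarithmic alterations are pulled back from the Artin fan: expansions come from subdivisions of the fan, and root stacks from finite index changes of the lattice. It therefore suffices to show that a sheaf of Tor dimension $\le1$ over $\mathcal{A}_X$ has vanishing $\mathrm{Tor}_{>0}$ against $\mathcal{O}_{\mathcal{A}_{X'}}$.

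To prove this I would use that $\mathcal{A}_{X'}\to\mathcal{A}_X$ is toric and birational (or flat finite, in the root case). For expansions, $\mathcal{O}_{\mathcal{A}_{X'}}$ has a resolution over $\mathcal{A}_X$ whose higher Tors are supported in the toric boundary and are killed by the regular sequence. This reduces, via a Koszul-type argument on monomial ideals, to the statement that the regular sequence stays regular after pullback. The same argument gives staticness of $\underline{q}^\ast F$, since Tor dimension over $\mathcal{A}_{X'}$ is computed by base change.

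\textbf{Step 2: the down functor.} Let $\arrowup F$ be the sheafification on $X_{\text{lét}}$ of the presheaf $(Y\to X)\mapsto \Gamma(\underline{Y},\underline{q_Y}^\ast F)$. By Step 1 this presheaf is already a sheaf for the logarithmic étale topology. Strict étale descent is classical. Descent along alterations $Y'\to Y$ needs $F_Y\to R\underline{p}_\ast\underline{p}^\ast F_Y$ to be an isomorphism, where $p\colon Y'\to Y$. I get this from the projection formula
$$R\underline{p}_\ast\underline{p}^\ast F_Y=R\underline{p}_\ast Lp^\ast F_Y=F_Y\otimes^{\mathbb{L}}R\underline{p}_\ast\mathcal{O}_{Y'},$$
combined with $R\underline{p}_\ast\mathcal{O}_{Y'}=\mathcal{O}_Y$. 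The latter holds because toric blowups have rational singularities and root stacks are tame. Reading off sections on $X'$ then gives $\Downarrow_{X'}\arrowup F=\underline{q}^\ast F$. The same computation, with $R\underline{p}_\ast$ in place of $\underline{p}_\ast$, shows that the Čech-to-derived spectral sequence degenerates. Taking the cofinal system of alterations, this yields $H^i(X_{\text{lét}},\arrowup F)=\varinjlim H^i(\underline{X'},\underline{q}^\ast F)=H^i(\underline{X},F)$.

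\textbf{Step 3: full faithfulness and right exactness.} For full faithfulness, adjunction gives
$$\operatorname{Hom}(\arrowup F,\arrowup G)=\operatorname{Hom}(F,\Downarrow_X\arrowup G)=\operatorname{Hom}(F,G)$$
by Step 2. For exactness, let $F_1\to F_2\to F_3\to0$ be exact with the $F_i$ static. Pullback is right exact, so on each alteration the sequence $\underline{q}^\ast F_1\to\underline{q}^\ast F_2\to\underline{q}^\ast F_3\to0$ is exact. Because $\arrowup$ is computed alteration-wise with no sheafification correction, the sequence $\arrowup F_1\to\arrowup F_2\to\arrowup F_3\to0$ is exact. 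If the sequence is left exact as well, i.e. a short exact sequence $0\to F_1\to F_2\to F_3\to0$ of statics, then exactness on the left follows from $\mathrm{Tor}_1(F_3,\mathcal{O}_{X'})=0$, which is Step 1 applied to $F_3$. The Ext comparison (Proposition \ref{prop: Ext groups static}) follows the same way, from $R\!\Downarrow\arrowup F=F$.

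\textbf{Main obstacle.} The hard part is Step 1, proving $\mathrm{Tor}$-independence for non-flat alterations. The "Tor dimension $\le 1$" hypothesis is exactly what is needed: a single syzygy step must become regular on boundary strata. I would first attempt the toric-local computation, $\mathrm{Tor}_i^{k[P]}(M,k[\sigma'])=0$ for the affine pieces $\sigma'$ of a subdivision. If that fails, I would fall back on factoring the subdivision into weighted blowups and inducting along the chain.
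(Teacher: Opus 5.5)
Your overall architecture matches the paper's: establish $F\cong R\ul{q}_*\ul{q}^*F$, deduce $R\!\Downarrow_{X'}\!\arrowup F=\ul{q}^*F$ from the colimit formula, then get full faithfulness and cohomology by adjunction. However, Step 1 as you state it is false, and Step 2 depends on it.

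\textbf{Where Step 1 fails.} You reduce $L_i\ul{q}^*F$ to $\operatorname{Tor}$ against $\cO_{\cA_{X'}}$ over $\cA_X$, i.e.\ locally to $\operatorname{Tor}^{k[P]}_i(M,k[Q])$. That group does vanish for $i\geq 1$, since it equals $\operatorname{Tor}^{k[P]}_{i+1}(M,k[Q^{\text{gp}}]/k[Q])$. But $L_i\ul{q}^*F$ is $\operatorname{Tor}^A_i(A\otimes_{k[P]}k[Q],M)$, and $A$ need not be Tor-independent of $k[Q]$ over $k[P]$. So the change-of-rings spectral sequence only gives vanishing for $i=1$. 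Vanishing for all $i$ requires $X$ itself to be static (compare Proposition \ref{proposition-on-derived-pullback-vanishing-static}). Your local picture, with $X$ an affine toric variety and a $\mathcal{K}$-regular boundary sequence, is exactly that static case; a general $X$ only maps strictly to such a chart.

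Concretely, take $P=\mathbb{N}^2\to A=k[x,y,t]/(xt,yt)$ and $M=A/tA=k[x,y]$. This $M$ is free over $k[P]$, hence static. On the chart $y=xu$ of the blowup at the origin, $B=k[x,u,t]/(xt)$. Since $\operatorname{Ann}_A(t)=(x,y)$, we have $tA\cong A/(x,y)$, and
\[
\operatorname{Tor}^A_2(B,M)\cong\operatorname{Tor}^A_1(B,tA)\cong\operatorname{Tor}^A_1(B,A/(x,y))\cong B/(x,t)=k[u]\neq 0 .
\]
So the identity $R\ul{p}_*\ul{p}^*F_Y=F_Y\otimes^{\mathbb{L}}R\ul{p}_*\cO_{\ul{Y}'}$ in Step 2 is unjustified. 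You need it for $F_Y$ over arbitrary log \'etale $Y$, and such $Y$ are typically not static. The same gap affects your cohomology comparison and the claim that the presheaf is already a sheaf.

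\textbf{The missing idea: proving $F\cong R\ul{q}_*\ul{q}^*F$ without full Tor-independence.}

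For root stacks, no staticity is needed. Since $\ul{q}_*$ is exact on quasi-coherent sheaves, $R\ul{q}_*\ul{q}^*F=\mathcal{H}^0(R\ul{q}_*L\ul{q}^*F)=\mathcal{H}^0(F\otimes^{\mathbb{L}}R\ul{q}_*\cO)=F$.

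For a log modification, work locally where it is pulled back from a toric blowup $h$ along the strict affine map $g\colon\ul{X}\to\operatorname{Spec}k[P]$, and push forward along $g$:
\begin{itemize}
\item $\ul{g}_*F$ is still static, because pushforward along affine integral maps preserves log Tor dimension.
\item Affine base change gives $R\ul{g}_*R\ul{q}_*\ul{q}^*F=R\ul{h}_*\ul{h}^*\ul{g}_*F$.
\end{itemize}
This reduces you to the static base $\operatorname{Spec}k[P]$, where all higher Tors do vanish and $R\ul{h}_*\cO=\cO$ by Kato.

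With this in hand, your Steps 2 and 3 go through. Left exactness only ever uses $L_1=0$, which is the true part of Step 1. For longer bounded-above exact sequences you also need that kernels of surjections between static sheaves are static. Then the sequence splits into short exact sequences whose right-hand terms are static.
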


In particular, if $X$ is affine and $F$ is a static conventional quasi-coherent sheaf on $\ul{X}$, then $H^i(X_{\text{l\'et}},\arrowup F)=0$ for all $i>0$. We view this as the first piece of evidence for the following perspective developed throughout this introduction: Staticity is a natural enhancement of affineness in logarithmic geometry.

If $X$ is of finite type over $k$, we will now detail a combinatorial characterization of staticity that allows us to reduce basic computations concerning logarithmic coherent sheaves to the static case. For this, let $F$ be a coherent sheaf on $X$. Since being static is a local property and since $X$ locally admits a strict closed immersion into an affine toric variety, we may assume that $X$ is itself affine and toric.
Recall that the \textit{Gr\"obner stratification} \cite{kennedy2023logarithmic,kennedy2025logarithmic} of a sheaf $K$ on $X$ with cocharacter lattice $M_X$ is the data of a locally closed stratification of $M_X\otimes \mathbb{Q}$. A fan structure $\Sigma$ on $M_X$ is said to \textit{refine} a \textit{Gr\"obner stratification} if each stratum is a union of interiors of cones in $\Sigma$. 
\begin{maintheorem}[Proposition~\ref{prop:computing statification}]\label{main theorem: Grobner strat}
     Let $X$ be an affine toric variety and let $F$ be a coherent sheaf on $\ul{X}$ which admits a presentation of the form $$0 \longrightarrow {K}\longrightarrow \mathcal{O}_{\ul{X}}^{\oplus \ell} \longrightarrow F \longrightarrow 0.$$
     The pullback of $F$ along a toric modification $q\colon \ul{X}'\rightarrow \ul{X}$ is static if and only if the fan of $\ul{X}'$ refines the Gr\"obner stratification of $K$.
\end{maintheorem}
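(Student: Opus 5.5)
The plan is to reduce staticity of $\ul{q}^\ast F$ to a flatness statement for the kernel and then to read that flatness off the Gröbner stratification, one torus orbit at a time.

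\textbf{Step 1: reduce to the kernel.} Staticity means Tor dimension at most one over the Artin fan, and it is local. So we may work on each affine toric chart $U_\sigma$ of $\ul{X}'$, for $\sigma$ a cone of the fan of $\ul{X}'$. Over such a chart the Artin fan is $[U_\sigma/T]$, and Tor dimension over $[U_\sigma/T]$ can be tested along the torus-orbit closures. Concretely, it is enough to check Tor-independence against the structure sheaves of the closed strata $V(\tau)$ for $\tau \preceq \sigma$; by the regular-sequence characterisation given above, these are cut out by the boundary equations.

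Pull back the presentation along $q$ to get
\[
0 \to q^\ast K/\mathrm{(torsion)} \to \mathcal{O}^{\oplus \ell} \to q^\ast F \to 0 .
\]
Here $q^\ast K$ may fail to be the kernel, so we replace it by the image $K'$ of $q^\ast K$ in $\mathcal{O}^{\oplus\ell}$. The long exact Tor sequence shows that $q^\ast F$ is static exactly when $K'$ is flat over the Artin fan, that is, Tor dimension $0$. This is the affine analogue of the characterisation of staticity quoted above.

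\textbf{Step 2: local flatness criterion on a chart.} On $U_\sigma = \operatorname{Spec} k[\sigma^\vee\cap M]$, flatness of $K'$ over $[U_\sigma/T]$ becomes a statement about the $M$-graded structure after passing to the $T$-equivariant picture. Following the Gröbner degeneration viewpoint of \cite{kennedy2023logarithmic}, I would pick a weight $w$ in the relative interior of a face $\tau$ of $\sigma$. Restricting $K'$ to the orbit stratum of $\tau$, the flat limit along the one-parameter subgroup $w$ is the initial submodule $\operatorname{in}_w(K)$.

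The key identity to aim for is the following: $K'$ is flat over $[U_\sigma/T]$ if and only if, for every face $\tau\preceq\sigma$, the initial module $\operatorname{in}_w(K)$ is independent of $w$ in the relative interior of $\tau$. Equivalently, the whole of $\operatorname{relint}(\tau)$ lies in a single Gröbner stratum.

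The forward direction goes as follows. If $\operatorname{in}_w$ jumped within $\operatorname{relint}(\tau)$, then the fibre dimensions or Hilbert functions of $K'$ in a fixed $M$-degree would jump along the orbit, contradicting flatness.

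For the converse, I would use the standard Gröbner flatness: the family over $\mathbb{A}^1$ given by a weight $w$ is flat, and this holds simultaneously for all $w$ in the cone because the initial module is constant. Applied to the equivariant Rees-type algebra $k[\sigma^\vee\cap M]$, this gives flatness over the toric chart.

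\textbf{Step 3: globalise.} Staticity holds on all of $\ul{X}'$ exactly when it holds on every chart $U_\sigma$. By Step 2 this happens exactly when every relative interior of a cone of $\Sigma$ lies in a single stratum. Since the stratification is locally closed and the cones cover the support, this is the statement that $\Sigma$ refines the Gröbner stratification.

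The main obstacle is the converse in Step 2: promoting "constant initial module on the relative interior of each face" to genuine flatness of $K'$ over the Artin fan, including at lower-dimensional faces where several strata meet. I expect to handle this by the Tor computation against the regular sequence of boundary equations, inducting on the dimension of $\sigma$. At each stage, the vanishing of $\operatorname{Tor}_1$ against $\mathcal{O}_{V(\rho)}$ for a ray $\rho$ is identified with the $u_\rho$-torsion-freeness of $K'$. I would then check that this torsion-freeness is equivalent to the initial modules agreeing on both sides of the ray.
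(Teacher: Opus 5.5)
Your Step 1 is correct and matches the paper. Because $\mathcal{O}_{\ul{X}'}^{\oplus\ell}$ is log flat, $\ul{q}^*F$ is static exactly when $K'$ is log flat. Moreover, $K'$, the image of $\ul{q}^*K$ in $\mathcal{O}^{\oplus\ell}$, is the strict transform $\ul{q}^{!}K$.

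The gap is the key identity of Step 2, which is false as you state it. It ties flatness of $K'$ to the initial modules $\operatorname{in}_w(K)$ of $K$ viewed as a submodule of $\mathcal{O}^{\oplus\ell}$. But degenerating $K$ to $\operatorname{in}_w(K)$ inside $\mathcal{O}^{\oplus\ell}$ is the Gr\"obner degeneration of the quotient $\mathcal{O}^{\oplus\ell}/K=F$. So constancy of these initial modules on relative interiors of cones is the criterion for log flatness of $\ul{q}^{!}F$, not of $K'$.

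Both directions already fail for the identity modification of $\mathbb{A}^2$:
\begin{itemize}
\item For $F=\mathcal{O}_0$, the submodule $K=(x,y)$ is the unit ideal on the torus, so its stratification is trivial. Yet $\mathrm{Tor}_1^{k[x,y]}((x,y),k)\cong\mathrm{Tor}_2^{k[x,y]}(k,k)\neq 0$, so $K$ is not log flat and $\mathcal{O}_0$ is not static.
\item For $F=\mathcal{O}_{V(x-y)}$, $K\cong\mathcal{O}$ is free, so $F$ is static. But $\operatorname{in}_w(x-y)$ is a unit off the diagonal and equals $x-y$ on it, so the fan of $\mathbb{A}^2$ does not refine that stratification.
\end{itemize}
Your forward-direction argument has the same defect. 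Jumping initial modules make the Hilbert functions of $\mathcal{O}^{\oplus\ell}/K'$ jump, and that quotient need not be flat when $K'$ is. Your induction also looks for the obstruction in the wrong place. On smooth charts, $\mathrm{Tor}_1(K',\mathcal{O}_{V(\rho)})$ is the $u_\rho$-torsion of $K'$, which vanishes automatically because $K'\subset\mathcal{O}^{\oplus\ell}$ over a domain. All the content sits on orbit closures of codimension at least two, as the example $(x,y)$ shows.

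The missing idea is that the stratification in the statement is that of the \emph{sheaf} $K$. It is computed from a presentation of $K$ itself, i.e.\ from its syzygies. This is why Proposition~\ref{prop:computing statification} uses a two-step resolution $\mathcal{O}^{\oplus n_2}\to\mathcal{O}^{\oplus n_1}\to\ul{i}_*F$.

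Concretely, choose $0\to K_2\to\mathcal{O}^{\oplus m}\to K\to 0$. Then $K'=\ul{q}^{!}(\mathcal{O}^{\oplus m}/K_2)$. By \cite[Theorem B]{kennedy2025logarithmic}, applied to $K_2\subset\mathcal{O}^{\oplus m}$ rather than to $K\subset\mathcal{O}^{\oplus\ell}$, this strict transform is log flat iff the fan refines the Gr\"obner stratification of $H^0(\mathbb{G}_m^n,K_2)$.

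That is the whole of the paper's proof, after a strict closed embedding into a smooth toric variety and affine base change. The paper cites this criterion rather than reproving it, as your Step 2 tries to. As a check, $(x,y)=\mathcal{O}^{\oplus 2}/\mathcal{O}\cdot(y,-x)$ yields the diagonal ray, i.e.\ the fan of $\mathrm{Bl}_0\mathbb{A}^2$. On that blowup $\ul{q}^*\mathcal{O}_0=\mathcal{O}_E$ is indeed static.
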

In particular, Theorem \ref{main theorem: Grobner strat} implies that there is a logarithmic modification $q\colon X'\rightarrow X$ such that $\ul{q}^*F$ is static. More generally, by \cite[Corollary 3.3.5]{on-toric-log-schemes}, so long as $X$ is fine and Noetherian, any coherent sheaf can be rendered static after a logarithmic modification, but the proof in op. cit. is non-constructive.

\begin{maincorollary}\label{main corr:addressing Molcho Wise}
    There exists a logarithmic modification $$X'\longrightarrow X$$ such that the structure sheaf of $X'$ satisfies logarithmic \'etale descent. In particular, the logarithmic \'etale sheafification of $\mathcal{O}_X$ is a coherent $\mathcal{O}_X$-module.
\end{maincorollary}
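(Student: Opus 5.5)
The plan is to make $\mathcal{O}_{\ul{X}}$ static after a logarithmic modification, and then read off descent from Theorem~\ref{mainthm: Static Good}. The structure sheaf is static exactly when it has Tor dimension at most one over the Artin fan. Note that $\mathcal{O}_{\ul{X}}$ need not be static: when $X$ is not log flat over its Artin fan, the Tor groups against the closed strata of the Artin fan can be nonzero.

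First, apply the staticization result to the coherent sheaf $F=\mathcal{O}_{\ul{X}}$. In the finite type case this is Theorem~\ref{main theorem: Grobner strat}. Locally, embed $X$ strictly into an affine toric variety $V$ and view $\mathcal{O}_{\ul{X}}$ as a coherent sheaf on $V$ with presentation $0\to \mathcal{I}\to \mathcal{O}_{V}\to \mathcal{O}_{\ul{X}}\to 0$. Choose a fan refining the Gr\"obner stratification of the ideal $\mathcal{I}$. The pullback along the corresponding toric modification is then static. Since $q^\ast$ of a structure sheaf is a structure sheaf, the pullback is $\mathcal{O}_{\ul{X}'}$.

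These local modifications have to be glued. Here I use that any two fans refining the Gr\"obner stratification have a common refinement that still refines it, and that staticity is preserved under further logarithmic modification, which is the "if" direction of the same theorem. The Gr\"obner stratifications are compatible with strict étale localization. So, as in the standard construction of subdivisions of the tropicalization, we obtain a global subdivision of the cone complex of $X$ and hence a global logarithmic modification $X'\to X$ with $\mathcal{O}_{\ul{X}'}$ static. For general fine Noetherian $X$, I would instead invoke \cite[Corollary 3.3.5]{on-toric-log-schemes}.

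Next, deduce descent. Let $q''\colon X''\to X'$ be any logarithmic alteration. Theorem~\ref{mainthm: Static Good} gives $\Downarrow_{X''}\arrowup \mathcal{O}_{\ul{X}'} = \ul{q}''^\ast\mathcal{O}_{\ul{X}'}=\mathcal{O}_{\ul{X}''}$. Together with strict étale descent for $\mathcal{O}$, this says that the presheaf $\mathcal{O}$ on $X'_{\text{l\'et}}$ agrees with the logarithmic étale sheaf $\arrowup\mathcal{O}_{\ul{X}'}$. Sections of a sheaf on the small logarithmic étale site are computed on the cofinal system of alterations, so this is precisely logarithmic étale descent for the structure sheaf of $X'$. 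Since $X'\to X$ is a cover in the logarithmic étale topology, the sheafification of $\mathcal{O}_X$ equals $\arrowup\mathcal{O}_{\ul{X}'}$. By Theorem~\ref{main theorem on log coherent} this is a logarithmic coherent sheaf. Its pushdown $\Downarrow_X$ is $R^0\ul{q}_\ast\mathcal{O}_{\ul{X}'}$, which is coherent because $q$ is proper; this gives the coherence over $\mathcal{O}_X$.

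I expect the gluing step to be the main obstacle: producing one global modification from the local Gr\"obner refinements, compatibly with strict étale charts. I would handle it by showing that the minimal subdivision refining all local stratifications is canonical and therefore descends.
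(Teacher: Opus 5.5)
Your overall architecture matches the paper's. You statify $\mathcal{O}_{\underline X}$ by a log modification (Theorem~\ref{thm-pullback-static}), read off descent from the static yoga (Corollary~\ref{corollary-DM-stacks-static-pullback}), and get coherence of $\Downarrow_X\mathcal{O}_{X_{\text{l\'et}}}=\underline{q}_*\mathcal{O}_{\underline{X}'}$ from properness (Remark~\ref{remark: up is exact on statics}). However, your finite-type statification step fails as written.

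The Gr\"obner criterion that actually detects staticity, Proposition~\ref{prop:computing statification}, uses the \emph{second} syzygy. One takes a resolution $0\to K\to\mathcal{O}^{\oplus n_2}\to\mathcal{O}^{\oplus n_1}\to \underline{i}_*F\to 0$ and the Gr\"obner stratification of $K$ restricted to the dense torus. In Examples~\ref{example: statification of structure sheaf 1} and~\ref{example: statification of structure sheaf 2} the relevant modules are generated by $(y^2,-x^2)$ and $(y,z,x)$, not by the ideals. The reason is that on a smooth toric $Y'$, staticity of a quotient of $\mathcal{O}^{\oplus n_1}$ is log flatness of its kernel, i.e.\ of the strict transform of $\mathcal{O}^{\oplus n_2}/K$, and \cite[Theorem B]{kennedy2025logarithmic} ties that to $K$.

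If you use the ideal $\mathcal{I}$ instead, restricting to the torus forgets exactly the part of $X$ lying in the toric boundary, which is where staticity fails. Take $X=V(x^2,y^2)\subset\mathbb{A}^2$ from Example~\ref{example: statification of structure sheaf 1}. There $\mathcal{I}$ becomes the unit ideal on $\mathbb{G}_m^2$, its Gr\"obner stratification is trivial, and your recipe returns $X'=X$. Yet $\text{Tor}_2^{k[x,y]}(k[x,y]/(x^2,y^2),k)=k\neq 0$, so $\mathcal{O}_{\underline X}$ is not static. The one-step presentation in the introduction's version of Theorem~\ref{main theorem: Grobner strat} is misleading on this point. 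Either use the two-step resolution, or simply invoke Theorem~\ref{thm-pullback-static}; it covers every Noetherian $X$ and makes the separate finite-type argument unnecessary.

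Your proposed gluing via a canonical minimal subdivision is both doubtful and unneeded. A piecewise linear stratification need not have a canonical coarsest refining fan: non-convex strata can be cut into cones in incomparable ways, and Proposition~\ref{prop:computing statification} needs smooth modifications, which adds further choices. So ``canonical, hence descends'' is not available. The paper instead statifies on a finite strict \'etale cover $\{U_i\to\mathcal{X}\}$ by charted affines. It then uses \cite{m-type-topology} to find one log modification $q\colon\mathcal{X}'\to\mathcal{X}$ such that the base change of $\coprod U_i'\to\mathcal{X}$ along $q$ is a strict \'etale cover. Since staticity is strict \'etale local and stable under log flat pullback (Corollary~\ref{cor: static and log flat stable under log flat pullback}), $\underline{q}^*\mathcal{O}_{\underline{\mathcal{X}}}$ is static.

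In the descent step, cofinality of alterations does not by itself handle a log \'etale $U\to X'$ that is not strict \'etale over an alteration. For $U''=U\times_{X'}X''$ you need $\Gamma(\underline U,\mathcal{O}_{\underline U})\to\Gamma(\underline{U''},\mathcal{O}_{\underline{U''}})$ to be bijective. This holds because $\mathcal{O}_{\underline U}$ is itself static (log flat pullback), so Proposition~\ref{proposition-on-derived-pullback-vanishing-static}\eqref{item: static push pull is identity} applies; this is the last assertion of Corollary~\ref{corollary-DM-stacks-static-pullback}. Your remaining steps agree with Remark~\ref{remark: up is exact on statics}.
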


Corollary \ref{main corr:addressing Molcho Wise} answers a question of Molcho and Wise \cite[Question 1.7.1]{molcho2023remarks} in the Noetherian case. The definition of static applied to the structure sheaf addresses \cite[Question 1.7.2]{molcho2023remarks}. Example \ref{example: statification of structure sheaf 1} and Example \ref{example: statification of structure sheaf 2} together resolve \cite[Question 1.7.3]{molcho2023remarks}. 

\subsection{The yoga of ups and downs} 
Before explaining the yoga of Outcome \ref{OutcomeI}, we record that derived functors preserve (quasi-)coherence in the expected way. We call an $\mathcal{O}_{X_{\text{l\'et}}}\text{-module}$ $\mathcal{F}$ \textit{quasi-coherent} if $\arrowdown_{X'}\!\mathcal{F}$
    is a conventional quasi-coherent sheaf on $\underline{X'}$ for every logarithmic alteration $q\colon X'\rightarrow X$. We write $\logqcoh{X}\subset \text{Mod}(X_{\text{l\'et}})$ for the full abelian subcategory whose objects are the logarithmic quasi-coherent sheaves. The relationship between logarithmic coherent and logarithmic quasi-coherent sheaves mirrors the conventional picture: $\logqcoh{X}$ is the ind-completion of $\logcoh{X}$, see Theorem~\ref{theorem-compacted-generateness-of-quasi-coherent}. The next result follows from Proposition \ref{prop: derived push log}, Proposition \ref{prop: pullback}, Proposition \ref{prop: ext}, and Proposition \ref{prop: tor}.

\begin{maintheorem}[Proposition \ref{prop: derived push log}]\label{main thm: (quasi)coherence preserved}
    Let $f\colon X\rightarrow Y$ be a morphism of Noetherian fine and saturated logarithmic $k$-schemes. Let $\underline{f}$ be the underlying morphism of schemes.  
    The derived functors $$L_nf_{\text{l\'et}}^*\colon \text{Mod}(\mathcal{O}_{Y_{\text{l\'et}}})\longleftrightarrows \text{Mod}(\mathcal{O}_{X_{\text{l\'et}}})\colon R^nf^\text{l\'et}_*$$ preserve logarithmic quasi-coherence. Coherence is preserved by left derived pullback and derived pushforward preserves coherence whenever $f$ is proper and $\text{char}(k)=0$. Moreover, if $\cF$ is a logarithmic coherent sheaf, the functors
    $$\cT or_n^{X_{\text{l\'et}}} (\cF,-)\colon \text{Mod}(\mathcal{O}_{X_{\text{l\'et}}})\longleftrightarrows \text{Mod}(\mathcal{O}_{X_{\text{l\'et}}})\colon \cE xt^{n}_{X_{\text{l\'et}}}(\cF,-) $$
    preserve (quasi-)coherence. Finally, $\cT or_n^{X_{\text{l\'et}}}$ preserves quasi-coherence whenever $\mathcal{F}$ is quasi-coherent. 
\end{maintheorem}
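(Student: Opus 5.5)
The plan is to reduce every statement to a conventional computation on a suitable logarithmic alteration. The reduction uses Theorem~\ref{main theorem on log coherent} (every coherent $\mathcal{F}$ has the form $\arrowup F$) and Theorem~\ref{mainthm: Static Good} (on static sheaves, $\arrowup$ is exact on the right and fully faithful, $\Downarrow_{X'}\arrowup F=\ul{q}^*F$, and cohomology is preserved).

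\emph{Quasi-coherent case.} A logarithmic quasi-coherent $\mathcal{F}$ is a filtered colimit of coherent ones, by Theorem~\ref{theorem-compacted-generateness-of-quasi-coherent}. We then use three facts:
\begin{itemize}
\item all functors in question commute with filtered colimits; for $R^nf_*$ this holds because $X$ is Noetherian, so the logarithmic étale site is coherent and cohomology commutes with filtered colimits;
\item $\Downarrow_{X'}$ commutes with filtered colimits;
\item conventional quasi-coherent sheaves are closed under filtered colimits.
\end{itemize}
So it suffices to treat coherent inputs, and to show that the outputs are quasi-coherent, resp.\ coherent.

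\emph{Pullback and Tor.} Given coherent $\mathcal{G}$ on $Y_{\text{l\'et}}$, choose an alteration $Y'\to Y$ and a coherent $G$ on $Y'$ with $\mathcal{G}=\arrowup G$. After a further modification, using Theorem~\ref{main theorem: Grobner strat} (or the non-constructive statification), we may assume that $G$ has a resolution by static sheaves, in fact by locally free sheaves pulled back from $Y'$. Next, form an alteration $X'$ of $X\times_Y Y'$, taken in fs log schemes, so that $X'\to X$ is a logarithmic alteration. Enlarge it so that the pullback of each term of this resolution, and the relevant kernels, are static on $X'$.

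Because $\arrowup$ of a static resolution computes the derived functor, we get
\[
L_nf^*_{\text{l\'et}}\arrowup G=\arrowup L_n\ul{f}'^*G,
\]
and the right-hand side is coherent by Theorem~\ref{main theorem on log coherent}. The $\cT or_n$ case is analogous: resolve $\mathcal{F}=\arrowup F$ by static locally frees on a common alteration, and tensor termwise. Since $\arrowup$ is monoidal on locally frees, $\cT or_n$ becomes $\arrowup$ of conventional Tor.

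\emph{Ext.} Resolve $\mathcal{F}$ by $\arrowup$ of finite locally frees $E_\bullet$ on an alteration. Full faithfulness of $\arrowup$ on statics, sheafified, gives
\[
\cE xt^n(\mathcal{F},\mathcal{G})=\mathcal{H}^n\,\mathcal{H}om(\arrowup E_\bullet,\mathcal{G}).
\]
This is computed by $\mathcal{G}\otimes\arrowup E_\bullet^\vee$, which reduces to the Tor case.

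\emph{Pushforward.} $R^nf_*\arrowup F$ is the sheafification of $V\mapsto H^n((X\times_Y V)_{\text{l\'et}},\arrowup F)$. Choose $X'\to X$ on which $F$ is static. Then choose a compatible alteration $Y'\to Y$ and a further $X''\to X'\times_Y Y'$ such that $X''\to Y'$ is integral and saturated. Semistable reduction is available here since $\mathrm{char}(k)=0$. With these choices, base change along further alterations of $Y'$ is tor-independent.

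By Theorem~\ref{mainthm: Static Good},
\[
\Downarrow_{Y''}R^nf_*\arrowup F=R^n\ul{f}_*(\text{pullback of }F)
\]
for every further alteration $Y''$ of $Y'$. The right-hand side is quasi-coherent in general, and coherent when $f$ is proper.

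\emph{Main obstacle.} The hardest step is this pushforward base change. One must ensure that staticity of $F$ is preserved under the pullbacks along all further alterations of $Y'$, and that cohomology commutes with these non-flat base changes. I would handle this using Tor-dimension $\le1$ over the Artin fan together with flatness of $X''\to Y'$ relative to Artin fans.
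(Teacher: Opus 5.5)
\textbf{There is a genuine gap, in the pushforward step.} Your treatment of $L_nf^*_{\text{l\'et}}$, $\mathcal{T}or_n$ and $\mathcal{E}xt^n$ is essentially the paper's: pass to a static alteration, resolve by free modules, use Proposition \ref{prop: static complex prop} to see that $\arrowup$ of the resolution is still a resolution, and statify the intermediate derived pullbacks successively. Reducing quasi-coherent inputs to coherent ones via filtered colimits is an acceptable variant.

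The pushforward step, however, rests on a false identity. You claim $\Downarrow_{Y''}R^nf^{\text{l\'et}}_*\arrowup F=R^n\ul{f}''_*F''$ for every further alteration $Y''$ of $Y'$, once $F$ is static and $X''\to Y'$ is saturated. For any further alteration $r\colon Y'''\to Y''$ we have $\Downarrow_{Y''}=\ul{r}_*\Downarrow_{Y'''}$. We also have $R\ul{f}''_*F''=R\ul{r}_*R\ul{f}'''_*F'''$ by Proposition \ref{proposition-on-derived-pullback-vanishing-static}\eqref{item: static push pull is identity}. So your identity would force the Leray edge map $R^n\ul{f}''_*F''\to \ul{r}_*R^n\ul{f}'''_*F'''$ to be an isomorphism. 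This fails whenever a term $R^p\ul{r}_*R^{n-p}\ul{f}'''_*F'''$ with $p>0$ survives.

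Here is a concrete counterexample.
\begin{itemize}
\item Take $Y=\mathbb{A}^2$ with toric log structure and $X=\mathbb{P}^1\times Y$ with strict log structure. Let $F$ be the pullback along $(x,y)\mapsto(x^2,y^2)$ of the universal extension of $\cO(1)$ by $\cO(-2)$ over $\operatorname{Ext}^1(\cO(1),\cO(-2))\cong\mathbb{A}^2$.
\item Then $f$ is strict (hence saturated), smooth and proper, and $F$ is locally free, hence static. So your procedure may take $Y'=Y$ and $X''=X$.
\item $R\ul{f}_*F$ is the complex $\cO^{2}\xrightarrow{(x^2,y^2)}\cO$, compatibly with every base change, so $R^1\ul{f}_*F=\cO/(x^2,y^2)$.
\item On $r\colon \text{Bl}_0\mathbb{A}^2\to Y$ the direct images become $R^0=\cO(2E)$ and $R^1=\cO_{2E}$, both static. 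Proposition \ref{prop: derived push log}\eqref{item: computing derived pushfoward} then gives $R^1f^{\text{l\'et}}_*\arrowup F=\arrowup \cO_{2E}$.
\item Hence $\Downarrow_Y R^1f^{\text{l\'et}}_*\arrowup F=\ul{r}_*\cO_{2E}=\cO/\mathfrak{m}^2\neq \cO/(x^2,y^2)$. The missing length is the Leray term $R^1\ul{r}_*\cO(2E)\cong k$.
\end{itemize}
Here $f$ is flat, so tor-independence and flatness over Artin fans cannot rescue the claim. Theorem \ref{mainthm: Static Good} only identifies the presheaf $V\mapsto H^n$; its log \'etale sheafification is not its strict \'etale sheafification on $Y''$.

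\textbf{What is missing} is that base change holds only in a range of degrees governed by staticity of the conventional higher direct images. Proposition \ref{prop: derived push log}\eqref{item: computing derived pushfoward} shows: if $R^i\ul{f}_*F$ is static for all $i\geq n$, then $R^if^{\text{l\'et}}_*\arrowup F=\arrowup R^i\ul{f}_*F$ and log flat base change hold for $i\geq n-1$. The proof applies Proposition \ref{prop: static complex prop} to a bounded alternating \v{C}ech complex of static sheaves on the base.

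Coherence then comes from a downward induction on degree. One starts from the top nonvanishing $R^N\ul{f}_*F$ and repeatedly modifies the base so that $R^N$, then $R^{N-1}$, and so on become static (Remark \ref{remark: how to compute higher derived push}).

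This stabilization is also what delivers log coherence. Even granting your identity, coherence of every $\Downarrow_{Y''}$ would not suffice: the non-coherent sheaf $\bigcup_n\arrowup I_n$ in the example following Proposition \ref{prop:DownCohDoesnt Imply Coh} has coherent restrictions to all alterations. You need $R^nf^{\text{l\'et}}_*\arrowup F=\arrowup G$ for a single coherent $G$.

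Finally, quasi-coherence of $R^nf^{\text{l\'et}}_*$ is asserted in every characteristic, so it should not go through semistable reduction. It follows directly from the colimit formula of Proposition \ref{key proposition}\eqref{item-1-key proposition}.
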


In Section \ref{sec:LogCoherentSheaves}, we detail explicit algorithms for computing these functors in the coherent case. Below, we record the most subtle example, which concerns derived pushforward:
    
\begin{maintheorem}\label{main thm: main theorem push forward}
Continuing with the notation of Theorem \ref{main thm: (quasi)coherence preserved}, we further assume that $f$ is saturated and separated. Let $F$ be a static conventional quasi-coherent sheaf such that $R^{i}\underline{f}{}_*F$ is static for all $i\geq n$ and set $\mathcal{F} = \arrowup F$. 
\begin{enumerate}
    \item\label{item: compute derived push introduction} We have $$R^{i}f^{\text{l\'et}}_*\mathcal{F}=\arrowup R^{i}\underline{f}{}_*F $$ for all $i\geq n-1$. 
    \item\label{item: derived push shit bla introduction} For any logarithmically flat morphism $q\colon Y'\rightarrow Y$ we have $$R^{i}\underline{f}'{}_* (\underline{q'})^*F = \underline{q}^*R^{i}\underline{f}{}_*F$$ for all $i\geq n-1$, where 
        \begin{equation*}
            \begin{tikzcd}
                X'\dar["q'"] \rar["f'"] &Y'\dar["q"]\\
                X\rar["f"]&Y
            \end{tikzcd}
        \end{equation*}
        is the base change of $f$ along $Y'$.
    \end{enumerate}
\end{maintheorem}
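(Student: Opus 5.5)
We will prove part (2) first, and then deduce part (1) from it together with Theorem~\ref{mainthm: Static Good}.

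\textbf{Reduction for part (2).} Since $q$ is logarithmically flat, \'etale locally it factors as a strict flat morphism followed by a logarithmic alteration, up to a base change along the Artin fan. For the strict flat part, the claim is ordinary flat base change for $\underline{f}$; this uses that $f$ is saturated, so the underlying square remains cartesian. It therefore suffices to treat the case where $q$ is a logarithmic alteration. Factoring the morphism $\underline{Y}\to\mathcal{A}_Y$ to the Artin fan, $\underline{q}$ is a base change of a morphism of Artin fans. The hypothesis that the $R^i\underline{f}_*F$ are static means they have Tor dimension $\le 1$ over $\mathcal{A}_Y$. Likewise, $F$ static means Tor dimension $\le 1$ over $\mathcal{A}_X$, and hence over $\mathcal{A}_Y$ by the saturation of $f$.

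\textbf{Base change in the truncated range.} The core of part (2) is a derived base change: $L\underline{q}^*R\underline{f}_*F \cong R\underline{f}'_*L\underline{q}'^*F$. This holds for the underlying stacky base change over Artin fans, by separatedness and the tor-independent (derived) base change theorem. Because $F$ is static, $L\underline{q}'^*F$ is concentrated in degrees $0,-1$, and $\mathcal{H}^{-1}$ vanishes. The vanishing holds because $F$ has Tor dimension at most one, and $L_1$ vanishes on the relevant boundary-torsion-free modules, as in Corollary~\ref{corollary-DM-stacks-static-pullback}. So $R\underline{f}'_*L\underline{q}'^*F = R\underline{f}'_*\underline{q}'^*F$. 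On the other side, run the spectral sequence $E_2^{a,b}=L_{-a}\underline{q}^*R^b\underline{f}_*F\Rightarrow H^{a+b}$. For $b\ge n$ only $a\in\{0,-1\}$ contribute, by staticity, and again $L_1\underline{q}^*$ vanishes on static sheaves; this is the same argument as Corollary~\ref{corollary-DM-stacks-static-pullback}. The terms with $b<n$ can only affect total degree $\le n-1$ through the $a=0$ row. Hence $H^i=\underline{q}^*R^i\underline{f}_*F$ for $i\ge n$.

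\textbf{The degree $n-1$ case.} Degree $i=n-1$ needs more care. Here the contributions are $\underline{q}^*R^{n-1}\underline{f}_*F$ and $L_1\underline{q}^*R^n\underline{f}_*F$, and the latter vanishes because $R^n\underline{f}_*F$ is static. This gives the boundary case of the range.

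\textbf{Part (1).} Compute $R^if^{\text{l\'et}}_*\mathcal{F}$ as the log \'etale sheafification of the presheaf $(Y'\to Y)\mapsto H^i(X'_{\text{l\'et}},\mathcal{F})$. Combining Theorem~\ref{mainthm: Static Good} (cohomology of static sheaves agrees on $X'$ and $X'_{\text{l\'et}}$) with the fact that $\underline{q}'^*F$ is static gives $H^i(X'_{\text{l\'et}},\arrowup F)=H^i(\underline{X}',\underline{q}'^*F)$. Localizing over $Y'$ and applying part (2) identifies the presheaf with $Y'\mapsto \underline{q}^*R^i\underline{f}_*F$ for $i\ge n-1$. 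By Theorem~\ref{mainthm: Static Good}, this is $\Downarrow_{Y'}\arrowup R^i\underline{f}_*F$, using staticity for $i\ge n$. For $i=n-1$, the presheaf agrees after sheafification with $\arrowup R^{n-1}\underline{f}_*F$, since $\arrowup$ is a sheafification of $Y'\mapsto \underline{q}^*(-)$.

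\textbf{Main obstacle.} I expect the hardest point to be justifying the vanishing of the higher Tor terms. This amounts to a precise Tor-dimension bookkeeping relative to the Artin fan under derived base change, and it is what makes the statement hold exactly for $i\ge n-1$ rather than for all $i$.
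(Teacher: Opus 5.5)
Your part (2) has a genuine gap: it relies on vanishing of higher Tor terms that staticity does not provide.

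\textbf{Where the argument fails.} For $G$ static and $q$ log flat, staticity gives only $L_1\underline{q}^*G=0$ (Proposition~\ref{proposition-on-derived-pullback-vanishing-static}(1)). Vanishing of all $L_i\underline{q}^*G$ with $i\geq 1$ needs the base to be static (part (2) of that proposition), and neither $X$ nor $Y$ is assumed static. Tor dimension $\le 1$ over the Artin fan does not put $L\underline{q}'^*F$ in degrees $0,-1$, because $\mathcal{O}_{\underline{X}}\otimes^{\mathbb{L}}_{k[P]}k[Q]$ need not be concentrated in degree $0$ when $X$ is not static.

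Here is a concrete counterexample within the theorem's hypotheses. Take $X=Y=\operatorname{Spec}k[x,y]/(x^2,xy)$ with log structure pulled back from $\mathbb{A}^2$, and $f=\mathrm{id}$. Let $F$ be the structure sheaf of the reduced $y$-axis; it is static, so $n=0$. Let $q$ be the base change of $\operatorname{Bl}_0\mathbb{A}^2$. On the chart $y=xt$ one computes
$L_2\underline{q}^*F\cong F\otimes\operatorname{Tor}_1^{k[x,y]}(\mathcal{O}_{\underline{X}},k[x,t])\cong k[t]\neq 0$,
coming from the embedded point of $X$. So both of your claims fail: "$L\underline{q}'^*F=\underline{q}'^*F$", and "for $b\geq n$ only $a\in\{0,-1\}$ contribute". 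In general, terms such as $L_2\underline{q}^*R^{n+1}\underline{f}_*F$ land in total degree $n-1$, which also breaks your degree $n-1$ analysis.

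Separately, the isomorphism $L\underline{q}^*R\underline{f}_*F\cong R\underline{f}'_*L\underline{q}'^*F$ does not follow from Tor-independent base change. The map $f$ is not assumed log flat, so the square need not be Tor-independent. What holds in general involves $F\otimes^{\mathbb{L}}_{\mathcal{O}_{\underline{Y}}}\mathcal{O}_{\underline{Y}'}$, whose higher Tors again vanish only in degree $1$. Note too that separatedness does no real work in your argument, which is a warning sign.

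Your opening reduction is also unsound. A log flat morphism need not factor, even \'etale locally, as a strict flat map followed by a log alteration; $(u,v)\mapsto uv$ from $\mathbb{A}^2$ to $\mathbb{A}^1$ is an example. Fortunately you never actually use this reduction.

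\textbf{The missing idea.} The argument must be arranged so that only $L_1$-vanishing is used, and this is exactly Proposition~\ref{prop: static complex prop}. For a bounded-above complex of static sheaves whose cohomology is static in degrees $\geq n$, log flat pullback (and $\Uparrow$) commutes with cohomology in degrees $\geq n-1$. The proof uses only two facts: the kernel of a surjection between static sheaves is static, and $L_1$ vanishes on static sheaves. This is the real source of the range $i\geq n-1$.

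The paper therefore reduces to $\underline{Y}$ affine and builds a \emph{bounded} complex of static quasi-coherent sheaves on $\underline{Y}$ that computes $R\underline{f}_*F$ and whose termwise base change computes $R\underline{f}'_*\underline{q}'^*F$.
\begin{itemize}
\item The complex is the alternating \v{C}ech complex of $\underline{\pi}_*F$ for an affine \'etale cover of a quasi-compact separated good moduli space $\underline{\pi}\colon\underline{\mathcal{X}}\to\underline{X}$. This space is obtained from tame inertia and concentratedness; this is where separatedness enters, and the alternating version is what makes the complex bounded.
\item Its terms are static by Lemma~\ref{lemma-on-pushforward-along-integral-affine-preserves-log-Tor dimensions}.
\item Its termwise base change computes $H^*(\underline{\mathcal{X}}',\underline{q}'^*F)$. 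This uses affine base change, stability of good moduli spaces under base change, and $\underline{\mathcal{X}}'=\underline{\mathcal{X}}\times_{\underline{\mathcal{Y}}}\underline{\mathcal{Y}}'$, which holds by saturation.
\end{itemize}

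Your deduction of (1) from (2) is a legitimate alternative to the paper's. You sheafify $Y'\mapsto H^i$ and use Corollary~\ref{corollary-sheaf-cohomology-of-static-sheaves}, whereas the paper takes the colimit over log alterations from Proposition~\ref{key proposition} together with $R\underline{g}_*\underline{g}^*F=F$. Your route works once (2) is established for log \'etale $q$.
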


If moreover $X$ and $Y$ are of finite type over $k$, Theorem \ref{main thm: main theorem push forward} can be used to compute push-forwards as follows: Given a conventional coherent sheaf $F$, we use downward induction on $n$ and Theorem \ref{main thm: main theorem push forward} \eqref{item: derived push shit bla introduction} together with Theorem \ref{main theorem: Grobner strat} to reduce to the case where the assumptions of Theorem \ref{main thm: main theorem push forward} \eqref{item: compute derived push introduction} hold for $n$. Since $R^i\underline{f}{}_*F=0$ for $i\gg 0$, there is a base case for this induction.

\subsection{Moduli of invertible logarithmic sheaves} We now assume that $k$ is algebraically closed of characteristic $0$.
An \textit{invertible logarithmic coherent sheaf} is a rank one locally free logarithmic coherent sheaf. Let $C/S$ be a logarithmically smooth curve and let $\mathfrak{M}_1(C/S)$ be the category fibred in groupoids over the category of fine and saturated logarithmic $S$-schemes whose fiber over $T$ consists of invertible logarithmic coherent sheaves on $T\times_S C$. The moduli stack $\mathfrak{M}_1(C/S)$ is the simplest example of a moduli space of logarithmic vector bundles.

In conventional algebraic geometry, the coarse space associated to the stack of line bundles on a smooth and proper curve, called the Picard scheme, is an abelian scheme, and is in particular separated and universally closed. Following a theme suggested by Illusie, the logarithmic Picard group of a curve $\operatorname{LogPic}(C/S)$ was shown to be a logarithmic abelian variety \cite{KajiwaraKatoNakayama2008} by Molcho and Wise \cite{MolchoWise}. The logarithmic Picard group admits a modular interpretation as a subfunctor of the moduli spaces of torsors under a logarithmic replacement $\mathbb{G}_m^\mathsf{log}$ for $\mathbb{G}_m$. 

The functors of points of both $\mathfrak{M}_1(C/S)$ and $\operatorname{LogPic}(C/S)$ naturally form sheaves in the big logarithmic \'etale site over $S$, see Lemma \ref{lem:MSatisfiesDescent} and Theorem \ref{thm:MolchoWise} respectively. By contrast, the functor of points of an algebraic space with logarithmic structure almost never satisfies logarithmic \'etale descent. We define \textit{descending logarithmic algebraic spaces} as a replacement for logarithmic algebraic spaces whose functors of points do satisfy this descent property, see Definition \ref{defn:logalgspace}.

\begin{maintheorem}[Proposition \ref{prop:LogVBtoLogPic}]\label{mainthm:LogPic}
    Every invertible logarithmic coherent sheaf specifies a $\mathbb{G}_m^\mathsf{log}$-torsor. This assignment descends to define a morphism $$\varpi \colon \mathfrak{M}_1(C/S)\longrightarrow \operatorname{LogPic}(C/S)$$ which is initial among morphisms from $\mathfrak{M}_1(C/S)$ to a Noetherian descending logarithmic algebraic space over $S$.
\end{maintheorem}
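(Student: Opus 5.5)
The plan is to construct $\varpi$ locally, check that it is compatible with descent data, and then use the sheaf properties of both sides to globalize. We then prove universality by showing that $\varpi$ is a categorical quotient by the action of piecewise linear functions.

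\textbf{Construction of $\varpi$.} Let $\mathcal{L}$ be an invertible logarithmic coherent sheaf on $C_T = T\times_S C$. By Theorem~\ref{main theorem on log coherent}, after a strict \'etale cover of $T$ and a logarithmic alteration $C'_T\to C_T$, we can write $\mathcal{L}\cong\arrowup L$ with $L$ a conventional line bundle on $\underline{C'_T}$. Line bundles are flat, hence static, so Theorem~\ref{mainthm: Static Good} applies to $L$.

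The $\mathbb{G}_m$-torsor of frames of $L$ pushes out along $\mathbb{G}_m\to\mathbb{G}_m^{\mathsf{log}}$ to a $\mathbb{G}_m^{\mathsf{log}}$-torsor on $C'_T$. Since $\mathbb{G}_m^{\mathsf{log}}$-torsors satisfy logarithmic \'etale descent, this torsor descends to $C_T$.

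The torsor is independent of the chosen presentation. Two presentations of $\mathcal{L}$ become isomorphic after a common refinement, by full faithfulness of $\arrowup$ on statics (Theorem~\ref{mainthm: Static Good}). Hence the torsors agree canonically.

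To land in $\operatorname{LogPic}$, we must check that the resulting torsor has bounded monodromy. This holds because it comes from a genuine line bundle on a logarithmic modification. We then glue over $T$ using Lemma~\ref{lem:MSatisfiesDescent} and the sheaf property of $\operatorname{LogPic}$ from Theorem~\ref{thm:MolchoWise}, which yields $\varpi$.

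\textbf{Universality.} We study the fibres of $\varpi$. Two invertible sheaves $\arrowup L_1$ and $\arrowup L_2$ have the same image exactly when, on a common alteration, $L_1\otimes L_2^{-1}$ is $\mathcal{O}(\alpha)$ for a piecewise linear function $\alpha$ on the tropicalization, up to a unit. The induced sheaves $\arrowup\mathcal{O}(\alpha)$ need not be isomorphic to $\mathcal{O}$, since twisting is chip firing.

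We then show that $\varpi$ is a logarithmic \'etale locally surjective quotient of $\mathfrak{M}_1$ by a groupoid generated by these twists. Each twist is realized by a family over a base $T$ with one logarithmic parameter. Over that base, $\arrowup\mathcal{O}(\alpha)$ is isomorphic to $\arrowup\mathcal{O}$ away from the closed stratum, so the two objects degenerate to one another; this is a logarithmic S-equivalence.

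Now let $g\colon\mathfrak{M}_1\to Z$ be a morphism to a Noetherian descending logarithmic algebraic space. Because $Z$ is a logarithmic \'etale sheaf and is separated in a suitable sense, $g$ must agree on S-equivalent objects; the Noetherian hypothesis prevents pathological non-separated behaviour. Hence $g$ is constant on fibres of $\varpi$ and factors uniquely through $\operatorname{LogPic}$, since $\varpi$ is an epimorphism of log \'etale sheaves.

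\textbf{Main obstacle.} The hardest step is this last one: showing that any map to a descending logarithmic algebraic space identifies twisted objects. For that I would construct explicit test families, namely the degeneration to the normal cone of a boundary stratum of $T\times\mathbb{A}^1$, and use valuative uniqueness for $Z$ after passing to a logarithmic modification of the base.
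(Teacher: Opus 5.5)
Your construction of $\varpi$ matches the paper's, and so does your description of its fibres (the paper's Lemma \ref{lem:KernelChipFiring}). One small slip in the construction: a line bundle on $C'_T$ is static only if $C'_T$ itself is static. For instance, a log scheme whose log structure has rank $\geq 2$ everywhere carries no nonzero static sheaves. So you cannot invoke full faithfulness of $\arrowup$ on statics for arbitrary $T$. Use the equivalence of Proposition \ref{proposition on finite presentation} instead, or argue as the paper does: $\mathbb{G}_m\to\mathbb{G}_m^{\mathsf{log}}$ is a map of sheaves compatible with pullback along log alterations.

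The genuine gap is in the universality step.
\begin{itemize}
\item Nothing in the statement makes the target $Z$ separated, and being (locally) Noetherian does not make it so; the line with a doubled origin is a counterexample. So ``valuative uniqueness for $Z$'' is simply not available.
\item Even for separated $Z$, a morphism from your test base $B$ to $Z$ is only represented by a morphism to the underlying log algebraic space $Z^{\mathrm{pre}}$ after passing to a log alteration $B'\to B$. The space $B'$ acquires new, often non-reduced, boundary components (for example exceptional $\mathbb{P}^1$'s). Knowing the two maps agree over the open part of $B$ gives no control on these components.
\item Agreement of underlying maps says nothing about the maps of log structures, which your plan never addresses.
\end{itemize}

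The missing idea is that a $\mathbb{G}_m$-quotient (good moduli space) structure forces the identification, not separatedness, exactly as $\Theta=[\mathbb{A}^1/\mathbb{G}_m]$ does for S-equivalence. The paper's argument runs as follows.
\begin{enumerate}
\item It realises the chip-firing twist by a family over the interval $T_I=T\times_{a^\ast\mathbb{N}}a^\ast\mathbb{N}^2$. This is a stack with two sections $\iota_1,\iota_2$ restricting to $\mathcal{L}_1$ and $\mathcal{L}_2$ (Lemma \ref{lem:Part2KeyProp}).
\item It replaces $T_I$ by a log alteration $T_I'$ on which the map lifts to $Y^{\mathrm{pre}}$, then applies semistable reduction. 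After this, $T_I'$ is covered by a chain of elementary $T$-Olsson fans $T^\rho\cong[W/\mathbb{G}_m]$, with consecutive sections identified.
\item Each $T^\rho\to T$ is a good moduli space, so Alper's universality theorem factors the underlying map through $T$ for an arbitrary, possibly non-separated, algebraic space target.
\item An explicit equalizer computation for $\Gamma(-,M)$ shows that the log structures factor as well (Lemma \ref{lem:ElementaryFanFactors}).
\item Walking along the chain gives $f\circ\iota_1=f\circ\iota_2$ (Proposition \ref{prop:IntervalFactors}).
\end{enumerate}
Your degeneration argument would have to be rebuilt around such an equivariant, stacky test family and an argument of this type.
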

The morphism $\varpi$ does not induce a bijection on standard logarithmic points, and we say that any two points identified by $\varpi$ are \textit{$\ell$-equivalent}. We consider $\ell$-equivalence to be a version of $S$-equivalence \cite{MumfordFogartyKirwan1994} in the setting of logarithmic geometry. Indeed, in the conventional case, $S$-equivalence can be detected by studying maps from the quotient stack $\Theta = [\mathbb{A}^1/\mathbb{G}_m]$ into $\mathfrak{M}_1(C/S)$. In the logarithmic setting, $\Theta$ is replaced by a particular Olsson fan \cite{VBOlssonFan} which, following a suggestion of Wise, we call \textit{the interval}. The underlying geometry of the interval over a (standard log) point consists of two copies of $[\mathbb{A}^1/\mathbb{G}_m]$ identified at the closed point.

In particular, two $T$-valued points $f_1,f_2$ of the domain of $\varpi$ are sent to the same $T$-valued point of the target precisely when there exists a map from the interval $T_I$ over $T$ that interpolates between them; see Proposition \ref{prop:TheInterval} for a precise statement. 

We expect that a generalization of the theory of good moduli spaces to the logarithmic setting will have applications to a number of related moduli problems; see, in particular, the discussion of \cite[Section 0.4]{Log-generalized-Kummer}. In this respect, the recent introduction of Olsson fans by Battistella, Carocci, and Wise appears prescient \cite{VBOlssonFan}. The phenomenon on $\ell$-equivalence can be understood at the level of tropical geometry, see Section \ref{sec:TropicalVBmeaning}, where it manifests as \textit{chip firing}.

\subsection{Origin of staticity}\label{sec:origin Static}
 Our definition of static and the proof of Theorem \ref{main theorem on log coherent} owe an intellectual debt to both Gabber's notion of t-flatness \cite[Definition~6.1.35]{gabber2018foundationsringtheory}, and \cite[Lemma~6.5]{kerz2018algebraic}. After isolating these insights, we learned through private communication with Ofer Gabber, facilitated by Adam Dauser, that staticity is a special case of \cite[Definition 2.1.2]{on-toric-log-schemes}. Similarly, our argument for exactness of the up functor is closely related to arguments used in \cite{GabberToricFlattening,on-toric-log-schemes} to establish the coherence of Kato's valuative log space.
We have been unable to identify references to these insights in any articles on arxiv or in a journal. They appear to have been lost to parts of the community.

\subsection*{Conventions}
Unless otherwise stated, all logarithmic structures and monoids in this paper are fine and saturated. Moreover, for any monoid $P$ we write $\cA_P= [\text{Spec}(k[P])/\text{Spec}(k[P^{gp}])]$ for the associated quotient stack. A logarithmic scheme $X$ is a scheme $\underline{X}$ equipped with a logarithmic structure, and the notation for a non-logarithmic scheme or stack will always include an underline. Similarly, given a DM stack $\mathcal{X}$ with logarithmic structure we write $\underline{\mathcal{X}}$ for the underlying DM stack. 
We refer the reader to standard references \cite{Logarithmic-structures-of-Fontaine-Illuie-I-Kato,lecturesonlogarithmicalgebraicgeometry,temkin2022introductionlogarithmicgeometry} for background on logarithmic geometry. We call a logarithmic \'etale morphism a \emph{cover} if it is universally surjective in the sense that it remains surjective after base change along any \emph{integral} logarithmic scheme, see \cite{m-type-topology}. This is consistent with \cite{MolchoWise} who do not restrict attention to the category of fine and saturated logarithmic schemes. We abreviate the phrase Deligne-Mumford to DM. Since being logarithmic \'etale is strict \'etale local, these notions naturally generalize to DM stacks and we write $\cX_{\text{l\'et}}$ for the small logarithmic \'etale site of a DM $k$-stack with logarithmic structure $\cX$.
We typically reserve italics $\mathcal{F}$ for logarithmic \'etale $\mathcal{O}_X$-modules and use plain calligraphy $F$ for strict \'etale $\mathcal{O}_{\underline{X}}$-modules. 
In the sequel, outside of definitions and section titles, we abbreviate the word logarithmic to log. We write CFG for the phrase category fibred in groupoids, qc for quasi-compact, and qcqs for quasi-compact and quasi-separated.

\subsection*{Acknowledgements} 
We thank Ofer Gabber for sharing his handwritten note \cite{GabberToricFlattening} and Howard Thompson's thesis \cite{on-toric-log-schemes} with us. H.D. and P.K.-H. thank Sam Johnston for collaboration in the early stages of this project. This paper forms parts of the Ph.D. thesis of X.H., who would like to thank his advisors, Christian Liedtke and Helge Ruddat, for their unwavering support.
We thank Adam Dauser, Uttaran Dutta, Jeremy Feusi, Leo Herr, Dominic Joyce, Ailsa Keating, Samouil Molcho, Thibault Poiret, Dhruv Ranganathan, David Rydh, Terry Song, Pim Spelier, Richard Thomas, and Jonathan Wise 
for numerous conversations that have shaped our understanding of the topic, and especially thank Jeremy, Dhruv, Helge, Terry, and Jonathan for comments on a first draft.
The expectation that Theorem \ref{mainthm:LogPic} should be true arose from conversations with Thibault Poiret concerning moduli of higher rank logarithmic vector bundles. The result is developed here with his permission. 
Aspects of this project have drawn on lessons learned during the HIMR Focused Research Grant collaborative visit \textit{Logarithmic Geometry and Smoothing Bridgeland Stability Conditions} attended by H.D., Uttaran Dutta, Sam Johnston, and P.K.-H.. We thank HIMR for supporting this visit. H.D.  was supported by the ERC Synergy Grant HyperK (ID 854361). X.H. was supported by DFG -- Project No.516701553 as a Ph.D student in Technical University of Munich. A part of the work was carried out during X.H.'s visit to the University of Stavanger, Norway, whose hospitality is gratefully acknowledged.
P.K.-H. was a research fellow at Peterhouse, Cambridge. In the later stages of this project P.K-H., K.M.R. and M.S. were supported as postdoctoral fellows of the Max Planck Institute for Mathematics. 
\section{Logarithmic alterations and logarithmic \'etale sheaves}\label{section-on-log-etale-topoi}

This section collects generalities on logarithmic \'etale sheaves. 

Covers in the logarithmic \'etale topology are generated by strict \'etale covers inherited from conventional algebraic geometry and \textit{logarithmic alterations}, whose definition we now discuss.

\subsection{Logarithmic alterations}\label{section: log alteration}
This section generalizes the definition of log alteration of log schemes introduced in \cite{MolchoWise,molcho2023remarks} to DM stacks. 
\begin{definition}\label{defn:RootStackKummerExt}
Let $\mathcal{X}$ be a DM $k$-stack with a log structure, and let
\[
\overline{M}_{\mathcal{X}}^{\mathrm{gp}} \subset B \subset \overline{M}_{\mathcal{X}}^{\mathrm{gp}} \otimes_{\mathbb{Z}} \mathbb{Q}
\]
be a locally finitely generated extension of sheaves of abelian groups on $\underline{\mathcal{X}}{}_{\text{\'et}}$, which we call a \emph{Kummer extension} of $\overline{M}_{\mathcal{X}}^{gp}$. Regarding $\cX$ as a CFG over the category of log schemes, we define the \textit{root stack} $\!\!\sqrt[B]{\mathcal{X}}$ associated to $B$ to be the full subcategory of $\cX$ with set of objects over a log scheme $T$ given by
$$
\Bigl\{
\, f \colon T \to \mathcal{X} \ \Big| \ 
\ul{f}^{-1}\overline{M}_{\mathcal{X}}^{\mathrm{gp}} \longrightarrow \overline{M}_T^{\mathrm{gp}}
\text{ factors through } \ul{f}^{-1}B 
\Bigr\}.
$$
Unless specified otherwise, we assume that $\!\!\sqrt[B]{\mathcal{X}}$ is \textit{invertible} meaning that every stalk of $B/\overline{M}_{\mathcal{X}}^{gp}$ is finite of order coprime to $\text{char}(k)$. In accordance with \cite{talpo2018infinite}, we write $\!\!\sqrt[n]{\cX}$ for the root stack associated to $\frac{1}{n}\overline{M}^{gp}_\cX$.
\end{definition}
\begin{proposition}\label{proposition: root stacks are a thing}
    With notations as in Definition \ref{defn:RootStackKummerExt}, the root stack $\!\!\sqrt[B]{\mathcal{X}}$ is represented by a DM $k$-stack with a log structure and the projection $q\colon \!\!\sqrt[B]{\mathcal{X}}\rightarrow \mathcal{X}$ satisfies the following properties:
    \begin{enumerate}
        \item\label{item: rooto first blood} The morphism $\ul{q}$ is proper and finitely presented.
        \item\label{item: root stack pushforward exact} The higher derived pushforwards of any quasi-coherent sheaf along $\ul{q}$ vanish.
        \item\label{item: root stack push forward structure sheaf} We have $\underline{q}{}_*\mathcal{O}_{\underline{\!\!\sqrt[B]{\mathcal{X}}}} = \mathcal{O}_{\underline{\mathcal{X}}}$.
    \end{enumerate}
\end{proposition}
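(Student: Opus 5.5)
All assertions are strict étale local on $\underline{\mathcal{X}}$, so I will reduce to a chart and then make the root stack explicit. Both representability and properties \eqref{item: rooto first blood}--\eqref{item: root stack push forward structure sheaf} can be checked after base change along an étale atlas. This uses two facts. First, the formation of $\!\!\sqrt[B]{\mathcal{X}}$ commutes with strict base change: if $g\colon U\to\mathcal{X}$ is strict, then $\overline{M}_U=\ul{g}^{-1}\overline{M}_{\mathcal{X}}$, and $\!\!\sqrt[B]{\mathcal{X}}\times_{\mathcal{X}}U=\!\!\sqrt[\ul{g}^{-1}B]{U}$ directly from the defining factorization condition. Second, properness, finite presentation, vanishing of higher pushforwards and the formula $\underline{q}_*\mathcal{O}=\mathcal{O}$ all descend along étale covers, by flat base change. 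So I may assume $\mathcal{X}=X$ is an affine scheme with a global chart $P\to M_X$, with $P$ fine and saturated and $P^{gp}\cong\overline{M}^{gp}_{X,x}$ at a chosen point $x$.

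Next I handle $B$. After further étale shrinking, $B$ is the constant extension $P^{gp}\subset L\subset P^{gp}\otimes\mathbb{Q}$ with $L/P^{gp}$ finite of order $n$ prime to $\mathrm{char}(k)$. Here I use that $B$ is locally finitely generated and that $\overline{M}^{gp}$ is constructible, with generization maps being quotients. I then set $Q=L\cap P_{\mathbb{Q}}$, where $P_\mathbb{Q}$ is the rational cone of $P$; this is a fine saturated monoid and $P\to Q$ is Kummer. By the standard description of root stacks in the style of Borne--Vistoli and Olsson, the factorization condition shows
\[
\!\!\sqrt[B]{X}\;\cong\; X\times_{\mathcal{A}_P}\mathcal{A}_Q\;=\;X\times_{[\operatorname{Spec}k[P]/T_P]}[\operatorname{Spec}k[Q]/T_Q].
\]
A map $T\to\mathcal{A}_Q$ over $\mathcal{A}_P$ is exactly a lift of $P\to\overline{M}_T$ to $Q$, and by saturation this is the same as a lift of $P^{gp}\to\overline{M}_T^{gp}$ to $L$. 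This identification is the step I expect to need the most care, especially checking that "factoring through $\ul{f}^{-1}B$" on groups matches the monoid lift compatibly across stalks where the chart is not neat. Using saturatedness of $M_T$ should make it go through: an element of $\overline{M}^{gp}_T$ some multiple of which lies in $\overline{M}_T$ itself lies in $\overline{M}_T$. The log structure on the root stack is the one pulled back from $\mathcal{A}_Q$.

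With this description, the remaining claims reduce to the single morphism $\mathcal{A}_Q\to\mathcal{A}_P$. I write it as $[\operatorname{Spec}k[Q]/\mu]\to\operatorname{Spec}k[P]$ after base change along $\operatorname{Spec}k[P]\to\mathcal{A}_P$, where $\mu=\operatorname{Spec}k[L/P^{gp}]$ is a finite diagonalizable group. It is étale because $n$ is invertible in $k$, which gives the DM property. The map $\operatorname{Spec}k[Q]\to\operatorname{Spec}k[P]$ is finite, since $k[Q]$ is a finite $k[P]$-module by the Kummer property, so $\ul{q}$ is proper and finitely presented; this proves \eqref{item: rooto first blood}. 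For \eqref{item: root stack pushforward exact} and \eqref{item: root stack push forward structure sheaf}, pushforward of quasi-coherent sheaves from $[\operatorname{Spec}k[Q]/\mu]$ to $\operatorname{Spec}k[P]$ is the composite of the exact finite pushforward with taking $\mu$-invariants. Taking invariants is exact because $\mu$ is linearly reductive, being diagonalizable. Hence higher pushforwards vanish, and this persists after flat base change to $X$. Finally, $\underline{q}_*\mathcal{O}$ is the degree-zero graded piece of $k[Q]\otimes_{k[P]}\mathcal{O}_X$ for the $L/P^{gp}$-grading. That piece is $k[Q\cap P^{gp}]\otimes_{k[P]}\mathcal{O}_X$, and $Q\cap P^{gp}=P$ because $P$ is saturated, so $\underline{q}_*\mathcal{O}=\mathcal{O}_X$.
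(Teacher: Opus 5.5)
Your argument is correct in substance but takes a different route from the paper. The paper computes nothing. After reducing to a log scheme, it cites \cite[Proposition 4.19]{Parabolic-sheaves-on-logarithmic-schemes} for representability, (1), and exactness of $\ul{q}_*$ on quasi-coherent sheaves. It obtains (3) from the fact that $\ul{\mathcal{X}}$ is the coarse moduli space of the root stack \cite{talpo2018infinite}. It upgrades exactness to the vanishing in (2) via \cite[Remark 3.5]{Alper2013GoodModuli} and affineness of the diagonal of $\ul{q}$.

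You instead write down the local model $[\ul{X}_Q/\mu]$, where $\ul{X}_Q=\ul{X}\times_{\operatorname{Spec}k[P]}\operatorname{Spec}k[Q]$ and $\mu$ is the Cartier dual of $L/P^{gp}$, and read everything off from it. Your computation $Q\cap P^{gp}=P$ is in effect a proof of the coarse moduli statement the paper quotes. This gains a self-contained argument and an explicit description of $\ul{q}_*$ as ``finite pushforward followed by taking the degree-zero part''. The cost is that you must justify the chart description yourself, including your reading of ``locally finitely generated'' as saying that \'etale locally $B$ is the image of the constant group $L=B_x$; the citation packages this for you.

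Two steps need correcting.

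First, the base change from $\operatorname{Spec}k[P]$ to $\ul{X}$ is along a strict but generally non-flat map. The map $\ul{X}\to\mathcal{A}_P$ is flat exactly when $X$ is log flat; for instance, for the standard log point with chart $\mathbb{N}\to k$, the map $\ul{X}\to\operatorname{Spec}k[\mathbb{N}]$ is the inclusion of the origin. So ``this persists after flat base change to $X$'' does not justify (2). It is also unnecessary: your invariants argument applies verbatim to $[\ul{X}_Q/\mu]\to\ul{X}$, since quasi-coherent sheaves there are $L/P^{gp}$-graded modules and pushforward is the degree-zero part. This is exactly what you already do for (3).

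Second, ``taking invariants is exact, hence higher pushforwards vanish'' passes from exactness of $\ul{q}_*$ on quasi-coherent sheaves to vanishing of $R^i\ul{q}_*$. That is precisely the distinction the paper flags, and it needs an argument. Here the argument is short. The iterated fibre products $\ul{X}_Q\times\mu^{p}$ of the finite \'etale cover $\ul{X}_Q\to[\ul{X}_Q/\mu]$ are affine, so its \v{C}ech complex computes cohomology of quasi-coherent sheaves. That complex is the standard complex computing $H^i(\mu,\Gamma(\ul{X}_Q,F))$, which vanishes for $i>0$ because $\mu$ is linearly reductive.
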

\begin{proof}
All claims are strict \'etale local, so we may assume that $\mathcal{X}$ is a log scheme, in which case \eqref{item: rooto first blood}, representability, and exactness of $\ul{q}{}_*$ on QCoh follow from \cite[Proposition 4.19]{Parabolic-sheaves-on-logarithmic-schemes}. \eqref{item: root stack push forward structure sheaf} follows from the fact that $\ul{\mathcal{X}}$ is the coarse moduli of $\ul{\!\!\sqrt[B]{\mathcal{X}}}$ (see the discussion after \cite[Remark 3.1]{talpo2018infinite}). Note that \eqref{item: root stack pushforward exact} is in general not equivalent to $\ul{q}{}_*$ being exact on quasi-coherent sheaves, but in our case this holds by \cite[Remark 3.5]{Alper2013GoodModuli} and the fact that $\ul{q}$ has affine diagonal.
\end{proof}
\begin{definition}\label{definition-log-modification}
    A morphism $f\colon\cX\rightarrow\cY$ of DM $k$-stacks with log structures is a \textit{logarithmic modification}\footnote{The term \emph{expansion} is used throughout the literature, and in the introduction, for  underlying morphisms of schemes associated to certain log modifications.} if $f$ is strict \'etale locally on $\cY$ pulled back from a toric modification (i.e. a proper, birational, and torus equivariant map of toric varieties). More generally, we call $f$ a \textit{logarithmic alteration} if it is a composite of logarithmic modifications and root stacks.
\end{definition}

\begin{example}
    Consider the morphism $C = \mathbb{A}^2_{u,v}\rightarrow \mathbb{A}^1_z = S$ defined by $(u,v) \mapsto uv$, and write $C_0$ for the (not proper) nodal curve which is the preimage of $0$. Note that $C_0$ consists of two components. Log \'etale covers of $C_0$ include the following.
    \begin{enumerate}
        \item The blowup $C'$ of $C$ at $0$ in $\mathbb{A}^2$ is a log modification of $C$, and thus the fibre product $C_1 = C_0 \times_C C'\rightarrow C_0$ is also a log modification. On the level of underlying schemes $C_0$ has three components $C^1,C^2,C^3$ arranged in a chain: two are reduced copies of $\mathbb{A}^1$, but the central (exceptional) component $C^2$ is non-reduced.
        \item We now demonstrate that, by working log \'etale locally, non-reduced scheme structure can be avoided. Indeed, replacing $S$ by the (Kummer) log \'etale cover $z\mapsto z^2$ and $C_1$ by its base change $\hat{C}_1$ yields a reducible and reduced rational curve with three components. 
        \item The root stack $\sqrt[2]{C_1}$ is a log alteration of $C_1$ with three (not all reduced) components. The gerbe structure at every point is $\mu_2$.
    \end{enumerate}
    We recursively define further log alterations $C_j\rightarrow C_{j-1}\rightarrow C$. Note that Kummer log \'etale locally about each node $C_j\rightarrow S$ is isomorphic to $C\rightarrow S$. To form $C_{j}$ replace each node in $C_{j-1}$ by the log alteration locally isomorphic to $C_1\rightarrow C$. Consequently, the underlying scheme of $C_j$ has $2^j-1$ components, most of which have non-reduced scheme structure. By toric semistable reduction \cite{AbramovichKaru,molcho2019universal}, after replacing $S$ by a Kummer log \'etale cover and $C$ by its base change, we may assume that $C$ is reduced.
\end{example}

We record the following well-known facts for later use.

\begin{proposition}\label{proposition: basic properties of log alterations}
    Let $q\colon \cX'\rightarrow \cX$ be a log alteration (resp. log modification, root stack).
    \begin{enumerate}
        \item\label{item: log alts stable under base change} Any base change of $q$ is a log alteration (resp. log modification, root stack).
        \item\label{item: log alts is a mono log et cover} The morphism $q$ is a monomorphism and a log \'etale cover and hence induces an isomorphism on sections of any log \'etale sheaf.
        \item\label{item: log alts are proper} The associated morphism of schemes $\ul{q}$ is proper and finitely presented.
        \item\label{item: cancellation property log alterations} If $q'\colon \cX''\rightarrow\cX'$ is a morphism and $q\circ q'$ is a log alteration (resp. log modification, root stack), then $q'$ is also a log alteration (resp. log modification, root stack).
    \end{enumerate}
\end{proposition}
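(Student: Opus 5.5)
The plan is to prove each property first for the two generating classes, log modifications and root stacks, and then pass to composites. Throughout, I work strict \'etale locally on $\cX$. This is legitimate because all four properties are either strict \'etale local on the target, or are checked after a strict \'etale cover; in the latter case one uses that monomorphisms and log \'etale covers descend along strict \'etale covers.

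\textbf{Step 1: base change (1).} For a log modification, the defining local description "pulled back from a toric modification" is visibly stable under base change. For a root stack, the description of $\!\!\sqrt[B]{\mathcal{X}}$ as the subfunctor of $\cX$ cut out by a factorization condition on $\overline{M}^{gp}$ gives a canonical identification
\[
\cY\times_{\cX}\!\!\sqrt[B]{\mathcal{X}} \cong \!\!\sqrt[B']{\cY},
\]
where $B'$ is the Kummer extension obtained as the pushout of $f^{-1}B$ along $f^{-1}\overline{M}^{gp}_{\cX}\to \overline{M}^{gp}_{\cY}$, saturated inside $\overline{M}^{gp}_\cY\otimes\mathbb{Q}$. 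This extension is again invertible, since the orders of its stalks divide those of $B/\overline{M}^{gp}$. Composites of base changes are base changes of composites, so (1) holds for log alterations.

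\textbf{Step 2: properness and monomorphism (3), (2).} Property (3) holds for root stacks by Proposition~\ref{proposition: root stacks are a thing}(1). For log modifications it follows because toric modifications are proper and of finite presentation, and these properties are fpqc local on the target. Both properties are stable under composition.

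For the monomorphism claim in (2), the root stack is a monomorphism by its very definition as a full subcategory of $\cX$ viewed as a CFG over fs log schemes. For a toric modification $X_\Sigma\to X_\sigma$, the fs log maps $T\to X_\Sigma$ lifting a given $T\to X_\sigma$ are exactly the factorizations of the tropicalization through the subdivision $\Sigma$. Such a factorization is unique when it exists, so the map is a monomorphism in fs log schemes, and monomorphisms are stable under base change. That log modifications and root stacks are log \'etale covers is the definition of the log \'etale topology used here, as in \cite{MolchoWise}. The consequence that $q$ induces an isomorphism on sections of any log \'etale sheaf is formal. Since $q$ is a mono, $\cX'\times_\cX\cX'=\cX'$, so the sheaf (equalizer) condition for the one-element cover $\{q\}$ reads $\mathcal{G}(\cX)\xrightarrow{\sim}\mathcal{G}(\cX')$.

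\textbf{Step 3: cancellation (4).} This is the main obstacle. I would first treat the pure cases. If $q\circ q'$ and $q$ are log modifications, then locally they are given by fans $\Sigma''$ and $\Sigma$ refining the cone $\sigma$. The morphism $q'$ exists precisely when $\Sigma''$ refines $\Sigma$, and then $q'$ is locally the pullback of the toric modification attached to $\Sigma''$ restricted over each cone of $\Sigma$. For root stacks with $B\subset B''$, the morphism $q'$ is the root stack $\sqrt[B'']{}$ taken over $\sqrt[B]{\cX}$; this is read off from the functor descriptions using Step 1.

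For general log alterations, I would first normalize any composite into the form "a root stack of a log modification". This is possible because a log modification of a root stack is locally pulled back from a toric modification of a finer lattice, and such a modification can be realized as a root stack of a log modification after passing to a subdivision. Cancellation then reduces to the two pure cases combined with Step 1: base change $q\circ q'$ and $q$ to a common refinement, and use that $q$ is a monomorphism, which is what identifies $\cX''$ with $\cX''\times_\cX\cX'$. The commutation of root stacks with modifications is where most of the combinatorial care is needed.
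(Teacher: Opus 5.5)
Your Steps 1 and 2 are fine. They supply more than the paper does, since the paper omits the proofs of (1)--(3) and refers to the literature. There is one slip in Step 1. The Kummer extension $B'$ should be the \emph{image} of the pushout $\overline{M}^{gp}_{\cY}\oplus_{f^{-1}\overline{M}^{gp}_{\cX}} f^{-1}B$ in $\overline{M}^{gp}_{\cY}\otimes\mathbb{Q}$, i.e.\ the pushout modulo torsion. It should not be its saturation there: that saturation is all of $\overline{M}^{gp}_{\cY}\otimes\mathbb{Q}$, which is not locally finitely generated.

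The real problem is Step 3. Your route has three parts: normalize composites to ``a root stack of a log modification'', treat the pure cases by comparing fans and Kummer extensions, then reduce via a ``common refinement''. This route is unnecessary, and as written it is not a proof. The normalization claim and the final reduction are only asserted. The pure cases also do not cover mixed situations, such as $q$ a root stack with $q\circ q'$ a log modification.

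The step that actually finishes (4) is one you already wrote down without drawing the conclusion. Since $q$ is a monomorphism of fs log stacks, the projection $\cX'\times_{\cX}\cX'\to\cX'$ is an isomorphism. Hence
\[
\cX''\times_{\cX}\cX' \;\cong\; \cX''\times_{\cX'}\bigl(\cX'\times_{\cX}\cX'\bigr) \;\cong\; \cX''\times_{\cX'}\cX' \;\cong\; \cX'',
\]
and under this identification the projection to $\cX'$ is $q'$. So $q'$ is literally the base change of $q\circ q'$ along $q$. Part (1) then gives that $q'$ is a log alteration (resp.\ log modification, root stack).

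This is the paper's entire proof of (4). It handles all three flavours, and mixed types, uniformly. It also uses the same identity $\cX'\times_{\cX}\cX'=\cX'$ that you invoked in Step 2 for sections of log \'etale sheaves. Replace Step 3 with this argument and drop the combinatorial normalization.
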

\begin{proof}
    Proofs of claims \eqref{item: log alts stable under base change} - \eqref{item: log alts are proper} are omitted (see \cite{Parabolic-sheaves-on-logarithmic-schemes,MolchoWise} for further discussion). Claim \eqref{item: cancellation property log alterations} follows from \eqref{item: log alts stable under base change} and the fact that $q'$ is a base change of $q\circ q'$ along $q$, using that log alterations are monomorphisms.
\end{proof}
\subsection{Logarithmic \'etale sheaves}\label{subsection ob log etale ringed topoi}
In this section, we study an appropriate notion of log \'etale $\cO$-modules and express them in terms of conventional \'etale $\cO$-modules on log alterations. Translating computations involving log \'etale $\mathcal{O}$-modules to computations involving only conventional $\mathcal{O}$-modules is a theme of this paper.
\begin{definition}\label{definition of log structure sheaf}
Let $\mathcal{X}$ be a DM $k$-stack with a log structure. We define the \textit{logarithmic \'etale structure sheaf} $\LogO{\cX}$ as the sheaf on $\cX_{\text{l\'et}}$ associated to the presheaf $\LogO{\cX}^{\text{pre}}$ defined by $\Gamma(\cU\rightarrow \cX,\LogO{\cX}^{\text{pre}}) = \Gamma(\ul{\cU}, \cO_{\ul{\cU}})$. 
Let $\cX_{\text{l\'et}}^r\coloneqq (\text{Sh}(\mathcal{X}_{\text{l\'et}}),\mathcal{O}_{\mathcal{X}_{\text{l\'et}}})$ be the resulting ringed topos. For any morphism $f\colon \cX\rightarrow\cY$, we denote by $f_{\text{l\'et}}\colon\rt{\cX}\rightarrow\rt{\cY}$ the corresponding morphism of ringed topoi.
\end{definition}
\begin{remark}\label{remark: first instance of Max's favorite counterexample}
    \begin{enumerate}
        \item To our knowledge, the above definition of structure sheaf was first recorded by Niziol \cite[Section 2.2]{K-theory-of-log-schemes-I}.
        \item\label{item: actual counterexample first time} The sheafification step is necessary as \cite[Remark p.~671]{Log-etale-cohomology-Chikara-Nakayama} and \cite[Example 6.5]{molcho2023remarks} detail an example of a log modification $q\colon X'\rightarrow X$ for which the associated $\ul{q}^\sharp\colon\Gamma(\ul{X},\cO_{\ul{X}})\rightarrow \Gamma(\ul{X}',\cO_{\ul{X}'})$ is not injective, despite the fact that $q$ is a log \'etale cover.
    \end{enumerate}
\end{remark}

The bridge between conventional $\mathcal{O}$-modules and log \'etale $\mathcal{O}$-modules is provided by the adjoint pair of functors introduced in Construction \ref{construction: up and down}. Proposition \ref{prop: explicit description of up and down} will provide an alternative way to think about of these functors.
\begin{construction}\label{construction: up and down}
Given a log alteration $q\colon \mathcal{X'}\rightarrow \mathcal{X}$, consider the composite morphism
$$\mathsf{rest}_{\cX,\cX'}\colon \rt{\cX}\xrightarrow{q_{\text{l\'et}}^{-1}} \rt{(\cX')} \xrightarrow{\pi_{\text{l\'et}}} \rt{(\ul{\cX}_{\text{triv}}')} = (\ul{\cX}')_{\text{\'et}}^r$$
to the conventional ringed \'etale topos of $\ul{\cX}'$, where $\pi\colon \cX'\rightarrow \ul{\cX}'_{\text{triv}}$ is the canonical morphism to the underlying DM stack equipped with the trivial log structure. Note here that $q_{\text{l\'et}}$ is an equivalence of ringed topoi as $q$ is a monomorphic log \'etale cover.

Associated to any morphism of ringed topoi are an adjoint pair of pullback and pushforward functors \cite[03D5]{stacks}, which we call \textit{up} and \textit{down} functor respectively:
$$\Uparrow \coloneqq \mathsf{rest}_{\cX,\cX'}^*\colon \operatorname{Mod}(\mathcal{O}_{\underline{\cX}'_\text{\'et}})\longleftrightarrows \operatorname{Mod}(\LogO{\cX})\colon (\mathsf{rest}_{\cX,\cX'})_* \eqqcolon \Downarrow_{\cX'}.$$

\end{construction}

\begin{remark}
    Using the counterexample $q\colon X'\rightarrow X$ of Remark \ref{remark: first instance of Max's favorite counterexample} \eqref{item: actual counterexample first time}, we see that $\Uparrow$ is not always faithful as $$\Uparrow\colon \text{Hom}_{\cO_{\ul{X}_{\text{\'et}}}}(\cO_{\ul{X}},\cO_{\ul{X}}) = \Gamma(\ul{X},\cO_{\ul{X}})\longrightarrow \Gamma(X,\LogO{X}) = \text{Hom}_{\LogO{\cX}}(\LogO{X},\LogO{X})$$ factors through $\Gamma(\ul{X},\cO_{\ul{X}})\rightarrow \Gamma(\ul{X}',\cO_{\ul{X}'})$, which is not injective.
\end{remark}
\begin{proposition}\label{prop: explicit description of up and down}
Let $q\colon \mathcal{X'}\rightarrow \mathcal{X}$ be a log alteration. Then for any sheaf $\mathcal{F}$ on $\mathcal{X}_{\text{l\'et}}$ and strict \'etale neighborhood $\cU\rightarrow \mathcal{X'}$, we have 
\[\Gamma(\ul{\cU}\rightarrow \ul{\mathcal{X}}',\arrowdown_{\mathcal{X'}}\mathcal{F})=\Gamma(\cU\rightarrow \mathcal{X'}\rightarrow \mathcal{X},\mathcal{F}).\]

Moreover, for any $\mathcal{O}_{\ul{\mathcal{X}}'_{\text{\'et}}}$-module $F$, we have that $\Uparrow \!\! F$ is the sheafification of 
\[(\varphi\colon \cV\rightarrow\mathcal{X})\longmapsto \Gamma(\ul{\cV\times_\mathcal{X} \mathcal{X'}}, (\ul{\varphi}{}'_{\text{\'et}})^*{} F),\]
where $\varphi'$ denotes the base change of $\varphi$ along $q\colon\mathcal{X'}\rightarrow \mathcal{X}$.
\end{proposition}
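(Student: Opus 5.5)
The plan is to unwind the two morphisms of topoi composing $\mathsf{rest}_{\cX,\cX'}$ and compute the pushforward and pullback of each separately. Throughout we use that $q_{\text{l\'et}}$ is an equivalence (Proposition~\ref{proposition: basic properties of log alterations}\eqref{item: log alts is a mono log et cover}). Concretely, $q^{-1}_{\text{l\'et}}$ is restriction along the functor of sites $\cX'_{\text{l\'et}}\rightarrow \cX_{\text{l\'et}}$, $(\cU\rightarrow\cX')\mapsto(\cU\rightarrow\cX'\rightarrow\cX)$. Its inverse, viewed as a pullback, is induced by the base change functor $\cX_{\text{l\'et}}\rightarrow\cX'_{\text{l\'et}}$, $\cV\mapsto \cV\times_\cX\cX'$.

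\textbf{The down functor.} The morphism $\pi_{\text{l\'et}}\colon \rt{(\cX')}\rightarrow (\ul{\cX}')^r_{\text{\'et}}$ comes from the continuous functor of sites $(\ul{\cU}\rightarrow\ul{\cX}')\mapsto(\cU\rightarrow \cX')$. Here $\cU$ carries the pulled back (strict) log structure; this uses that strict \'etale maps to $\cX'$ correspond to \'etale maps to $\ul{\cX}'$. The underlying sheaf of $\pi_{\text{l\'et},*}\mathcal{G}$ is therefore $\ul{\cU}\mapsto\mathcal{G}(\cU)$. Composing with $q_{\text{l\'et}}^{-1}$, which as noted is just restriction along $\cU\mapsto(\cU\rightarrow\cX'\rightarrow\cX)$, gives the first formula. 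Module structures play no role, since pushforward of modules is computed on underlying sheaves.

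\textbf{The up functor.} Write $\Uparrow F = (q_{\text{l\'et}}^{-1})^*\pi^*_{\text{l\'et}}F$, where each module pullback is $\mathcal{O}\otimes_{f^{-1}\mathcal{O}} f^{-1}(-)$. For the pullback along the equivalence $q_{\text{l\'et}}^{-1}$ we need no tensoring, because $q_{\text{l\'et}}$ is an equivalence of ringed topoi. Its inverse sends $\mathcal{G}$ to $\cV\mapsto \mathcal{G}(\cV\times_\cX\cX')$. Indeed, this is a sheaf, and it is inverse to restriction because $\cU\times_\cX\cX'=\cU$ for $\cU$ over $\cX'$, $q$ being a monomorphism.

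It remains to identify $\pi^*_{\text{l\'et}}F$ on an object $\cW\rightarrow\cX'$ of $\cX'_{\text{l\'et}}$ with the sheafification of $\cW\mapsto \Gamma(\ul{\cW},\ul{\psi}^*_{\text{\'et}}F)$, where $\psi\colon\cW\rightarrow\cX'$. I would argue as follows. The presheaf $P\colon\cW\mapsto\Gamma(\ul{\cW},\ul{\psi}^*F)$ is an $\LogO{\cX'}^{\text{pre}}$-module. Moreover, $\mathcal{H}om(P^{a},\mathcal{M})=\mathcal{H}om_{\mathcal{O}_{\ul{\cX}'}}(F,\pi_*\mathcal{M})$ for any $\LogO{\cX'}$-module $\mathcal{M}$. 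A map $F\rightarrow \pi_*\mathcal{M}$ gives maps $F(\ul{\cU})\rightarrow\mathcal{M}(\cU)$ for strict $\cU$. For general $\cW$, the morphism $\ul{\psi}$ is a map of DM stacks, so the conventional adjunction $\ul{\psi}^*\dashv\ul{\psi}_*$ together with the restriction $\Gamma(\cW,\mathcal{M})\rightarrow$ (sections pulled back to $\ul{\cW}$) produces $\Gamma(\ul{\cW},\ul\psi^*F)\rightarrow \mathcal{M}(\cW)$. Here one uses that $\mathcal{M}|_{\cW}$ pushes forward to an $\mathcal{O}_{\ul{\cW}}$-module $\pi_{\cW,*}\mathcal{M}$, and that $\ul{\psi}^{-1}\pi_{*}\mathcal{M}\rightarrow \pi_{\cW,*}\mathcal{M}$ is compatible with the module structures. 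These maps are natural in $\cW$, and the construction is inverse to restriction to strict objects. By Yoneda, this identifies $P^a$ with $\pi^*_{\text{l\'et}}F$. Substituting $\cW=\cV\times_\cX\cX'$ and then sheafifying on $\cX_{\text{l\'et}}$ yields the claimed description, since sheafification commutes with the equivalence.

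\textbf{Main obstacle.} I expect the hardest step to be verifying naturality and bijectivity in this adjunction for non-strict $\cW$. One must check that morphisms $\cW_1\rightarrow\cW_2$ in $\cX'_{\text{l\'et}}$ act compatibly on the conventional pullbacks $\ul\psi^*F$, which holds via the pseudo-functoriality $\ul{\psi}_1^*\cong \ul{g}^*\ul{\psi}_2^*$. One must also check that the map out of $P^a$ is determined by its values on strict objects. For the latter I would note that the identity $\ul{\psi}^*F=\mathcal{O}_{\ul\cW}\otimes\ul\psi^{-1}F$ expresses every section locally as an $\mathcal{O}$-linear combination of pulled-back sections from strict neighbourhoods.
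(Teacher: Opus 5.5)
Your proposal is correct and follows the same route as the paper: you factor $\mathsf{rest}_{\cX,\cX'}$ through the equivalence induced by $q$ and $\pi_{\text{l\'et}}$. The equivalence is restriction in one direction and $\cV\mapsto \cV\times_{\cX}\cX'$ in the other, using that $q$ is a monomorphism. You then read off $\Downarrow_{\cX'}$ directly, and reduce the formula for $\Uparrow$ to identifying $\pi_{\text{l\'et}}^*F$ with the sheafification of $\mathcal{W}\mapsto\Gamma(\ul{\mathcal{W}},\ul{\psi}^*F)$.

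The paper omits the proof of this last identification, citing \cite[070S (2)]{stacks} as an analogue. Your adjunction-plus-Yoneda argument is a sound way to fill it in, including the observation that a map out of the presheaf is determined on strict objects because sections of $\ul{\psi}^*F$ are locally $\mathcal{O}$-linear combinations of pullbacks of sections over strict neighbourhoods.
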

\begin{remark}\label{remark: down arrow on presheaves}
    Note that $\mathsf{rest}_{\cX,\cX'}$ does not arise from a morphism of sites and hence $(\mathsf{rest}_{\cX,\cX'})_*$ is not a priori defined on the level of presheaves. However, there is still a functor $\Downarrow_{\cX'}\colon \text{PreSh}(\cX_{\text{l\'et}})\rightarrow\text{PreSh}(\ul{\cX}{}'_{\text{\'et}})$ defined by setting $\Gamma(\ul{\cU}\rightarrow \ul{\mathcal{X}}',\arrowdown_{\mathcal{X'}}\mathcal{F})\coloneqq \Gamma(\cU\rightarrow \mathcal{X'}\rightarrow \mathcal{X},\mathcal{F})$ for any presheaf $\cF$.
\end{remark}
\begin{proof}[Proof of Proposition \ref{prop: explicit description of up and down}]
Using the notation of Construction \ref{construction: up and down}, we have $\arrowdown_{\cX'}=\pi^{\text{l\'et}}_*\circ (q_{\text{l\'et}}^{-1})_* = \pi^{\text{l\'et}}_*\circ q_{\text{l\'et}}^*$ and $\arrowup=(q_{\text{l\'et}}^{-1})^*\circ \pi_{\text{l\'et}}^* = q^{\text{l\'et}}_*\circ \pi_{\text{l\'et}}^*$.

For the first claim, it follows from the construction of $q_{\text{l\'et}}^{*}$ that $\Gamma(\cU\rightarrow \cX',q_{\text{l\'et}}^{*}\cF) = \Gamma(\cU\rightarrow \cX'\rightarrow \cX,\mathcal{F})$, as the right hand side already defines a sheaf of $\cO_{\cX'_{\text{l\'et}}}$-modules. 

The second claim reduces to the fact that $\pi_{\text{l\'et}}^{*}F$ is the sheafification of the presheaf 
\[ (\varphi\colon \cV\rightarrow\cX')\longmapsto \Gamma(\ul{\cV},\ul{\varphi}{}_{\text{\'et}}^{*} F),\]
the proof of which we omit. See \cite[070S (2)]{stacks} for a similar statement.
\end{proof} 
\subsection{Properties of log \'etale sheaves} The purpose of this section is to establish Proposition \ref{key proposition} and Corollary \ref{Key corollary}; on first reading, little will be missed in skipping to these results. 

Our first proposition uses \cite{m-type-topology} to record a generalization of well-known results, see \cite[Lemma 3.3]{herr2025loggeometryliftingrational}, \cite[Theorem 1.1, Corollary 1.2]{kato2022integral}, and \cite[Theorem 1.01]{molcho2019universal}, to the setting where neither $\mathcal{Y}$ nor the morphism has a quasi-compactness hypothesis.
\begin{proposition}\label{proposition-alteration-to-saturated-morphisms}
Let $f\colon \cX\rightarrow \cY$ be a morphism of DM $k$-stacks with log structures so that $\cX$ is qc. Then
\begin{enumerate}
    \item\label{item: strictify log etale via log alteration} If $f$ is log \'etale, then there is a log alteration $\cY'\rightarrow \cY$ so that the base change $\cX\times_\cY \cY'\rightarrow \cY'$ is strict and conventional \'etale.
    \item If $\text{char}(k)=0$, then there is a log alteration $\cY'\rightarrow \cY$ so that the base change $\cX\times_\cY \cY'\rightarrow \cY'$ is saturated.
\end{enumerate}
\end{proposition}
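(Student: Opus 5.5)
The plan is to reduce to the quasi-compact, local, chart-level situation and then apply the known results cited before the statement, using \cite{m-type-topology} to remove the quasi-compactness hypothesis on $\cY$ and on $f$.

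\textbf{Step 1: reduce to local charts.} Since $\cX$ is qc, it is covered by finitely many strict \'etale charts $U_i\rightarrow \cX$ on which $f$ admits a chart by a homomorphism of fs monoids $Q_i\rightarrow P_i$. The image of $\cX$ in $\cY$ is then contained in a quasi-compact open $\cV\subset\cY$.

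Log alterations are built from toric modifications and root stacks. Such a construction is determined by a subdivision or Kummer extension of the cone complex (the Artin fan) of $\cY$. A subdivision of finitely many cones of $\mathrm{Trop}(\cY)$ extends to a subdivision of the whole cone complex by pushing the new rays into all faces containing them, and similarly for Kummer extensions. So it suffices to find, for each of the finitely many charts $Q_i\rightarrow P_i$, a subdivision or root of $Q_i^\vee$, and then to take a common refinement. Both properties in question, strict \'etale and saturated, are stable under further log alteration of the base: base change of a strict \'etale map is strict \'etale, and saturatedness is stable under base change. I would verify this stability using Proposition~\ref{proposition: basic properties of log alterations}\eqref{item: log alts stable under base change}.

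\textbf{Step 2: the two assertions.} For~(1), a log \'etale map with chart $Q\rightarrow P$ has finite-index cokernel on groupification with order invertible, and $\ul{U}\rightarrow \ul{\cY}\times_{\cA_Q}\cA_P$ is \'etale. After subdividing $Q^\vee_{\mathbb{Q}}$ so that the image cones of $P^\vee$ become unions of cones, the map becomes, locally on the source, an open immersion of Artin fans composed with a Kummer map. Passing to the root stack associated to the Kummer extension $B$ generated by the $P^{gp}$ then makes it strict. This is the argument of \cite[Lemma 3.3]{herr2025loggeometryliftingrational}.

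For~(2), in characteristic $0$ I would apply the semistable reduction of \cite{molcho2019universal} (or \cite[Theorem 1.1]{kato2022integral}) chart by chart. First weak semistable reduction via subdivision makes $f$ integral. Then a root of the base, using $\frac{1}{n}\overline{M}^{gp}$ for $n$ divisible enough, kills the non-saturatedness, since saturatedness fails exactly by torsion in the cokernels on the relevant cones. Characteristic $0$ guarantees that these roots are invertible.

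\textbf{Step 3: globalize.} The main obstacle is gluing these local subdivisions and roots into a single log alteration of all of $\cY$ when $\cY$ is not qc. I would handle this with the framework of \cite{m-type-topology}. Log alterations of $\cY$ correspond to subdivisions of its Artin fan, which is a colimit of cones. Only the finitely many cones meeting the image of $\cX$ need modification, and the refinement is extended to the remaining cones by the canonical star-subdivision and root procedure, which leaves cones away from the image unchanged. Finally I would check that the base change computed on each chart agrees with the local construction, by the stability noted in Step~1.
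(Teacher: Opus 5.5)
Your proposal is correct in substance and has the same two-stage shape as the paper's proof. First a log modification of the base makes $f$ integral (resp.\ Kummer), then a root stack finishes the job. The real difference is how the missing quasi-compactness of $\cY$ is handled.

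The paper does not build the global modification by hand. It invokes \cite{m-type-topology} directly for a logarithmic blow-up of all of $\cY$ after which $f$ becomes integral. It then uses $\sqrt[n]{\cY}$, which is canonically defined on all of $\cY$, so only the integer $n$ has to be found locally. This is done on a qc open containing the image, with chart-wise input from \cite[Theorem II.3.4]{saturatedTsuji}, and $n$ is invertible in the log \'etale case. Item (1) then follows because a saturated log \'etale map is Kummer, hence strict.

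Your route to (1) is a bit more direct: after your subdivision $f$ is already Kummer, and rooting by the index makes the fs base change strict \'etale, with no integrality or Tsuji needed. Your Step 3, however, essentially re-proves the global statement of \cite{m-type-topology}, and as written it needs tightening.
\begin{enumerate}
\item A general DM stack with log structure need not have an Artin fan. You must work with its cone stack and make the chart-wise subdivisions invariant under monodromy and automorphisms.
\item Cones lying outside the qc open but having subdivided cones as faces must themselves be subdivided, for example inductively by dimension, coning from the barycentric ray. So ``cones away from the image are unchanged'' holds only for cones with no modified faces.
\item The appeal to star subdivisions needs either a lemma that any subdivision is dominated by iterated stellar subdivisions, or the coning construction above.
\item In (2), making images of cones unions of cones only gives equidimensionality, not integrality. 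For example, $\mathbb{A}^2\to\mathbb{A}^2/\mu_2$ is Kummer but not integral. The subdivision you globalize must therefore be the one produced by the cited integralization theorem, not the one from (1).
\item The Kummer extension ``generated by the $P^{gp}$'' in (1) is only defined near the image. Replace it by $\frac{1}{n}\overline{M}^{gp}_{\cY}$, with $n$ the lcm of the finitely many invertible indices, as the paper does.
\end{enumerate}
With these repairs your argument works and is self-contained combinatorics. The paper's proof is shorter because it outsources exactly this globalization.
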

\begin{proof}
We may assume that $f$ is integral, since by \cite{m-type-topology}, there is a logarithmic blow-up $\cY'\rightarrow\cY$ so that $\cX\times_\cY \cY'\rightarrow\cY'$ is integral. We will now show that there is some $n>0$ coprime to $\text{char}(k)$ so that $f$ becomes saturated under base change along $\sqrt[n]{\cY}\rightarrow\cY$. Indeed, we may replace $\cY$ with any qc open substack containing the image of $f$, which we can in turn replace with any strict \'etale cover by a log scheme with a global chart. It now follows from \cite[Theorem II.3.4]{saturatedTsuji} that we can choose $n$ so that the resulting base change is saturated. In the case that $f$ is log \'etale, the proof of \cite[Theorem II.3.4]{saturatedTsuji} combined with the chart criterion for log \'etale maps shows that $n$ can be chosen to be invertible. Now note that any saturated log \'etale morphism is strict since such a morphism must be Kummer, in which case the strictness follows from \cite[Corollary I.4.3.10, Theorem III.2.5.5]{lecturesonlogarithmicalgebraicgeometry}.
\end{proof}

\begin{remark}\label{remark: something is fucky about mixed char}
    The reason why we restrict ourselves to stacks over $k$ is that Proposition \ref{proposition-alteration-to-saturated-morphisms} fails in mixed characteristic. Indeed, note that $\text{Spec}(\mathbb{N}\xrightarrow{0}\mathbb{Z})$ does not admit any nontrivial log alterations.
\end{remark}

\begin{proposition}\label{proposition on log etale topos}
Let $\mathcal{X}$ be a DM $k$-stack with a log structure. Then the morphisms $\mathsf{rest}_{\cX,-}$ of Construction \ref{construction: up and down} give an equivalence of ringed topoi
\[\cX^{r}_{\text{l\'et}}=\underset{\cX'\rightarrow \cX\text{ log alt}}{\mathrm{lim}}(\ul{\cX}')^{r}_{\text{\'et}},\]
where the limit runs over all log alterations of $\cX$.
\end{proposition}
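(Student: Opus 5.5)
We need to show that $\cX^r_{\text{l\'et}}$ is the limit of the conventional ringed \'etale topoi of all log alterations. The plan is to identify $\cX_{\text{l\'et}}$, up to equivalence of topoi, with a site built from strict \'etale objects over log alterations. Then we apply the standard description of sheaves on such a site as compatible families of sheaves on the pieces.

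\textbf{Step 1 (the index category).} The log alterations of $\cX$ form a cofiltered category. Given $\cX_1,\cX_2\to\cX$, the fibre product $\cX_1\times_\cX\cX_2$ is again a log alteration by Proposition~\ref{proposition: basic properties of log alterations}\eqref{item: log alts stable under base change}. Morphisms between log alterations are unique, since alterations are monomorphisms (\eqref{item: log alts is a mono log et cover}), and every such morphism is itself a log alteration by \eqref{item: cancellation property log alterations}. So the index category is a cofiltered poset. For $\cX''\to\cX'$ the transition morphism $(\ul{\cX}'')^r_{\text{\'et}}\to(\ul{\cX}')^r_{\text{\'et}}$ is the morphism of ringed \'etale topoi induced by $\ul{\cX}''\to\ul{\cX}'$. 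The maps $\mathsf{rest}_{\cX,\cX'}$ are compatible with these transitions, because $\pi$ is natural and $q_{\text{l\'et}}$ is an equivalence. This gives a comparison morphism from $\cX^r_{\text{l\'et}}$ to the limit.

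\textbf{Step 2 (a generating site).} Let $\mathcal{C}$ be the full subcategory of $\cX_{\text{l\'et}}$ consisting of objects $\cU\to\cX'\to\cX$ with $\cX'\to\cX$ a log alteration and $\cU\to\cX'$ strict \'etale with $\cU$ quasi-compact.

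- By Proposition~\ref{proposition-alteration-to-saturated-morphisms}\eqref{item: strictify log etale via log alteration}, every quasi-compact log \'etale $\cV\to\cX$ becomes strict \'etale after base change along some log alteration $\cX'\to\cX$.
- The morphism $\cV\times_\cX\cX'\to\cV$ is a log alteration, hence a log \'etale cover. So every object of $\cX_{\text{l\'et}}$ is covered by objects of $\mathcal{C}$.
- Covers in $\cX_{\text{l\'et}}$ are generated by strict \'etale covers and log alterations, and these are stable under base change.

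By the comparison lemma (SGA4 III.4.1), $\mathrm{Sh}(\cX_{\text{l\'et}})\simeq\mathrm{Sh}(\mathcal{C})$ with the induced topology. A sheaf on $\mathcal{C}$ is then exactly a compatible system $(F_{\cX'})$ of \'etale sheaves on $\ul{\cX}'$. Indeed, the restriction of such a sheaf to objects over a fixed $\cX'$ is an \'etale sheaf on $\ul{\cX}'$, via Proposition~\ref{prop: explicit description of up and down}.

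\textbf{Step 3 (the main obstacle: matching covers).} One has to check that the induced topology on $\mathcal{C}$ is the one whose sheaves are compatible families, with transition isomorphisms $F_{\cX'}\cong$ (pullback of $F_{\cX''}$) encoding descent along alterations. This is exactly what a limit of topoi over a cofiltered diagram means, \`a la SGA4 VI.8: sheaves on the fibred site $\coprod_{\cX'}\ul{\cX}'_{\text{\'et}}$ with cartesian transitions.

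The key point is that a log alteration cover $\cU'\to\cU$ of an object of $\mathcal{C}$ is a monomorphism. Its \v{C}ech nerve is therefore constant, so the sheaf condition along it is precisely the requirement that sections over $\cU$ and $\cU'$ agree. This is what passing to the limit encodes.

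\textbf{Step 4 (structure sheaves).} The structure sheaves must match. The limit ring object is the system $(\cO_{\ul{\cX}'})$, and under the equivalence of Step 2 it corresponds to the sheafification of $\LogO{\cX}^{\text{pre}}$, by Definition~\ref{definition of log structure sheaf}. This follows because sheafification commutes with the comparison equivalence, since $\mathcal{C}$ is cofinal.
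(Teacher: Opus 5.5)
Your proposal is correct and follows the paper's route. The paper's one-line proof uses Proposition~\ref{proposition-alteration-to-saturated-morphisms}\eqref{item: strictify log etale via log alteration} to reduce log \'etale objects and covers to strict \'etale ones over a log alteration, and then runs the argument of \cite[Proposition 5.6~(1)]{Log-etale-cohomology-Chikara-Nakayama} identifying the topos with the limit via the colimit site; your Steps 2--3 spell this out.

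Two small corrections. First, the transition condition in Step 3 should read $F_{\cX'}\cong \ul{q}{}_*F_{\cX''}$, not an inverse-image condition. Second, and consequently, in Step 4 the family $(\cO_{\ul{\cX}'})$ is not itself an object of the limit: it is not compatible under pushforward, cf.\ Remark~\ref{remark: first instance of Max's favorite counterexample}. The limit structure ring is instead $\varinjlim_{\cX'}\mathsf{rest}_{\cX,\cX'}^{-1}\cO_{\ul{\cX}'}$, which is indeed the sheafification you identify with $\LogO{\cX}$.
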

\begin{proof}
    Follows from Proposition \ref{proposition-alteration-to-saturated-morphisms} \eqref{item: strictify log etale via log alteration} and an analagous argument to \cite[Proposition 5.6~(1)]{Log-etale-cohomology-Chikara-Nakayama}.
\end{proof}
\begin{remark}
\begin{enumerate}
    \item Note that any two log alterations of $\cX$ can be dominated by a common refinement, hence the above limit is cofiltered.
    \item See \cite[Lemma 4.1]{molcho2023remarks} and \cite[Proposition 5.6]{Log-etale-cohomology-Chikara-Nakayama} for similar statements.
\end{enumerate}
\end{remark}
\begin{proposition}\label{key proposition}
Let $f\colon \cX\rightarrow \cY$ be a morphism of DM $k$-stacks with log structures. 
\begin{enumerate}
    \item \label{item-1-key proposition} If $f$ is qcqs, and $\mathcal{F}$ is a presheaf of $\mathcal{O}_{\cX_{\text{l\'et}}}^{\text{pre}}$-modules, then we have 
    \begin{equation}\label{equation: fickfickfick} R^{n}f^{\text{l\'et}}_{*}(\mathcal{F}^{\text{sh}})=\underset{\text{squares as in (}\ref{key-proposition-equation-1}\text{)}}{\mathrm{colim}}\arrowup R^{n}(\underline{f}{}'_{\text{\'et}})_{*}(\arrowdown_{\cX'}\mathcal{F})^{\text{sh}}\text{ for all }n\geq 0,\end{equation} 
    where $(-)^{\text{sh}}$ denotes sheafification, $\Downarrow_{\cX'}$ is defined using Remark \ref{remark: down arrow on presheaves}, and the colimit runs over all not necessarily cartesian squares
\begin{equation}\label{key-proposition-equation-1}
\begin{tikzcd}
\cX' \arrow[r, "f'"] \arrow[d, "p'"] & \cY' \arrow[d, "p"] \\
\cX \arrow[r, "f"] & \cY,
\end{tikzcd}      
\end{equation}
 where the vertical maps $p$ and $p'$ are log alterations.
\item \label{item-2-key proposition}
For any $\mathcal{O}_{\underline{\cY}{}_{\text{\'et}}}$-module $F$, we have
\[L_{n}f_{\text{l\'et}}^{*}\arrowup F=\underset{\text{squares as in (}\ref{key-proposition-equation-1}\text{)}}{\mathrm{colim}}\arrowup L_{n}(\underline{f}{}'_{\text{\'et}})^{*}\underline{p}{}_{\text{\'et}}^{*} F.\]
\end{enumerate} 
\end{proposition}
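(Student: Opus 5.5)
Both formulas will be derived from Proposition \ref{proposition on log etale topos}, which writes $\cX^r_{\text{l\'et}}$ and $\cY^r_{\text{l\'et}}$ as cofiltered limits of the conventional ringed \'etale topoi of log alterations. Together with Proposition \ref{proposition-alteration-to-saturated-morphisms}\eqref{item: strictify log etale via log alteration} this says that every log \'etale object of $\cX$ is dominated by a strict \'etale object over some log alteration. The plan is to reduce each statement to the standard result that cohomology commutes with cofiltered limits of topoi, in the style of SGA4 VI \S8 and \cite[Proposition 5.6]{Log-etale-cohomology-Chikara-Nakayama}. The qcqs hypothesis on $f$ is exactly what that result needs.

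For \eqref{item-1-key proposition}, I first check that the index category of squares \eqref{key-proposition-equation-1} is cofiltered. Given two squares, a common refinement $\cY''$ of $\cY'_1$ and $\cY'_2$ exists. One then takes a common refinement of the pullbacks $\cX'_i\times_{\cY'_i}\cY''$, and Proposition \ref{proposition: basic properties of log alterations}\eqref{item: cancellation property log alterations} keeps everything inside the class of log alterations. Next I construct the comparison map. For each square, the equivalence $p'_{\text{l\'et}}$ together with base change for the morphism of topoi $\mathsf{rest}$ gives a natural map
\[\arrowup R^n(\underline{f}{}'_{\text{\'et}})_*(\arrowdown_{\cX'}\cF)^{\text{sh}}\longrightarrow R^nf^{\text{l\'et}}_*(\cF^{\text{sh}}).\]
This uses $\mathsf{rest}_{\cX,\cX'}^*(\arrowdown_{\cX'}\cF)^{\text{sh}}\to\cF^{\text{sh}}$ and the fact that $f'$ is compatible with $f$ under $\mathsf{rest}$.

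It suffices to check the map on sections over objects $\cV\to\cY$ that are qc and strict \'etale over some log alteration $\cY'$; these form a basis by Proposition \ref{proposition-alteration-to-saturated-morphisms}. After base change we may take $\cV=\cY$ affine. The right side is then the sheafification of $\cV\mapsto H^n((\cX\times_\cY\cV)_{\text{l\'et}},\cF^{\text{sh}})$. The left side, by Proposition \ref{prop: explicit description of up and down}, is the colimit over squares of $H^n(\underline{\cX}'_{\text{\'et}},(\arrowdown_{\cX'}\cF)^{\text{sh}})$. Because $f$ is qcqs, $\cX\times_\cY\cV$ is qcqs. The topos limit formula then identifies $H^n$ of the limit topos with the filtered colimit of $H^n$ of the terms, provided the transition morphisms are coherent. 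They are, because log alterations are proper and finitely presented by Proposition \ref{proposition: basic properties of log alterations}\eqref{item: log alts are proper}. The sheaf $\cF^{\text{sh}}$ on the limit topos is recovered as the colimit of the pullbacks of the $(\arrowdown_{\cX'}\cF)^{\text{sh}}$, since the presheaf $\cF$ is itself that colimit on the basis of strict \'etale objects over alterations. This gives the isomorphism.

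For \eqref{item-2-key proposition}, the case $n=0$ follows formally from adjunctions. Indeed $f^*_{\text{l\'et}}\circ\mathsf{rest}_{\cY,\cY'}^*=\mathsf{rest}_{\cX,\cX'}^*\circ\underline{f}'^*$ whenever the square commutes, which here reduces to the single term $\cY'=\cY$, $\cX'=\cX$. To handle higher $n$, I choose a flat resolution $P_\bullet\to F$ on $\underline{\cY}$. The up functor is $\mathsf{rest}^*$, so it preserves flat modules, but it is not exact. So I compute $L_nf^*_{\text{l\'et}}\arrowup F$ by resolving $\arrowup F$ directly and writing the log \'etale stalks as filtered colimits, via Proposition \ref{proposition on log etale topos}, of stalks of $\underline{p}^*F$ on alterations. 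Filtered colimits are exact, so they commute with $\mathcal{T}or$ and hence with $L_n$.

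I expect the main obstacle to be \eqref{item-1-key proposition}, specifically justifying the passage of cohomology through the cofiltered limit of topoi. This needs coherence of the limit (qcqs objects being cofinal) and care with presheaf versus sheaf on each term. I would model the argument closely on \cite[Proposition 5.6]{Log-etale-cohomology-Chikara-Nakayama}.
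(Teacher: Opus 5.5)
Your proposal is correct and follows essentially the paper's route: realize $f_{\text{l\'et}}$ as the cofiltered limit of the $\ul{f}'$ over the poset of squares, and apply SGA4 VI.8.7. The paper uses VI.8.7.3 for $R^nf^{\text{l\'et}}_*$ directly rather than your sectionwise VI.8.7.7 argument. It then identifies $\cF^{\text{sh}}$ with $\mathrm{colim}\,\arrowup(\arrowdown_{\cX'}\cF)^{\text{sh}}$ by a universal-property argument on strict \'etale neighborhoods of log alterations, where your ``the presheaf $\cF$ is that colimit'' only holds after sheafification; for (2) it likewise uses that $\mathcal{T}or$ commutes with filtered colimits of rings and modules, but at the level of sheaves (bar construction, exactness of $\mathsf{rest}^{-1}$) rather than stalks, so it never needs enough points, and your left side of (1) should be read as the sheafification of $\cV\mapsto\mathrm{colim}\,H^n(\ldots)$, not its sections over an affine $\cV$, since $\cY'$ need not be affine.
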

\begin{proof} 
By Proposition \ref{proposition on log etale topos}, both $\cX_{\text{l\'et}}^{r}$ and $\cY_{\text{l\'et}}^{r}$ are limits of conventional \'etale topoi and the two index diagrams admit a common refinement given by the diagram of all squares of the form \eqref{key-proposition-equation-1}. In particular, $f_{\text{l\'et}}$ arises as a limit of morphisms along this diagram. 

If $f$ is qcqs, then the $\ul{f}'$ and the log alterations are also qcqs, which implies that derived pushforward along them commutes with filtered colimits \cite[0GQW]{stacks}. Thus, we may apply \cite[Theorem VI.8.7.3]{SGA4}, which asserts that the right hand side of \eqref{equation: fickfickfick} is equal to $R^n f^{\text{l\'et}}_*\cF'$, where 
\begin{equation}\label{equation: first time sh is written differently}
\cF' = \underset{\text{squares as in (}\ref{key-proposition-equation-1}\text{)}}{\mathrm{colim}}\Uparrow\!\! (\arrowdown_{\cX'}\mathcal{F})^{\text{sh}} = \underset{\cX'\rightarrow \cX\text{ log alt}}{\mathrm{colim}}\Uparrow\!\! (\arrowdown_{\cX'}\mathcal{F})^{\text{sh}}.
\end{equation} 
To show claim \eqref{item-1-key proposition}, it thus suffices to prove that $\cF' = \cF^{\text{sh}}$. Indeed, for any $\LogO{\cX}$-module $\cG$, the datum of a morphism $\cF'\rightarrow \cG$ is equivalent to a datum of compatible morphisms $\Downarrow_{\cX'}\cF\rightarrow \Downarrow_{\cX'}\cG$ for every log alteration $\cX'\rightarrow\cX$, which in turn is equivalent to giving a morphism $\cF|_{\cC^{\text{op}}}\rightarrow\cG|_{\cC^{\text{op}}}$ on the full subcategory $\cC\subset \cX_{\text{l\'et}}$ spanned by the strict \'etale neighborhoods of all log alterations. Since $\cG$ is a log \'etale sheaf, it can be recovered from $\cG|_{\cC^{\text{op}}}$, hence the above is equivalent to a morphism of presheaves of $\LogO{\cX}^{\text{pre}}$-modules $\cF\rightarrow\cG$, which verifies that $\cF'$ has the universal property of $\cF^{\text{sh}}$.

To prove claim \eqref{item-2-key proposition}, we note that an argument analogous to the above paragraph also shows that 
\begin{equation}\label{equation: second time sh is written differently}
    \cF^{\text{sh}} = \underset{\text{squares as in (}\ref{key-proposition-equation-1}\text{)}}{\mathrm{colim}}\mathsf{rest}_{\cX,\cX'}^{-1} (\arrowdown_{\cX'}\mathcal{F})^{\text{sh}}
\end{equation}
for any presheaf on $\cX_{\text{l\'et}}$ and similarly for $\mathcal{Y}$.
In particular, for any $\cO_{\ul{\cY}{}_{\text{\'et}}}$-module $F$, we get
\begin{align*}
    L_{n}f_{\text{l\'et}}^{*}\arrowup F&=\mathcal{T}or_{n}^{f^{-1}_{\text{l\'et}}\mathcal{O}_{\cY_{\text{l\'et}}}}(\mathcal{O}_{\cX_{\text{l\'et}}},f^{-1}_{\text{l\'et}}\arrowup F)\\
    &=\mathcal{T}or_n^{\underset{\text{squares as in (}\ref{key-proposition-equation-1}\text{)}}{\mathrm{colim}}f^{-1}_{\text{l\'et}}\mathsf{rest}_{\cY,\cY'}^{-1}\cO_{\ul{\cY}{}'_{\text{\'et}}}}\left(\underset{\text{squares as in (}\ref{key-proposition-equation-1}\text{)}}{\mathrm{colim}}\mathsf{rest}_{\cX,\cX'}^{-1}\cO_{\ul{\cX}{}'_{\text{\'et}}},\underset{\text{squares as in (}\ref{key-proposition-equation-1}\text{)}}{\mathrm{colim}}f^{-1}_{\text{l\'et}}\mathsf{rest}_{\cY,\cY'}^{-1} \ul{p}{}_{\text{\'et}}^* F \right)\\
    &=\underset{\text{squares as in (}\ref{key-proposition-equation-1}\text{)}}{\mathrm{colim}}\mathcal{T}or_n^{\mathsf{rest}_{\cX,\cX'}^{-1} (\ul{f}{}'_{\text{\'et}})^{-1}\cO_{\ul{\cY}{}'_{\text{\'et}}}}\left(\mathsf{rest}_{\cX,\cX'}^{-1}\cO_{\ul{\cX}{}'_{\text{\'et}}},\mathsf{rest}_{\cX,\cX'}^{-1} (\ul{f}{}'_{\text{\'et}})^{-1}\ul{p}{}_{\text{\'et}}^* F\right)\\
    &=\underset{\text{squares as in (}\ref{key-proposition-equation-1}\text{)}}{\mathrm{colim}} \mathsf{rest}_{\cX,\cX'}^{-1} L_n (\ul{f}{}'_{\text{\'et}})^*\ul{p}{}_{\text{\'et}}^* F \\
    &=\underset{\text{squares as in (}\ref{key-proposition-equation-1}\text{)}}{\mathrm{colim}} \arrowup L_n (\ul{f}{}'_{\text{\'et}})^*\ul{p}{}_{\text{\'et}}^* F,
\end{align*}
where we used the fact that $f_{\text{l\'et}}^{-1}$ is a left adjoint and hence commutes with all colimits in the second equality. For the third equality we used that any square as in \eqref{key-proposition-equation-1} gives a commutative diagram
\[
    \begin{tikzcd}
        \cX_{\text{l\'et}} \dar["\mathsf{rest}_{\cX,\cX'}"']\rar["f_{\text{l\'et}}"]&\cY_{\text{\'et}}\dar["\mathsf{rest}_{\cY,\cY'}"]\\
        \ul{\cX}{}'_{\text{\'et}} \rar["\ul{f}{}'_{\text{\'et}}"] & \ul{\cY}{}'_{\text{\'et}}
    \end{tikzcd}
\]
and we further used that $\cT or$ commutes with filtered colimits, which follows from the fact that the derived tensor product $\cM\otimes^{\mathbb{L}}_{\cO}\cN$ is a homotopy colimit of objects of the form $\cM\otimes^{\mathbb{L}}_{\ul{\mathbb{Z}}}\cO\otimes^{\mathbb{L}}_{\ul{\mathbb{Z}}}\ldots \otimes^{\mathbb{L}}_{\ul{\mathbb{Z}}}\cO\otimes^{\mathbb{L}}_{\ul{\mathbb{Z}}}\cN$ via the bar construction (see e.g. \cite[§3.4.4]{HA}). The fourth equality uses that $\mathsf{rest}_{\cX,\cX'}^{-1}$ is exact and by \cite[07A4]{stacks} commutes with $\otimes^{\mathbb{L}}$. The last equality uses \eqref{equation: first time sh is written differently} and \eqref{equation: second time sh is written differently}.
\end{proof}
\begin{corollary}\label{Key corollary}
Let $\cX$ be a DM $k$-stack with a log structure, the following statements hold:
\begin{enumerate}
\item\label{item-1-key corollary} For any presheaf of $\mathcal{O}_{\cX_{\text{l\'et}}}^{\text{pre}}$-modules $\mathcal{F}$, we have
    \[\mathcal{F}^{\text{sh}}=\underset{\cX'\rightarrow \cX\text{ log alt}}{\mathrm{colim}}\Uparrow\!\! (\arrowdown_{\cX'}\mathcal{F})^{\text{sh}}.\]
\item\label{item-2-key-corollary} For any presheaf of $\mathcal{O}_{\cX_{\text{l\'et}}}^{\text{pre}}$-modules $\mathcal{F}$ and a log \'etale morphism $\cU\rightarrow \cX$ with $\cU$ qcqs, we have for all $n\geq 0$,
\[H^{n}(\cU,\mathcal{F}^{\text{sh}})=\underset{\cX'\rightarrow \cX\text{ log alt}}{\mathrm{colim}}H^{n}(\underline{\cU\times_\cX \cX'},\mathcal{F}|_{\underline{\cU\times_\cX \cX'}}^{\text{sh}}).\]
\item\label{item-3-key-corollary} For any presheaf of $\mathcal{O}_{\cX}^{\text{pre}}$-modules $\cF$ and log alteration $\cX'\rightarrow \cX$, we have for all $n\geq 0$, 
\[R^n\arrowdown_{\cX'}\cF^{\text{sh}}=\underset{q\colon \cX''\rightarrow \cX'\text{ log alt}}{\mathrm{colim}}R^{n}\underline{q}{}^{\text{\'et}}_{*}(\arrowdown_{\cX''}\mathcal{F})^{\text{sh}}.\]
\item\label{item-4-key-corollary} For any $\mathcal{O}_{\underline{\cX}{}_{\text{\'et}}}$-module $F$, we have for all $n\geq 0$,
\[L_n\arrowup F=\underset{q\colon \cX'\rightarrow \cX\text{ log alt}}{\mathrm{colim}}\arrowup L_{n}\underline{q
}{}_{\text{\'et}}^{*}F.\]
\end{enumerate}
\end{corollary}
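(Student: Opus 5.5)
Each of the four items is a specialization of Proposition \ref{key proposition} to a suitable morphism $f$, together with the simplification that the index category of squares \eqref{key-proposition-equation-1} becomes cofinally the category of log alterations of one fixed stack.

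\emph{Item \eqref{item-1-key corollary}.} This is precisely the identity $\cF'=\cF^{\text{sh}}$ established inside the proof of Proposition \ref{key proposition}; see \eqref{equation: first time sh is written differently}. Alternatively, apply Proposition \ref{key proposition}\eqref{item-1-key proposition} with $f=\mathrm{id}_\cX$ and $n=0$. For $f=\mathrm{id}$, the squares with $\cY'=\cX$ are cofinal: given any square, replace $\cX'$ by $\cX'\times_\cY\cY'$, which is a log alteration by Proposition \ref{proposition: basic properties of log alterations}. Then $\underline{f}{}'$ is an identity.

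\emph{Item \eqref{item-2-key-corollary}.} Apply Proposition \ref{key proposition}\eqref{item-1-key proposition} to the structure morphism $f\colon\cU\to\operatorname{Spec}k$ with trivial log structure, which is qcqs since $\cU$ is. The presheaf is $\cF|_\cU$, and we use that $(\cF|_\cU)^{\text{sh}}=\cF^{\text{sh}}|_\cU$. The target has no nontrivial log alterations, so the colimit runs over log alterations $\cU'\to\cU$. Taking global sections commutes with filtered colimits on the qcqs base, giving
\[H^n(\cU,\cF^{\text{sh}})=\operatorname{colim}_{\cU'}H^n(\underline{\cU}{}',(\cF|_{\cU'})^{\text{sh}}).\]
It remains to see that the alterations $\cU\times_\cX\cX'$ are cofinal among log alterations of $\cU$. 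Proposition \ref{proposition-alteration-to-saturated-morphisms}\eqref{item: strictify log etale via log alteration} gives an $\cX'$ over which $\cU$ becomes strict \'etale. Over such a base, every log alteration of $\cU\times_\cX\cX'$ is strict \'etale locally pulled back from the base, so it is dominated by some $\cU\times_\cX\cX''$.

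I expect this cofinality to be the main obstacle. Since alterations of $\cU$ are only \'etale locally pulled back from $\cX'$, one must check that a local domination glues to a global one. I would try to use quasi-compactness of $\cU$ to pick finitely many local dominations and take a common refinement, noting that log alterations are monomorphisms, so compatibility is automatic.

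\emph{Item \eqref{item-3-key-corollary}.} By Proposition \ref{prop: explicit description of up and down}, $\arrowdown_{\cX'}=\pi^{\text{l\'et}}_*\circ q_{\text{l\'et}}^{*}$, and $q_{\text{l\'et}}$ is an equivalence, so we may replace $\cX$ by $\cX'$. Then $R^n\arrowdown_{\cX'}$ is $R^n\pi^{\text{l\'et}}_*$ for $\pi\colon\cX'\to\underline{\cX}{}'_{\text{triv}}$. Applying Proposition \ref{key proposition}\eqref{item-1-key proposition} to $\pi$ is not directly right, since it yields $\arrowup$ on the trivially logged target, which is just the identity. The target admits only trivial alterations, so the colimit is over $q\colon\cX''\to\cX'$ with $\underline{f}{}'=\underline{q}$. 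This gives the formula. Here $\pi$ is qcqs because it is the identity on underlying stacks.

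\emph{Item \eqref{item-4-key-corollary}.} We have $\arrowup=\mathsf{rest}^*_{\cX,\cX}=\pi_{\text{l\'et}}^*$ for $\pi\colon\cX\to\underline{\cX}_{\text{triv}}$, and $\arrowup F=F$ on the trivial target. Apply Proposition \ref{key proposition}\eqref{item-2-key proposition} with $f=\pi$. The squares reduce to log alterations $q\colon\cX'\to\cX$ with $\underline{f}{}'=\underline{q}$ and $p=\mathrm{id}$, yielding $L_n\arrowup F=\operatorname{colim}\arrowup L_n\underline{q}{}^*_{\text{\'et}}F$.
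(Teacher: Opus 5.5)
Your items (1), (3) and (4) follow the paper: each is Proposition~\ref{key proposition} specialized to $f=\mathrm{id}_\cX$ or to the canonical morphism to the trivially logged underlying stack. (In your alternative argument for (1) the indices are muddled: the cofinal squares are those with $\cY'=\cX'$ and $f'=\mathrm{id}$.)

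The gap is in item (2). Because you route through the log \'etale topos of $\cU$ itself via $\cU\to\operatorname{Spec}k$, you get a colimit over log alterations $\cU'\to\cU$. Your argument then rests on the $\cU\times_\cX\cX''$ being cofinal among these, which you leave open. Your sketch does not establish this. Strict \'etale locally on $\cU\times_\cX\cX'$, a log alteration is pulled back from a log alteration of a chart neighbourhood $W\to\cX'$, not of $\cX'$. Taking a common refinement of finitely many local dominations presupposes that each already comes from a global log alteration of $\cX'$, and producing that extension is exactly the missing content. You also silently use $\Gamma(\operatorname{Spec}k,R^nf^{\text{l\'et}}_*(-))=H^n(\cU,-)$, and likewise on each $\ul{\cU}{}'$. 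This is true because $\cO$-modules on $(\operatorname{Spec}k)_{\text{\'et}}$ are quasi-coherent by Galois descent, hence acyclic, but for $k$ not separably closed it needs saying.

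The cofinality does follow from the tool you already cite, applied to the right morphism. Given $\cU'\to\cU$, apply Proposition~\ref{proposition-alteration-to-saturated-morphisms}\eqref{item: strictify log etale via log alteration} to the log \'etale composite $\cU'\to\cU\to\cX$ (note $\cU'$ is qc) and to $\cU\to\cX$, and pass to a common refinement $\cX''$. Then $\cU'\times_\cX\cX''\to\cU\times_\cX\cX''$ is a log alteration between strict \'etale $\cX''$-stacks. So it is strict with \'etale underlying map; being a monomorphism it is an open immersion, and being proper and surjective it is an isomorphism. Hence $\cU\times_\cX\cX''\to\cU$ factors through $\cU'$.

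The paper avoids this comparison altogether. It first uses Proposition~\ref{proposition-alteration-to-saturated-morphisms} to assume $\cU\to\cX$ is strict \'etale. Then $\cU$ is a coherent object coming from the stage $\ul{\cX}{}_{\text{\'et}}$ of the cofiltered limit $\cX^r_{\text{l\'et}}=\lim_{\cX'}(\ul{\cX}{}')^r_{\text{\'et}}$ of Proposition~\ref{proposition on log etale topos}, and its image at the stage $\cX'$ is $\ul{\cU\times_\cX\cX'}$. Since item (1) presents $\cF^{\text{sh}}$ as the colimit of the pulled-back $(\arrowdown_{\cX'}\cF)^{\text{sh}}$, \cite[Corollaire VI.8.7.7]{SGA4} gives the formula directly. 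The result is already indexed by log alterations of $\cX$, with no detour through cohomology over $\operatorname{Spec}k$.
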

\begin{proof}
Claim \eqref{item-1-key corollary} is the same as equation \eqref{equation: first time sh is written differently}, but also follows from Proposition \ref{key proposition} \eqref{item-1-key proposition} by taking $f=\text{id}_\cX$ and $n=0$. To show claim \eqref{item-2-key-corollary}, we may by Proposition \ref{proposition-alteration-to-saturated-morphisms} assume that $\cU\rightarrow \cX$ is strict \'etale, in which case the claim follows from claim \eqref{item-1-key corollary}, Proposition \ref{proposition on log etale topos} and \cite[Corollaire VI.8.7.7]{SGA4}. Claims \eqref{item-3-key-corollary} and \eqref{item-4-key-corollary} follow from applying Proposition \ref{key proposition} \eqref{item-1-key proposition} and \eqref{item-2-key proposition} respectively to the canonical morphism $\cX\rightarrow \underline{\cX}_{\text{triv}}$ to the underlying DM stack equipped with the trivial log structure.
\end{proof}
\section{Static quasi-coherent sheaves}
In this section we introduce staticity, and more generally log Tor dimension, using the framework of Olsson's stack of log structures \cite{log-geometry-and-algebraic-stacks}.
\subsection{Relative Tor dimension for algebraic stacks}\label{relative Tor dimension}
Our notion of log Tor dimension in the next section is built on the concept of relative Tor dimension for morphisms of algebraic stacks. Here we collect basic definitions and properties of the latter notion since we were not able to find an appropriate reference.
\begin{definition}\label{Definition-of-relative-Tor dimension}
Let $\underline{f}\colon \underline{X}\rightarrow \underline{Y}$ be a morphism of schemes and $F$ a quasi-coherent sheaf on $\underline{X}$. We say that $F$ has \emph{Tor dimension} $\leq d$ over $\underline{Y}$ for some integer $d\geq 0$ if for every $x\in \underline{X}$, we have
\[
\operatorname{Tor}^{\mathcal{O}_{\underline{Y},\ul{f}(x)}}_i\!\bigl(F_x,M) = 0 \qquad \text{for all } i > d,
\]
where $M$ is any $\mathcal{O}_{\underline{Y},\ul{f}(x)}$-module. We say the morphism $f$ has \textit{Tor dimension $\leq d$} if $F= \mathcal{O}_X$ has tor dimension $\leq d$ in the sense defined above.
\end{definition}
\begin{remark}
    By \cite[fd Lemma 4.1.10]{WeibelHomAlg}, it is enough to check this vanishing only for $i=d+1$.
\end{remark}
In order to generalize this definition to algebraic stacks, we need the following result.
\begin{lemma}\label{lemma: nice properties of rel Tor dim}
    Let $\underline{f}\colon \underline{X}\rightarrow \underline{Y}$ be a morphism of schemes, $F$ a quasi-coherent sheaf on $\underline{X}$ and $d\in\mathbb{Z}_{\geq 0}$.
\begin{enumerate}
    \item\label{item: rel Tor stacks cover target} Let $\underline{g}: \underline{Y}'\rightarrow \underline{Y}$ be flat. If $F$ is of Tor dimension $\leq d$ over $\underline{Y}$, then so is the pullback of $F$ along $\underline{X}\times_{\underline{Y}} \underline{Y}'\rightarrow \underline{X}$ over $\underline{Y}'$. The converse implication holds if $\underline{g}$ is also surjective.
    \item\label{item: rel Tor stacks cover source} Let $\underline{g}\colon \underline{X}'\rightarrow \underline{X}$ be flat. If $F$ is of Tor dimension $\leq d$ over $\underline{Y}$, then so is $\ul{g}^*F$ over $\underline{Y}$. If $\underline{g}$ is also surjective, then the converse holds too. 
    \item\label{item: rel Tor stacks etale} Let $\underline{g}\colon \underline{Y}\rightarrow \underline{Z}$ be of Tor dimension $\leq e$. If $F$ has Tor dimension $\leq d$ over $\underline{Y}$, then $F$ has Tor dimension $\leq d+e$ over $\underline{Z}$. If $\underline{g}$ is \'etale and $F$ has Tor dimension $\leq d$ over $\underline{Z}$, then it also has Tor dimension $\leq d$ over $\underline{Y}$.
\end{enumerate}
\end{lemma}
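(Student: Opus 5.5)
The plan is to reduce all three claims to statements about stalks and flat base change of Tor, using the characterization of Tor dimension $\leq d$ at a point $x$ as vanishing of $\operatorname{Tor}_{d+1}^{\mathcal{O}_{\underline{Y},y}}(F_x,M)$ for all $M$ (by the Remark, one index suffices). Since $\operatorname{Tor}$ commutes with filtered colimits and every module is a filtered colimit of finitely presented ones, it is enough to test vanishing against a restricted class of $M$ when convenient. We will also freely use that, for a flat local homomorphism $A\to B$, an $A$-module $N$ and a $B$-module $P$, one has $\operatorname{Tor}^A_i(N,P)\cong \operatorname{Tor}^B_i(N\otimes_A B,P)$, and that $\operatorname{Tor}^A_i(N,M)\otimes_A B\cong\operatorname{Tor}^A_i(N,M\otimes_AB)$.

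For \eqref{item: rel Tor stacks cover target}, let $x'=(x,y')$ be a point of $\underline{X}'=\underline{X}\times_{\underline{Y}}\underline{Y}'$ over $x$, $y'$, and $y$. The stalk $(F')_{x'}$ is a localization of $F_x\otimes_{\mathcal{O}_{Y,y}}\mathcal{O}_{Y',y'}$. For an $\mathcal{O}_{Y',y'}$-module $M$, flatness of $\mathcal{O}_{Y,y}\to\mathcal{O}_{Y',y'}$ gives $\operatorname{Tor}^{\mathcal{O}_{Y',y'}}_i(F_x\otimes\mathcal{O}_{Y',y'},M)=\operatorname{Tor}^{\mathcal{O}_{Y,y}}_i(F_x,M)$, which vanishes for $i>d$. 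Localization is flat and commutes with Tor, so vanishing passes to $(F')_{x'}$. For the converse, given $x$, surjectivity lets us pick $x'$ over $x$. Then $\mathcal{O}_{X,x}\to\mathcal{O}_{X',x'}$ is a flat local, hence faithfully flat, map, and $\operatorname{Tor}^{\mathcal{O}_{Y,y}}_i(F_x,M)\otimes_{\mathcal{O}_{X,x}}\mathcal{O}_{X',x'}\cong \operatorname{Tor}^{\mathcal{O}_{Y',y'}}_i(F'_{x'},M\otimes\mathcal{O}_{Y',y'})$, which vanishes. Faithful flatness then gives the vanishing we need. Checking this last identification carefully, since $\mathcal{O}_{X',x'}$ is a localization of the tensor product and the Tor module carries both structures, is the main bookkeeping obstacle.

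Item \eqref{item: rel Tor stacks cover source} is simpler. For $x'\mapsto x$, we have $(\underline{g}^*F)_{x'}=F_x\otimes_{\mathcal{O}_{X,x}}\mathcal{O}_{X',x'}$ with $\mathcal{O}_{X,x}\to\mathcal{O}_{X',x'}$ flat. Hence $\operatorname{Tor}^{\mathcal{O}_{Y,y}}_i((\underline{g}^*F)_{x'},M)=\operatorname{Tor}^{\mathcal{O}_{Y,y}}_i(F_x,M)\otimes_{\mathcal{O}_{X,x}}\mathcal{O}_{X',x'}$, which we check by computing Tor with a flat resolution of $M$. This gives the forward direction, and faithful flatness at a preimage point gives the converse.

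For \eqref{item: rel Tor stacks etale}, set $A=\mathcal{O}_{Z,z}$ and $B=\mathcal{O}_{Y,y}$, and use the base-change spectral sequence
\[ E^2_{p,q}=\operatorname{Tor}^B_p\bigl(F_x,\operatorname{Tor}^A_q(B,M)\bigr)\Rightarrow \operatorname{Tor}^A_{p+q}(F_x,M). \]
Here $E^2_{p,q}=0$ when $p>d$ or $q>e$, so the abutment vanishes in degrees greater than $d+e$. For the second claim, with $g$ étale the map $A\to B$ is flat, so the spectral sequence degenerates and gives $\operatorname{Tor}^A_i(F_x,M)\cong\operatorname{Tor}^B_i(F_x,M\otimes_AB)$. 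To conclude, we must realize every $B$-module $N$ as such a term. Since $A\to B$ is a localization of an étale algebra, $B\otimes_AB\to B$ is a localization of a projection onto a factor: the diagonal is an open and closed immersion, so its image is a union of components. Hence $N$ is a direct summand of $N\otimes_AB$ in the appropriate localized sense. Regarding $N$ as an $A$-module, $\operatorname{Tor}^B_i(F_x,N)$ is then a summand of $\operatorname{Tor}^B_i(F_x,N\otimes_AB)=\operatorname{Tor}^A_i(F_x,N)=0$ for $i>d$.
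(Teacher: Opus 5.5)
Your proof is correct and takes essentially the same route as the paper. Both reduce to stalks and use flat base change of $\operatorname{Tor}$, faithful flatness of flat local homomorphisms, and the base-change spectral sequence for the first half of (3). For the étale half of (3), you split $N$ off $N\otimes_A B$ using that the diagonal of $A\to B$ is open and closed; this is the local-ring version of the paper's argument that the graph $\underline{X}\to\underline{X}\times_{\underline{Z}}\underline{Y}$ is an open immersion (a base change of $\Delta_{\underline{g}}$), combined with parts (1) and (2).
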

\begin{proof}
    Since all of these claims may be checked stalkwise, this reduces to commutative algebra. More specifically, \eqref{item: rel Tor stacks cover target} follows from \cite[066M]{stacks} and \cite[0B67]{stacks}, \eqref{item: rel Tor stacks cover source} from \cite[0DJF]{stacks} and the first part of \eqref{item: rel Tor stacks etale} follows from \cite[066K]{stacks}. For all of this also recall that a local morphism of local rings is faithfully flat if and only if it flat. 
    For the second part of \eqref{item: rel Tor stacks etale}, it follows from \eqref{item: rel Tor stacks cover target} that $G=(\underline{X}\times_{\underline{Z}} \underline{Y}\rightarrow \underline{X})^{*}F$ has Tor dimension $\leq d$ over $\underline{Y}$. Note that $(\underline{\text{Id}},\underline{f})\colon \underline{X}\rightarrow \underline{X}\times_{\underline{Z}} \underline{Y}$ is an open immersion as it is a base change of the diagonal $\Delta_{\ul{g}}\colon \underline{Y}\rightarrow \underline{Y}\times_{\underline{Z}} \underline{Y}$, which is an open immersion as $\underline{g}$ is \'etale. It thus follows from \eqref{item: rel Tor stacks cover source} that $F=(\underline{\text{Id}},\underline{f})^*G$ has Tor dimension $\leq d$ over $\underline{Y}$.
\end{proof}
\begin{definition}
    Let $f\colon \underline{\mathcal{X}}\rightarrow \underline{\mathcal{Y}}$ be a morphism of algebraic stacks and $F$ a quasi-coherent sheaf on $\underline{\mathcal{X}}$. Then we say that $F$ is of \textit{Tor dimension} $\leq d$ over $\underline{\mathcal{Y}}$ if there is a commutative diagram 
    \begin{equation*}
    \begin{tikzcd}
        \underline{U} \rar\dar[two heads, "\underline{\pi}"] & \underline{V}\dar\\ \underline{\mathcal{X}} \rar["\underline{f}"] & \underline{\mathcal{Y}}
    \end{tikzcd}
    \end{equation*}
    where $\underline{U}$ and $\underline{V}$ are schemes, the vertical morphisms are flat, $\underline{\pi}$ is surjective and $\underline{\pi}^*F$ is of Tor dimension $\leq d$ over $\underline{V}$.
\end{definition}
It is straightforward to check that Lemma \ref{lemma: nice properties of rel Tor dim} also holds more generally for algebraic stacks. The following Proposition gives us a more concrete handle on relative Tor dimension:
\begin{proposition}\label{prop: stacky Tor dimension in terms of qcoh sheaves}
    Let $\underline{f}\colon \underline{\mathcal{X}}\rightarrow \underline{\mathcal{Y}}$ be a morphism of algebraic stacks and $F$ a quasi-coherent sheaf on $\underline{\mathcal{X}}$. If $F$ has Tor dimension $\leq d$ over $\underline{\mathcal{Y}}$, then for every quasi-coherent sheaf $G$ on $\underline{\mathcal{Y}}$ we have $\mathcal{H}^{-i}(F\otimes_{\mathcal{O}_{\underline{\mathcal{X}}}}^\mathbb{L} L\underline{f}^*G) = 0$ for all $i>d$. \\
    Moreover, if $\underline{\mathcal{Y}}$ is quasi-separated and admits an open cover by substacks whose diagonal is affine\footnote{For example, this holds if $\ul{\cY}$ is a quasi-separated scheme or an algebraic stack with affine diagonal.}, then the converse implication holds and it suffices to check vanishing only for $i=d+1$.
\end{proposition}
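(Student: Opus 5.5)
For the forward implication, I would first reduce to schemes. The vanishing of a cohomology sheaf of a complex on $\underline{\mathcal{X}}$ can be checked after flat surjective pullback. So I pick a diagram $\underline{U}\to\underline{V}$ as in the definition, with $\underline{\pi}\colon\underline{U}\to\underline{\mathcal{X}}$ flat surjective and $\underline{\rho}\colon\underline{V}\to\underline{\mathcal{Y}}$ flat. Since $\underline{\pi}$ is flat, $\underline{\pi}^*$ is exact and commutes with derived tensor products, which gives
\[
\underline{\pi}^*\bigl(F\otimes^{\mathbb{L}} L\underline{f}^*G\bigr)\cong \underline{\pi}^*F\otimes^{\mathbb{L}} L\underline{g}^*(\underline{\rho}^*G),
\]
where $\underline{g}\colon\underline{U}\to\underline{V}$ is the top map. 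Here I use $\underline{\rho}^*=L\underline{\rho}^*$ by flatness, together with $L\underline{\pi}^*L\underline{f}^*=L\underline{g}^*L\underline{\rho}^*$. This reduces the claim to the case where both sides are schemes, and further to affine opens.

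On affine schemes the computation is stalkwise. At $x\in\underline{U}$ with image $y$, the stalk of $\underline{\pi}^*F\otimes^{\mathbb{L}}_{\mathcal{O}_U}L\underline{g}^*G'$ is $F_x\otimes^{\mathbb{L}}_{\mathcal{O}_{U,x}}(\mathcal{O}_{U,x}\otimes^{\mathbb{L}}_{\mathcal{O}_{V,y}}G'_y)\cong F_x\otimes^{\mathbb{L}}_{\mathcal{O}_{V,y}}G'_y$. Its $\mathcal{H}^{-i}$ is $\operatorname{Tor}_i^{\mathcal{O}_{V,y}}(F_x,G'_y)$, which vanishes for $i>d$ by Definition~\ref{Definition-of-relative-Tor dimension}.

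For the converse, the goal is to recover vanishing of $\operatorname{Tor}_{d+1}^{\mathcal{O}_{V,y}}(F_x,M)$ for \emph{arbitrary} modules $M$ from vanishing against pullbacks of quasi-coherent sheaves on $\underline{\mathcal{Y}}$. By the remark after the definition, vanishing at $d+1$ alone suffices, which explains the last clause. I would carry this out in three steps.

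\textbf{Step 1: reduce to a single piece of the cover.} Tor dimension is flat local on the target by Lemma~\ref{lemma: nice properties of rel Tor dim}\eqref{item: rel Tor stacks cover target}, so it suffices to work over an open substack $\underline{\mathcal{W}}\subset\underline{\mathcal{Y}}$ with affine diagonal. For this I need every quasi-coherent sheaf on $\underline{\mathcal{W}}$ to extend to $\underline{\mathcal{Y}}$. If $j\colon\underline{\mathcal{W}}\to\underline{\mathcal{Y}}$ is quasi-compact, which holds after shrinking since $\underline{\mathcal{Y}}$ is quasi-separated, then $j_*G$ is quasi-coherent and satisfies $j^*j_*G=G$.

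\textbf{Step 2: pass to a smooth atlas.} Choose a smooth atlas $\rho\colon\underline{V}=\operatorname{Spec}A\to\underline{\mathcal{W}}$. Since the diagonal is affine, $\rho$ is affine, so $\rho_*M$ is quasi-coherent for any $A$-module $M$. Moreover, $M$ is a direct summand of $\rho^*\rho_*M$: the counit $\rho^*\rho_*M\to M$ is split by a section arising from the diagonal $\underline{V}\to\underline{V}\times_{\underline{\mathcal{W}}}\underline{V}$. Tor against $M$ is therefore a direct summand of Tor against $\rho^*\rho_*M$, and the latter vanishes by the hypothesis applied to $G=\rho_*M$ (extended as in Step 1). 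I would pull everything back to $\underline{U}_V=\underline{\mathcal{X}}\times_{\underline{\mathcal{W}}}\underline{V}$.

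\textbf{Step 3: localize to stalks.} Finally, I need to pass from global modules to the local modules $M$ over $\mathcal{O}_{V,y}$. Since $\operatorname{Tor}$ over a localization is computed by localizing, a local module can be viewed as an $A$-module, and this completes the converse.

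I expect the main obstacle to be Step 2, namely the splitting of $M$ off $\rho^*\rho_*M$, together with ensuring that $j_*$ preserves quasi-coherence. These are exactly the points where the affine-diagonal and quasi-separatedness hypotheses enter.
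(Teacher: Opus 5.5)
Your forward direction and your Steps 1 and 3 are fine. The converse, however, rests on a false claim in Step 2: the counit $\rho^*\rho_*M\to M$ is in general \emph{not} split as a map of $A$-modules. A counterexample already occurs for $\underline{\mathcal W}=\operatorname{Spec}k$, which has affine diagonal. Take $\rho\colon \operatorname{Spec}k[t]\to\operatorname{Spec}k$ and $M=k[t]/(t)$. Then $\rho^*\rho_*M=k[t]$, and the counit is the quotient $k[t]\to k[t]/(t)$. This has no $k[t]$-linear section, since every $k[t]$-map $k[t]/(t)\to k[t]$ is zero.

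The diagonal $V\to V\times_{\underline{\mathcal W}}V$ does not help: restricting to it produces the counit itself, not a section. The only splitting you actually have is the triangle identity for $\rho_*$ of the counit. That splitting is $\mathcal{O}_{\underline{\mathcal W}}$-linear, so it induces nothing on $\operatorname{Tor}^{\mathcal{O}_{V,y}}$. Hence you cannot conclude that $\operatorname{Tor}$ against $M$ is a direct summand of $\operatorname{Tor}$ against $\rho^*\rho_*M$.

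The paper replaces the splitting with a faithfulness argument on $V\times_{\underline{\mathcal W}}V$. Set up notation as follows:
\begin{itemize}
\item $F_V$ is the pullback of $F$ to $U_V$, and $g\colon U_V\to V$ is the base change of $f$;
\item $\pi_1',\pi_2'\colon U_V\times_{\underline{\mathcal X}}U_V\to U_V$ are the projections lying over $\mathrm{pr}_1,\mathrm{pr}_2\colon V\times_{\underline{\mathcal W}}V\to V$.
\end{itemize}
Then $\pi_1'$ is faithfully flat. Also $\pi_2'$ is affine, because $\mathrm{pr}_2$ is affine: the diagonal of $\underline{\mathcal W}$ is affine and $V$ is affine.

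Now pull back along $\pi_1'$ and push forward along $\pi_2'$. Three inputs are used: the projection formula, base change for the cartesian square over the flat affine map $\mathrm{pr}_2$, and flat base change $\rho^*\rho_*M\cong(\mathrm{pr}_2)_*\mathrm{pr}_1^*M$. Together they give
\[
(\pi_2')_*(\pi_1')^*\,\mathcal{H}^{-i}\bigl(F_V\otimes^{\mathbb{L}}Lg^*M\bigr)\cong\mathcal{H}^{-i}\bigl(F_V\otimes^{\mathbb{L}}Lg^*\rho^*\rho_*M\bigr).
\]
The right side vanishes for $i=d+1$ by the hypothesis applied to $G=\rho_*M$, extended as in your Step 1. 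The functor $(\pi_2')_*(\pi_1')^*$ annihilates no nonzero quasi-coherent sheaf. So the Tor sheaf against $M$ vanishes, and your Step 3 then finishes the argument.

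The missing idea is therefore an exact, faithful transform of the Tor sheaf, namely $(\pi_2')_*(\pi_1')^*$, in place of a direct-summand decomposition.
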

\begin{proof}
    Let us first assume that $\ul{\mathcal{Y}}$ is a scheme. Since both the cohomology vanishing and Tor dimension are faithfully flat local in $\ul{\mathcal{X}}$, we may also assume that $\ul{\mathcal{X}}$ is a scheme. Then for any $x\in \ul{\mathcal{X}}$ and $F$ and $G$ as above we have 
    $$
    \mathcal{H}^{-i}(F\otimes_{\mathcal{O}_{\ul{\mathcal{X}}}}^\mathbb{L} L\ul{f}^*G)_x = \mathcal{H}^{-i}(F\otimes_{\ul{f}^{-1}\mathcal{O}_\mathcal{Y}}^\mathbb{L} \ul{f}^{-1} G)_x = \text{Tor}_i^{\mathcal{O}_{\ul{\mathcal{Y}},\ul{f}(x)}}(F_x,G_{\ul{f}(x)}).$$
    This implies the first claim. For the second, it suffices to show that any $\mathcal{O}_{\ul{\mathcal{Y}},\ul{f}(x)}$-module $N$ can be realized as the stalk $N=G_{\ul{f}(x)}$ of some quasi-coherent $G$. Indeed, let $i\colon \ul{U}\hookrightarrow \ul{\mathcal{Y}}$ be an affine neighborhood of $\underline{f}(x)$. Then $N$ can be regarded as an $\mathcal{O}_{\ul{\mathcal{Y}}}(\ul{U})$-module via $\mathcal{O}_{\ul{\mathcal{Y}}}(\ul{U})\rightarrow \mathcal{O}_{\ul{\mathcal{Y}},\ul{f}(x)}$. Since $\ul{\mathcal{Y}}$ is quasi-separated, it follows that $i$ is qcqs and hence $G= i_*\widetilde{N}$ is quasi-coherent with stalk at $\ul{f}(x)$ equal to $N$ as desired.\\
    For arbitrary $\ul{\mathcal{Y}}$, we choose a flat cover $\phi\colon \ul{U}\twoheadrightarrow \ul{\mathcal{Y}}$ by a scheme $\ul{U}$ and let 
    \begin{equation*}
    \begin{tikzcd}
        \ul{\mathcal{U}} = \ul{\mathcal{X}}\times_{\ul{\mathcal{Y}}}\dar[two heads, "\ul{\phi}'"] U\rar["\underline{f}'"] & U\dar[two heads, "\ul{\phi}"]\\
        \underline{\mathcal{X}}\rar["\underline{f}"]&\underline{\mathcal{Y}}
    \end{tikzcd}
    \end{equation*}
    be the resulting base change. Since $\underline{\phi}'$ is faithfully flat, it follows that $\mathcal{H}^{-i}(F\otimes_{\mathcal{O}_{\ul{\mathcal{X}}}}^\mathbb{L} L\ul{f}^*G)$ vanishes if and only if 
    \begin{align}\label{blablashit}
        L(\ul{\phi}')^*(F\otimes^{\mathbb{L}}_{\mathcal{O}_{\ul{\mathcal{X}}}} L\ul{f}^*G) = (\ul{\phi}')^*F \otimes_{\mathcal{O}_{\mathcal{U}}}^\mathbb{L} L(\ul{f}\circ \ul{\phi}')^* G = (\ul{\phi}')^*F \otimes_{\mathcal{O}_{\ul{\mathcal{U}}}}^\mathbb{L} L(\ul{f}')^*\ul{\phi}^* G
    \end{align}
    vanishes in degree $-i$. As $\ul{U}$ is a scheme, this shows the first claim.
    
    For the second claim, note that it is Zariski local in $\ul{\mathcal{Y}}$ as any quasi-coherent sheaf on a qcqs open substack of $\ul{\mathcal{Y}}$ may be extended to all of $\ul{\mathcal{Y}}$ as seen above. Thus, we may assume that $\ul{\cY}$ has affine diagonal and $\ul{U}$ is affine, which means that we only need to check that $\mathcal{H}^{-i}((\ul{\phi}')^*F\otimes_{\mathcal{O}_{\ul{\mathcal{U}}}}^\mathbb{L} L(\ul{f}')^*G)=0$ for $i=d+1$ and $G$ quasi-coherent on $\ul{U}$. Indeed, by \eqref{blablashit} we know that this is the case if $G$ is pulled back from $\ul{\mathcal{Y}}$. In general, denote by 
    \[
    \begin{tikzcd}
        \ul{\mathcal{U}}\times_{\ul{\mathcal{X}}} \ul{\mathcal{U}}\dar[xshift=0.7ex,"\pi_2'"] \dar[xshift=-0.7ex,"\pi_1'"'] \rar["\ul{f}''"] &\ul{U}\times_{\ul{\mathcal{Y}}} \ul{U}\dar[xshift=0.7ex,"\pi_2"] \dar[xshift=-0.7ex,"\pi_1"']\\
        \ul{\mathcal{U}} \rar["\ul{f}'"] &\ul{U}
    \end{tikzcd}
    \]
    the cartesian diagram involving the canonical projections. Since $\ul{\pi}'_1$ is faithfully flat, it suffices to show that
    $$
        L(\ul{\pi}_1')^*((\ul{\phi}')^*F\otimes_{\mathcal{O}_{\ul{\mathcal{U}}}}^\mathbb{L} L(\ul{f}')^*G) = (\ul{\phi}'\circ \ul{\pi}_1')^*F \otimes^\mathbb{L}_{\mathcal{O}_{\ul{\mathcal{U}}\times_{\ul{\mathcal{X}}} \ul{\mathcal{U}}}}L(\ul{f}'\circ\ul{\pi}_1')^*G = (\ul{\pi}'_2)^*(\ul{\phi}')^*F \otimes^\mathbb{L}_{\mathcal{O}_{\ul{\mathcal{U}}\times_{\ul{\mathcal{X}}} \ul{\mathcal{U}}}}L(\ul{f}'')^*(\ul{\pi}_1^*G)
    $$
    has the desired vanishing. Note also that $\ul{\pi}_2'$ is affine since it is the base change of $\ul{\pi}_2\colon \ul{U}\times_{\ul{\mathcal{Y}}}\ul{U}\rightarrow \ul{U}\times \ul{U}\xrightarrow{\ul{\pi}_2} \ul{U}$, where the first morphism is a base change of $\Delta_{\ul{\mathcal{Y}}}$ and the second is affine since $\ul{U}$ is. Thus, it is enough to show vanishing for the push forward
    \begin{align*}
         R(\ul{\pi}_2')_* \left((\ul{\pi}'_2)^*(\ul{\phi}')^*F \otimes^\mathbb{L}_{\mathcal{O}_{\ul{\mathcal{U}}\times_{\ul{\mathcal{X}}} \ul{\mathcal{U}}}}L(\ul{f}'')^*(\ul{\pi}_1^*G)\right)
        &= (\ul{\phi}')^*F\otimes^\mathbb{L}_{\mathcal{O}_{\ul{\mathcal{U}}}}R(\ul{\pi}_2')_*L(\ul{f}'')^*(\ul{\pi}_1^*G)\\
        &= (\ul{\phi}')^*F\otimes^\mathbb{L}_{\mathcal{O}_{\ul{\mathcal{U}}}}L(\ul{f}')^*((\ul{\pi}_2)_*\ul{\pi}_1^*G)\\
        &= (\ul{\phi}')^*F\otimes^\mathbb{L}_{\mathcal{O}_{\ul{\mathcal{U}}}}L(\ul{f}')^*(\ul{\phi}^*\ul{\phi}_*G),
    \end{align*}
    where we used the projection formula \cite[Corollary 4.12]{HallRydh2017} (note that $\ul{\pi}_2'$ is concentrated as it is affine) in the first equality and flat base change for stacks (see \cite[Lemma 1.2(4)]{HallRydh2017}) in the last two equalities. By \eqref{blablashit}, this finishes the proof.
\end{proof}
In particular, this gives us the following characterization that will be useful later on:
\begin{corollary}\label{corollary: relative Tor dimension quotient stack}
    Consider a sequence of morphisms
    \[
        \ul{X}\longrightarrow \ul{Y}\longrightarrow [\ul{Y}/\ul{G}],
    \]
where $\ul{X}\rightarrow \ul{Y}$ is a morphism of affine schemes corresponding to a ring homomorphism $A\rightarrow B$ and $\ul{G}=\text{Spec}(C)$ is a smooth affine group scheme acting on $\ul{Y}$. Then for any $B$-module $M$ the following are equivalent:
    \begin{enumerate}
        \item\label{item: tor dim quotient stack} $\widetilde{M}$ is of Tor dimension $\leq d$ over $[\ul{Y}/\ul{G}]$.
        \item\label{item: tor dim equiv mods} We have $\text{Tor}_{i}^A(M,N)=0$ for all $i>d$ (or equivalently, $i=d+1$) and any $A$-module $N$ that admits a $C$-comodule structure compatible with the one on $A$. 
    \end{enumerate}
\end{corollary}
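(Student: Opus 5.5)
The plan is to apply Proposition \ref{prop: stacky Tor dimension in terms of qcoh sheaves} to the composite $\ul{f}\colon \ul{X}\rightarrow \ul{Y}\rightarrow [\ul{Y}/\ul{G}]$, and then translate its conditions into module-theoretic ones. First I check that the converse direction of that proposition applies. Since $\ul{G}$ is affine and $\ul{Y}$ is affine, the stack $[\ul{Y}/\ul{G}]$ has affine diagonal: it is the morphism $\ul{G}\times\ul{Y}\rightarrow \ul{Y}\times \ul{Y}$, a morphism of affine schemes. In particular it is quasi-separated. Hence $\widetilde{M}$ has Tor dimension $\leq d$ over $[\ul{Y}/\ul{G}]$ if and only if $\mathcal{H}^{-i}(\widetilde{M}\otimes^{\mathbb{L}}_{\mathcal{O}_{\ul{X}}} L\ul{f}^*G)=0$ for all quasi-coherent $G$ on $[\ul{Y}/\ul{G}]$ and all $i>d$. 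Equivalently, the proposition also allows us to check only $i=d+1$.

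The second step is the dictionary. Quasi-coherent sheaves on $[\ul{Y}/\ul{G}]$ are the same as $\ul{G}$-equivariant quasi-coherent sheaves on the affine scheme $\ul{Y}$. These are $A$-modules $N$ with a $C$-comodule structure compatible with the coaction on $A$, and under this identification the pullback of $G$ to $\ul{Y}$ is $\widetilde{N}$. Since $\ul{Y}\rightarrow[\ul{Y}/\ul{G}]$ is flat (smooth, as $\ul{G}$ is smooth), we get $L\ul{f}^*G = L(\ul{X}\rightarrow\ul{Y})^*\widetilde{N}$. The source $\ul{X}$ is affine, so the derived tensor product is computed by modules:
\[
\mathcal{H}^{-i}\bigl(\widetilde{M}\otimes^{\mathbb{L}}_{\mathcal{O}_{\ul{X}}} L\ul{f}^*G\bigr) \;=\; \widetilde{H^{-i}\bigl(M\otimes^{\mathbb{L}}_B (B\otimes^{\mathbb{L}}_A N)\bigr)} \;=\; \widetilde{\operatorname{Tor}_i^A(M,N)}.
\]
This uses associativity of derived tensor products, $M\otimes^{\mathbb{L}}_B(B\otimes^{\mathbb{L}}_AN)\simeq M\otimes^{\mathbb{L}}_AN$. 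As a sheaf on an affine scheme vanishes exactly when its module of global sections does, condition \eqref{item: tor dim quotient stack} becomes vanishing of $\operatorname{Tor}_i^A(M,N)$ for all equivariant $N$, which is \eqref{item: tor dim equiv mods}. The parenthetical ``$i=d+1$ suffices'' is inherited directly from Proposition \ref{prop: stacky Tor dimension in terms of qcoh sheaves}.

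The main obstacle is the dictionary step: justifying that $L\ul{f}^*$ for a morphism into a stack agrees with the naive module computation. I would handle it by factoring $\ul{f}$ through the flat map $\ul{Y}\rightarrow[\ul{Y}/\ul{G}]$, where pullback is exact. This reduces the question to derived pullback between affine schemes, where it is the standard $B\otimes^{\mathbb{L}}_A-$. The equivalence $\mathrm{QCoh}([\ul{Y}/\ul{G}])\simeq$ equivariant modules is standard descent along the smooth cover $\ul{Y}\rightarrow[\ul{Y}/\ul{G}]$, with $\ul{Y}\times_{[\ul{Y}/\ul{G}]}\ul{Y}\cong \ul{G}\times \ul{Y}=\operatorname{Spec}(C\otimes A)$.
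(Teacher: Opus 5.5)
Your proposal is correct and follows the paper's proof. Both identify the modules in (2) with the pullbacks to $\ul{Y}$ of quasi-coherent sheaves on $[\ul{Y}/\ul{G}]$, then apply Proposition~\ref{prop: stacky Tor dimension in terms of qcoh sheaves} via $H^{-i}(M\otimes^{\mathbb{L}}_B(B\otimes^{\mathbb{L}}_A N))=\operatorname{Tor}_i^A(M,N)$. Your explicit check that $[\ul{Y}/\ul{G}]$ has affine diagonal, which is needed for the converse direction and the $i=d+1$ criterion, is a hypothesis the paper leaves implicit.
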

\begin{proof}
    Note that the modules occuring in \eqref{item: tor dim equiv mods} are exactly the quasi-coherent sheaves on $Y$ that are pulled back from $[Y/G]$. By Proposition \ref{prop: stacky Tor dimension in terms of qcoh sheaves}, it thus follows that \eqref{item: tor dim quotient stack} is equivalent to the vanishing of
    \[
        H^{-i}(M\otimes^{\mathbb{L}}_B (B\otimes^\mathbb{L}_A N) ) = \text{Tor}_i^A(M,N)
    \]
    for $i>d$ and such $N$, which is \eqref{item: tor dim equiv mods}.
\end{proof}
\subsection{Definition of static quasi-coherent sheaves}
In order to define log Tor dimension, we first recall Olsson's stack of log structures.
\begin{definition}
    Let $\cX$ be a DM $k$-stack with a log structure. We define \textit{Olsson's stack of log structures} $\cL og_{\cX}$ \textit{over} $\cX$ as the stack whose objects over a scheme $\ul{T}$ are pairs $(T, f)$, where $T$ is a (fine and saturated) log scheme with underlying scheme $\ul{T}$, and $f \colon T \to \cX$ is a morphism of stacks with log structures. When $\cX=\text{Spec}(k)$ with the trivial log structure, we write $\cL og_{k}\coloneqq \cL og_{\cX}$.
\end{definition}
\begin{remark}
    \begin{enumerate}
        \item This notion goes back to \cite{log-geometry-and-algebraic-stacks}, where $\cL og_{\cX}$ was shown to be an algebraic $\ul{\cX}$-stack if $\ul{\cX}$ is a scheme. By \cite[Exercise 10.A]{Algebraic-spaces-and-stacks}, it follows that $\cL og_{\cX}$ is always algebraic.
        \item Our notation differs from that of \cite{log-geometry-and-algebraic-stacks}, where the above notion is denoted $\cT or$ and $\cL og$ instead refers to the stack parametrizing \textit{fine} log structures.
    \end{enumerate}
\end{remark}
We are now ready to define log Tor dimension.
\begin{definition}\label{definition: log Tor dimension}
Let $\mathcal{X}$ be a DM $k$-stack with a log structure and $F$ a quasi-coherent sheaf on the underlying stack $\underline{\mathcal{X}}$. We say that $F$ has \emph{log Tor dimension $\leq d$} (over k) if $F$ has Tor dimension $\leq d$ along $\underline{\mathcal{X}}\rightarrow \mathcal{L}og_{k}$.
Moreover, in the case of $d=1$ (resp. $d=0$), we call $F$ \emph{static} (resp. \emph{log flat}). Furthermore, we call the DM stack $\mathcal{X}$ \textit{static} if its structure sheaf $\mathcal{O}_{\mathcal{X}}$ is static.
\end{definition}
This definition fits the pattern of \cite[Definition 4.1]{log-geometry-and-algebraic-stacks}. Also see \cite{gillam2016logarithmicflatness} for the first account of log flatness.
\begin{remark}\label{remark-on-reduction-on-artin-fan-log-Tor dimension}
If $\mathcal{X}$ admits an Artin fan $\mathcal{A}_\mathcal{X}$ in the sense of \cite{AWbirational} or a global chart $P\rightarrow \Gamma(\mathcal{X},\mathcal{M}_\mathcal{X})$, then it follows from Lemma \ref{lemma: nice properties of rel Tor dim} \eqref{item: rel Tor stacks etale} that a quasi-coherent sheaf $F$ on $\ul{\mathcal{X}}$ has log Tor dimension $\leq d$ if and only if it has Tor dimension $\leq d$ over $\mathcal{A}_\mathcal{X}$ or $\mathcal{A}_P$.
\end{remark}

Before giving examples, we provide a chart criterion for logarithmic Tor dimension in the spirit of \cite{gillam2016logarithmicflatness,Logarithmic-structures-of-Fontaine-Illuie-I-Kato}.

\begin{proposition}[Chart criterion]\label{statement-monoid-static}
Let $P\rightarrow A$ be a pre-log $k$-algebra and $M$ an $A$-module. Then for each integer $d\geq 0$, the following are equivalent:
\begin{enumerate}
    \item\label{item-1-monoid-claim} $\widetilde{ M}$ has log Tor dimension $\leq d$ on $X=\text{Spec}(P\rightarrow A)$. 
    \item\label{item-2-monoid-claim} For every prime ideal $\mathfrak{p}\subset P$, we have \[\text{Tor}_{d+1}^{k[P]}(M,k[P]/k[\mathfrak{p}])=0.\]
    \item\label{item-3-monoid-claim} For every abelian group $G$ and inclusion of monoids $ P\subset G$, we have
   \[\text{Tor}_{i}^{k[P]}(M,N_{\bullet})=0\]
    for all $i> d$ and $N_{\bullet}$ a $G$-graded $k[P]$-module such that $pN_{g}\subset N_{pg}$ for all $p\in P$ and $g\in G$.
\end{enumerate}
\end{proposition}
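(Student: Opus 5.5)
We prove (1)$\Leftrightarrow$(3) via the quotient stack description, and (3)$\Leftrightarrow$(2) via a filtration argument on graded modules. Throughout we write $T=\text{Spec}(k[P^{gp}])$.

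\textbf{Step 1: (1)$\Leftrightarrow$(3).} The chart $P\rightarrow A$ gives a strict morphism $X\rightarrow \cA_P=[\text{Spec}(k[P])/T]$. One should be careful here: $P$ need only be a pre-log chart, and Remark~\ref{remark-on-reduction-on-artin-fan-log-Tor dimension} is stated for fs charts. If $P$ is not fine and saturated, I would first pass to the associated log structure and argue that $\cA_P\rightarrow \cL og_k$ is still étale on the relevant locus, or else accept $\cA_P$ as the working definition. By that remark, (1) is equivalent to $\widetilde M$ having Tor dimension $\leq d$ over $\cA_P$. Corollary~\ref{corollary: relative Tor dimension quotient stack}, applied to $\text{Spec}(A)\rightarrow\text{Spec}(k[P])\rightarrow\cA_P$, translates this into the vanishing of $\text{Tor}_i^{k[P]}(M,N)$ for $i>d$ and all $P^{gp}$-graded $k[P]$-modules $N$.

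This is (3) for the choice $G=P^{gp}$. Conversely, suppose $P\subset G$ is any inclusion and $N_\bullet$ is $G$-graded. Then $N$ splits as a direct sum over the cosets $G/P^{gp}$ of $P^{gp}$-graded modules, after choosing a coset representative and shifting. Since Tor commutes with direct sums, general $G$ reduces to $G=P^{gp}$.

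\textbf{Step 2: (3)$\Rightarrow$(2).} For a prime ideal $\mathfrak p$ of $P$, the quotient $k[P]/k[\mathfrak p]=k[P\setminus\mathfrak p]$ is naturally $P^{gp}$-graded. So (2) is the special case $i=d+1$ of (3).

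\textbf{Step 3: (2)$\Rightarrow$(3).} This is the main obstacle. By the remark after Definition~\ref{Definition-of-relative-Tor dimension} and Corollary~\ref{corollary: relative Tor dimension quotient stack}, it suffices to prove $\text{Tor}_{d+1}^{k[P]}(M,N)=0$ for every $P^{gp}$-graded $N$. The plan is a dévissage of graded modules.

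\emph{Reduction to finitely generated $N$.} Tor commutes with filtered colimits, and every graded module is the filtered colimit of its finitely generated graded submodules.

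\emph{Reduction to cyclic quotients.} A finitely generated graded module over the noetherian graded ring $k[P]$ (here I take $P$ finitely generated, reducing to that case otherwise via colimits) has a finite filtration whose subquotients are shifts of $k[P]/I$, with $I$ a homogeneous prime ideal. Every homogeneous prime of $k[P]$ has the form $k[\mathfrak p]$ for a prime ideal $\mathfrak p\subset P$. Shifts do not affect Tor, so each subquotient satisfies the vanishing by (2).

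\emph{Passing through extensions.} Vanishing of $\text{Tor}_{d+1}$ alone does not obviously pass through extensions. However, the long exact sequence does propagate vanishing of all $\text{Tor}_i$ for $i\geq d+1$. So I would strengthen (2) to vanishing of $\text{Tor}_i$ for all $i>d$. For this, take a syzygy $0\rightarrow K\rightarrow F\rightarrow k[P\setminus\mathfrak p]\rightarrow 0$ with $F$ graded free. Then $\text{Tor}_{i+1}(M,k[P\setminus\mathfrak p])=\text{Tor}_i(M,K)$ for $i\geq 1$, and $K$ is again graded, so it can be filtered by shifts of $k[P\setminus\mathfrak q]$. An induction on $i$ combined with noetherian induction on the filtration length then yields vanishing of $\text{Tor}_i$ for all $i\geq d+1$ on all graded modules.

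The delicate point is ordering this induction correctly. One must first prove $\text{Tor}_{d+1}(M,N)=0$ for all graded $N$ simultaneously, by the filtration argument, using only $\text{Tor}_{d+1}$ and the right-exactness pattern: in the long exact sequence, vanishing of $\text{Tor}_{d+1}$ at the two outer terms forces it at the middle term. Only then does dimension shifting give the higher degrees.
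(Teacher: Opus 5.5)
Your proposal is correct and follows the paper's proof: you get (1)$\Leftrightarrow$(3) for $G=P^{\text{gp}}$ from Remark~\ref{remark-on-reduction-on-artin-fan-log-Tor dimension} and Corollary~\ref{corollary: relative Tor dimension quotient stack}, and you prove (2)$\Rightarrow$(3) by reducing to $\text{Tor}_{d+1}$ via dimension shifting, to finitely generated $N_\bullet$ via filtered colimits, and to the subquotients $k[P]/k[\mathfrak{p}]$ via a graded prime filtration. Extensions are handled by middle-exactness of the long exact Tor sequence, so your intermediate ``strengthening'' detour is unnecessary, as you conclude yourself. The only differences are cosmetic: you reduce a general $G$ to $P^{\text{gp}}$ by splitting $N_\bullet$ along the cosets of $P^{\text{gp}}$ in $G$, whereas the paper runs the filtration argument directly with $G$-gradings, and your hedges about non-fs or non-finitely-generated $P$ are moot because the paper's conventions make all monoids fine and saturated.
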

\begin{proof}
By Remark \ref{remark-on-reduction-on-artin-fan-log-Tor dimension}, log Tor dimension is the same as Tor dimension over $\mathcal{A}_{P}$. Thus, it follows from Corollary \ref{corollary: relative Tor dimension quotient stack} that $\eqref{item-1-monoid-claim}$ is equivalent to $\eqref{item-3-monoid-claim}$ for $G=P^{\text{gp}}$, which establishes the implications \eqref{item-3-monoid-claim}$\Rightarrow$\eqref{item-1-monoid-claim}$\Rightarrow$\eqref{item-2-monoid-claim}. To show that \eqref{item-2-monoid-claim} implies \eqref{item-3-monoid-claim} it suffices to show that $\text{Tor}_{n+1}^{k[P]}(M,N_{\bullet})=0$ for all $N_{\bullet}$ as in \eqref{item-3-monoid-claim}, since for any $N_{\bullet}$, we can always find a $G$-graded surjection $\pi\colon P_{\bullet}\rightarrow N_{\bullet}$ by a free $k[P]$-module and so 
\[\text{Tor}^{k[P]}_{i+1}(M,N_{\bullet})=\text{Tor}_{i}^{k[P]}(M,\text{Ker}(\pi))\text{ for all }i\geq 1.\]
Since $N_{\bullet}$ is the filtered colimit of its finitely generated graded submodules and Tor commutes with filtered colimits, it suffices to check the claim for $N_{\bullet}$ finitely generated. By \cite[Exercise 3.5 (b)]{eisenbud2013commutative}, every such module admits a filtration by graded submodules with subquotients of the form $k[P]/k[\mathfrak{p}]$ with $\mathfrak{p}\subset P$ a prime ideal, which concludes the proof.
\end{proof}
\begin{remark}\label{ex:YesLogFlat}
    Let $X$ be a toric variety and let $Z\hookrightarrow X$ be any closed subscheme with pulled back log structure. Log flatness is a condition which enforces that $Z$ meets the toric boundary transversely. This condition ensures that, whenever $X_\sigma$ is a toric stratum of codimension $k$, the codimension of $Z\cap X_\sigma$ in $Z$ is $k$. Log flat subschemes of toric varieties play a central role in \cite{kennedy2023logarithmic,kennedy2025logarithmic,maulik2024logarithmic,tevelev2005compactifications}.
\end{remark}

\begin{example}
    Let $X$ be a smooth toric variety with toric log structure. The closure of a torus orbit is log flat if and only if this torus orbit is the dense torus. The closure of a torus orbit is static if and only if the torus orbit has codimension at most one.
\end{example}
\begin{remark}
In \cite{GabberToricFlattening, gabber2018foundationsringtheory,on-toric-log-schemes}, a module $M$ on a pre-log ring $P\rightarrow A$ is defined to be of 
log Tor dimension $\leq d$ if 
\[
    \text{Tor}_{d+1}^{\mathbb{Z}[P]}(M,\mathbb{Z}[P]/\mathbb{Z}[\mathfrak{p}])=0
\]
for all $i$ and prime ideals $\mathfrak{p}\subset P$. In the case that $A$ is a $k$-algebra, one can show that 
\[
    \text{Tor}_{d+1}^{\mathbb{Z}[P]}(M,\mathbb{Z}[P]/\mathbb{Z}[\mathfrak{p}]) = \text{Tor}_{d+1}^{k[P]}(M,k[P]/k[\mathfrak{p}]).
\]
Together with Proposition \ref{statement-monoid-static}, this implies that the notion in op. cit. agrees with Definition \ref{definition: log Tor dimension}, which is therefore independent of the ground field $k$.
\end{remark}

\subsection{Functoriality properties of static sheaves} In this section, we describe how staticity behaves under pushforward along integral maps and pullback along log flat morphisms. An important application is Proposition \ref{proposition: log flat over static is static} which plays a central role in understanding behaviour of the functor of points of moduli of log coherent sheaves \cite{Kennedy-Hunt-Poiret-Song:Log-Vector-bundle}.
\begin{lemma}\label{lemma-on-pushforward-along-integral-affine-preserves-log-Tor dimensions}
Let $f\colon \mathcal{X}\rightarrow \mathcal{Y}$ be an affine and integral morphism of DM $k$-stacks with log structures and $F$ a quasi-coherent sheaf on $\mathcal{X}$ that has log Tor dimension $\leq d$. Then $\ul{f}{}_{*}F$ also has log Tor dimension $\leq d$.
\end{lemma}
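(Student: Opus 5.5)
Since $f$ is affine, $\ul{f}{}_*F$ is quasi-coherent and its formation commutes with flat base change on $\ul{\cY}$. Log Tor dimension is strict \'etale local on the source (Lemma \ref{lemma: nice properties of rel Tor dim}\eqref{item: rel Tor stacks cover source} for stacks), so I would first reduce to the case where $\cY = \text{Spec}(Q\rightarrow A)$ is affine with a global chart. Because $f$ is affine, $\cX = \text{Spec}(B)$ is then affine as well. After a further strict \'etale localization on $\cY$, I would choose an integral chart of $f$: monoids $Q\rightarrow P$ with $Q\rightarrow P$ an integral homomorphism of fine monoids and $P\rightarrow B$ a chart for $\cX$. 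The localization is harmless, since pushforward along affine morphisms commutes with flat base change and Tor dimension can be checked strict \'etale locally on $\cY$. If such charts exist only \'etale locally on $\cX$, I would use a finite strict \'etale affine cover of $\cX$ and treat $\ul{f}{}_*$ of a \v{C}ech-type presentation; I expect this to be avoidable via the standard integral chart lemma after localizing on $\cY$.

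Next I would apply the chart criterion, Proposition \ref{statement-monoid-static}\eqref{item-3-monoid-claim}. The sheaf $F=\widetilde M$ for a $B$-module $M$ has $\text{Tor}_i^{k[P]}(M,N_\bullet)=0$ for all $i>d$ and all $P^{gp}$-graded $k[P]$-modules $N_\bullet$. The goal is the same vanishing for $M$ viewed as an $A$-module, namely $\text{Tor}_i^{k[Q]}(M,N'_\bullet)=0$ for $Q^{gp}$-graded $k[Q]$-modules $N'_\bullet$. To compare the two, I would use a change of rings:
\[
M\otimes^{\mathbb{L}}_{k[Q]}N' \;\cong\; M\otimes^{\mathbb{L}}_{k[P]}\bigl(k[P]\otimes^{\mathbb{L}}_{k[Q]}N'\bigr).
\]
Integrality of $Q\rightarrow P$ is exactly what makes $k[Q]\rightarrow k[P]$ flat (Kato's criterion), so the inner derived tensor is the underived $k[P]\otimes_{k[Q]}N'$. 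This module is naturally $(P^{gp}\oplus_{Q^{gp}}\ldots)$-graded, or more simply graded by $G=P^{gp}$ via $Q^{gp}\rightarrow P^{gp}$, and it satisfies the compatibility $pN_g\subset N_{pg}$.

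The main obstacle is that $Q\rightarrow P$ need not be injective on groupifications, so the inclusion-of-monoids hypothesis in \eqref{item-3-monoid-claim} needs care. This is precisely why the criterion is stated for arbitrary $G\supset P$: I can take $G=P^{gp}$, and the grading on $k[P]\otimes_{k[Q]}N'$ assigns $p\otimes n_h$ degree $p+\bar h$. This is well defined because the tensor relations identify $pq\otimes n$ with $p\otimes qn$, and these have matching degrees. Condition \eqref{item-3-monoid-claim} for $P$ then gives the vanishing for $i>d$, and hence $\ul{f}{}_*F$ has log Tor dimension $\leq d$ over $\mathcal{A}_Q$, so over $\cL og_k$ by Remark \ref{remark-on-reduction-on-artin-fan-log-Tor dimension}.
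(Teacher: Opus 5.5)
Your argument is essentially the paper's. Integrality of the chart makes $k[Q]\to k[P]$ flat. Your change-of-rings identity is the module-level form of the transitivity of Tor dimension along the flat map $\mathcal{A}_P\to\mathcal{A}_Q$ that the paper invokes. And since $\ul{\cX}$ is affine, the groups $\text{Tor}_i^{k[Q]}(M,N')$ are the same whether $M$ is regarded as a $B$-module or as the $A$-module $\Gamma(\ul{\cY},\ul{f}{}_*F)$.

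One correction concerns the localization on $\cX$. Your expectation that localizing on $\cY$ alone produces a global integral chart on $\cX$ is false in general. For instance, take $\cY=\text{Spec}(k)$ with trivial log structure and $\cX=\mathbb{A}^2$ with the divisorial log structure of a nodal cubic; this log structure admits charts only \'etale locally near the node.

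No \v{C}ech argument is needed to get around this. The group $\text{Tor}_i^{k[Q]}(M,N')$ is a $B$-module whose formation commutes with flat base change $B\to B'$. Its vanishing can therefore be checked on a finite affine strict \'etale cover of $\cX$ on which integral charts exist. This is exactly the locality-on-the-source step in the paper's proof.
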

\begin{proof}
By flat base change \cite[Lemma 1.2(4)]{HallRydh2017}, we may replace $\mathcal{Y}$ by a strict \'etale cover and hence assume that $\mathcal{Y}$ is affine and has a global chart by a sharp monoid $P$. Let $\{U_i\rightarrow \mathcal{X}\}_{i\in I}$ be a finite \'etale cover by affines so that each $U_i\rightarrow \mathcal{Y}$ admits a chart $P\rightarrow Q_i$, which is local and integral. By \cite[Proposition I.4.6.7]{lecturesonlogarithmicalgebraicgeometry}, it follows that $k[P]\rightarrow k[Q_i]$ and hence $\mathcal{A}_{Q_i}\rightarrow \mathcal{A}_{P}$ is flat, which implies that $F|_{U_i}$ has Tor dimension $\leq d$ over $\mathcal{A}_{P}$ for all $i$. It follows that $F$ and hence (e.g. by Corollary \ref{corollary: relative Tor dimension quotient stack}) $\ul{f}{}_{*}F$ has Tor dimension $\leq d$ over $\mathcal{A}_P$.  
\end{proof}

\begin{lemma}\label{characterzation of absolute flatness and staticity}
Let $\mathcal{X}$ be a DM $k$-stack with a log structure and $F$ be a quasi-coherent sheaf on $\mathcal{X}$. Then the following statements are equivalent:
\begin{enumerate}
    \item\label{char-1} $F$ is static (resp. log flat).
    \item\label{char-2} Denoting by $p\colon \mathcal{L}og_\mathcal{X}\rightarrow \underline{\mathcal{X}}$ the canonical projection, $\ul{p}^{*}F$ is of Tor dimension $\leq 1$ (resp. flat) over $\mathcal{L}og_k$.
\end{enumerate}
\end{lemma}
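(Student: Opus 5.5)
We compare the two stacks through the natural factorization $\underline{\mathcal{X}}\rightarrow \mathcal{L}og_k$ and $\mathcal{L}og_\mathcal{X}\rightarrow\mathcal{L}og_k$. The morphism $\underline{\mathcal{X}}\rightarrow\mathcal{L}og_k$ classifies the log structure of $\mathcal{X}$. Sending a pair $(T,f)$ to $T$ defines a morphism $\mathcal{L}og_\mathcal{X}\rightarrow\mathcal{L}og_k$. There is also a section $s\colon\underline{\mathcal{X}}\rightarrow\mathcal{L}og_\mathcal{X}$, $\ul{T}\mapsto (\ul{T}\times_{\ul{\mathcal{X}}}\mathcal{X},\mathrm{pr})$, satisfying $p\circ s=\mathrm{id}$. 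The composite $\underline{\mathcal{X}}\xrightarrow{s}\mathcal{L}og_\mathcal{X}\rightarrow\mathcal{L}og_k$ is the classifying map of the log structure on $\mathcal{X}$. By Olsson, $s$ is an open immersion: its image is the locus where $f$ is strict.

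\textbf{Direction \eqref{char-2}$\Rightarrow$\eqref{char-1}.} We have $F=s^*\ul{p}^*F$, and $s$ is an open immersion, hence flat. Lemma \ref{lemma: nice properties of rel Tor dim} \eqref{item: rel Tor stacks cover source}, in its stack version, then shows that $F$ has Tor dimension $\leq d$ over $\mathcal{L}og_k$. So $F$ is static (resp. log flat).

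\textbf{Direction \eqref{char-1}$\Rightarrow$\eqref{char-2}.} This is the harder direction, because $p$ is not flat. The question is local, so we work strict étale locally on $\mathcal{X}$. We may assume $\mathcal{X}$ is affine with a global chart $P$, and then, by Remark \ref{remark-on-reduction-on-artin-fan-log-Tor dimension}, that $F$ has Tor dimension $\leq d$ over $\mathcal{A}_P$. Olsson gives a description of $\mathcal{L}og_\mathcal{X}$ over such a chart: it is covered by the strict étale maps $\ul{\mathcal{X}}\times_{\mathcal{A}_P}\mathcal{A}_Q\rightarrow\mathcal{L}og_\mathcal{X}$, indexed by morphisms of fs monoids $P\rightarrow Q$. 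In more detail, $\mathcal{L}og_\mathcal{X}$ is covered by the fibre products $\underline{\mathcal{X}}\times_{\mathcal{A}_P}\mathcal{L}og_{\mathcal{A}_P}$, and $\mathcal{L}og_{\mathcal{A}_P}$ is covered by the stacks $\mathcal{A}_Q$ with $P\rightarrow Q$.

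We then apply the base change statement Lemma \ref{lemma: nice properties of rel Tor dim} \eqref{item: rel Tor stacks cover target} to the square
\[
\begin{tikzcd}
\ul{\mathcal{X}}\times_{\mathcal{A}_P}\mathcal{A}_Q \rar\dar & \mathcal{A}_Q\dar\\
\ul{\mathcal{X}}\rar&\mathcal{A}_P .
\end{tikzcd}
\]
This needs $\mathcal{A}_Q\rightarrow\mathcal{A}_P$ to be flat, which fails in general. The plan is therefore to avoid flatness and use Tor-independence. By Proposition \ref{prop: stacky Tor dimension in terms of qcoh sheaves}, it suffices to check, for quasi-coherent $G$ on $\mathcal{A}_Q$, that $F\otimes^{\mathbb{L}}$ (the pullback of $G$) vanishes in degrees below $-d$. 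We compute this pullback through $\mathcal{A}_P$: it equals $F\otimes^{\mathbb{L}}_{\mathcal{O}_{\mathcal{A}_P}}R\pi_*G$, where $\pi\colon\mathcal{A}_Q\rightarrow\mathcal{A}_P$. In chart terms, a $Q^{gp}$-graded $k[Q]$-module is in particular a $Q^{gp}$-graded $k[P]$-module, with $P\rightarrow Q^{gp}$. Proposition \ref{statement-monoid-static} \eqref{item-3-monoid-claim} is stated for an arbitrary group $G$ and an inclusion $P\subset G$ exactly to cover this situation. Since $\pi$ is affine on the relevant charts, the derived pushforward has no higher terms, so we get
\[
\text{Tor}_i^{k[Q]}(M\otimes_{k[P]}k[Q],N)=\text{Tor}_i^{k[P]}(M,N) \quad\text{whenever } \text{Tor}^{k[P]}_{>0}(M,k[Q])=0.
\]

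This Tor-independence is the main obstacle. In general it is not available directly, so I would instead argue via a spectral sequence or a flat resolution on the $\mathcal{A}_P$ side, or reduce to $P\rightarrow Q$ injective using the integral/saturated reductions of Proposition \ref{proposition-alteration-to-saturated-morphisms}. I expect the cleanest route is the following. The fibre of $\mathcal{L}og_\mathcal{X}\rightarrow\mathcal{L}og_k$ is controlled by $\mathcal{L}og_{\mathcal{A}_P}\rightarrow\mathcal{L}og_k$. After identifying $\mathcal{L}og_{\mathcal{A}_P}\cong\mathcal{A}_P\times_{\mathcal{L}og_k}$-type data, $\ul{p}^*F$ over $\mathcal{L}og_k$ is computed by the graded Tor criterion, with graded $k[P]$-modules indexed by more general groups. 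Condition \eqref{item-3-monoid-claim} then delivers the vanishing directly.
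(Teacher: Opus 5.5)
Your direction \eqref{char-2}$\Rightarrow$\eqref{char-1} is correct and is the paper's argument. For \eqref{char-1}$\Rightarrow$\eqref{char-2} you reach the right reduction. Via Olsson's chart cover and Lemma~\ref{Tor-Lemma}\eqref{item-2-Tor-lemma}, it suffices to have $\mathrm{Tor}^{k[P]}_i(M,k[Q])=0$ for all $i\geq 1$. But you then declare this unavailable and replace it with a sketch: a spectral sequence, ``$\mathcal{A}_P\times_{\mathcal{L}og_k}$-type data'', and condition \eqref{item-3-monoid-claim} ``delivering the vanishing directly''. That sketch never uses $d\leq 1$, so it cannot succeed, because the analogous statement for $d=2$ is false. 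For example, $k[x,y]/(x,y)$ has log Tor dimension $\leq 2$ on $\mathbb{A}^2$. Take the injective chart $\mathbb{N}^2\hookrightarrow Q$ into the quadric cone $k[Q]=k[a,b,c,d]/(ad-bc)$, with $e_1\mapsto a$, $e_2\mapsto b$. Along it the sheaf becomes $k[Q]/(a,b)$, whose minimal resolution is infinite and $2$-periodic (a matrix factorization of $ad-bc$). Hence $\mathrm{Tor}^{k[Q]}_3\bigl(k[Q]/(a,b),k[Q]/k[Q\setminus\{0\}]\bigr)\neq 0$. Also, Proposition~\ref{proposition-alteration-to-saturated-morphisms} is not the tool for making $P\to Q$ injective. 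Injectivity is available inside Olsson's description itself: the paper uses \cite[Remark 5.26]{log-geometry-and-algebraic-stacks} for an fppf cover by charts with $P\hookrightarrow Q$ injective.

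The missing idea is that, once $P\hookrightarrow Q$ is injective, the Tor-independence you need follows from the hypothesis, precisely because $d\leq 1$. Injectivity gives $P^{gp}\hookrightarrow Q^{gp}$, so $k[Q^{gp}]$ is free over $k[P^{gp}]$ and hence flat over $k[P]$. The sequence $0\to k[Q]\to k[Q^{gp}]\to k[Q^{gp}]/k[Q]\to 0$ then gives $\mathrm{Tor}^{k[P]}_i(M,k[Q])\cong\mathrm{Tor}^{k[P]}_{i+1}(M,k[Q^{gp}]/k[Q])$ for $i\geq 1$. The right-hand side vanishes by Proposition~\ref{statement-monoid-static}\eqref{item-3-monoid-claim} with $G=Q^{gp}$, since $i+1\geq 2>d$. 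This is exactly the computation in the proof of Proposition~\ref{proposition-on-derived-pullback-vanishing-static}. With it, your base-change formula plus \eqref{item-3-monoid-claim} finishes the proof uniformly for $d=0,1$.

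The paper argues differently, with the roles of $P$ and $Q$ swapped. For log flatness it uses only the unconditional surjection on $\mathrm{Tor}_1$ from Lemma~\ref{Tor-Lemma}\eqref{item-1-Tor-Lemma}. For staticity it dimension-shifts along $0\to K\to F\to M\to 0$, with $F$ free and $K$ log flat. The vanishing of $L_1$ keeps this sequence exact after tensoring with the larger monoid algebra, which reduces staticity to the log flat case.
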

\begin{proof}
First, we show the implication \eqref{char-2}$\Rightarrow$\eqref{char-1}: The canonical morphism $\ul{\pi}\colon \underline{\mathcal{X}}\rightarrow \mathcal{L}og_\mathcal{X}$ is representable by open immersions and compatible with morphisms to $\mathcal{L}og_k$. Hence, if $\ul{p}^{*} F$ is of Tor dimension $\leq 1$ (resp. flat), then $F=\ul{\pi}^{*}\ul{p}^{*}F$ is log flat (resp. static).

For the inverse implication \eqref{char-1}$\Rightarrow$\eqref{char-2}: since the claim is strict \'etale local in $\mathcal{X}$, we may assume that $\mathcal{X}=\text{Spec}(Q\rightarrow A)$. By \cite[Remark 5.26]{log-geometry-and-algebraic-stacks}, there is a fppf cover of $\mathcal{L}og_X$ by morphisms of the form $\text{Spec}(P\rightarrow A\otimes_{k[Q]}k[P])\rightarrow \mathcal{L}og_X$, where $Q\hookrightarrow P$ is an inclusion of fs monoids. Hence, it suffices to show that $M\otimes_{k[Q]}k[P]$ is static (resp. log flat).

For log flatness, Lemma \ref{Tor-Lemma} gives a surjection
\[
    \text{Tor}_{1}^{k[Q]}(M,k[P]/k[\mathfrak{p}]) \longtwoheadrightarrow\text{Tor}_{1}^{k[P]}(M\otimes_{k[Q]}k[P],k[P]/k[\mathfrak{p}])
\]
for every prime ideal $\mathfrak{p}\subset P$ and the left side vanishes by Proposition \ref{statement-monoid-static} and the fact that $k[P]/k[\mathfrak{p}]$ is $P^{\text{gp}}$-graded.

For staticity, since we can replace $A$ with $k[Q]$, we may assume that $A$ is log flat with respect to $Q$. Indeed, in this case, we may find a short exact sequence
\[0\longrightarrow K\longrightarrow F\longrightarrow M\longrightarrow 0,\]
where $F$ is free and hence log flat. Thus, $K$ is also log flat. By Proposition \ref{proposition-on-derived-pullback-vanishing-static}, the sequence 
\[0\longrightarrow K\otimes_{k[Q]}k[P]\longrightarrow F\otimes_{k[Q]}k[P]\longrightarrow M\otimes_{k[Q]}k[P]\longrightarrow 0\]
is also exact and hence the claim follows from how $K\otimes_{k[Q]}k[P]$ and $F\otimes_{k[Q]}k[P]$ are log flat.
\end{proof}
In the above proof we needed the following fact about Tor groups:
\begin{lemma}[Tor Lemma]\label{Tor-Lemma}
Let $R\rightarrow S$ be a ring homomorphism, $M$ be an $R$-module and $N$ be an $S$-module. Then the following statements hold.
\begin{enumerate}
    \item\label{item-1-Tor-Lemma} There is a natural surjection:
\[\text{Tor}^{R}_{1}(M,N)\longtwoheadrightarrow \text{Tor}_{1}^{S}(M\otimes_{R} S,N).\]
    \item\label{item-2-Tor-lemma} If $\text{Tor}^{R}_{i}(M,S)=0$ for $\forall \,i\geq 0$, then $\text{Tor}^{R}_{n}(M,N)=\text{Tor}^{S}_{n}(M\otimes_R S,N)$ for all $n$.
\end{enumerate}
\end{lemma}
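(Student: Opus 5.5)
Both parts will come from comparing derived tensor products through the change-of-rings (base change) spectral sequence
\[
E^2_{p,q}=\text{Tor}^{S}_{p}\bigl(\text{Tor}^{R}_{q}(M,S),N\bigr)\Longrightarrow \text{Tor}^{R}_{p+q}(M,N),
\]
which arises from the quasi-isomorphism $M\otimes^{\mathbb{L}}_R N\simeq (M\otimes^{\mathbb{L}}_R S)\otimes^{\mathbb{L}}_S N$ of complexes of $S$-modules. To set it up concretely, I take a projective resolution $P_\bullet\to M$ over $R$ and a projective resolution $Q_\bullet\to N$ over $S$, and form the double complex $(P_\bullet\otimes_R S)\otimes_S Q_\bullet = P_\bullet\otimes_R Q_\bullet$. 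Each $P_i\otimes_R Q_j$ computes the correct derived tensor since the $P_i$ are $R$-projective, so the total complex computes $\text{Tor}^R_\ast(M,N)$. The other filtration first takes homology in the $P$-direction, giving $\text{Tor}^R_q(M,S)\otimes_S Q_j$, which produces the spectral sequence above.

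For \eqref{item-1-Tor-Lemma} I would look at total degree $1$. The $E^2$ terms there are $E^2_{1,0}=\text{Tor}^S_1(M\otimes_R S,N)$ and $E^2_{0,1}=\text{Tor}^R_1(M,S)\otimes_S N$. No differential leaves $E^2_{1,0}$: the differential $d^2\colon E^2_{1,0}\to E^2_{-1,1}$ lands in a zero group, since the spectral sequence lives in the first quadrant. Hence $E^\infty_{1,0}=E^2_{1,0}$. The edge map $\text{Tor}^R_1(M,N)\to E^\infty_{1,0}$ is the projection onto the top graded piece of the filtration on $\text{Tor}^R_1(M,N)$, so it is surjective. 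This gives the natural surjection $\text{Tor}^R_1(M,N)\twoheadrightarrow\text{Tor}^S_1(M\otimes_R S,N)$.

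For naturality I would identify this edge map with the obvious map induced by the morphism of complexes $P_\bullet\otimes_R N\to (P_\bullet\otimes_R S)\otimes_S N$; in fact these two complexes are isomorphic. If one prefers to avoid spectral sequences, there is an elementary alternative: take a presentation $0\to K\to F\to M\to 0$ with $F$ free, so that $\text{Tor}^R_1(M,N)=\ker(K\otimes_R N\to F\otimes_R N)$. After tensoring the sequence with $S$, the image of $K\otimes_R S$ in $F\otimes_R S$ is some $K'$, and $\text{Tor}^S_1(M\otimes_R S,N)=\ker(K'\otimes_S N\to F\otimes_S N)$. The map $K\otimes_R N=(K\otimes_R S)\otimes_S N\to K'\otimes_S N$ is surjective because tensoring is right exact, and a short diagram chase shows it maps one kernel onto the other.

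For \eqref{item-2-Tor-lemma} the hypothesis is meant as $\text{Tor}^R_i(M,S)=0$ for $i>0$; the case $i=0$ cannot be intended, since otherwise both sides vanish trivially. Under this hypothesis $M\otimes^{\mathbb{L}}_R S\simeq M\otimes_R S$, so the spectral sequence collapses onto the row $q=0$, and we get $\text{Tor}^R_n(M,N)=\text{Tor}^S_n(M\otimes_R S,N)$ for all $n$. Concretely, $P_\bullet\otimes_R S$ is then a projective resolution of $M\otimes_R S$ over $S$, and $(P_\bullet\otimes_R S)\otimes_S N=P_\bullet\otimes_R N$. The only real care needed in the whole argument is in \eqref{item-1-Tor-Lemma}: checking that the surjection is the natural map and verifying the edge-map surjectivity.
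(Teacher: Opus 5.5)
Your proposal is correct and follows the paper's proof. The paper simply cites the base-change spectral sequence $\text{Tor}^S_p(\text{Tor}^R_q(M,S),N)\Rightarrow\text{Tor}^R_{p+q}(M,N)$ \cite[068F]{stacks} and reads off both claims as you do. Your reading of the hypothesis in (2) as $i\geq 1$ is also the intended one; that is how the paper applies it.

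One small slip concerns naturality. The edge map is not induced by the isomorphism $P_\bullet\otimes_R N\cong(P_\bullet\otimes_R S)\otimes_S N$, since the homology of that complex is still $\text{Tor}^R_\ast(M,N)$. It is induced by a comparison map from $P_\bullet\otimes_R S$ to an $S$-projective resolution of $M\otimes_R S$. This does not affect the result: naturality already follows from functoriality of the spectral sequence, and your elementary presentation argument gives the surjection directly.
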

\begin{proof}
Both statements follow from the Tor spectral sequence (see \cite[068F]{stacks}).   
\end{proof}

\begin{corollary}\label{cor: static and log flat stable under log flat pullback}
Let $f\colon \mathcal{X}\rightarrow \mathcal{Y}$ be a log flat morphism between DM $k$-stacks with log structures and $F$ be a conventional quasi-coherent sheaf on $\ul{\mathcal{Y}}$. If $F$ is static (resp. log flat), then $\ul{f}^{*}F$ is also static (resp. log flat).
\end{corollary}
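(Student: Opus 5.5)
The plan is to reduce to Lemma \ref{characterzation of absolute flatness and staticity} and transitivity of Tor dimension. By that lemma, $F$ is static (resp. log flat) exactly when $\ul{p}_\cY^*F$ has Tor dimension $\leq 1$ (resp. $\leq 0$) over $\mathcal{L}og_k$, where $p_\cY\colon\mathcal{L}og_\cY\rightarrow\ul{\cY}$ is the projection. It is therefore enough to show that $\ul{f}^*F$ has Tor dimension $\leq d$ along $\ul{\cX}\rightarrow\mathcal{L}og_k$, for $d=1$ (resp. $d=0$).

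This map factors as
\[
\ul{\cX}\xrightarrow{\ \ul{\pi}_f\ }\mathcal{L}og_\cY\xrightarrow{\ \ul{\rho}\ }\mathcal{L}og_k,
\]
where $\pi_f$ sends $T\rightarrow\cX$ to the composite $T\rightarrow\cX\rightarrow\cY$. By Olsson's definition (cf. \cite[Definition 4.1]{log-geometry-and-algebraic-stacks}), log flatness of $f$ means precisely that $\ul{\pi}_f$ is flat, i.e. $\ul{\pi}_f$ has Tor dimension $\leq 0$. The composite $\ul{p}_\cY\circ\ul{\pi}_f$ is $\ul{f}$, so
\[
\ul{f}^*F=\ul{\pi}_f^*(\ul{p}_\cY^*F).
\]

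Next I would apply the stack version of Lemma \ref{lemma: nice properties of rel Tor dim} \eqref{item: rel Tor stacks cover source}: pulling back along a flat morphism preserves Tor dimension $\leq d$ over a fixed target. Here the target is $\mathcal{L}og_k$, and the flat morphism is $\ul{\pi}_f\colon\ul{\cX}\rightarrow\mathcal{L}og_\cY$. Since $\ul{p}_\cY^*F$ has Tor dimension $\leq d$ over $\mathcal{L}og_k$, it follows that $\ul{\pi}_f^*\ul{p}_\cY^*F=\ul{f}^*F$ has Tor dimension $\leq d$ over $\mathcal{L}og_k$. This is exactly the statement that $\ul{f}^*F$ is static (resp. log flat).

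Lemma \ref{lemma: nice properties of rel Tor dim} \eqref{item: rel Tor stacks cover source} is stated for schemes, with a flat map $\ul{X}'\rightarrow\ul{X}$ over a fixed $\ul{Y}$, and the paper asserts the stack version without proof. To make the step honest I would choose smooth atlases: $\ul{V}\twoheadrightarrow\mathcal{L}og_k$, then $\ul{U}\twoheadrightarrow\mathcal{L}og_\cY\times_{\mathcal{L}og_k}\ul{V}$, then $\ul{W}\twoheadrightarrow\ul{\cX}\times_{\mathcal{L}og_\cY}\ul{U}$. On these, $\ul{W}\rightarrow\ul{U}$ is flat as a base change of $\ul{\pi}_f$ composed with a smooth map. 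The pullback of $\ul{p}_\cY^*F$ to $\ul{U}$ has Tor dimension $\leq d$ over $\ul{V}$, and the scheme case then applies directly.

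The main obstacle I expect is the identification of log flatness of $f$ with flatness of $\ul{\pi}_f\colon\ul{\cX}\rightarrow\mathcal{L}og_\cY$. If the paper instead defines a log flat morphism as one for which $\cO_\cX$ has Tor dimension $0$ over $\mathcal{L}og_\cY$, the argument is the same. Failing that, I would work strict étale locally on $\cY$ with a chart $P$ and use $\mathcal{A}_P$ in place of $\mathcal{L}og_k$ (Remark \ref{remark-on-reduction-on-artin-fan-log-Tor dimension}), reducing to charts $P\rightarrow Q$ via Olsson's fppf cover as in the proof of Lemma \ref{characterzation of absolute flatness and staticity}.
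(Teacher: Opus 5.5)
Your proposal is correct and follows the paper's proof. Both arguments use Lemma \ref{characterzation of absolute flatness and staticity} to see that $\ul{p}_\cY^*F$ has Tor dimension $\leq 1$ (resp. is flat) over $\mathcal{L}og_k$, and then pull it back along the flat morphism $\pi_f\colon\ul{\cX}\rightarrow\mathcal{L}og_\cY$, which is compatible with the maps to $\mathcal{L}og_k$. Your atlas argument simply spells out the stack version of Lemma \ref{lemma: nice properties of rel Tor dim}\eqref{item: rel Tor stacks cover source}, which the paper calls straightforward and leaves to the reader.
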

\begin{proof}
The morphism $f$ induces the following diagram:
\begin{equation*}
\begin{tikzcd}
\underline{\mathcal{X}} \arrow[r, "\pi_f"] \arrow[rd, "\underline{f}"] & \mathcal{L}og_\mathcal{Y} \arrow[d, "p_\mathcal{Y}"] \\
                                                           & \underline{\mathcal{Y}}                     
\end{tikzcd}.
\end{equation*}
If $F$ is static (resp. log flat), by Lemma \ref{characterzation of absolute flatness and staticity}, $p_\mathcal{Y}^{*}F$ is static (resp. log flat). Since $\pi_f$ is flat and compatible with morphisms to $\mathcal{L}og_k$, it follows that $\underline{f}^{*}F=\pi_f^{*}(p_\mathcal{Y}^{*}F)$ is static (resp. log flat).  
\end{proof}

\begin{proposition}\label{proposition: log flat over static is static}
Let $f\colon \mathcal{X}\rightarrow \mathcal{Y}$ be a morphism of DM $k$-stacks with log structures and $F$ be a conventional quasi-coherent sheaf on $\ul{\mathcal{X}}$ that is log flat over $\mathcal{Y}$. If $\mathcal{Y}$ is static (resp. log flat), then $F$ is also static (resp. log flat). 
\end{proposition}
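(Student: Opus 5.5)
Here "log flat over $\mathcal{Y}$" means that $F$ has Tor dimension $0$ along $\underline{\mathcal{X}}\to\mathcal{L}og_{\mathcal{Y}}$. The plan is to factor the structure morphism to $\mathcal{L}og_k$ through Olsson's stack $\mathcal{L}og_\mathcal{Y}$ and then compose Tor dimensions. The factorization is
\[
\underline{\mathcal{X}} \xrightarrow{\ \pi_f\ } \mathcal{L}og_\mathcal{Y} \longrightarrow \mathcal{L}og_k .
\]
It suffices to show that $\mathcal{L}og_\mathcal{Y}\to\mathcal{L}og_k$ has Tor dimension $\leq 1$ (resp. is flat). Granting this, the first part of Lemma \ref{lemma: nice properties of rel Tor dim} \eqref{item: rel Tor stacks etale}, extended to algebraic stacks as noted after that lemma, shows that $F$ has Tor dimension $\leq 0+1$ (resp. $\leq 0+0$) over $\mathcal{L}og_k$. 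By Definition \ref{definition: log Tor dimension}, this says exactly that $F$ is static (resp. log flat).

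The required property of $\mathcal{L}og_\mathcal{Y}\to\mathcal{L}og_k$ comes from Lemma \ref{characterzation of absolute flatness and staticity} applied to $\mathcal{O}_\mathcal{Y}$. Since $\mathcal{Y}$ is static (resp. log flat), the implication \eqref{char-1}$\Rightarrow$\eqref{char-2} of that lemma says that $\ul{p}^*\mathcal{O}_{\underline{\mathcal{Y}}}=\mathcal{O}_{\mathcal{L}og_\mathcal{Y}}$ has Tor dimension $\leq 1$ (resp. is flat) over $\mathcal{L}og_k$. This is precisely the statement that the morphism $\mathcal{L}og_\mathcal{Y}\to\mathcal{L}og_k$ has Tor dimension $\leq 1$ (resp. $0$).

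The only step needing care is the transitivity of Tor dimension for stacks. I would handle it by choosing flat atlases compatibly. Take a smooth cover $V\to\mathcal{L}og_k$. Take a smooth cover $W\to \mathcal{L}og_\mathcal{Y}\times_{\mathcal{L}og_k}V$, which is then a flat cover of $\mathcal{L}og_\mathcal{Y}$ mapping to $V$. Finally take a smooth cover $U\to\underline{\mathcal{X}}\times_{\mathcal{L}og_\mathcal{Y}}W$. Lemma \ref{lemma: nice properties of rel Tor dim} \eqref{item: rel Tor stacks cover target} and \eqref{item: rel Tor stacks cover source} then transport the Tor dimension hypotheses to the schemes $U\to W\to V$. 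On these schemes the stalkwise statement is the standard spectral-sequence bound \cite[066K]{stacks}.

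I expect this bookkeeping to be the only real obstacle, since the paper has already asserted that Lemma \ref{lemma: nice properties of rel Tor dim} holds for algebraic stacks. Everything else is formal once Lemma \ref{characterzation of absolute flatness and staticity} is in hand.
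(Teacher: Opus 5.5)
Your proposal is correct and matches the paper's proof. The paper also obtains that $\mathcal{L}og_\mathcal{Y}\to\mathcal{L}og_k$ has Tor dimension $\leq 1$ (resp.\ is flat) from Lemma \ref{characterzation of absolute flatness and staticity}, and then concludes by transitivity, Lemma \ref{lemma: nice properties of rel Tor dim} \eqref{item: rel Tor stacks etale}. Your atlas bookkeeping for stacks just spells out what the paper takes for granted when it asserts that the lemma extends to algebraic stacks.
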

\begin{proof}
By Lemma \ref{characterzation of absolute flatness and staticity}, $\mathcal{L}og_\mathcal{Y}\rightarrow \mathcal{L}og_{k}$ is of Tor dimension $\leq 1$ (resp. flat). Since $F$ is flat over $\mathcal{L}og_\mathcal{Y}$, it follows from Lemma \ref{lemma: nice properties of rel Tor dim} \eqref{item: rel Tor stacks etale} that $F$ is static (resp. log flat).
\end{proof}
\subsection{Statification by logarithmic modification} 
Staticity is a very restrictive condition. Indeed, a log scheme whose log structure is everywhere of rank $\geq 2$ does not admit any non-zero static quasi-coherent sheaves. However, the following result, due to Gabber and Thompson, implies that every coherent sheaf is log \'etale locally static. 
\begin{theorem}[{\cite{GabberToricFlattening}, \cite[Theorem 3.3.4]{on-toric-log-schemes}}]\label{really technical theorem}
Let $A$ be a Noetherian ring, $P$ a fine and saturated monoid, and $X\coloneqq \text{Spec}(P\rightarrow A[P])$. Given a conventional coherent sheaf $F$ on $\underline{X}$,  there is an ideal $I\subset P$ such that the strict transform $\ul{q}^{!}F$ in the sense of \cite[080D]{stacks} is log flat, where $q\colon \text{Bl}_{A[I]}X\rightarrow X$ is the corresponding log modification.
\end{theorem}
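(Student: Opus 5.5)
Our plan is to prove the theorem as a flattening statement for the map $\ul{X}\to\mathcal{A}_P$. By Remark~\ref{remark-on-reduction-on-artin-fan-log-Tor dimension}, log flatness of a sheaf on a log modification $X'$ of $X$ is flatness over $\mathcal{A}_P$. Since $X'$ is pulled back from the toric modification $\mathcal{A}_{X'}\to\mathcal{A}_P$, which is \'etale-locally $\mathcal{A}_{Q}$ for charts $Q$, this is the same as flatness over the local Artin cones. By the chart criterion, Proposition~\ref{statement-monoid-static} with $d=0$, it therefore suffices to find an ideal $I\subset P$ such that, on each affine chart $\text{Spec}(A[Q])$ of $\text{Bl}_{A[I]}X$ with $Q\supset P$ the monoid of the chart, the strict transform $M'$ satisfies
\[
\text{Tor}_1^{k[Q]}(M',k[Q]/k[\mathfrak{q}])=0
\]
for every prime ideal $\mathfrak{q}\subset Q$.

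The strategy is a Raynaud--Gruson style flattening by blowups, adapted to the toric/graded setting.

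\textbf{Step 1 (reduction to graded modules).} The strict transform commutes with flat base change. We then use the $P^{gp}$-grading: degenerate $M$ along the torus action, in the spirit of Gröbner degeneration, to an equivariant module $\mathrm{in}(M)$ over $A[P]$. The point is that flatness of $M'$ over $k[Q]$ can be tested on the $T$-equivariant initial modules.

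\textbf{Step 2 (a finite fan datum).} Noetherianity of $A$ and finite generation of $M$ give a finite Gröbner-type stratification of $P^{gp}\otimes\mathbb{Q}$ that records where the initial module jumps. A fan refining this stratification is then realized by a blowup along a monomial ideal $I$. This uses that every toric modification is dominated by a monomial blowup, and that strict transforms compose under iterated blowups.

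\textbf{Step 3 (check Tor vanishing cone by cone).} On each chart, the strict transform kills the $k[Q\setminus\mathfrak{q}]$-torsion responsible for $\text{Tor}_1$. The reason is that, inside a single cone of the refined fan, the initial module is constant and therefore torsion free along the boundary monomials. This shows that the boundary monomials form regular sequences on the appropriate quotients, which gives the vanishing of $\text{Tor}_1$.

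The main obstacle is Step 2 over an arbitrary Noetherian $A$, where no ordinary Gröbner theory is available. Finiteness of the stratification must instead come from Noetherian induction on $\text{Spec}\,A$: one uses generic flatness over $A$ to handle a dense open set, applies induction on the complementary closed subset, and combines the finitely many fans so obtained by a common refinement. I would attempt this first. If it fails, the fallback is to follow Gabber's argument in \cite{GabberToricFlattening} via a valuative criterion for t-flatness combined with quasi-compactness of the Riemann--Zariski space of $P$.
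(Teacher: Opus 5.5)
The paper gives no proof of this statement. It is quoted from \cite{GabberToricFlattening} and \cite[Theorem 3.3.4]{on-toric-log-schemes}, whose argument the introduction calls non-constructive. The Gr\"obner route you propose appears in the paper only for $X$ of finite type over a field (Proposition~\ref{prop:computing statification}), and there the decisive input is \cite[Theorem B]{kennedy2025logarithmic}. Your Steps~1 and~3 amount to asserting that theorem over an arbitrary Noetherian $A$, and the mechanism you give in Step~3 does not prove it.

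On a chart of $\mathrm{Bl}_{A[I]}X$, the ideal $I\mathcal{O}$ is generated by a single monomial, so the strict transform only removes torsion along that monomial. At best this makes $M'$ torsion-free along the boundary monomials. But $\mathrm{Tor}_1^{k[Q]}(M',k[Q]/k[\mathfrak{q}])$ measures Koszul-type relations among the monomials of $\mathfrak{q}$, and torsion-freeness does not kill it once $\mathfrak{q}$ has height $\geq 2$. Already $M=k[x,y]/(x-y)$ on $\mathbb{A}^2$ is a counterexample: $x$ and $y$ act injectively and $M$ is its own strict transform for $I=(xy)$, yet $H_1(x,y;M)\cong M/xM\cong k\neq 0$. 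The same example shows that the $T$-equivariant initial modules cannot detect log flatness. For generic $w$ they are all trivial, and the obstruction sits on the diagonal ray, where $\mathrm{in}_w$ is only homogeneous for a one-dimensional subtorus $T_\tau$.

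What is missing is the actual bridge between initial modules and Tor vanishing. You would need to identify $\mathrm{in}_w(K)$ for $w\in\mathrm{relint}(\tau)$ with the restriction of the strict transform to the orbit $O_\tau$, using flatness of the Gr\"obner degeneration. You would then need to deduce Tor vanishing on each chart from the constancy of $\mathrm{in}_w$ on relative interiors of cones.

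Your fix for finiteness in Step~2 also fails as stated. Strict transforms, initial modules, and log flatness at points over $V(f)\subset\operatorname{Spec}A$ do not commute with the non-flat base change $A\to A/f$. Two examples with $A=k[s]$:
\begin{enumerate}
\item With $P=\mathbb{N}$, the module $M=A[x]/(x-s)$ is log flat and equals its strict transform for $I=(x)$. However, $M/sM=k[x]/(x)$ has strict transform $0$.
\item For $K=(x-sy)\subset A[x^{\pm},y^{\pm}]$ one has $\mathrm{in}_w(K)=(s)$ when $w_x>w_y$, whose image modulo $s$ is $0$. But the reduction of $K$ modulo $s$ is the unit ideal, all of whose initial modules are trivial.
\end{enumerate}
So a fan that works for $M_f$ and a fan that works for $M/fM$ give no control over the stalks of the strict transform of $M$ over $V(f)$. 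Their common refinement need not work.

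There is also a smaller issue with the choice of $I$. If the stratification is computed on the torus, as in Proposition~\ref{prop:computing statification}, it cannot see the part of $F$ supported on the boundary, so $V(I)$ must be chosen to contain that part. For example, $F=\mathcal{O}_{\{y=0\}}$ on $\mathbb{A}^2_k$ has trivial stratification but is its own strict transform for $I=(x)$. Multiplying $I$ by an interior monomial of $P$ fixes this without changing the blowup.

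Your fallback, a valuative criterion plus quasi-compactness of the Riemann--Zariski space of $P$, is the right shape for arbitrary Noetherian $A$ and much closer to the cited proof. As written it is only named. You would need two ingredients before compactness and a common refinement, realized by a single ideal $I$, could conclude:
\begin{enumerate}
\item over each valuative monoid $V\supseteq P$, the quotient of $M\otimes_{A[P]}A[V]$ by its monomial torsion is log flat;
\item crucially, this log flatness spreads out to the strict transform on some finite subdivision, i.e.\ over an open neighbourhood of $V$.
\end{enumerate}
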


\begin{theorem}\label{thm-pullback-static}
Let $\mathcal{X}$ be a Noetherian DM $k$-stack with a log structure and $F$ be a coherent sheaf on $\ul{\mathcal{X}}$. Then there is a log modification $q\colon \mathcal{X}'\rightarrow \mathcal{X}$ such that $\ul{q}^{*}F$ is static.
\end{theorem}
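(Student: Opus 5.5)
The plan is to reduce to the chart situation of Theorem~\ref{really technical theorem}, turn the log flatness of a strict transform into staticity of a total transform, and then glue. A preliminary observation makes gluing harmless: log modifications are log \'etale, hence log flat, so by Corollary~\ref{cor: static and log flat stable under log flat pullback} staticity of $\ul{q}^*F$ survives any further log modification. It therefore suffices to produce log modifications locally and then dominate them by one global log modification.

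\emph{Local step.} Since $\ul{\mathcal{X}}$ is Noetherian, there is a finite strict \'etale cover by affine log schemes $U_i=\text{Spec}(B_i)$ with global charts $P_i\to B_i$. Staticity is strict \'etale local, and so is the notion of log modification. Fix one such $U=\text{Spec}(B)$ with chart $P$.
\begin{itemize}
\item Consider the strict closed immersion $i\colon U\hookrightarrow Y\coloneqq\text{Spec}(P\to B[P])$ given by the graph of the chart, and set $G=i_*F$, which is coherent.
\item Choose a presentation $0\to K\to\cO_{\ul{Y}}^{\oplus\ell}\to G\to 0$.
\item Apply Theorem~\ref{really technical theorem} with $A=B$ to $K$. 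This gives a monoid ideal $I\subset P$ such that, for $q\colon Y'=\text{Bl}_{B[I]}Y\to Y$, the strict transform $\ul{q}^!K$ is log flat.
\end{itemize}
Now $\ul{q}^*G=\cO_{\ul{Y}'}^{\oplus\ell}/K''$, where $K''$ is the image of $\ul{q}^*K$ in $\cO_{\ul{Y}'}^{\oplus \ell}$. Since $B[P]$ is flat over $k[P]$, the blowup $Y'$ is log flat, so $\cO_{\ul{Y}'}$ has no nonzero sections supported on the exceptional (boundary) locus. Hence $\ul{q}^*K\to K''$ kills exactly the exceptionally supported sections. Conversely, log flatness of $\ul{q}^!K$ forces it to have no such sections. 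Together these give $\ul{q}^!K\cong K''$.

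It follows that $\ul{q}^*G$ is the cokernel of an injection of log flat sheaves, so it has Tor dimension $\le 1$ over $\mathcal{A}_P$, i.e.\ it is static by Remark~\ref{remark-on-reduction-on-artin-fan-log-Tor dimension}.

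\emph{Transfer back to $U$.} Base change along the strict closed immersion $i$ gives $U'=U\times_YY'$ and $i'\colon U'\hookrightarrow Y'$. Because $i$ is a closed immersion, $\ul{q}^*i_*F=i'_*\ul{q}_U^*F$. The morphism $i'$ is affine and strict, hence integral, and the Tor groups over $k[P]$ appearing in Proposition~\ref{statement-monoid-static} are the same whether the module is viewed over $\cO_{U'}$ or over $\cO_{Y'}$. Thus Lemma~\ref{lemma-on-pushforward-along-integral-affine-preserves-log-Tor dimensions} and its evident converse show that $\ul{q}_U^*F$ is static on $U'$, and $U'\to U$ is a log modification.

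\emph{Globalization (main obstacle).} We now have finitely many log modifications $U_i'\to U_i$, each defined by a monoid ideal in a chart. We need one log modification $q\colon\mathcal{X}'\to\mathcal{X}$ whose restriction to each $U_i$ factors through $U_i'$. By Proposition~\ref{proposition: basic properties of log alterations}\eqref{item: cancellation property log alterations}, such a factorization $\mathcal{X}'|_{U_i}\to U_i'$ is itself a log modification, and then the preliminary observation finishes the proof.

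To construct $q$, I would pass to the tropicalization, i.e.\ the cone complex or Artin fan of $\mathcal{X}$, which is quasi-compact since $\mathcal{X}$ is Noetherian. Each $U_i'$ corresponds to a subdivision of the cones in the image of $U_i$, defined by the ideal $I_i$. Taking the ideals generated by the $I_i$ in the characteristic sheaf $\overline{M}_{\mathcal{X}}$, I would blow up their product, or equivalently take a common refinement of finitely many subdivisions extended compatibly across faces. This yields a global log blowup dominating each $U_i'$.

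The delicate point is that the ideal $I_i$ lives on a chart rather than on $\overline{M}_{\mathcal{X}}$, so it must be extended from the open $U_i$ to all of $\mathcal{X}$. I would handle this by replacing $I_i$ by its image in $\overline{M}$ (a coherent log ideal on $U_i$) and extending it to a coherent log ideal on $\mathcal{X}$, for instance by adding the maximal ideal of the characteristic monoid at points outside $U_i$. This is the analogue, for coherent log ideals, of extending coherent ideal sheaves from opens in the Noetherian setting.
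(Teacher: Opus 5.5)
Your local step is the paper's argument. It uses the same strict closed immersion into $\mathrm{Spec}(P\to B[P])$, applies Theorem~\ref{really technical theorem} to the kernel $K$, obtains the same exact sequence $0\to\underline{q}^{!}K\to\mathcal{O}_{\underline{Y}'}^{\oplus\ell}\to\underline{q}^{*}\underline{i}_{*}F\to 0$, and uses affine base change back to $U$. Your identification of $\underline{q}^{!}K$ with the image of $\underline{q}^{*}K$ fills in a step the paper leaves implicit.

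There is one slip in that step. $P$ is not a chart for $Y'$, so on $Y'$ the relevant Tor dimension is over $\mathcal{L}og_k$, i.e.\ over the Artin cones $\mathcal{A}_Q$ of local charts of the blowup, not over $\mathcal{A}_P$. Your claim over $\mathcal{A}_P$ is false in general: for the toric blowup $\mathrm{Bl}_0\mathbb{A}^2\to\mathbb{A}^2$, the static sheaf $\mathcal{O}_E$ (the total transform of $\mathcal{O}_0$) has Tor dimension $2$ over $\mathcal{A}_{\mathbb{N}^2}$. Your exact sequence does prove the correct statement, and the same correction applies to the Tor groups in your transfer step.

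The genuine gap is the globalization, where you depart from the paper and your proposed mechanism fails for two reasons.

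\emph{Descent.} $U_i\to\mathcal{X}$ is only strict \'etale, not an open immersion. So the log ideal generated by $I_i$ on $U_i$ need not come from any log ideal on the image of $U_i$. Take $\mathcal{X}=[\mathbb{A}^2/(\mathbb{Z}/2)]$ with the coordinate swap and the toric log structure. The ideal $(x,y^2)$ on the atlas $\mathbb{A}^2$ is not swap-invariant, so $\mathrm{Bl}_{(x,y^2)}\mathbb{A}^2$ does not descend. One must first symmetrize (here to $(x,y^2)(y,x^2)$), and you do not address this.

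\emph{Coherence.} Even for a genuine open $V\subset\mathcal{X}$, extending by the maximal ideal of $\overline{M}$ outside $V$ does not give a coherent log ideal. On $\mathbb{A}^2$ with the toric log structure, extend $(y^2)$ from $V=D(x)$ by $\langle x,y\rangle$ at the origin. Local finite generation near the origin forces the stalks at points $(a,0)$ with $a\neq 0$ to contain the image of $x$, which is the identity there. Those stalks are then the unit ideal rather than $(y^2)$. Coherence imposes compatibility under generization, so an extension has to be constructed, for instance as a concave piecewise linear function on the cone complex, compatible with the monodromy of $\overline{M}_{\mathcal{X}}$. This symmetrize-and-extend statement is exactly the nontrivial input missing from your proof.

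The paper avoids it by citing \cite{m-type-topology}. For the log \'etale cover $\coprod_i U_i'\to\coprod_i U_i\to\mathcal{X}$, there is a single log modification $q\colon\mathcal{X}'\to\mathcal{X}$ after base change along which it becomes a strict \'etale cover. Strict \'etale locality of staticity then finishes the proof, together with your preliminary observation applied to the base change $U_i'\times_{\mathcal{X}}\mathcal{X}'\to U_i'$ of $q$. Note that Proposition~\ref{proposition-alteration-to-saturated-morphisms}\eqref{item: strictify log etale via log alteration}, as stated, would only provide a log alteration, possibly involving root stacks, which is weaker than the theorem.
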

\begin{definition}
    We will call any log modification as above a \textit{statification} of $F$.
\end{definition}
\begin{proof}[Proof of Theorem \ref{thm-pullback-static}]
We first assume that $\mathcal{X}=\text{Spec}(P\rightarrow A)$. Using the surjection $A[P]\twoheadrightarrow A$ induced by the log structure, we get a strict closed immersion $i\colon \mathcal{X}\hookrightarrow Y\coloneqq \text{Spec}(P\rightarrow A[P])$.
Since $\ul{i}{}_{*}F$ is again coherent, there exists a short exact sequence 
\[0\longrightarrow K\longrightarrow \mathcal{O}_{\ul{Y}}^{\oplus n}\longrightarrow \ul{i}{}_{*}F\longrightarrow 0\]
with $K$ coherent.
By Theorem \ref{really technical theorem}, there is a log modification $q\colon Y'\rightarrow Y$ such that $\ul{q}^{!}K$ is log flat. As a result, the short exact sequence 
\[0\longrightarrow \ul{q}^{!}K\longrightarrow \mathcal{O}_{\ul{Y}'}^{\oplus n}\longrightarrow \ul{q}^{*}\ul{i}{}_{*}F\longrightarrow 0\]
witnesses that $\ul{q}^{*}\ul{i}{}_{*}F$ is static. 
Taking the pullback 
\begin{equation*}
\begin{tikzcd}
\mathcal{X}' \arrow[r, "i'",hook] \arrow[d, "q'"] & Y' \arrow[d, "q"] \\
\mathcal{X} \arrow[r, "i",hook]                                   & Y                         
\end{tikzcd}
\end{equation*}
We see that $(\ul{q}')^{*}F$ is static as $(\ul{i}')_{*}(\ul{q}')^{*}F=\ul{q}^{*}\ul{i}{}_{*}F$ by affine base change \cite[02KG]{stacks}. Hence, $q'$ is the desired log modification.

For arbitrary $\mathcal{X}$ we choose a finite \'etale cover $\{U_i\rightarrow \mathcal{X}\}_{i=1}^n$ by globally charted affine log schemes. By the above, there are log modifications $q_i\colon U'_i\rightarrow U_i$ such that the restriction of $F$ to each $U_i'$ is static. Then, as shown in \cite{m-type-topology}, there is a log modification $q\colon \mathcal{X}'\rightarrow \mathcal{X}$ such that the base change of $\coprod_{i=1}^n U_i'\rightarrow \coprod_{i=1}^n U_i\rightarrow \mathcal{X}$ along $q$ is a strict \'etale cover. Since being static is strict \'etale local, this implies that $\ul{q}^* F$ is static as desired.
\end{proof}
\subsection{Computing statification.}
Although Theorem~\ref{thm-pullback-static} guarantees the existence of statifications in great generality, it is not immediately clear how to find them in practice. In this section, we present an explicit algorithm to do this using tropical geometry, and in particular Gröbner theory, for log schemes that are of finite type over $k$. Our algorithm also provides a method for checking whether a sheaf is static under the same hypotheses.

First, we recall the definition of initial forms and modules.\footnote{We warn the reader that in the ideal sheaf case (introduced in \cite{Cartwright}), the Gr\"obner stratification is often strictly coarser than the well-studied Gr\"obner fan \cite{MaclaganSturmfels}.}
\begin{definition}
Let $f\in k[x_1^{\pm},...,x_{n}^{\pm}]^{\oplus m}$ be a vector of Laurent polynomials and $\mathbf{w}\in \mathbb{R}^m$. Writing 
$$f=\sum_{\mathbf{n}\in \mathbb{Z}^n}\sum_{1\leq i\leq m} a_{\mathbf{n},i}\cdot \mathbf{x}^\mathbf{n}e_i$$ with $a_{\mathbf{n},i}\in k$ and $e_i$ the standard basis, we define the \textit{initial form} of $f$ with respect to $\mathbf{w}$ by
$$\text{in}_\mathbf{w}(f)\coloneqq \sum_{\substack{\mathbf{n}, i \\ \mathbf{n}\cdot \mathbf{w} = d_0}} a_{\mathbf{n},i}\cdot\mathbf{x}^\mathbf{n}e_i,$$
where $d_0$ is the minimum value of $\mathbf{n}\cdot \mathbf{w}$ occurring such that $a_{\mathbf{n},i}\neq 0$.

More generally, if $G\subset k[x_1^{\pm},...,x_{n}^{\pm}]^{\oplus m}$ is a submodule, then the \textit{initial module} of $G$ with respect to $\mathbf{w}$ $$\text{in}_\mathbf{w}(G) = \langle \text{in}_\mathbf{w}(g) \mid g\in G\rangle \subset k[x_1^{\pm},x_2^{\pm},...,x_{n}^{\pm}]^{\oplus m}$$ is the submodule generated by the initial forms of elements in $G$. The \textit{Gröbner stratification} of $G$ is the stratification on $\mathbb{R}^n$ given by 
$$\mathbb{R}^n = \bigsqcup_{G_0\in k[x_1^{\pm},...,x_{n}^{\pm}]^{\oplus m}} \{\mathbf{w}\in \mathbb{R}^n\mid \text{in}_{\mathbf{w}}(G) = G_0\}.$$
\end{definition}

The Gröbner stratification of $G$ is a finite piecewise linear stratification \cite{Cartwright,kennedy2023logarithmic}. Our next result provides a tropical characterization of staticity, and the promised algorithm for computing statification. Parallel results for log flatness are available \cite[Theorem~B]{kennedy2025logarithmic}.

\begin{proposition}\label{prop:computing statification}
Let $i\colon X\hookrightarrow Y$ be a strict closed embedding into a smooth toric $k$-variety and $\mathbb{G}_m^{n} \subset Y$ be the dense open torus. Let $F$ be a coherent sheaf on $\underline{X}$ and
$$0\longrightarrow K\longrightarrow\mathcal{O}_{\underline{Y}}^{\oplus n_2}\longrightarrow\mathcal{O}_{\underline{Y}}^{\oplus n_1}\longrightarrow \underline{i}{}_{*}F\longrightarrow 0$$ a resolution. For any smooth toric modification $Y'\rightarrow Y$, the following are equivalent:
\begin{enumerate}
\item\label{item-1-prop-computing-statification} The fan of $Y'$ subdivides the Gröbner stratification of $$H^0(\mathbb{G}_m^n,K)\subset H^0(\mathbb{G}_m^n,\cO_{\underline{Y}}^{\oplus n_2}) = k[x_1^{\pm},\ldots,x_n^{\pm}]^{\oplus n_2}$$ on the support of the fan of $Y$.
\item \label{item-2-prop-computing-statification}
The pullback $(\underline{q}')^*F$ along the corresponding log modification $q'\colon X' = X\times_Y Y'\rightarrow X$ is static.
\end{enumerate}
In particular, the Gr\"obner stratification of $K$ depends only upon $F$, and not on the chosen presentation.
\end{proposition}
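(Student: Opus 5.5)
We reduce staticity of $(\underline{q}')^*F$ to log flatness of a strict transform of $K$, and then invoke the Gröbner criterion for log flatness \cite[Theorem~B]{kennedy2025logarithmic}.

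\textbf{Step 1 (reduction to $Y$).} By affine base change \cite[02KG]{stacks} we have $\underline{i}'_*(\underline{q}')^*F = \underline{q}^*\underline{i}_*F$, where $q\colon Y'\to Y$ and $i'\colon X'\hookrightarrow Y'$. Staticity is computed over $\mathcal{L}og_k$, and $X'\to Y'$ is strict. So, as in the proof of Theorem \ref{thm-pullback-static}, we may replace $X$ by $Y$ and $F$ by $\underline{i}_*F$. The question is then Zariski local on $Y'$, so we may assume $Y'$ is a smooth affine toric chart $\mathbb{A}^r\times\mathbb{G}_m^{n-r}$. Its Artin fan is $\mathcal{A}_{\mathbb{N}^r}$, and by Remark \ref{remark-on-reduction-on-artin-fan-log-Tor dimension} Tor dimension over $\mathcal{L}og_k$ equals Tor dimension over this Artin fan.

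\textbf{Step 2 (staticity as log flatness of a syzygy).} Split the resolution into short exact sequences $0\to K\to \mathcal{O}^{n_2}\to E\to 0$ and $0\to E\to\mathcal{O}^{n_1}\to \underline{i}_*F\to 0$, and pull them back along $\underline{q}$. Free sheaves are log flat, so dimension shifting through Proposition \ref{statement-monoid-static} gives
\[
\operatorname{Tor}_2^{k[\mathbb{N}^r]}\bigl(\underline{q}^*\underline{i}_*F,\, k[\mathbb{N}^r]/k[\mathfrak{p}]\bigr)=\operatorname{Tor}_1\bigl(\operatorname{im}(\underline{q}^*E\to\mathcal{O}^{n_1}),\,-\bigr).
\]
Hence $\underline{q}^*\underline{i}_*F$ is static if and only if the image $\bar E$ of $\underline{q}^*E$ in $\mathcal{O}^{n_1}$ is log flat. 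Since $\bar E$ is torsion free where the subsequent map is injective, this is in turn equivalent to log flatness of $\mathcal{O}^{n_2}/\underline{q}^!K$, where $\underline{q}^!K$ is the saturation of the pullback of $K$ along the torus. This mirrors the proof of Theorem \ref{thm-pullback-static}. The reformulation requires some care, and I expect it to need a further argument that log flatness of the cokernel of $\underline{q}^!K$ in $\mathcal{O}^{n_2}$ matches the chosen resolution. This is exactly why the statement uses the second syzygy $K$ rather than the first.

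\textbf{Step 3 (Gröbner criterion).} On a smooth chart the primes of the monoid are the coordinate faces. Log flatness of the quotient $\mathcal{O}^{n_2}/\underline{q}^!K$ then means that the boundary coordinates form a regular sequence on it, and equivalently that the restriction to each torus orbit is the flat limit. This limit is computed by $\operatorname{in}_{\mathbf{w}}(H^0(\mathbb{G}_m^n,K))$ for $\mathbf{w}$ in the relative interior of the corresponding cone. The Tevelev/Gröbner argument of \cite[Theorem~B]{kennedy2025logarithmic} (in the module version of \cite{kennedy2023logarithmic}) shows this holds if and only if $\operatorname{in}_{\mathbf{w}}$ is constant on the relative interior of each cone of $Y'$, which is the statement that the fan subdivides the Gröbner stratification. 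This gives the equivalence of \eqref{item-1-prop-computing-statification} and \eqref{item-2-prop-computing-statification}.

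\textbf{Independence of presentation.} Condition \eqref{item-2-prop-computing-statification} depends only on $F$, so every smooth fan refining one stratification refines the other. Smooth fans refining a stratification can be taken arbitrarily fine around any point. Two finite piecewise linear stratifications that are refined by exactly the same smooth fans must therefore coincide, since a fan can be chosen refining one stratification but separating the strata of the other unless they agree.

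The main obstacle is Step 2: identifying staticity of $F$ precisely with log flatness of the object whose Gröbner stratification appears in \eqref{item-1-prop-computing-statification}, with the correct indexing of syzygies and the correct handling of strict transform versus pullback.
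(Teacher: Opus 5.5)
Your argument for (1)$\Leftrightarrow$(2) is the paper's argument. You use affine base change to pass to $\underline{q}^*\underline{i}_*F$ on $Y'$, identify its staticity with log flatness of the strict transform of $\mathcal{O}^{\oplus n_2}/K$, and then apply \cite[Theorem~B]{kennedy2025logarithmic}. Your dimension shift just unpacks the paper's one-line ``which is in turn equivalent to''.

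The step you flag as the main obstacle needs no further input:
\begin{itemize}
\item $\bar E\subset\mathcal{O}_{\underline{Y}'}^{\oplus n_1}$ has no nonzero sections supported on the toric boundary, because $\underline{Y}'$ is integral.
\item $\underline{q}^*E\to\bar E$ is an isomorphism over the dense torus.
\item Hence $\bar E$ is $\underline{q}^*E$ modulo its boundary-supported sections. This is simultaneously $\mathcal{O}^{\oplus n_2}$ modulo the saturation of $K$, and the strict transform $\underline{q}^!(\mathcal{O}^{\oplus n_2}/K)$ used in the paper.
\end{itemize}
Nothing about the choice of resolution needs to be checked beyond this.

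Your paragraph on independence of the presentation, however, rests on a false claim. Two finite piecewise linear stratifications refined by exactly the same smooth fans need not coincide. On the support $\mathbb{R}^2_{\geq 0}$ of the fan of $\mathbb{A}^2$, take the one-stratum stratification and the stratification $\{w_1=0\}\sqcup\{w_1>0\}$. Every fan with this support refines both, since each such fan contains the ray $\{w_1=0,\ w_2\geq 0\}$.

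This happens for actual Gröbner stratifications. Take $F=0$ on $X=Y=\mathbb{A}^2$.
\begin{itemize}
\item The presentation with $K=0$ gives a single stratum.
\item The presentation $\mathcal{O}^{\oplus 2}\xrightarrow{(1,\,x)}\mathcal{O}$ gives $K=\mathcal{O}\cdot(-x,1)$. Its initial module is $k[x^{\pm},y^{\pm}]\cdot(-x,1)$ for $w_1=0$, and $0\oplus k[x^{\pm},y^{\pm}]$ for $w_1>0$.
\end{itemize}
So the stratification itself can depend on the presentation. What the equivalence actually gives is that the set of smooth fans refining the Gröbner stratification of $K$ depends only on $F$. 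The paper offers no separate argument for the final sentence, so read it (and prove it) in that weaker form, rather than trying to recover the stratification from its refining fans.
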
   
\begin{proof}
Applying affine base change \cite[02KG]{stacks} to the cartesian square
\begin{equation*}
\begin{tikzcd}
    X'\dar["q'"]\rar["i'",hook] &Y'\dar["q"]\\
    X\rar["i",hook] &Y 
\end{tikzcd}
\end{equation*}
yields $\underline{q}^*\underline{i}{}_*F = \underline{i}{}'_*(\underline{q}')^*F$. Hence statement \eqref{item-2-prop-computing-statification} above is equivalent to $\underline{q}^*\underline{i}{}_*F$ being static, which is in turn equivalent to $\underline{q}^{!}(\cO_{\underline{Y}}^{\oplus n_2}/K)$ being log flat. We conclude by \cite[Theorem B]{kennedy2025logarithmic}.
\end{proof}
\begin{remark}\label{remark on computing statification}
\begin{enumerate}
    \item\label{item-1-remark-on-computing-stratification} The Gröbner stratification can be explicitly computed (often by hand) using Gröbner theory and any finite piecewise linear stratification can always be refined by a fan. Thus, Proposition \ref{prop:computing statification} gives an algorithm to compute statifications of $F$.
    \item More generally, this algorithm applies whenever $X$ is of finite type over $k$, since locally, one can always embed $X$ into a toric variety, whose singularities can always be resolved by toric blowup.
\end{enumerate}
\end{remark}
We present the following examples to illustrate Remark \ref{remark on computing statification} \eqref{item-1-remark-on-computing-stratification}.
\begin{example}\label{example: statification of structure sheaf 1}
Consider the counterexample of Remark \ref{remark: first instance of Max's favorite counterexample} \eqref{item: actual counterexample first time}, which is given by $$
    i\colon X= V(x^{2},y^{2})\hookrightarrow  \mathbb{A}^2_{x,y}
$$ 
with log structure induced by the toric log structure on $\mathbb{A}^2_k$.
To compute a statification of the structure sheaf $\cO_{\ul{X}}$, we use the following resolution:
$$ 0 \longrightarrow 
\mathcal {O}_{\mathbb{A}^2}
\xrightarrow{(y^{2}, -x^{2})}
\mathcal{O}_{\mathbb{A}^2}^{\oplus 2}
\xrightarrow{(x^2,y^2)}
   \mathcal{O}_{\mathbb{A}^2}
   \longrightarrow
   \underline{i}{}_*\cO_{\ul{X}}
   \longrightarrow 0.
$$
The Gröbner stratification of $k[x^{\pm},y^{\pm}]\cdot (y^2,-x^2)\subset k[x^{\pm},y^{\pm}]^{\oplus 2}$ coincides with the fan of $q\colon\text{Bl}_0\mathbb{A}^2\rightarrow \mathbb{A}^2$, hence it follows from Proposition \ref{prop:computing statification} that the structure sheaf of $X' = X\times_{\mathbb{A}^2}\text{Bl}_0\mathbb{A}^2$ is static.
\end{example}
\begin{example}\label{example: statification of structure sheaf 2}
    We repeat the above analysis for \cite[Example 6.8]{molcho2023remarks}, where $X$ is the strict closed subscheme $$i\colon X = V(x^3 z - x y^2 z,\; x y^3 - x y z^2,\; y z^3 - x^2 y z)\hookrightarrow \mathbb{A}^3_{x,y,z}.$$ The structure sheaf $\cO_{\ul{X}}$ admits a resolution
$$
0 \longrightarrow \cO_{\mathbb{A}^3}
\xrightarrow{(y,z,x)}
\cO_{\mathbb{A}^3}^{\oplus 3}
\xrightarrow{(x^{3}z - x y^{2}z ,\; x y^{3} - x y z^{2} ,\; y z^{3} - x^{2} y z)}
\cO_{\mathbb{A}^3} \longrightarrow \ul{i}{}_*\cO_{\ul{X}} \longrightarrow 0,
$$
which leads us to consider the Gröbner stratification of $k[x^{\pm},y^{\pm},z^{\pm}]\cdot (y,z,x)\subset k[x^{\pm},y^{\pm},z^{\pm}]^{\oplus 3}$. One can verify that this coincides with the fan of $q\colon\text{Bl}_0\mathbb{A}^3\rightarrow \mathbb{A}^3$, which implies that the structure sheaf of $X' = X\times_{\mathbb{A}^3}\text{Bl}_0\mathbb{A}^3$ is static.
\end{example}
%
%
%
%
%
\subsection{What is staticity good for?}
\subsubsection{Logarithmic \'etale descent and exactness} In conventional scheme theory, there is a close connection between modules and quasi-coherent sheaves on an affine scheme. Corollary \ref{corollary-DM-stacks-static-pullback} demonstrates that in log geometry there is a parallel close connection between static sheaves and global sections on an affine patch.
\begin{proposition}\label{proposition-on-derived-pullback-vanishing-static}
Let $f\colon \mathcal{X}'\rightarrow \mathcal{X}$ be a morphism between DM $k$-stacks with log structures, and $F$ be a static quasi-coherent sheaf on $\mathcal{X}_{\text{\'et}}$.
\begin{enumerate}
    \item\label{item: static first derived pull vanishes} If $f$ is log flat, then $L_{1}\ul{f}^{*}F=0$.
    \item\label{item: static all derived pulls vanish} If $f$ is log flat and $\mathcal{X}$ is static, then $L_{i}\ul{f}^{*} F=0$ for all $i\geq 1$.
    \item\label{item: static push pull is identity} If $f$ is a log alteration, then we have
\[F=R\ul{f}{}_{*}\ul{f}^{*}F.\]
\end{enumerate}
\end{proposition}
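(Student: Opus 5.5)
We reduce each claim to a local computation with charts. All three statements are strict \'etale local on $\mathcal{X}$, and for (1) and (2) also flat local on $\mathcal{X}'$. So we may assume $\mathcal{X}=\text{Spec}(P\rightarrow A)$ is affine with a global chart, and $F=\widetilde{M}$. For a log flat $f$ we use the factorization $\ul{\mathcal{X}}'\xrightarrow{\pi_f}\mathcal{L}og_{\mathcal{X}}\xrightarrow{p}\ul{\mathcal{X}}$ from the proof of Corollary \ref{cor: static and log flat stable under log flat pullback}. Here $\pi_f$ is flat, so $L_i\ul{f}^*F=\pi_f^*L_ip^*F$, and it suffices to treat $p$.

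By the fppf cover of \cite[Remark 5.26]{log-geometry-and-algebraic-stacks}, this amounts to computing $\text{Tor}_i^{A}(M,A\otimes_{k[P]}k[Q])$ for inclusions $P\hookrightarrow Q$ of fs monoids. The plan is a change-of-rings spectral sequence:
\[
E^2_{a,b}=\text{Tor}_a^{A}\bigl(M,\text{Tor}_b^{k[P]}(A,k[Q])\bigr)\Rightarrow \text{Tor}_{a+b}^{k[P]}(M,k[Q]).
\]
The module $k[Q]$ is $Q^{\text{gp}}$-graded over $k[P]$, so by Proposition \ref{statement-monoid-static}(3) the abutment vanishes in degrees $\geq 2$.

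For (1), the low-degree exact sequence
\[
\text{Tor}_2^{k[P]}(M,k[Q])\rightarrow \text{Tor}_0^A(M,\text{Tor}_1^{k[P]}(A,k[Q]))\rightarrow \text{Tor}_1^{k[P]}(M,k[Q])\rightarrow \text{Tor}_1^A(M,A\otimes_{k[P]}k[Q])\rightarrow 0
\]
shows that $\text{Tor}_1^A(M,-)$ is a quotient of $\text{Tor}_1^{k[P]}(M,k[Q])$. That alone does not give vanishing. A cleaner route is to resolve $0\rightarrow K\rightarrow A^{\oplus n}\rightarrow M\rightarrow 0$. Then $L_1\ul{f}^*F=\ker(K\otimes_{k[P]}k[Q]\rightarrow A^{\oplus n}\otimes_{k[P]}k[Q])$, and this vanishes iff $\text{Tor}_1^{k[P]}(M,k[Q])\rightarrow \text{Tor}_1^{k[P]}(A^{\oplus n},k[Q])$ is surjective.

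This is the main obstacle: $A$ itself need not be log flat. I would try to reduce to $A=k[P]$ via the strict closed embedding into $\text{Spec}(P\rightarrow A[P])$, as in the proof of Theorem \ref{thm-pullback-static}. There $\ul{i}_*F$ is static and the ambient space is log flat. Affine base change for $\ul{i}$ (exact pushforward) identifies $L_i\ul{f}^*F$ with $L_i$ of the pullback of $\ul{i}_*F$ along the base-changed map, after checking that the base-changed map stays log flat and Tor-independent with $i$. Once the ambient ring $B=A[P]$ is log flat over $k[P]$, Tor Lemma \ref{Tor-Lemma}(2) gives $\text{Tor}_j^{B}(M,B\otimes_{k[P]}k[Q])=\text{Tor}_j^{k[P]}(M,k[Q])$, which is $0$ for $j\geq 2$. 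The case $j=1$ then needs only that $K$ is log flat, which the sequence $0\rightarrow K\rightarrow B^n\rightarrow M\rightarrow 0$ gives by the long exact sequence for graded test modules. Hence $K\otimes k[Q]\rightarrow B^n\otimes k[Q]$ is injective. I expect the delicate point to be this reduction to the ambient space. An alternative is to argue directly with Proposition \ref{prop: stacky Tor dimension in terms of qcoh sheaves} over $\mathcal{A}_P$.

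For (2), when $\mathcal{X}$ is static, Lemma \ref{Tor-Lemma}(2) is no longer available. Instead, dimension shifting gives $L_i\ul{f}^*F=L_{i-1}\ul{f}^*K$ for $i\geq 2$ with $K$ log flat. Moreover, $\mathcal{O}_{\mathcal{X}}$ static plus (1) applied to $\mathcal{O}_{\mathcal{X}}$ (via $p$) yields $L_1=0$ on the structure sheaf. An induction then shows that log flat sheaves over a static base have vanishing $L_{\geq1}$; concretely, the spectral sequence above collapses since $\text{Tor}_b^{k[P]}(A,k[Q])=0$ for $b\geq 1$ by (1) applied to $A$.

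For (3), we factor $f$ into log modifications and root stacks. By induction it suffices to treat each separately, using the composition of derived pushforwards and the fact that $\ul{f}^*F$ stays static by Corollary \ref{cor: static and log flat stable under log flat pullback}, since alterations are log flat. For root stacks, Proposition \ref{proposition: root stacks are a thing} gives $R\ul{f}_*=\ul{f}_*$. The projection formula then holds for finite flat local presentations $A\rightarrow A\otimes_{k[P]}k[\tfrac1nP]$, which are a direct summand decomposition by grading, giving $\ul{f}_*\ul{f}^*F=F$. For log modifications, locally pulled back from a toric modification $Y'\rightarrow \mathcal{A}_P$-chart, we use $L\ul{f}^*F=\ul{f}^*F$ by (1) and (2) applied locally on the smooth ambient space. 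Then $R\ul{f}_*L\ul{f}^*F=F\otimes^{\mathbb{L}}R\ul{f}_*\mathcal{O}=F$ by the projection formula and $R\ul{f}_*\mathcal{O}_{Y'}=\mathcal{O}_Y$ for toric modifications. Justifying $L\ul{f}^*F=\ul{f}^*F$ here again goes through the embedding into $\text{Spec}(A[P])$ with flat base change, and I expect this to be the same technical crux as in (1).
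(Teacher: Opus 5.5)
There is a genuine gap, and it sits exactly at the point you flag as the main obstacle. All three parts of your argument depend on the vanishing $\text{Tor}_1^{k[P]}(M,k[Q])=0$ for $M$ static and $P\hookrightarrow Q$. Proposition \ref{statement-monoid-static} only gives you $\text{Tor}_{i}^{k[P]}(M,k[Q])=0$ for $i\geq 2$, and your workaround does not produce degree one.

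After embedding into $B=A[P]$, the module $B^{\oplus n}$ is flat over $k[P]$. So the kernel of $K\otimes_{k[P]}k[Q]\rightarrow B^{\oplus n}\otimes_{k[P]}k[Q]$ is precisely $\text{Tor}_1^{k[P]}(M,k[Q])$. Log flatness of $K$ only says that $\text{Tor}_1^{k[P]}(K,-)\cong \text{Tor}_2^{k[P]}(M,-)$ vanishes on graded modules, which you already knew. So the asserted injectivity is the missing statement in disguise.

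The same circularity appears in (2). As you formulated (1), ``(1) applied to $A$'' is vacuous, since higher derived pullbacks of a structure sheaf vanish trivially. The collapse of your spectral sequence needs $\text{Tor}_1^{k[P]}(A,k[Q])=0$, which is the missing fact for $M=A$. Your treatment of log modifications in (3) uses (2) on the ambient space, so it inherits the gap as well.

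The missing idea is a dimension shift through the group completion, which is how the paper argues. Since $k[P^{\text{gp}}]$ is a localization of $k[P]$ and $k[Q^{\text{gp}}]$ is free over $k[P^{\text{gp}}]$, the ring $k[Q^{\text{gp}}]$ is flat over $k[P]$. The cokernel of $k[Q]\hookrightarrow k[Q^{\text{gp}}]$ is $Q^{\text{gp}}$-graded, hence
\[
\text{Tor}_i^{k[P]}(k[Q],M)\cong \text{Tor}_{i+1}^{k[P]}\bigl(k[Q^{\text{gp}}]/k[Q],M\bigr)=0 \quad\text{for all } i\geq 1 .
\]
With this, each part closes as follows.
\begin{enumerate}
\item The observation you dismissed is enough: $\text{Tor}_1^A(M,A\otimes_{k[P]}k[Q])$ is a quotient of $\text{Tor}_1^{k[P]}(M,k[Q])$, by Lemma \ref{Tor-Lemma}(1). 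No embedding into $\text{Spec}(A[P])$ is needed.
\item The same vanishing for $M=A$ lets Lemma \ref{Tor-Lemma}(2) identify $\text{Tor}_i^A(M,A\otimes_{k[P]}k[Q])$ with $\text{Tor}_i^{k[P]}(M,k[Q])$, which is $0$ for $i\geq 1$.
\item For log modifications, the paper pushes forward along the affine strict map $\ul{\mathcal{X}}\rightarrow\text{Spec}(k[P])$, using affine base change and the fact that staticity is preserved under such pushforward. This reduces to the static base $\text{Spec}(k[P])$, where (2), the projection formula and $R\ul{h}{}_*\mathcal{O}=\mathcal{O}$ conclude. Your route through $\text{Spec}(A[P])$ would also work once (2) is in place.
\end{enumerate}
Your root stack argument is fine in substance, although $k[\tfrac{1}{n}P]$ need not be flat over $k[P]$. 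Only the grading is used, and its degree-zero part is $k[P]$ by saturation.
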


\begin{proof}
The claims about derived pullback are local in $\mathcal{X}$ and $\mathcal{X}'$, hence we may assume that $\mathcal{X}=\text{Spec}(P\rightarrow A)$, $\mathcal{X}'=\text{Spec}(Q\rightarrow B)$, $F=\widetilde{M}$ for an $A$-module $M$, and $f$ admits a chart $P\hookrightarrow Q$ given by an inclusion of monoids. Since $k[Q^{\text{gp}}]$ is a free $k[P^{\text{gp}}]$-module, it is flat over $k[P]$. Hence
\[\text{Tor}^{k[P]}_{i}(k[Q],M)=\text{Tor}^{k[P]}_{i+1}(k[Q^{\text{gp}}]/k[Q],M)=0\]
for all $i\geq 1$ as $M$ is static. By Lemma \ref{Tor-Lemma} \eqref{item-1-Tor-Lemma}, we thus have $\text{Tor}_{1}^{A}(A\otimes_{k[P]}k[Q],M)=0$, which implies the $L_{1}\ul{f}^{*}F=0$. In the case that $A$ is static, the vanishing of all higher Tors follows from Lemma \ref{Tor-Lemma} \eqref{item-2-Tor-lemma}.

It suffices to verify the last claim for log modifications and root stacks separately. Indeed, for root stacks, this holds as
\[
    R\ul{f}{}_*\ul{f}^*F = \ul{f}{}_*\ul{f}^*F = \mathcal{H}^0(R\ul{f}{}_*L\ul{f}^*F) = \mathcal{H}^0(F\otimes_{\mathcal{O}_\mathcal{\mathcal{X}}}^\mathbb{L} R\ul{f}{}_*\mathcal{O}_\mathcal{X'}) = F,
\]
where we used the projection formula \cite[Corollary 4.12]{HallRydh2017} and Proposition \ref{proposition: root stacks are a thing} (note that this did not use that $F$ is static). Since the claim for log modifications is local in $\mathcal{X}$, we may therefore assume that $\mathcal{X}=\text{Spec}(P\rightarrow A)$, $F=\widetilde{M}$ and $f$ arises as a pullback
$$
\begin{tikzcd}
\ul{\mathcal{X}}' \arrow[d, "i"] \arrow[r, "\ul{f}"]                & \ul{\mathcal{X}} \arrow[d, "g"]    \\
{\text{Bl}_{k[I]}\text{Spec}(k[P])} \arrow[r, "h"] & {\text{Spec}(k[P])}
\end{tikzcd}
$$
for some ideal $I\subset P$. Since $g$ is affine, it follows that the natural homomorphism $F\rightarrow R\ul{f}{}_{*}L\ul{f}^{*}F\rightarrow R\ul{f}{}_{*}\ul{f}^{*}F$ is an isomorphism if and only if $R\ul{g}{}_{*}F=\ul{g}{}_{*}F\rightarrow R\ul{g}{}_{*}R\ul{f}{}_{*}(f^{*}F)$ is an isomorphism. However, by affine base change \cite[02KG]{stacks}, we have 
\[R\underline{g}{}_{*}R\ul{f}{}_{*}\ul{f}^{*}F=R\ul{h}{}_{*}R\ul{i}{}_{*}\ul{f}^{*}F=R\ul{h}{}_{*}\ul{i}{}_{*}\ul{f}^{*}F=R\ul{h}{}_{*}\ul{h}^{*}\ul{g}{}_{*}F.\]
Hence, it suffices to prove the claim for $\ul{g}{}_{*}F$ instead, which is also static by Lemma \ref{lemma-on-pushforward-along-integral-affine-preserves-log-Tor dimensions}. Therefore, we may assume that $\mathcal{X}=\text{Spec}(k[P])$ is static and hence $L_{i}\ul{f}^{*}F=0$ for all $i\geq 1$ as shown above. The claim now follows from the projection formula:
\[R\ul{f}{}_{*}\ul{f}^{*}F=R\ul{f}{}_{*}(L\ul{f}^{*}F)=F\otimes_{\mathcal{O_\mathcal{X}}}^{\mathbb{L}}R\ul{f}{}_{*}\mathcal{O}_{\ul{\mathcal{X}}'}=F,\]
where 
$R\ul{f}{}_*\mathcal{O}_{\ul{\mathcal{X}}'}$$=\mathcal{O}_{\ul{\mathcal{X}}}$ 
follows from \cite[Theorem 11.3]{Toric-singularities-Kato}.
\end{proof}

\begin{corollary}\label{corollary-DM-stacks-static-pullback}
Let $\mathcal{X}$ be a DM $k$-stack with a log structure and $F$ a static quasi-coherent sheaf on $\ul{\mathcal{X}}$. Then we have $L_1\arrowup F=0 $ and for any log alteration $q\colon \cX'\rightarrow \cX$ we have $R\arrowdown_{\mathcal{X}'} \arrowup F=\ul{q}^*F$.

Moreover, for any log \'etale morphism $\varphi\colon U\rightarrow \mathcal{X}$, we have 
\[\Gamma(U,\arrowup F)=\Gamma(U,\varphi^{*} F).\]
If in addition $\mathcal{X}$ is static, then $L_{i}\arrowup {F}=0$ for all $i\geq 1$.
\end{corollary}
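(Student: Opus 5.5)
The plan is to reduce every claim to Corollary \ref{Key corollary} together with Proposition \ref{proposition-on-derived-pullback-vanishing-static}. Log alterations are log flat, and Proposition \ref{proposition: basic properties of log alterations} shows they are stable under base change and composition.

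For the derived up functor, Corollary \ref{Key corollary} \eqref{item-4-key-corollary} gives
\[L_n\arrowup F=\operatorname{colim}_{q\colon \cX'\rightarrow \cX}\arrowup L_n\ul{q}^*_{\text{\'et}}F.\]
By Proposition \ref{proposition-on-derived-pullback-vanishing-static} \eqref{item: static first derived pull vanishes} each term with $n=1$ vanishes, so $L_1\arrowup F=0$. If $\cX$ is static, part \eqref{item: static all derived pulls vanish} kills every $L_n\ul{q}^*F$ with $n\geq 1$, so $L_i\arrowup F=0$ for all $i\geq 1$.

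For $R\arrowdown_{\cX'}\arrowup F$, I first note that $\arrowup F=\arrowup_{\cX'}\ul{q}^*F$. Indeed, $\arrowup$ is a pullback of ringed topoi and $\mathsf{rest}_{\cX,\cX'}$ factors through $\mathsf{rest}_{\cX,\cX}$ followed by $\ul{q}$. By Corollary \ref{cor: static and log flat stable under log flat pullback}, $\ul{q}^*F$ is static on $\cX'$. Since $q_{\text{l\'et}}$ is an equivalence, I may replace $\cX$ by $\cX'$ and assume $q=\mathrm{id}$. Now write $\arrowup F$ as the sheafification of the presheaf $\mathcal{P}\colon(\varphi\colon\cV\rightarrow\cX)\mapsto\Gamma(\ul{\cV},\ul{\varphi}^*F)$, using Proposition \ref{prop: explicit description of up and down} with $\cX'=\cX$. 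Then Corollary \ref{Key corollary} \eqref{item-3-key-corollary} gives
\[R^n\arrowdown_{\cX}\arrowup F=\operatorname{colim}_{p\colon \cX''\rightarrow \cX}R^n\ul{p}_*(\arrowdown_{\cX''}\mathcal{P})^{\text{sh}}.\]
Here $(\arrowdown_{\cX''}\mathcal{P})^{\text{sh}}=\ul{p}^*F$, because on strict étale neighbourhoods of $\cX''$ the presheaf restricts to pullback of $F$, which is already an étale sheaf. Proposition \ref{proposition-on-derived-pullback-vanishing-static} \eqref{item: static push pull is identity} gives $R\ul{p}_*\ul{p}^*F=F$, so every term of the colimit equals $F$ in degree $0$ and vanishes in positive degrees. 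The transition maps are identities, hence $R\arrowdown_{\cX}\arrowup F=F$.

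For sections over a log étale $\varphi\colon U\rightarrow\cX$, I will use Proposition \ref{proposition-alteration-to-saturated-morphisms} \eqref{item: strictify log etale via log alteration}. After base change along a log alteration $\cX'\rightarrow\cX$, which doesn't change log étale sections, I may assume $U\rightarrow\cX'$ is strict étale (taking $U$ qc first, which is harmless by sheaf gluing). Then
\[\Gamma(U,\arrowup F)=\Gamma(\ul{U},\arrowdown_{\cX'}\arrowup F)=\Gamma(\ul{U},\ul{q}^*F)\]
by the previous paragraph, and this is $\Gamma(U,\varphi^*F)$.

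One point needs justifying: $U\times_{\cX}\cX'\rightarrow U$ is itself a log alteration, so replacing $U$ by it is harmless for both sides. On the $\varphi^*F$ side this uses Proposition \ref{proposition-on-derived-pullback-vanishing-static} \eqref{item: static push pull is identity} once more, applied to $\varphi^*F$, which is static since log étale maps are log flat.

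The main obstacle I expect is the identification $(\arrowdown_{\cX''}\mathcal{P})^{\text{sh}}=\ul{p}^*F$ and the compatibility of the transition maps in the colimit. This requires care with the restriction of the presheaf to strict étale neighbourhoods of alterations and with the base-change maps.
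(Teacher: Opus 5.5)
Your proposal is correct and follows the paper's own proof, which simply cites Corollary \ref{Key corollary} \eqref{item-3-key-corollary}, \eqref{item-4-key-corollary} and Proposition \ref{proposition-on-derived-pullback-vanishing-static}. You spell out the details the paper leaves implicit: the identification $(\arrowdown_{\cX''}\mathcal{P})^{\text{sh}}=\ul{p}^*F$, and the strictification of $U\to\cX$ via Proposition \ref{proposition-alteration-to-saturated-morphisms} for the statement about sections. One small wording slip: the factorization is $\mathsf{rest}_{\cX,\cX}=\ul{q}_{\text{\'et}}\circ\mathsf{rest}_{\cX,\cX'}$, not the other way round, although your conclusion $\arrowup F=\arrowup\ul{q}^*F$ is right.
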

\begin{proof}
The claims follow from Corollary \ref{Key corollary} \eqref{item-3-key-corollary} and \eqref{item-4-key-corollary} as well as Proposition \ref{proposition-on-derived-pullback-vanishing-static}.
\end{proof}

\begin{remark}\label{remark: up is exact on statics}
    \begin{enumerate}
        \item In particular, for any exact sequence $$0\longrightarrow F\longrightarrow G\longrightarrow H\longrightarrow 0$$ of quasi-coherent sheaves on $\mathcal{X}$ with $H$ static, we see that $$0\longrightarrow \arrowup F\longrightarrow \arrowup G\longrightarrow\arrowup H\longrightarrow 0$$ is still exact. Thus, $\Uparrow$ is exact on static sheaves.
        \item If $\cX$ is a static DM $k$-stack with log structure, then it follows that $\cO_{\ul{\cX}}$ satisfies log \'etale descent in the sense of \cite{molcho2023remarks}. This partially answers \cite[Question 1.7.2]{molcho2023remarks}.
        \item If $\cX$ is Noetherian, but not necessarily static, it follows that $\Downarrow \!\!\LogO{\cX} = \ul{q}{}_*\cO_{\ul{\cX}'}$ for any statification $q\colon\cX'\rightarrow\cX$ of $\cO_{\ul{\cX}}$. As a result, $\Downarrow\!\!\LogO{\cX}$ is coherent, which answers and strengthens \cite[Question 1.7.1]{molcho2023remarks} in our setting.
        \item\label{item: answer sheafification question of molcho} 
        In particular, if $X'\rightarrow X$ is one of the log modifications in Example \ref{example: statification of structure sheaf 1} or \ref{example: statification of structure sheaf 2}, then for every log \'etale neighborhood $U\rightarrow X$ we have $\Gamma(U,\LogO{X}) = \Gamma(\ul{U\times_X X'},\cO_{\ul{U\times_X X'}})$. This answers \cite[Question 1.7.3]{molcho2023remarks}.
        \item Using \eqref{item: answer sheafification question of molcho}, we can show that $\Uparrow$ is not full in general. Indeed, it was shown in \cite[Example 6.8]{molcho2023remarks} that the map $\Gamma(\ul{X},\cO_{\ul{X}})\rightarrow \Gamma(\ul{X}',\cO_{\ul{X}'})$ is not surjective, where $X'\rightarrow X$ is the log modification of Example \ref{example: statification of structure sheaf 2}. Hence
        \[
            \text{Hom}_{\cO_{\underline{X}}}(\cO_{\ul{X}},\cO_{\ul{X}}) = \Gamma(\ul{X},\cO_{\ul{X}}) \longrightarrow \Gamma(\ul{X}',\cO_{\ul{X}'}) =\Gamma(X,\LogO{X}) = \text{Hom}_{\cO_{X_{\text{l\'et}}}}(\LogO{X},\LogO{X})
        \]
        is not surjective.
    \end{enumerate} 
\end{remark}
More generally, $\Uparrow$ is not exact in the sense that it does not commute with taking cohomology sheaves - even for complexes consisting of static quasi-coherent sheaves. However, this holds under stronger hypotheses:
\begin{proposition}\label{prop: static complex prop}
    Let $\mathcal{X}$ be a DM $k$-stack with a log structure and $E^\bullet$ be a complex of static quasi-coherent sheaves on $\mathcal{X}$ so that $E^i=0$ for $i\gg 0$. If for some integer $n$ we have \begin{equation}\label{sheaf coh is static}
        \mathcal{H}^i(E^\bullet) \text{ is static for all } i\geq n,
    \end{equation}
    then $\ul{f}^*\mathcal{H}^i(E^\bullet) = \mathcal{H}^i(\ul{f}^*E^\bullet)$ and $\arrowup \mathcal{H}^i(E^\bullet) = \mathcal{H}^i(\arrowup E^\bullet)$ for every log flat morphism $f\colon\mathcal{X}'\rightarrow\mathcal{X}$ and $i\geq n-1$. Moreover, if $\mathcal{X}$ is Noetherian and the $\mathcal{H}^{i}(E^\bullet)$ are coherent, then for any $n$, there is a log modification $\pi\colon \mathcal{X}'\rightarrow\mathcal{X}$ so that $\pi^*E^\bullet$ satisfies \eqref{sheaf coh is static}.
\end{proposition}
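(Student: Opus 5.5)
Since $E^i=0$ for $i\gg 0$, we argue by descending induction on $i$, splitting $E^\bullet$ into short exact sequences. Write $Z^i=\ker(E^i\to E^{i+1})$ and $B^{i}=\operatorname{im}(E^{i-1}\to E^i)$. This gives exact sequences
\[0\to Z^i\to E^i\to B^{i+1}\to 0,\qquad 0\to B^i\to Z^i\to \mathcal{H}^i(E^\bullet)\to 0.\]
The first claim is that $B^{i}$ and $Z^i$ are static for all $i\geq n$. For $i\gg0$ all of these vanish. Suppose $B^{i+1}$ is static with $i\geq n$. From the first sequence, with $E^i$ and $B^{i+1}$ static, the long exact Tor sequence over the Artin fan (Tor dimension $\leq 1$, checked via $\text{Tor}_2$ as in Proposition \ref{statement-monoid-static}) gives that $Z^i$ is static; in fact $Z^i$ has log Tor dimension $\leq 1$ because $\text{Tor}_2(Z^i)$ sits between $\text{Tor}_3(B^{i+1})=0$ and $\text{Tor}_2(E^i)=0$. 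From the second sequence, with $Z^i$ and $\mathcal{H}^i$ static, $\text{Tor}_2(B^i)$ sits between $\text{Tor}_3(\mathcal{H}^i)=0$ and $\text{Tor}_2(Z^i)=0$, so $B^i$ is static. This continues down to $B^n$ and $Z^n$, and also gives $Z^{n-1}$ static, since $E^{n-1}$ and $B^n$ are static.

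Next, for a log flat $f$, Proposition \ref{proposition-on-derived-pullback-vanishing-static} \eqref{item: static first derived pull vanishes} gives $L_1\ul{f}^*=0$ on static sheaves. Hence $\ul{f}^*$ preserves exactness of every short exact sequence whose right term is static, and Remark \ref{remark: up is exact on statics} gives the same for $\arrowup$, using $L_1\arrowup=0$ from Corollary \ref{corollary-DM-stacks-static-pullback}. Apply this to the sequences above with right terms $B^{i+1}$ for $i\geq n-1$ and $\mathcal{H}^i$ for $i\geq n$. We get that $\ul{f}^*Z^i=\ker(\ul{f}^*E^i\to \ul{f}^*B^{i+1})$, and $\ul{f}^*B^{i+1}\hookrightarrow \ul{f}^*E^{i+1}$ is injective because its cokernel is $\ul{f}^*$ of the sequence $0\to B^{i+1}\to E^{i+1}\to B^{i+2}\to 0$ with static $B^{i+2}$. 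Together these give $\ul{f}^*Z^i=\ker(\ul{f}^*E^i\to\ul{f}^*E^{i+1})$ and $\ul{f}^*B^{i}=\operatorname{im}$ for $i\geq n$. Therefore $\mathcal{H}^i(\ul{f}^*E^\bullet)=\ul{f}^*Z^i/\ul{f}^*B^i=\ul{f}^*\mathcal{H}^i$ for $i\geq n$.

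The case $i=n-1$ is the delicate one. There $\mathcal{H}^{n-1}=\operatorname{coker}(E^{n-2}\to Z^{n-1})$, and right exactness of $\ul{f}^*$ gives $\ul{f}^*\mathcal{H}^{n-1}=\operatorname{coker}(\ul{f}^*E^{n-2}\to\ul{f}^*Z^{n-1})$. Since $B^n$ is static, $\ul{f}^*Z^{n-1}=\ker(\ul{f}^*E^{n-1}\to\ul{f}^*E^n)$, and this yields the claim. The same bookkeeping works verbatim for $\arrowup$, which is right exact as a left adjoint.

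For the final statement, fix $n$. Since $E^i=0$ for $i> N$ and only finitely many $\mathcal{H}^i$ with $n\leq i\leq N$ are coherent, Theorem \ref{thm-pullback-static} applied to $\bigoplus_{i\geq n}\mathcal{H}^i(E^\bullet)$ gives a log modification $\pi$ making $\ul{\pi}^*\mathcal{H}^i$ static for $i\geq n$. Here we use that a direct sum is static if and only if each summand is. Log modifications are log flat, so the part already proved gives $\mathcal{H}^i(\ul{\pi}^*E^\bullet)=\ul{\pi}^*\mathcal{H}^i(E^\bullet)$ for $i\geq n$. The $E^i$ stay static by Corollary \ref{cor: static and log flat stable under log flat pullback}, so $\pi^*E^\bullet$ satisfies \eqref{sheaf coh is static}.

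The main obstacle is the index bookkeeping at $i=n-1$. One must check carefully that only staticity of $B^n$, and not of $\mathcal{H}^{n-1}$, is needed there.
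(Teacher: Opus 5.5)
Your first part follows the paper's argument, apart from one slip noted at the end. The genuine gap is in the final assertion.

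\textbf{The gap.} After statifying $\bigoplus_{i\ge n}\mathcal{H}^i(E^\bullet)$ all at once, you invoke the first part to conclude $\mathcal{H}^i(\ul{\pi}^*E^\bullet)=\ul{\pi}^*\mathcal{H}^i(E^\bullet)$. The first part, however, needs the $\mathcal{H}^i(E^\bullet)$ themselves to be static on $\mathcal{X}$; if they were, no modification would be needed. Knowing that $\ul{\pi}^*\mathcal{H}^i(E^\bullet)$ is static on $\mathcal{X}'$ does not substitute for this, and the identity is false in general.

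For a counterexample, take on $\mathbb{A}^2$ the complex $E^\bullet=[I\to\cO_{\mathbb{A}^2}]$ in degrees $0,1$ with $I=(x,y)$. Both terms are static, $\mathcal{H}^0(E^\bullet)=0$ and $\mathcal{H}^1(E^\bullet)=\cO_0$. The blow-up $\pi$ of the origin statifies $\cO_0$. Yet by Example~\ref{example: up not exact}, $\mathcal{H}^0(\ul{\pi}^*E^\bullet)=L_1\ul{\pi}^*\cO_0\cong\cO_D(-1)\neq 0=\ul{\pi}^*\mathcal{H}^0(E^\bullet)$, where $D$ is the exceptional curve.

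Conceptually, $\ul{\pi}^*E^\bullet$ computes $L\ul{\pi}^*E^\bullet$ (see the cone argument below). Its degree-$j$ cohomology is assembled from all the sheaves $L_p\ul{\pi}^*\mathcal{H}^{j+p}(E^\bullet)$, $p\ge 0$. Statifying the $\ul{\pi}^*\mathcal{H}^i(E^\bullet)$ only addresses $p=0$, and even then only up to quotients. In the example the extra term happens to be static, but nothing in your argument shows this in general.

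\textbf{The fix.} Follow the paper and statify one degree at a time, from the top.
\begin{enumerate}
\item Suppose $\mathcal{H}^i(E^\bullet)$ is static for all $i>m$. Choose a log modification $\pi$ statifying the coherent sheaf $\mathcal{H}^m(E^\bullet)$.
\item The first part now applies legitimately, with $n=m+1$. It gives $\mathcal{H}^i(\ul{\pi}^*E^\bullet)=\ul{\pi}^*\mathcal{H}^i(E^\bullet)$, static, for all $i\ge m$.
\item The terms $\ul{\pi}^*E^i$ stay static by Corollary~\ref{cor: static and log flat stable under log flat pullback}.
\item The cohomology of $\ul{\pi}^*E^\bullet$ stays coherent. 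Locally choose a bounded-above finite free resolution $P^\bullet\to E^\bullet$. Its cone is an acyclic bounded-above complex of static sheaves, so by the first part $\ul{\pi}^*P^\bullet\to\ul{\pi}^*E^\bullet$ is a quasi-isomorphism.
\item Repeat with $\ul{\pi}^*E^\bullet$ and $m-1$. After finitely many steps, compose the modifications to reach degree $n$.
\end{enumerate}

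\textbf{The slip in the first part.} The sequence $0\to B^{i+1}\to E^{i+1}\to B^{i+2}\to 0$ is not exact unless $\mathcal{H}^{i+1}(E^\bullet)=0$. In fact $E^{i+1}/B^{i+1}$ is an extension of $B^{i+2}$ by $\mathcal{H}^{i+1}(E^\bullet)$. Injectivity of $\ul{f}^*B^{i+1}\to\ul{f}^*E^{i+1}$ for $i+1\ge n$ should instead come from composing two injections:
\begin{itemize}
\item $\ul{f}^*B^{i+1}\hookrightarrow\ul{f}^*Z^{i+1}$, which uses staticity of $\mathcal{H}^{i+1}(E^\bullet)$;
\item $\ul{f}^*Z^{i+1}\hookrightarrow\ul{f}^*E^{i+1}$, which uses staticity of $B^{i+2}$.
\end{itemize}
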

\begin{remark}
    In particular, if $E^\bullet$ is an acyclic complex of static quasi-coherent sheaves on $\mathcal{X}$ that is bounded from above, then $\Uparrow \!\! E^\bullet$ is also acyclic.
\end{remark}
\begin{proof}
    Using the exact sequences 
    \begin{equation}\label{eqn: some random shit wtf}
        0\longrightarrow \text{Ker}(d^{i}_{E^\bullet}) \longrightarrow E^i \longrightarrow \text{Im}(d^i_{E^\bullet})\longrightarrow 0,
    \end{equation}
    and
    \[
        0\longrightarrow \text{Im}(d^{i-1}_{E^\bullet})\longrightarrow \text{Ker}(d^{i}_{E^\bullet})\longrightarrow \mathcal{H}^{i}(E^\bullet)\longrightarrow 0
    \]
    and the fact that staticity is stable under taking kernels of surjections, we see by downward induction in $i$ that $\text{Ker}(d^{i}_{E^\bullet})$ and $\text{Im}(d^{i-1}_{E^\bullet})$ are static for all $i\geq n$. Since $\ul{f}^*$ and $\Uparrow$ commute with $\text{Im}({-})$, it thus follows from Proposition \ref{proposition-on-derived-pullback-vanishing-static}, Corollary \ref{corollary-DM-stacks-static-pullback} and \eqref{eqn: some random shit wtf} that $\ul{f}^*\text{Ker}(d^{i}_{E^\bullet}) = \text{Ker}(\ul{f}^*d^{i}_{E^\bullet})$ for all $i\geq n-1$ and hence 
    \begin{equation}\label{eqn: coh sheaf pullback stuff}
        \ul{f}^*\mathcal{H}^i(E^\bullet) = \ul{f}^*\text{Ker}(d^{i}_{E^\bullet})/\ul{f}^*\text{Im}(d^{i-1}_{E^\bullet}) = \text{Ker}(\ul{f}^*d^{i}_{E^\bullet})/\text{Im}(\ul{f}^*d^{i-1}_{E^\bullet}) = \mathcal{H}^i(\ul{f}^*E^\bullet)
    \end{equation}
    as desired (and the same for $\Uparrow$).
    
    For the last claim, note that the pullback $\ul{f}^* E^\bullet$ along any log flat $f\colon \mathcal{X}'\rightarrow \mathcal{X}$ preserves our hypotheses. Indeed, Corollary \ref{cor: static and log flat stable under log flat pullback} implies that the $\ul{f}^* E^{i}$ are still static and by right exactness of $\ul{f}^*$ there is always a surjection $\ul{f}^*\mathcal{H}^{i}(E^\bullet)\twoheadrightarrow \mathcal{H}^{i}(\ul{f}^* E^\bullet)$ similar to \eqref{eqn: coh sheaf pullback stuff} and so the $\mathcal{H}^{i}(\ul{f}^* E^\bullet)$ are again coherent. Since $E^\bullet$ is bounded from above, the second claim thus follows from Theorem \ref{thm-pullback-static}, as the first claim allows one to inductively make the cohomology sheaves static.
\end{proof}
Using the machinery described above, we can finally show that $\Uparrow$ is in general not exact.
\begin{example}\label{example: up not exact}
    Consider $q\colon X' = \text{Bl}_0 \mathbb{A}^2 \rightarrow X = \mathbb{A}^2$ as a toric log modification. We claim that  
    \[
        0\longrightarrow I = (x,y) \longrightarrow \cO_{\mathbb{A}^2} \longrightarrow \cO_0\longrightarrow 0,
    \]
    does not remain left exact after applying $\Uparrow$. Indeed, pulling back along $\ul{q}$ yields an exact sequence
    \begin{equation}\label{equation: the circus of shitty fucks is back in town, man!}
        0\longrightarrow L_1\ul{q}^* \cO_0 \longrightarrow \ul{q}^*I\longrightarrow \cO_{\ul{X}'} \longrightarrow \cO_{\ul{E}}\longrightarrow 0,
    \end{equation}
    where $\ul{E}\subset\ul{X}'$ is the exceptional divisor. One can compute that $L_1\ul{q}^* \cO_0 = \cO_{\ul{E}}(-1)$ and so \eqref{equation: the circus of shitty fucks is back in town, man!} remains exact after applying $\Uparrow$ as all terms are static. Moreover, staticity of $\cO_{\ul{E}}(-1)$ also implies that $\Downarrow_{X'}\arrowup \cO_{\ul{E}}(-1) = \cO_{\ul{E}}(-1)$ and in particular $\Uparrow\!\! \cO_{\ul{E}}(-1)\neq 0$, which shows the claim.
\end{example}

\subsubsection{Logarithmic \'etale sheaf cohomology} We complete our discussion of static sheaves by showing that cohomology behaves in the expected way, see Corollary \ref{corollary-sheaf-cohomology-of-static-sheaves}.
\begin{proposition}\label{prop: Ext groups static}
Let $\mathcal{X}$ be a DM $k$-stack with a log structure and $F$ and $G$ be quasi-coherent sheaves on $\mathcal{X}$ with $G$ being static. 
\begin{enumerate}
    \item\label{Static-Hom} The canonical morphism 
    \[\text{Hom}_{\cO_{\underline{\mathcal{X}}{}_{\text{\'et}}}}(F,G)\longrightarrow \text{Hom}_{\LogO{\cX}}(\arrowup F,\arrowup G)\]
    is an isomorphism;
    \item\label{static-ext} If for every log alteration $q\colon\mathcal{X}'\rightarrow \mathcal{X}$, we have $L^{n}\ul{q}^{*} F=0$ for all $n>0$, then there is a canonical isomorphism
    \[\text{Ext}^{i}_{\cO_{\underline{\mathcal{X}}{}_{\text{\'et}}}}(F,G)\cong \text{Ext}_{\LogO{\mathcal{X}}}^{i}(\arrowup F,\arrowup G)\]
    for all $i$.

    Moreover, the assumption on $F$ holds in any of the following cases
    \begin{enumerate}
        \item $F$ is conventionally flat on $\mathcal{X}$;
         \item Both $F$ and $\mathcal{X}$ are static. 
    \end{enumerate}
\end{enumerate}
\end{proposition}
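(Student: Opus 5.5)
The plan is to use the adjunction $\Uparrow \dashv \Downarrow_{\cX}$ together with Corollary \ref{corollary-DM-stacks-static-pullback}, taking $q=\mathrm{id}$ there.

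For \eqref{Static-Hom}, adjunction gives
\[\text{Hom}_{\LogO{\cX}}(\arrowup F,\arrowup G)=\text{Hom}_{\cO_{\ul{\cX}_{\text{\'et}}}}(F,\arrowdown_{\cX}\arrowup G).\]
Since $G$ is static, Corollary \ref{corollary-DM-stacks-static-pullback} gives $R\arrowdown_{\cX}\arrowup G=G$, and in particular $\arrowdown_{\cX}\arrowup G=G$. One still has to check that the resulting isomorphism is the canonical map, i.e.\ that the unit $G\to\arrowdown_\cX\arrowup G$ is this identification. It is: the identification in Corollary \ref{Key corollary}\eqref{item-3-key-corollary} is induced by the unit maps $G\to \ul{q}_*\ul{q}^*G$ in the colimit.

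For \eqref{static-ext}, I would upgrade the adjunction to the derived level:
\[R\text{Hom}_{\LogO{\cX}}(L\!\arrowup F,\arrowup G)=R\text{Hom}_{\cO_{\ul{\cX}}}(F,R\arrowdown_\cX\arrowup G).\]
This holds because $L\!\arrowup$ is left adjoint to $R\arrowdown_{\cX}$ on derived categories of modules, as for any morphism of ringed topoi. The right-hand side equals $R\text{Hom}(F,G)$ by Corollary \ref{corollary-DM-stacks-static-pullback}. On the left, Corollary \ref{Key corollary}\eqref{item-4-key-corollary} gives
\[L_n\arrowup F=\operatorname{colim}_q \arrowup L_n\ul{q}^*F,\]
which vanishes for $n>0$ by hypothesis. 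Hence $L\!\arrowup F=\arrowup F$, and taking $H^i$ gives the claimed isomorphism. Its compatibility with the canonical map follows as in part \eqref{Static-Hom}.

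It remains to verify the hypothesis in the two cases.
\begin{enumerate}
\item[(a)] If $F$ is flat, then $L_n\ul{q}^*F=0$ for $n>0$ trivially.
\item[(b)] If $F$ and $\cX$ are static, note that a log alteration is log flat. Indeed, it is locally a base change of a toric modification or a root stack, and these are log flat since they are log \'etale and integral (a toric blowup is covered by log smooth integral charts). Then Proposition \ref{proposition-on-derived-pullback-vanishing-static}\eqref{item: static all derived pulls vanish} gives $L_n\ul{q}^*F=0$ for all $n\geq 1$.
\end{enumerate}

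The main obstacle I anticipate is justifying the derived adjunction with unbounded complexes and checking that the composite isomorphism agrees with the map induced by $\Uparrow$ on Ext groups. I would handle both by working with injective resolutions of $\arrowup G$ and applying the Grothendieck spectral sequence
\[\text{Ext}^p(F,R^q\arrowdown_{\cX}\arrowup G)\Rightarrow \text{Ext}^{p+q}(\arrowup F,\arrowup G).\]
This spectral sequence is valid because $\arrowup F=L\!\arrowup F$ and $\arrowdown_{\cX}$ preserves injectives, being a right adjoint of an exact functor on the relevant objects. It degenerates since $R^q\arrowdown_{\cX}\arrowup G=0$ for $q>0$.
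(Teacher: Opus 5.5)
Your argument is correct and is the paper's proof: part (1) is the adjunction together with $\arrowdown_{\mathcal{X}}\arrowup G=G$, and part (2) is the derived adjunction $L\!\arrowup\dashv R\arrowdown_{\mathcal{X}}$ combined with $L\!\arrowup F=\arrowup F$ from Corollary \ref{Key corollary}\eqref{item-4-key-corollary} and $R\arrowdown_{\mathcal{X}}\arrowup G=G$ from Corollary \ref{corollary-DM-stacks-static-pullback}; that derived adjunction holds for any morphism of ringed topoi, so your spectral-sequence detour is unnecessary (and $\arrowdown_{\mathcal{X}}$ need not preserve injectives anyway, since $\Uparrow$ is not exact). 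One small correction in case (b): toric blowups are not integral (e.g.\ $k[x,y]\to k[u,v]$ with $x\mapsto u$, $y\mapsto uv$ is not flat), so drop that parenthetical; log alterations are log flat simply because they are log \'etale, and that is all Proposition \ref{proposition-on-derived-pullback-vanishing-static}\eqref{item: static all derived pulls vanish} needs.
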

\begin{proof}
Statement \eqref{Static-Hom} follows from the adjunction 
\[\text{Hom}_{\cO_{\underline{\mathcal{X}}{}_{\text{\'et}}}}(F,\arrowdown_\mathcal{X}\arrowup G) = \text{Hom}_{\LogO{\mathcal{X}}}(\arrowup F,\arrowup G)\]
and $G= \arrowdown_{\mathcal{X}}\arrowup G$ as $G$ is static.

To show statement \eqref{static-ext}, we first note that $L\arrowup F=\arrowup F$ by Corollary \ref{Key corollary} \eqref{item-4-key-corollary}. Thus, we have
\[\text{Ext}^{i}_{\cO_{\mathcal{X}_{\text{l\'et}}}}(\arrowup F,\arrowup G)=\text{Ext}^{i}_{\cO_{\mathcal{X}_{\text{l\'et}}}}(L\arrowup F,\arrowup G)=\text{Ext}^{i}_{\mathcal{O}_{\ul{\mathcal{X}}{}_{\text{\'et}}}}(F,R\arrowdown_\mathcal{X}\arrowup G),\]
where the last equality uses the adjunction between $L\arrowup$ and $R\arrowdown_\mathcal{X}$ (see \cite[0FTR]{stacks}). Finally, since $G$ is static, we have $R\arrowdown_\mathcal{X}\arrowup G = G$ by Corollary \ref{corollary-DM-stacks-static-pullback}, which gives the desired isomorphism of Ext groups.
The final claim also follows from Corollary \ref{corollary-DM-stacks-static-pullback}.
\end{proof}
As an immediate consequence, this gives us control over the log \'etale sheaf cohomology of a static quasi-coherent sheaf.
\begin{corollary}\label{corollary-sheaf-cohomology-of-static-sheaves}
Let $\mathcal{X}$ be a DM $k$-stack with log structure and $F$ a static quasi-coherent sheaf on $\mathcal{X}$, then we have
\[H^{i}(\mathcal{X}_{\text{l\'et}},\arrowup F)=H^{i}(\ul{\mathcal{X}}{}_{\text{\'et}},F), \text{for }i\geq 0.\]
\end{corollary}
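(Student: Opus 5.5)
The plan is to deduce this from Proposition \ref{prop: Ext groups static}\eqref{static-ext} with $F$ replaced by the structure sheaf $\cO_{\ul{\cX}}$ and $G$ by the given static sheaf $F$. Sheaf cohomology of a module on a ringed topos is computed by $\text{Ext}$ out of the structure sheaf:
\[
H^i(\ul{\cX}{}_{\text{\'et}},F)=\text{Ext}^i_{\cO_{\ul{\cX}{}_{\text{\'et}}}}(\cO_{\ul{\cX}},F),\qquad H^i(\cX_{\text{l\'et}},\arrowup F)=\text{Ext}^i_{\LogO{\cX}}(\LogO{\cX},\arrowup F).
\]
Since $\arrowup$ is the pullback along the morphism of ringed topoi $\mathsf{rest}_{\cX,\cX}$, we have $\arrowup\cO_{\ul{\cX}}=\LogO{\cX}$. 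The two right-hand sides are therefore exactly the groups compared in Proposition \ref{prop: Ext groups static}\eqref{static-ext}, applied to the pair $(\cO_{\ul{\cX}},F)$.

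It remains to check the hypothesis of that proposition: for every log alteration $q\colon\cX'\rightarrow\cX$ we need $L^n\ul{q}^*\cO_{\ul{\cX}}=0$ for $n>0$. This is immediate, since $\cO_{\ul{\cX}}$ is conventionally flat and case (a) of the proposition applies (indeed $L\ul{q}^*\cO=\cO_{\ul{\cX}'}$ for any $q$). Note that we do not need $\cX$ itself to be static, because the flatness criterion is the relevant one.

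As a consistency check, we can also argue directly. Corollary \ref{Key corollary}\eqref{item-4-key-corollary} gives $L\arrowup\cO_{\ul{\cX}}=\LogO{\cX}$. By adjunction, $R\Gamma(\cX_{\text{l\'et}},\arrowup F)=R\text{Hom}(\cO_{\ul{\cX}},R\arrowdown_{\cX}\arrowup F)$, and Corollary \ref{corollary-DM-stacks-static-pullback} with $q=\text{id}$ gives $R\arrowdown_\cX\arrowup F=F$. Putting these together yields $R\Gamma(\ul{\cX}{}_{\text{\'et}},F)$.

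The only step requiring care is the identification $\arrowup\cO_{\ul{\cX}}=\LogO{\cX}$ together with the compatibility of the $\text{Ext}$ description of cohomology with module categories on both sides, which is standard for ringed topoi. I expect no genuine obstacle.
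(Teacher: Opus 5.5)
Your proposal is correct and is the paper's own argument: the paper gives no separate proof, presenting the corollary as an immediate consequence of Proposition~\ref{prop: Ext groups static}\eqref{static-ext}. That is applied with the conventionally flat sheaf $\cO_{\ul{\cX}}$ in the first slot and the static sheaf $F$ in the second, using $\arrowup\cO_{\ul{\cX}}=\LogO{\cX}$ and $H^i=\text{Ext}^i(\cO,-)$. Your ``consistency check'' via $R\arrowdown_{\cX}\arrowup F=F$ from Corollary~\ref{corollary-DM-stacks-static-pullback} is simply the proof of that proposition unwound.
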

\section{Logarithmic (quasi-)coherent sheaves}\label{sec:LogCoherentSheaves}

\subsection{Logarithmic coherent sheaves}\label{sec:coherent sheaves}
In this section, we introduce logarithmic coherent sheaves and $\cO_{\cX_{\text{l\'et}}}$-modules of finite presentation. Parallel to conventional scheme theory, these definitions coincide when $\mathcal{X}$ is locally Noetherian, see Theorem~\ref{thm: FPisCoherent}.
\subsubsection{$\mathcal{O}_{\mathcal{X}_{\text{l\'et}}}$-modules of finite presentation} 
Let $\mathcal{X}$ be a DM $k$-stack with a log structure. Recall from \cite[03DL]{stacks} that a $\mathcal{O}_{\mathcal{X}_{\text{l\'et}}}$-module $\mathcal{F}$ is said to be \emph{of finite presentation} if there exists a log \'etale cover 
$\{\cU_i\rightarrow \mathcal{X}\}_{i\in I}$
such that for each $i$ there exists an exact sequence of $\mathcal{O}_{(\mathcal{U}_i){}_{\text{l\'et}}}$-modules
of the form:  $$\mathcal{O}_{(\cU_i)_{\text{l\'et}}}^{\oplus m_i}\longrightarrow\mathcal{O}_{(\cU_i)_{\text{l\'et}}}^{\oplus n_i}\longrightarrow \mathcal{F}|_{\cU_i}\longrightarrow 0.$$
We denote by $\text{FP}(\LogO{\cX})\subset \text{Mod}(\mathcal{O}_{\mathcal{X}_{\text{l\'et}}})$ the full subcategory of finitely presented $\mathcal{O}_{\mathcal{X}_{\text{l\'et}}}$-modules.

By the right exactness of the up functor, every finitely presented $\mathcal{O}_{\mathcal{X}'_{\text{\'et}}}$-module on some log alteration $\mathcal{X}'\rightarrow \mathcal{X}$ gives rise to a finitely presented $\mathcal{O}_{\mathcal{X}_{\text{l\'et}}}$-module $\Uparrow \!\! F$. For the converse, we have:
\begin{proposition}\label{proposition on finite presentation}
Let $\mathcal{X}$ be a qcqs DM $k$-stack with a log structure. The functor
\[
    \mathop{\text{colim}}_{\mathcal{X}'\rightarrow \mathcal{X}}  \ \text{FP}(\cO_{\ul{\mathcal{X}}{}'_{\text{\'et}}})\longrightarrow \text{FP}(\LogO{\cX})
\]
induced by $\Uparrow$ is an equivalence,
where the (filtered) colimit of categories runs over all log alterations and pullbacks between them. 
\end{proposition}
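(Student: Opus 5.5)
The plan is the standard argument that finitely presented objects on a limit of ringed topoi are a filtered colimit of finitely presented objects on the stages. It rests on Proposition~\ref{proposition on log etale topos}, which writes $\rt{\cX}$ as the cofiltered limit of the ringed topoi $(\ul{\cX}')^r_{\text{\'et}}$, together with the explicit colimit formulas of Corollary~\ref{Key corollary}. First I will check that the functor is well defined. For a log alteration $p\colon\cX''\rightarrow\cX'$, we have $\arrowup\ul{p}^*F\cong\arrowup F$ because $\mathsf{rest}_{\cX,\cX'}=\ul{p}\circ\mathsf{rest}_{\cX,\cX''}$ as morphisms of ringed topoi. Moreover, $\arrowup$ is right exact and sends $\cO_{\ul{\cX}'}$ to $\LogO{\cX}$, so it preserves finite presentation.

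\textbf{Full faithfulness.} Let $F,G$ be finitely presented on $\ul{\cX}'$. We must show that
\[
\mathop{\mathrm{colim}}_{q\colon\cX''\to\cX'}\mathrm{Hom}(\ul{q}^*F,\ul{q}^*G)\longrightarrow \mathrm{Hom}(\arrowup F,\arrowup G)=\mathrm{Hom}(F,\arrowdown_{\cX'}\arrowup G)
\]
is bijective. The argument proceeds in three steps.
\begin{enumerate}
\item Compute $\arrowdown_{\cX'}\arrowup G$. By Corollary~\ref{Key corollary}\eqref{item-3-key-corollary} with $n=0$, applied to the presheaf defining $\arrowup G$ (Proposition~\ref{prop: explicit description of up and down}), it equals $\mathrm{colim}_q\,\ul{q}_*\ul{q}^*G$.
\item Reduce to a local statement. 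Since $F$ is finitely presented, $\mathcal{H}om(F,-)$ commutes with filtered colimits of sheaves locally. Because $\ul{\cX}'$ is qcqs, global Hom also commutes with these filtered colimits; the argument is the same as \cite[Theorem VI.8.7.3]{SGA4} as used in Proposition~\ref{key proposition}.
\item Conclude. This identifies the right side with $\mathrm{colim}_q\mathrm{Hom}(F,\ul{q}_*\ul{q}^*G)=\mathrm{colim}_q\mathrm{Hom}(\ul{q}^*F,\ul{q}^*G)$, by the adjunction $\ul{q}^*\dashv\ul{q}_*$.
\end{enumerate}

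\textbf{Essential surjectivity.} Let $\cF$ be finitely presented, and pick a log \'etale cover $\{\cU_i\}$ with presentations. Since $\cX$ is qc, finitely many $\cU_i$ suffice. By Proposition~\ref{proposition-alteration-to-saturated-morphisms}\eqref{item: strictify log etale via log alteration}, after a log alteration $\cX'\to\cX$, which induces an equivalence of log \'etale topoi, we may assume each $\cU_i\to\cX$ is strict \'etale. The presentation on $\cU_i$ is a map $\LogO{}^{m_i}\to\LogO{}^{n_i}$, i.e.\ a matrix of sections of $\LogO{\cU_i}$. By Corollary~\ref{Key corollary}\eqref{item-2-key-corollary} in degree $0$, these sections come from $\Gamma(\ul{\cU_i\times_\cX\cX''},\cO)$ for some further alteration $\cX''$. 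Hence, after refinement, $\cF|_{\cU_i}\cong\arrowup F_i$ with $F_i$ finitely presented on $\ul{\cU}_i''$. On overlaps $\cU_{ij}$, which are qcqs because $\cX$ is qs, the isomorphisms $\arrowup F_i|\cong\arrowup F_j|$ descend to isomorphisms $F_i|\cong F_j|$ after a further alteration, by full faithfulness applied on $\cU_{ij}$. Equalities of these descended isomorphisms on the triple overlaps $\cU_{ijk}$ likewise hold after a further alteration, again by full faithfulness. There are finitely many indices, so one common alteration $\cX'''$ works. Strict \'etale descent of finitely presented modules on $\ul{\cX}'''$ then gives $F$ with $\arrowup F\cong\cF$.

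\textbf{Main obstacle.} I expect the delicate point to be the colimit/Hom interchange in the full faithfulness step, together with the bookkeeping showing that the colimit over alterations of $\cX'$ restricted to \'etale opens is cofinal with the colimit over alterations of $\cX$. These use the qcqs hypothesis and the cofilteredness of log alterations (Proposition~\ref{proposition: basic properties of log alterations}\eqref{item: log alts stable under base change}, \eqref{item: cancellation property log alterations}).
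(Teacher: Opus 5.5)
Your proposal is correct and follows essentially the same route as the paper. The essential surjectivity argument is the paper's: take a finite qcqs cover, strictify it via Proposition~\ref{proposition-alteration-to-saturated-morphisms}, lift the local presentations, the gluing isomorphisms and the cocycle identities to one common log alteration, and glue by strict \'etale descent. The only difference is in full faithfulness. The paper proves the sheaf-level isomorphism $\mathrm{colim}_q\,\underline{q}_*\mathcal{H}om(\underline{q}^*F,\underline{q}^*G)\cong\Downarrow_{\mathcal{X}'}\mathcal{H}om(\Uparrow F,\Uparrow G)$ by d\'evissage from the case $F=\mathcal{O}$ (your Step~1 via Corollary~\ref{Key corollary}\eqref{item-3-key-corollary}), then takes global sections on the qcqs $\mathcal{X}'$. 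Your route through the adjunction $\Uparrow\dashv\Downarrow_{\mathcal{X}'}$ and compactness of $F$ uses the same ingredients, packaged slightly more economically.
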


\begin{corollary}\label{corr: FP iff up FP}
    An $\mathcal{O}_{\mathcal{X}_{\text{l\'et}}}$-module $\mathcal{F}$ is of finite presentation if and only if it arises as $\cF= \arrowup F$ for some conventional finitely presented sheaf $F$ on a log alteration $\mathcal{X}'\rightarrow\mathcal{X}$. Moreover, for any morphism $\phi\colon \arrowup F\rightarrow \arrowup G$ between finitely presented $\mathcal{O}_{\ul{\mathcal{X}}{}_{\text{\'et}}}$-modules $F$ and $G$, there exists a log alteration $q\colon\mathcal{X}'\rightarrow \mathcal{X}$ and a morphism $\phi'\colon \ul{q}{}_{\text{\'et}}^*F\rightarrow \ul{q}{}_{\text{\'et}}^*G$ such that $\phi = \arrowup \phi'$.
\end{corollary}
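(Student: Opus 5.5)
The corollary is a direct unpacking of Proposition \ref{proposition on finite presentation}, applied after restricting to a qcqs situation. I will prove it in that setting: as in the proposition, $\mathcal{X}$ is qcqs. For general $\mathcal{X}$ one would first work strict \'etale locally and then glue, which is the same reduction used in the proof of Theorem \ref{thm-pullback-static}.

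\emph{The ``if'' direction.} Suppose $F$ is a finitely presented $\cO_{\ul{\cX}{}'_{\text{\'et}}}$-module on a log alteration $\cX'\rightarrow\cX$. By the discussion preceding Proposition \ref{proposition on finite presentation}, $\Uparrow$ is a pullback functor of ringed topoi. It is therefore right exact and sends $\cO_{\ul{\cX}'}$ to $\LogO{\cX}$.
\begin{itemize}
\item Choose strict \'etale locally a presentation $\cO^{\oplus m}\rightarrow\cO^{\oplus n}\rightarrow F\rightarrow 0$.
\item Applying $\Uparrow$ yields a presentation of $\arrowup F$ on the corresponding log \'etale cover.
\item Here we use that strict \'etale covers of $\cX'$ are log \'etale covers of $\cX$, since $\cX'\rightarrow\cX$ is a log \'etale cover by Proposition \ref{proposition: basic properties of log alterations}.
\end{itemize}

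\emph{The ``only if'' direction.} This is essential surjectivity of the functor in Proposition \ref{proposition on finite presentation}. An object of the filtered colimit $\mathop{\text{colim}}_{\cX'}\text{FP}(\cO_{\ul{\cX}{}'_{\text{\'et}}})$ is by definition represented by a finitely presented $F$ on some log alteration $\cX'$. The functor sends it to $\arrowup F$. Hence any $\cF\in\text{FP}(\LogO{\cX})$ is isomorphic to $\arrowup F$ for such an $F$.

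\emph{The morphism statement.} This is fullness of the same functor. Morphisms in a filtered colimit of categories are computed as
\[
\text{Hom}(F,G)=\mathop{\text{colim}}_{q\colon\cX''\rightarrow\cX'}\text{Hom}(\ul{q}{}_{\text{\'et}}^*F,\ul{q}{}_{\text{\'et}}^*G).
\]
Let $F,G$ be finitely presented on $\ul{\cX}$, regarded as objects at the index $\cX$, and let $\phi\colon\arrowup F\rightarrow\arrowup G$ be given. Full faithfulness gives a log alteration $q\colon\cX'\rightarrow\cX$ and a morphism $\phi'\colon\ul{q}{}^*_{\text{\'et}}F\rightarrow\ul{q}{}^*_{\text{\'et}}G$ representing $\phi$. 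To conclude $\arrowup\phi'=\phi$, we need the compatibility $\arrowup_{\cX}\cong\arrowup_{\cX'}\circ\,\ul{q}{}^*_{\text{\'et}}$. This holds because $\mathsf{rest}_{\cX,\cX''}$ factors through $\mathsf{rest}_{\cX,\cX'}$ and $\ul{q}_{\text{\'et}}$, by the description of $\Uparrow$ in Proposition \ref{prop: explicit description of up and down}.

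The only points requiring care are this identification of $\Uparrow$ under pullback along log alterations, and the reduction of a general $\mathcal{X}$ to the qcqs hypothesis of the proposition. Everything else is formal.
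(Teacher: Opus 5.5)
Your proposal is correct and argues exactly as the paper intends: the ``if'' direction is the right-exactness remark preceding Proposition~\ref{proposition on finite presentation}, the ``only if'' direction is its essential surjectivity, and the morphism statement is its fullness combined with the identification $\Uparrow_{\cX}\cong \Uparrow_{\cX'}\circ\,\ul{q}{}^*_{\text{\'et}}$, which comes from $\mathsf{rest}_{\cX,\cX}=\ul{q}{}_{\text{\'et}}\circ\mathsf{rest}_{\cX,\cX'}$. Drop your aside that non-qcqs $\cX$ follows by gluing: the proof of the proposition and of Theorem~\ref{thm-pullback-static} both use finite covers, and the proposition also needs $\Gamma$ to commute with filtered colimits, so that reduction is not available. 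It is not needed anyway, since the corollary carries the proposition's standing qcqs hypothesis.
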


\begin{proof}[Proof of Proposition \ref{proposition on finite presentation}]
We will first show that for any log alteration $\mathcal{X}'\rightarrow \mathcal{X}$ and any two finitely presented modules $F$ and $G$ on $\ul{\mathcal{X}}{}'_{\text{\'et}}$, the morphism of sheaves
\begin{equation}\label{eqn: some equation somewhere somehow}
    \mathop{\text{colim}}_{q\colon\mathcal{X}''\rightarrow \mathcal{X}'} \ul{q}{}^{\text{\'et}}_*\mathcal{H}om_{\mathcal{O}_{\ul{\mathcal{X}}{}''_{\text{\'et}}}}(\ul{q}{}_{\text{\'et}}^*F,\ul{q}{}_{\text{\'et}}^*G)\longrightarrow \arrowdown_{\mathcal{X}'}\mathcal{H}om_{\mathcal{O}_{\mathcal{X}_{\text{l\'et}}}}(\arrowup F,\arrowup G)
\end{equation}
is an isomorphism. From this fact, full faithfulness of the functor in Proposition \ref{proposition on finite presentation} follows since taking $\Gamma(\mathcal{X}',{-})$ on both sides yields the desired equality of Hom spaces, where we use that $\Gamma(\mathcal{X}',{-})$ commutes with filtered colimits as $\mathcal{X}'$ is qcqs. Since \eqref{eqn: some equation somewhere somehow} can be checked strict \'etale locally we may assume that $F$ is globally of finite presentation. From Corollary \ref{Key corollary} \eqref{item-3-key-corollary} it follows that \eqref{eqn: some equation somewhere somehow} is an isomorphism in the case that $F=\mathcal{O}_{\ul{\mathcal{X}}{}'_{\text{\'et}}}$. By left exactness of $\mathcal{H}om$, pushforward, filtered colimits and right exactness of $\Uparrow$ and pullback, \eqref{eqn: some equation somewhere somehow} holds whenever $F$ admits a global finite presentation.

For essential surjectivity, note that any $\mathcal{O}_{\mathcal{X}_{\text{l\'et}}}$-module $\mathcal{F}$ that admits a global finite presentation 
\[
    \mathcal{O}_{\mathcal{X}_{\text{l\'et}}}^{\oplus m}\xrightarrow{\;\;\phi\;\;}\mathcal{O}_{\mathcal{X}_{\text{l\'et}}}^{\oplus n} \longrightarrow\mathcal{F}\longrightarrow 0
\]
is in the essential image since by full faithfulness we have $\phi=\arrowup \phi'$ for some morphism $\phi'\colon \mathcal{O}_{\ul{\mathcal{X}}{}'_{\text{\'et}}}^{\oplus m}\rightarrow\mathcal{O}_{\ul{\mathcal{X}}{}'_{\text{\'et}}}^{\oplus n}$ on some log alteration $\mathcal{X}'\rightarrow \mathcal{X}$ and hence $\mathcal{F} = \text{Coker}(\phi) = \arrowup \text{Coker}(\phi')$. In general, we may choose a finite (as $\mathcal{X}$ is qc) cover $\{\cU_i\rightarrow \mathcal{X}\}_{i=1}^n$ by qcqs $\cU_i$ so that each $\mathcal{F}|_{\cU_i}$ is globally of finite presentation. By Proposition \ref{proposition-alteration-to-saturated-morphisms} it is possible to replace our cover by a refinement which is a strict \'etale cover of some log alteration $q\colon \mathcal{X}'\rightarrow \mathcal{X}$. By the previous discussion and the fact that log alterations of $\mathcal{X}$ form a filtered poset, we may choose a log alteration $\mathcal{X}'\rightarrow \mathcal{X}$ on which we can find lifts of all $\mathcal{F}|_{\cU_i}$ and of the gluing isomorphisms $\mathcal{F}|_{\cU_i}|_{\cU_i\times_\mathcal{X} \cU_j}\cong \mathcal{F}|_{\cU_j}|_{\cU_i\times_\mathcal{X} \cU_j}$ so that the cocycle condition holds for these lifts. This glues to the desired preimage of $\mathcal{F}$ along $\Uparrow$.
\end{proof}
\subsubsection{Logarithmic coherent sheaves} 
We define log coherent sheaves as coherent sheaves in the sense of \cite[03DL]{stacks} on the ringed log \'etale site. More precisely:
\begin{definition}
Let $\mathcal{X}$ be a DM $k$-stack with a log structure. A \emph{log coherent sheaf} is an $\mathcal{O}_{\mathcal{X}_{\text{l\'et}}}$-module $\mathcal{F}$ satisfying the following hypotheses:
\begin{enumerate}
\item The sheaf $\mathcal{F}$ is of finite presentation.
\item The kernel of any morphism $$\mathcal{O}_{\cU_{\text{l\'et}}}^{\oplus n}\longrightarrow \mathcal{F}|_{\cU_{\text{l\'et}}}$$ on some log \'etale open $\cU\rightarrow \mathcal{X}$ is generated by finitely many global sections. 
\end{enumerate}
The category of log coherent sheaves is denoted $\mathcal{L}\operatorname{Coh}(\mathcal{X})$.
\end{definition}
Recall that the category of coherent sheaves on a ringed site is always abelian, although it is not a priori clear whether $\logcoh{\mathcal{X}}$ contains any non-zero objects. 
Theorem \ref{thm: FPisCoherent} generalizes the following conventional fact to the logarithmic setting: An $\cO_{\ul{X}}$-module on a locally Noetherian scheme $\ul{X}$ is of finite presentation if and only if it is coherent.
\begin{theorem}\label{thm: FPisCoherent}
Let $\mathcal{X}$ be a locally Noetherian DM $k$-stack with a log structure.
\begin{enumerate}
    \item\label{item: fp is coherent} An $\mathcal{O}_{\mathcal{X}_{\text{l\'et}}}$-module $\mathcal{F}$ on $\mathcal{X}$ is coherent if and only if it is of finite presentation. Hence $\text{FP}(\LogO{\cX}) = \logcoh{\mathcal{X}}$ is abelian.
    \item\label{item: how to compute kernels} Let $\mathcal{X}$ be Noetherian and $\phi\colon F\rightarrow G$ a morphism of strict \'etale $\mathcal{O}_{\ul{\mathcal{X}}{}_{\text{\'et}}}$-modules of finite presentation. Then there is some log modification $q\colon \mathcal{X}'\rightarrow \mathcal{X}$ so that $\text{Ker}(\arrowup \phi) = \arrowup \text{Ker}(\ul{q}{}_{\text{\'et}}^*\phi)$. Indeed, this holds for any log alteration $q$ upon which $\ul{q}{}_{\text{\'et}}^*F$, $\ul{q}{}_{\text{\'et}}^*G$ and $\ul{q}{}_{\text{\'et}}^*\text{Coker}(\phi)$ are static.
\end{enumerate}
\end{theorem}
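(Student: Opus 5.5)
I would prove part \eqref{item: how to compute kernels} first and derive part \eqref{item: fp is coherent} from it.

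\textbf{Part (2).} Let $\phi\colon F\to G$ be a morphism of finitely presented sheaves on a Noetherian $\cX$, with image $I\subset G$ and cokernel $C=\text{Coker}(\phi)$. By Theorem~\ref{thm-pullback-static}, applied three times, and using that a composite of log modifications is again one, there is a log modification $q$ with $\ul{q}^*F$, $\ul{q}^*G$ and $\ul{q}^*C$ static. Statics pull back to statics along log alterations, which are log flat, by Corollary~\ref{cor: static and log flat stable under log flat pullback}. So it suffices to prove the claim for any $q$ with these three pullbacks static, and after replacing $\cX$ by $\cX'$ we may take $q=\text{id}$.

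With $F$, $G$, $C$ static, the sequence $0\to I\to G\to C\to 0$ shows that $I$ is static, since staticity is stable under kernels of surjections. The four-term sequence $0\to K\to F\to G\to C\to 0$, where $K=\text{Ker}(\phi)$, is then a bounded complex of statics whose only cohomology is zero. Proposition~\ref{prop: static complex prop} with $n$ small, or the remark following it, shows that applying $\Uparrow$ keeps it acyclic. Concretely, applying $\Uparrow$ to $0\to I\to G\to C\to 0$ gives an exact sequence by Remark~\ref{remark: up is exact on statics}, so $\arrowup I\to\arrowup G$ is injective. Applying $\Uparrow$ to $0\to K\to F\to I\to 0$, with $I$ static, gives an exact sequence as well. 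Hence $\text{Ker}(\arrowup\phi)=\text{Ker}(\arrowup F\to\arrowup I)=\arrowup K$. Since $K$ is coherent on a Noetherian stack, $\arrowup K$ is finitely presented.

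\textbf{Part (1).} Coherent implies finitely presented by definition. For the converse, let $\cF$ be finitely presented and let $\psi\colon\cO^{\oplus n}_{\cU_{\text{l\'et}}}\to\cF|_{\cU}$ be a morphism over a log étale $\cU$. Coherence and kernels are local, so we may shrink $\cU$: it becomes strict étale over a log alteration (Proposition~\ref{proposition-alteration-to-saturated-morphisms}), and we may take it affine and Noetherian. By Corollary~\ref{corr: FP iff up FP}, after a further log alteration we can write $\cF|_{\cU}=\arrowup G$ and $\psi=\arrowup\psi'$ for a conventional morphism $\psi'$ of coherent sheaves. Part (2) then gives $\text{Ker}(\psi)=\arrowup K$ with $K$ coherent on a suitable log modification, and we may arrange that $K$ is static.

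It remains to see that $\arrowup K$ is generated by finitely many global sections over $\cU$. On an affine $\ul{\cU}'$, $K$ is globally generated by finitely many sections, giving a surjection $\cO^{m}\to K$. Since $\Uparrow$ is right exact, this produces finitely many global sections generating $\arrowup K$. One point to check is that the global sections live on $\cU'$ rather than on $\cU$. This is harmless because log alterations induce isomorphisms on sections of log étale sheaves (Proposition~\ref{proposition: basic properties of log alterations}). Finally, $\text{FP}=\logcoh{\cX}$ is abelian, since coherent sheaves on a ringed site always form an abelian category.

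\textbf{Main obstacle.} The crux is the exactness step in part (2): one needs a static image $I$, and not merely static $F$ and $G$, for $\Uparrow$ to preserve the injectivity of $I\to G$. This is exactly why staticity of the cokernel is required in the hypothesis. A secondary point is the locality bookkeeping in part (1): when passing to affine strict étale charts of log alterations, one must ensure that Noetherianity and finite type are preserved, which holds because log alterations are proper and of finite presentation.
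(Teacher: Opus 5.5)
Your proposal is correct and follows the paper's route. Part (2) is Proposition~\ref{prop: static complex prop} applied to the complex $F\xrightarrow{\phi}G$, which you unpack via the static image $I$, and part (1) reduces to part (2) by log \'etale locality together with Noetherianity of log alterations. One small slip: a log modification $\ul{\cU}'$ of an affine $\ul{\cU}$ need not be affine, so you cannot call it affine. This does no harm, because finite generation is a local condition: cover $\ul{\cU}'$ by finitely many affine strict \'etale opens and generate the kernel on each of them.
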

\begin{proof}
    The second claim follows from Proposition \ref{prop: static complex prop} applied to the complex $F\xrightarrow{\phi} G$. The first claim is log \'etale local in $\mathcal{X}$ and thus reduces to the second claim and the fact that every log alteration of $\mathcal{X}$ is Noetherian if $\mathcal{X}$ was Noetherian.
\end{proof}
In the setting of Theorem \ref{thm: FPisCoherent}, one might further hope that the category of log coherent sheaves is also Noetherian in the sense that any increasing sequence of subobjects must stabilize. However, this fails due to the following counterexample.

\begin{example}[The category of log coherent sheaves is not Noetherian]\label{example: not Noetherian}
Let $q_n\colon C_{n+1}\rightarrow C_n$ and $\ul{Z}{}_n\subset \ul{C}{}_n$ be the sequence of log modifications and closed subschemes of chains of projective lines depicted in Figure \ref{figurino}. Let $I_n$ be the conventional ideal sheaf associated to $\ul{Z}{}_n$.
Since there are proper inclusions $Z_{n+1}\subsetneq \ul{q}{}_n^{-1}Z_n$, we also get proper inclusions $I_{\ul{q}{}_n^{-1}\ul{Z}{}_n}\subsetneq I_{n+1}$ and since $\cO_{\ul{Z}{}_n}$ is static, we further have $I_{\ul{q}{}_n^{-1}\ul{Z}{}_n}=\ul{q}{}_n^*I_n$. One can check that the cokernels of these inclusions are static, which implies that there is an infinite ascending chain $$\arrowup I_1\subsetneq \arrowup I_2\subsetneq \ldots \subset \LogO{X}.$$
where $X=C_1$. Hence, $\text{FP}(\LogO{X})$ is not Noetherian.
\end{example}
\begin{figure}[h]
    \centering
    \begin{tikzpicture}[scale=0.75, transform shape]
  \tikzset{
    sphere/.style={
      fill=gray!10,
      draw=black,
      line width=1pt
    },
    redsphere/.style={
      fill=blue!40,
      draw=black,
      line width=1pt
    }
  }

  \node at (-4,-0.6) {\scalebox{1.5}{$\cdot\;\cdot\;\cdot$}};

  \draw[->, thick] (-2.6,-0.6) -- (-1.0,-0.6);

  \filldraw[redsphere] (0,1.6) ellipse (0.6cm and 0.8cm);
  \node at (1,1.6) {$\mathbb{P}^1$};

  \foreach \y in {0,-1.6} {
    \filldraw[sphere] (0,\y) ellipse (0.6cm and 0.8cm);
    \node at (1,\y) {$\mathbb{P}^1$};
    \fill[blue!40] (0,\y) circle (3pt);
  }

  \fill (0,2.4) circle (2pt);
  \fill (0,0.8) circle (2pt);
  \fill (0,-0.8) circle (2pt);

  \node[above] at (0,2.4) {$\infty$};

  \filldraw[redsphere] (4,0) ellipse (0.6cm and 0.8cm);
  \node at (5,0) {$\mathbb{P}^1$};

  \filldraw[sphere] (4,-1.6) ellipse (0.6cm and 0.8cm);
  \node at (5,-1.6) {$\mathbb{P}^1$};
  \fill[blue!40] (4,-1.6) circle (3pt);

  \fill (4,0.8) circle (2pt);
  \fill (4,-0.8) circle (2pt);

  \node[above] at (4,0.8) {$\infty$};

  \filldraw[redsphere] (8,-1.6) ellipse (0.6cm and 0.8cm);
  \node at (9,-1.6) {$\mathbb{P}^1$};

  \fill (8,-0.8) circle (2pt);

  \node[above] at (8,-0.8) {$\infty$};

  \draw[->, thick] (1.2,-0.6) -- (2.8,-0.6);
  \draw[->, thick] (5.2,-0.6) -- (6.8,-0.6);

\end{tikzpicture}
\caption{The above picture displays $\ldots\rightarrow C_3\rightarrow C_2\rightarrow C_1$ where $q_n\colon C_{n+1}\rightarrow C_n$ is a log modification contracting the top component to $\infty$. The closed subschemes $\ul{Z}{}_n\subset \ul{C}{}_n$ are depicted in blue.}
\label{figurino}
\end{figure}
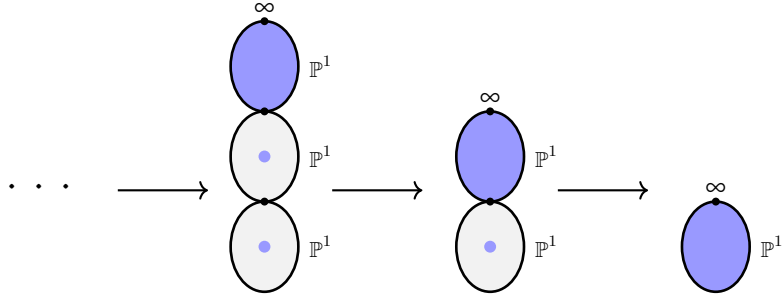

\subsubsection{Logarithmic coherent sheaves as systems of coherent sheaves}
\begin{proposition}\label{prop:DownCohDoesnt Imply Coh}
    Let $\mathcal{X}$ be a Noetherian DM k-stack with a log structure, and let $\mathcal{F}$ be a logarithmic coherent sheaf on $\mathcal{X}$. Then for every logarithmic alteration $\mathcal{X'}\rightarrow \mathcal{X}$, the sheaf $\arrowdown_{\cX'} \mathcal{F}$ is a conventional coherent sheaf.
\end{proposition}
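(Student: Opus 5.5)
Since $\mathcal{X}$ is Noetherian and in particular qcqs, Corollary \ref{corr: FP iff up FP} together with Theorem \ref{thm: FPisCoherent} lets us write $\mathcal{F}\cong\arrowup F$ for some conventional coherent sheaf $F$ on a log alteration $p\colon \cX_0\rightarrow\cX$. The goal is to identify $\arrowdown_{\cX'}\mathcal{F}$ with a pushforward of a coherent sheaf along a proper morphism. The first step is to replace $\cX_0$ by a common refinement $\cX''$ of $\cX_0$ and $\cX'$; such a refinement exists because log alterations form a cofiltered system. We then apply Theorem \ref{thm-pullback-static}, which is allowed since $\cX''$ is again Noetherian, and choose $\cX''$ so that the pullback $F''$ of $F$ to $\cX''$ is static. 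The composite $\cX''\rightarrow\cX_0\rightarrow\cX$ is a log alteration. Because pulling back along log alterations does not change the up functor (Proposition \ref{proposition on log etale topos} and Corollary \ref{Key corollary}), we still have $\mathcal{F}=\arrowup F''$, where $\Uparrow$ is now taken from $\cX''$. By the cancellation property, Proposition \ref{proposition: basic properties of log alterations} \eqref{item: cancellation property log alterations}, the map $q\colon\cX''\rightarrow\cX'$ is itself a log alteration.

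Next we compute $\arrowdown_{\cX'}$. We have $\mathsf{rest}_{\cX,\cX'}=\ul{q}_{\text{\'et}}\circ\mathsf{rest}_{\cX,\cX''}$, as in the proof of Proposition \ref{key proposition}, and this gives $\arrowdown_{\cX'}=\ul{q}{}^{\text{\'et}}_*\circ\arrowdown_{\cX''}$. Corollary \ref{corollary-DM-stacks-static-pullback}, applied over the base $\cX''$ with $F''$ static, gives $\arrowdown_{\cX''}\arrowup F''=F''$. Hence
\[\arrowdown_{\cX'}\mathcal{F}=\ul{q}_*F''.\]
An alternative route is Corollary \ref{Key corollary} \eqref{item-3-key-corollary} combined with Proposition \ref{proposition-on-derived-pullback-vanishing-static} \eqref{item: static push pull is identity}: every further alteration $r$ of $\cX''$ satisfies $\ul{r}_*\ul{r}^*F''=F''$, so the colimit stabilises at $\ul{q}_*F''$.

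It remains to see that $\ul{q}_*F''$ is coherent. By Proposition \ref{proposition: basic properties of log alterations} \eqref{item: log alts are proper}, $\ul{q}$ is proper, and $\ul{\cX}'$ is locally Noetherian. Proper pushforward preserves coherence for DM stacks with Noetherian base, by the coherence theorem for proper morphisms of algebraic stacks (Olsson/Faltings). Therefore $\ul{q}_*F''$ is coherent.

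I expect the main point of care to be the identification $\arrowdown_{\cX'}=\ul{q}_*\arrowdown_{\cX''}$ on the level of sheaves. This should follow from the explicit description in Proposition \ref{prop: explicit description of up and down}: sections over a strict étale $\cU\rightarrow\cX'$ agree with sections over $\cU\times_{\cX'}\cX''\rightarrow\cX''$, because $q$ is a monomorphic log étale cover. With this identification in hand, everything else is a direct application of the static machinery.
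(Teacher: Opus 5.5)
Your proof is correct and is essentially the paper's argument made explicit. The paper just cites Proposition~\ref{prop: derived push log} for the (automatically saturated) morphism $\cX\to\ul{\cX}_{\text{triv}}$; unwinding that proposition in this case (pass to a static representative $F''$ on a further alteration, use $\arrowdown\Uparrow F''=F''$, then coherence of proper pushforward) gives exactly your identification $\arrowdown_{\cX'}\mathcal{F}=\ul{q}_*F''$. A small bonus of your direct route is that it visibly needs no $\text{char}(k)=0$ hypothesis, which appears in part \eqref{item: log coherence push forward} of that proposition but not in the statement here.
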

\begin{proof}
    This is immediate from Proposition \ref{prop: derived push log}, noting that $\Downarrow_{\cX'}$ arises as the log \'etale pushforward along the canonical (saturated) morphism $\cX\rightarrow \ul{\cX}_{\text{triv}}$ to the underlying stack $\ul{\mathcal{X}}$ equipped with the trivial log structure.
\end{proof}
\begin{example}[No converse to Proposition \ref{prop:DownCohDoesnt Imply Coh}]
    Using the notation of Example \ref{example: not Noetherian}, let $$\cI= \bigcup_{n\geq 0}\; \arrowup I_n\subset \cO_{X_{\text{l\'et}}}.$$ Since any log alteration $\cX'\rightarrow X$ is static and Noetherian, it follows that $\Downarrow_{\cX'} \!\! \cI \subset \;\Downarrow_{\cX'}\!\LogO{X} = \cO_{\ul{\cX}'}$ 
    is coherent as it is a quasi-coherent ideal sheaf. However, we now argue that $\cI$ itself cannot be coherent: indeed if it were coherent then the cokernel $Q$ to $\mathcal{I}\hookrightarrow \LogO{X}$ would be coherent and thus enjoy the property described in Proposition~\ref{prop:DownCohDoesnt Imply Coh}. However, $Q$ is the colimit of $\Uparrow \!\! \!\mathcal{O}_{\underline{Z}{}_n}$ over all integers $n$. Since $X$ is qcqs and by \cite[0738]{stacks}, taking global sections commutes with passing to this colimit. But $\Gamma(C_n,\mathcal{O}_{\underline{Z}{}_n})$ has dimension $n$ for each $n$, and thus the colimit is not a finite dimensional $k$ vector space. In particular, $\LogO{X}$-modules with coherent restrictions to all log alterations do not form an abelian category. 
\end{example}

\subsection{Logarithmic quasi-coherent sheaves}\label{subsection on log quasi-coherent sheaves}
We will now introduce a notion of log quasi-coherence that generalizes log coherence and behaves similarly to quasi-coherence in conventional algebraic geometry. In particular, we show in Theorem \ref{theorem-compacted-generateness-of-quasi-coherent} that the category of log quasi-coherent sheaves is the smallest subcategory of $\text{Mod}(\LogO{X})$ containing all log coherent sheaves that is closed under small colimits.

\begin{definition}\label{Def:quasi-coherent}
    An $\mathcal{O}_{\mathcal{X}_{\text{l\'et}}}$-module $\mathcal{F}$ is said to be a \textit{logarithmic quasi-coherent sheaf} if for every log alteration $q\colon \mathcal{X}'\rightarrow \mathcal{X}$, the restriction $\Downarrow_{\cX'}\!\! \mathcal{F}$ is a conventional quasi-coherent sheaf. A morphism of logarithmic quasi-coherent sheaves is a morphism in the category of $\mathcal{O}_{\mathcal{X}_{\text{l\'et}}}$-modules. The category of log quasi-coherent sheaves is denoted $\logqcoh{\mathcal{X}}\subset \text{Mod}(\mathcal{O}_{\mathcal{X}_{\text{l\'et}}}).$ 
\end{definition}
\begin{remark}
    It follows from Corollary \ref{Key corollary} \eqref{item-1-key corollary} that every log quasi-coherent sheaf is of the shape $\underset{i}{\mathrm{colim}}\arrowup F_i$ where the $F_i$ are conventional quasi-coherent sheaves on various log alterations of $\cX$. By Theorem \ref{thm-pullback-static}, we may choose the $F_i$ to be static if $\cX$ is Noetherian.
\end{remark}
We provide alternative characterizations of log quasi-coherent sheaves.

\begin{proposition}\label{Prop-on-quasi-coherent-sheaves}
Let $\mathcal{X}$ be a DM $k$-stack with a log structure and $\mathcal{F}$ be an $\mathcal{O}_{\mathcal{X}_{\text{l\'et}}}$-module. The following statements are equivalent.
\begin{enumerate}
    \item \label{item-1-prop-quasi-coherent}
    The sheaf $\mathcal{F}$ is log quasi-coherent.
    \item\label{item-2-prop-quasi-coherent} For any log \'etale morphism $\mathcal{U}\rightarrow \mathcal{X}$, the restriction $\mathcal{F}|_{\ul{\mathcal{U}}{}_{\text{\'et}}}$ is a conventional quasi-coherent sheaf on $\underline{\mathcal{U}}$.
    \item\label{item-3-prop-quasi-coherent} For any strict morphism $V\rightarrow U$ in $\mathcal{X}_{\text{l\'et}}$ with both $U$ and $V$ affine, the canonical morphism
    $$\mathcal{F}(U)\otimes_{\mathcal{O}_{\ul{U}}(\ul{U})}\mathcal{O}_{\ul{V}}(\ul{V})\longrightarrow \mathcal{F}(V)$$
    is an isomorphism.
    \item\label{item-4-prop-quasi-coherent} There is a log \'etale cover $\{\mathcal{U}_i\rightarrow \mathcal{X}\}_{i\in I}$ such that the $\cF|_{\cU_i}$ are log quasi-coherent.
\end{enumerate}
\end{proposition}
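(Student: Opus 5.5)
I will prove the cycle \eqref{item-1-prop-quasi-coherent}$\Rightarrow$\eqref{item-2-prop-quasi-coherent}$\Rightarrow$\eqref{item-3-prop-quasi-coherent}$\Rightarrow$\eqref{item-4-prop-quasi-coherent}$\Rightarrow$\eqref{item-1-prop-quasi-coherent}. The main tool is Proposition \ref{proposition-alteration-to-saturated-morphisms} \eqref{item: strictify log etale via log alteration}. It says that any qc log \'etale $\cU\rightarrow\cX$ becomes strict \'etale after base change along a suitable log alteration $\cX'\rightarrow\cX$. Combined with Proposition \ref{proposition: basic properties of log alterations} \eqref{item: log alts is a mono log et cover}, this reduces every statement about log \'etale opens to statements about strict \'etale neighbourhoods of log alterations. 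On those neighbourhoods, Proposition \ref{prop: explicit description of up and down} identifies $\cF$ with $\Downarrow_{\cX'}\cF$.

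\textbf{\eqref{item-1-prop-quasi-coherent}$\Rightarrow$\eqref{item-2-prop-quasi-coherent}.} Quasi-coherence is strict \'etale local, so I may assume $\cU$ is qc. Choose a log alteration $\cX'\rightarrow\cX$ such that $\cU'=\cU\times_\cX\cX'\rightarrow\cX'$ is strict \'etale. I must show that $\cF|_{\ul{\cU}_{\text{\'et}}}$, which I read as $\Downarrow_{\cU}\cF$ for $\cU$ regarded over itself, is quasi-coherent. The same construction applies to any log alteration $\cU''\rightarrow\cU$, since $\cU''$ is again log \'etale over $\cX$. It therefore suffices to show that $\Downarrow_{\cU}\cF$ is quasi-coherent whenever $\cU\rightarrow\cX$ is log \'etale, given that this holds when $\cU$ is strict \'etale over a log alteration. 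In the strict case, Proposition \ref{prop: explicit description of up and down} gives $\Downarrow_{\cU}\cF=(\Downarrow_{\cX'}\cF)|_{\ul{\cU}}$, which is quasi-coherent by \eqref{item-1-prop-quasi-coherent}.

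For general $\cU$, I compare $\Downarrow_\cU\cF$ with $\Downarrow_{\cU'}\cF$ along the log alteration $p\colon\cU'\rightarrow\cU$. Since $p$ is a monomorphic log \'etale cover, sections of $\cF$ over $\cV\rightarrow\cU$ and over $\cV\times_\cU\cU'$ agree. This yields $\Downarrow_\cU\cF=\ul{p}_*\Downarrow_{\cU'}\cF$. Because $\ul{p}$ is qcqs (it is proper), this is a pushforward of a quasi-coherent sheaf and hence quasi-coherent.

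\textbf{\eqref{item-2-prop-quasi-coherent}$\Rightarrow$\eqref{item-3-prop-quasi-coherent}.} A strict morphism $V\rightarrow U$ of log \'etale opens is strict log \'etale, hence conventionally \'etale. Since $\cF|_{\ul{U}_{\text{\'et}}}$ is quasi-coherent on the affine scheme $\ul{U}$, the usual description of quasi-coherent sheaves on the affine \'etale site gives the stated base change isomorphism.

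\textbf{\eqref{item-3-prop-quasi-coherent}$\Rightarrow$\eqref{item-4-prop-quasi-coherent}.} I take the trivial cover and show directly that \eqref{item-3-prop-quasi-coherent} implies \eqref{item-1-prop-quasi-coherent}. For a log alteration $\cX'$ and an affine strict \'etale $U\rightarrow\cX'$, the sections of $\Downarrow_{\cX'}\cF$ are $\cF(U)$ by Proposition \ref{prop: explicit description of up and down}. Condition \eqref{item-3-prop-quasi-coherent} says these are compatible with affine \'etale base change, which characterizes quasi-coherence on the affine \'etale site (\cite[03DX]{stacks}-type criterion).

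\textbf{\eqref{item-4-prop-quasi-coherent}$\Rightarrow$\eqref{item-1-prop-quasi-coherent}.} This is the main obstacle, because the cover is only log \'etale. Fix a log alteration $\cX'$. Refine the cover so that, locally on $\cX'$ and after a further log alteration $\cX''\rightarrow\cX'$, it becomes strict \'etale over $\cX''$. This uses Proposition \ref{proposition-alteration-to-saturated-morphisms} on qc pieces, together with the fact that quasi-coherence of $\Downarrow_{\cX'}\cF$ is strict \'etale local on $\ul{\cX}'$.

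The hypothesis on $\cF|_{\cU_i}$, applied to the log alteration $\cU_i\times_\cX\cX''$, shows that $\Downarrow_{\cX''}\cF$ is quasi-coherent strict-\'etale locally, and hence quasi-coherent. The pushforward argument from the first step gives $\Downarrow_{\cX'}\cF=\ul{q}_*\Downarrow_{\cX''}\cF$, with $\ul{q}$ qcqs, and this is quasi-coherent.

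The delicate points are the identification $\Downarrow_{\cX'}=\ul{q}_*\Downarrow_{\cX''}$, which follows from $\Downarrow_{\cX'}=(\mathsf{rest}_{\cX,\cX'})_*$ and the equivalence $q_{\text{l\'et}}$, and the handling of quasi-compactness when refining covers.
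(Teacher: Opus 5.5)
Your proposal is correct and follows essentially the same route as the paper. The key step (1)$\Rightarrow$(2) is done the same way: reduce to qc $\mathcal{U}$, use Proposition~\ref{proposition-alteration-to-saturated-morphisms} to make $\mathcal{U}$ strict \'etale over a log alteration, and push forward along the qcqs log alteration. Likewise, (2)$\Leftrightarrow$(3) is the standard affine criterion, and (4)$\Rightarrow$(1) refines to a finite qc cover that becomes strict \'etale after a log alteration.

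The only differences are organizational: you run a cycle and fix $\mathcal{X}'$ first, whereas the paper first treats strict \'etale covers. One step you leave implicit is that $\Downarrow$ of $\mathcal{F}$ on $\mathcal{U}_i\times_{\mathcal{X}}\mathcal{X}''$ is quasi-coherent when $\mathcal{X}''$ is only a log alteration of a strict \'etale piece of $\mathcal{X}'$. This follows from your own (1)$\Rightarrow$(2) applied to $\mathcal{F}|_{\mathcal{U}_i}$.
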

\begin{proof}
The equivalence of \eqref{item-2-prop-quasi-coherent} and \eqref{item-3-prop-quasi-coherent} is standard \cite[0GZX]{stacks}. As \eqref{item-1-prop-quasi-coherent} is a special case of \eqref{item-2-prop-quasi-coherent}, it only remains to show \eqref{item-1-prop-quasi-coherent} $\Rightarrow$ \eqref{item-2-prop-quasi-coherent}. Let $\mathcal{U}\rightarrow \mathcal{X}$ be a log \'etale morphism. By locality of conventional quasi-coherence, we may assume that $\mathcal{U}$ is qc. Moreover, since $\cF$ is a log \'etale sheaf, it follows that for any log alteration $q\colon \mathcal{U}'\rightarrow \mathcal{U}$ we have $\ul{q}{}^{\text{\'et}}_*(\mathcal{F}|_{\ul{\mathcal{U}}{}'_{\text{\'et}}} )= \mathcal{F}|_{\ul{\mathcal{U}}{}_{\text{\'et}}}$ and so we may replace $\mathcal{U}$ by a log alteration. By Proposition \ref{proposition-alteration-to-saturated-morphisms}, we can therefore assume $\mathcal{U}\rightarrow\mathcal{X}$ is the composite $\mathcal{U}\rightarrow\mathcal{X}'\rightarrow\mathcal{X}$ of a strict \'etale morphism followed by a log alteration $q\colon \cX'\rightarrow\cX$, in which case $\mathcal{F}|_{\ul{\mathcal{U}}{}_{\text{\'et}}} = (\arrowdown_{\cX'}\cF )|_{\ul{\cU}}$ is quasi-coherent as desired.

The locality statement \eqref{item-4-prop-quasi-coherent} holds if $\{\mathcal{U}_i\rightarrow \mathcal{X}\}_{i\in I}$ is a strict \'etale cover. In particular, we assume that $\mathcal{X}$ is qc and $\{\mathcal{U}_i\rightarrow \mathcal{X}\}_{i\in I}$ is a finite log \'etale cover with $\mathcal{U}_i$ qc. Thus, we may assume that there is a log alteration $\mathcal{X}'\rightarrow \mathcal{X}$ so that we can factor $\mathcal{U}_i\rightarrow\mathcal{X}'\rightarrow\mathcal{X}$, where the first morphism is strict \'etale. Thus, $\{\mathcal{U}_i\rightarrow\mathcal{X}'\}_{i\in I}$ is a strict \'etale cover and so $\mathcal{F}|_{\mathcal{X}'}=\mathcal{F}$ satisfies \eqref{item-2-prop-quasi-coherent}-\eqref{item-1-prop-quasi-coherent}.
\end{proof}
We now show that the up functor preserves quasi-coherence. For this, we first establish that quasi-coherence is stable under log \'etale sheafification.
\begin{proposition}\label{prop:nifty}
Let $\mathcal{F}$ be a presheaf of $\mathcal{O}_{\mathcal{X}_{\text{l\'et}}}^{\text{pre}}$-modules such that for a strict morphism $V\rightarrow U$ between affines in $\mathcal{X}_{\text{\'et}}$, the morphism
\begin{equation}\label{equation:quasi-coherent}
\mathcal{F}(U)\otimes_{\mathcal{O}_{\ul{U}}(\ul{U})}\mathcal{O}_{\ul{V}}(\ul{V})\longrightarrow \mathcal{F}(V)    
\end{equation}
is an isomorphism, then the sheafification $\mathcal{F}^{\text{sh}}$ is log quasi-coherent. Moreover, it is enough to check that \eqref{equation:quasi-coherent} is an isomorphism under the additional assumption that $U\rightarrow \mathcal{X}$ factors through a member of a given log \'etale cover $\{\cU_i\rightarrow \mathcal{X}\}_{i\in I}$.
\end{proposition}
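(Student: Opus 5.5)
The plan is to use Corollary \ref{Key corollary} \eqref{item-1-key corollary}, which writes
\[
\mathcal{F}^{\text{sh}}=\underset{\cX'\rightarrow \cX\text{ log alt}}{\mathrm{colim}}\Uparrow\!\! (\arrowdown_{\cX'}\mathcal{F})^{\text{sh}},
\]
and then check the criterion of Proposition \ref{Prop-on-quasi-coherent-sheaves} \eqref{item-2-prop-quasi-coherent} or \eqref{item-1-prop-quasi-coherent}. Fix a log alteration $\cX'\rightarrow\cX$. By Remark \ref{remark: down arrow on presheaves}, the presheaf $\arrowdown_{\cX'}\mathcal{F}$ on $\ul{\cX}{}'_{\text{\'et}}$ assigns $\mathcal{F}(\cU\rightarrow\cX'\rightarrow\cX)$ to a strict \'etale $\cU\rightarrow\cX'$. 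The morphisms between affine such $\cU$ are strict in $\cX_{\text{l\'et}}$, so hypothesis \eqref{equation:quasi-coherent} says precisely that this presheaf satisfies the affine base change condition of a quasi-coherent module. By the conventional statement \cite[0GZX]{stacks} (a presheaf on the affine \'etale site satisfying this condition sheafifies to the quasi-coherent sheaf $\widetilde{\mathcal{F}(U)}$ on each affine $U$), $(\arrowdown_{\cX'}\mathcal{F})^{\text{sh}}$ is a conventional quasi-coherent sheaf on $\ul{\cX}'$. So $\mathcal{F}^{\text{sh}}$ is a filtered colimit of sheaves $\arrowup G_{\cX'}$ with $G_{\cX'}$ quasi-coherent.

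Next I show that each $\arrowup G$ is log quasi-coherent and that the class is closed under filtered colimits. For the colimit, I apply $\arrowdown_{\cX''}$ for a log alteration $\cX''$. By Corollary \ref{Key corollary} \eqref{item-3-key-corollary} with $n=0$, $\arrowdown_{\cX''}$ of a sheafification is a filtered colimit of pushforwards $\ul{q}{}_*$ along the qcqs (proper) maps $\ul{q}$. These commute with filtered colimits, and filtered colimits of quasi-coherent sheaves are quasi-coherent. Concretely, $\arrowdown_{\cX''}\mathcal{F}^{\text{sh}}=\mathrm{colim}_{q\colon\cX'''\rightarrow\cX''}\,\ul{q}{}_*(\arrowdown_{\cX'''}\mathcal{F})^{\text{sh}}$. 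Each term is the pushforward along a proper morphism of a quasi-coherent sheaf by the previous paragraph, hence quasi-coherent. The colimit is filtered because log alterations form a cofiltered system, so the result is quasi-coherent. This argument proves the claim directly without treating $\arrowup G$ separately. The only point to check is that Corollary \ref{Key corollary} \eqref{item-3-key-corollary} applies to the presheaf $\mathcal{F}$ itself, which it does since it is stated for presheaves.

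For the "moreover" statement, I restrict to the cover. Set $\cV=\coprod_i\cU_i$. The hypothesis holds for $\mathcal{F}|_{\cU_i}$ on the site of $\cU_i$, restricted to affines factoring through $\cU_i$. Sheafification commutes with restriction to $\cU_i$, so the first part gives that $\mathcal{F}^{\text{sh}}|_{\cU_i}$ is log quasi-coherent. Proposition \ref{Prop-on-quasi-coherent-sheaves} \eqref{item-4-prop-quasi-coherent} then yields the claim. I expect the main obstacle to be the first step: matching the hypothesis on strict morphisms $V\rightarrow U$ in $\cX_{\text{l\'et}}$ with the affine strict \'etale site of $\cX'$. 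This needs care when the log alteration $\cX'$ is only a DM stack, in which case one first passes to an \'etale atlas. One must also check that the \'etale affines over $\cX'$ suffice to compute the \'etale sheafification, which holds because they form a basis of the topology.
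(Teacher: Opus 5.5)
Your argument is correct and is essentially the paper's proof. The paper likewise reduces the \emph{moreover} clause to Proposition \ref{Prop-on-quasi-coherent-sheaves}~\eqref{item-4-prop-quasi-coherent}, and then gets quasi-coherence of each $\arrowdown_{\cX'}\cF^{\text{sh}}$ from three ingredients: Corollary \ref{Key corollary}~\eqref{item-3-key-corollary} with $n=0$, quasi-coherence of the conventional sheafifications $(\arrowdown_{\cX''}\cF)^{\text{sh}}$, and closure of quasi-coherent sheaves under pushforward along the proper maps $\ul{q}$ and under colimits. You were also right to drop the detour through $\arrowup G_{\cX'}$: the paper deduces that $\Uparrow$ preserves quasi-coherence (Corollary \ref{key-corollary-quasi-coherent}) from this very proposition, so that route would have been circular.
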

\begin{proof}
The last part follows from Proposition \ref{Prop-on-quasi-coherent-sheaves} \eqref{item-4-prop-quasi-coherent}. Hence, we may assume that \eqref{equation:quasi-coherent} holds for all affine $U$. The claim now follows from Corollary \ref{Key corollary} \eqref{item-3-key-corollary} and the fact that conventional quasi-coherent sheaves are closed under colimits.
\end{proof}
\begin{corollary}\label{key-corollary-quasi-coherent}
For any DM $k$-stack $\mathcal{X}$ with a log structure we have:
\begin{enumerate}
    \item\label{cor-1} Any finitely presented $\cO_{\cX_{\text{l\'et}}}$-module is log quasi-coherent.
    \item\label{cor-2} For any log alteration $q\colon \mathcal{X}'\rightarrow \mathcal{X}$, the up functor $\Uparrow\colon \text{Mod}(\mathcal{O}_{\ul{\mathcal{X}}{}'_{\text{\'et}}})\rightarrow\text{Mod}(\mathcal{O}_{\mathcal{X}_{\text{l\'et}}})$ sends conventional quasi-coherent sheaves to log quasi-coherent sheaves.
\end{enumerate}
\end{corollary}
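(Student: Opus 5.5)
I would derive both parts from Proposition \ref{prop:nifty}, proving \eqref{cor-2} first and then deducing \eqref{cor-1} from it.

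For \eqref{cor-2}, let $F$ be a conventional quasi-coherent sheaf on $\ul{\cX}'$. Proposition \ref{prop: explicit description of up and down} says that $\Uparrow\!F$ is the log \'etale sheafification of the presheaf
\[
\cF^{\text{pre}}\colon (\varphi\colon \cV\rightarrow \cX)\longmapsto \Gamma\bigl(\ul{\cV\times_\cX\cX'},(\ul{\varphi}{}'_{\text{\'et}})^*F\bigr).
\]
By Proposition \ref{prop:nifty}, it suffices to check the base-change isomorphism \eqref{equation:quasi-coherent} for $\cF^{\text{pre}}$ for strict morphisms $V\rightarrow U$ of affines. Moreover, I may restrict to those $U$ that factor through a chosen log \'etale cover of $\cX$.

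I would take the cover to be $\cX'\rightarrow\cX$ together with a strict \'etale cover of $\cX'$ by affines. This is legitimate because $\cX'\rightarrow\cX$ is a monomorphic log \'etale cover by Proposition \ref{proposition: basic properties of log alterations}. If $U\rightarrow\cX$ factors through $\cX'$, then, since $q$ is a monomorphism, $U\times_\cX\cX'=U$. Hence $\cF^{\text{pre}}(U)=\Gamma(\ul{U},F|_{\ul{U}})$, and the same holds for $V$. Since $V\rightarrow U$ is strict, $\ul{V}\rightarrow\ul{U}$ is a morphism of affine schemes. The map \eqref{equation:quasi-coherent} is then the usual isomorphism $\Gamma(\ul{U},F)\otimes_{\cO(\ul{U})}\cO(\ul{V})\cong\Gamma(\ul{V},F)$ for quasi-coherent sheaves on affine schemes. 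Therefore $\Uparrow\!F$ is log quasi-coherent.

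For \eqref{cor-1}, log quasi-coherence is log \'etale local by Proposition \ref{Prop-on-quasi-coherent-sheaves} \eqref{item-4-prop-quasi-coherent}, so I may assume that $\cX$ is qcqs and that $\cF$ has a global presentation. Proposition \ref{proposition on finite presentation} then gives $\cF\cong\Uparrow\!F$ with $F$ finitely presented, hence quasi-coherent, on some log alteration, and I conclude by \eqref{cor-2}. If I want to avoid that proposition, there is a direct route: $\cO_{\cX_{\text{l\'et}}}$ is $\Uparrow\cO_{\ul{\cX}}$ and so is log quasi-coherent by \eqref{cor-2}, and I then need log quasi-coherence to be closed under cokernels. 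For that I would use the characterization of Proposition \ref{Prop-on-quasi-coherent-sheaves} \eqref{item-2-prop-quasi-coherent} and compute cokernels via the presheaf cokernel. Its sheafification is quasi-coherent by Proposition \ref{prop:nifty}, because tensoring with $\cO(\ul{V})$ is right exact.

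The main point needing care is the factorization step: that $U\times_\cX\cX'=U$ when $U$ factors through $\cX'$, and that restricting to such $U$ is permitted. This is exactly what Proposition \ref{prop:nifty} provides, together with the fact that log alterations are monomorphisms.
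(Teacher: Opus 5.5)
Your proposal is correct and follows the paper's own proof. Part (2) comes from Proposition \ref{prop:nifty} applied to the explicit presheaf of Proposition \ref{prop: explicit description of up and down}, and part (1) from (2) together with the finite-presentation equivalence (the paper cites Corollary \ref{corr: FP iff up FP}). Your reduction to a qcqs base via Proposition \ref{Prop-on-quasi-coherent-sheaves} \eqref{item-4-prop-quasi-coherent}, and your monomorphism argument giving $U\times_\cX\cX'=U$, make explicit details that the paper's two-line proof leaves implicit.
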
 
\begin{proof}
Part \eqref{cor-2} follows from Proposition \ref{prop:nifty} and the explicit description of $\Uparrow$ in Proposition \ref{prop: explicit description of up and down}. Part \eqref{cor-1} follows from \eqref{cor-2} and Corollary \ref{corr: FP iff up FP}.
\end{proof}
\begin{theorem}\label{thm-category-of-quasi-coherent-sheaves}
Let $\mathcal{X}$ be any DM $k$-stack with a log structure. Then the subcategory $\logqcoh{\mathcal{X}}\subset \text{Mod}(\mathcal{O}_{\mathcal{X}_{\text{l\'et}}})$ is closed under kernels, cokernels, extensions and small colimits.
\end{theorem}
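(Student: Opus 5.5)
The plan is to use Definition \ref{Def:quasi-coherent} directly: $\cF$ is log quasi-coherent exactly when $\Downarrow_{\cX'}\cF$ is a conventional quasi-coherent sheaf on $\ul{\cX}'$ for every log alteration $q\colon\cX'\rightarrow\cX$. Conventional quasi-coherent sheaves on $\ul{\cX}'_{\text{\'et}}$ are closed under kernels, cokernels, extensions and small colimits inside $\text{Mod}(\cO_{\ul{\cX}'_{\text{\'et}}})$. So the whole task is to control how $\Downarrow_{\cX'}$ interacts with these operations.

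\emph{Kernels.} Since $\Downarrow_{\cX'}$ is a right adjoint, it is left exact and preserves kernels. Hence for a morphism $\phi$ of log quasi-coherent sheaves we get $\Downarrow_{\cX'}\text{Ker}(\phi)=\text{Ker}(\Downarrow_{\cX'}\phi)$, which is quasi-coherent.

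\emph{Cokernels, extensions and colimits.} These do not commute with $\Downarrow_{\cX'}$ on the nose, so I would pass through presheaves.
\begin{enumerate}
\item Cokernels and colimits are sheafifications of the presheaf cokernel or colimit. For log quasi-coherent $\cF_i$, I would form the presheaf colimit $\cP$ (computed sectionwise) and check the hypothesis of Proposition \ref{prop:nifty}.
\item By Proposition \ref{Prop-on-quasi-coherent-sheaves} \eqref{item-3-prop-quasi-coherent}, each $\cF_i$ satisfies $\cF_i(U)\otimes_{\cO(\ul U)}\cO(\ul V)\cong\cF_i(V)$ for strict $V\rightarrow U$ between affines.
\item Tensor products commute with colimits, so $\cP$ satisfies the same condition. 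Proposition \ref{prop:nifty} then gives that $\cP^{\text{sh}}$ is log quasi-coherent.
\end{enumerate}
This handles arbitrary small colimits and cokernels at once; a cokernel is the coequalizer of $\phi$ and $0$.

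\emph{Extensions: the main obstacle.} Let $0\rightarrow\cF\rightarrow\cG\rightarrow\cH\rightarrow0$ be exact with $\cF,\cH$ log quasi-coherent. Applying $R\Downarrow_{\cX'}$ gives
$$0\rightarrow\Downarrow\cF\rightarrow\Downarrow\cG\rightarrow\Downarrow\cH\rightarrow R^1\Downarrow_{\cX'}\cF,$$
so I need to control $R^1\Downarrow_{\cX'}\cF$. The plan is as follows.
\begin{enumerate}
\item By Corollary \ref{Key corollary} \eqref{item-3-key-corollary}, $R^1\Downarrow_{\cX'}\cF=\mathrm{colim}_{q'\colon\cX''\rightarrow\cX'}R^1\ul{q'}_*\Downarrow_{\cX''}\cF$. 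Each term is quasi-coherent, because $\ul{q'}$ is proper and qcqs and $\Downarrow_{\cX''}\cF$ is quasi-coherent. A filtered colimit of quasi-coherent sheaves is quasi-coherent, so $R^1\Downarrow_{\cX'}\cF$ is quasi-coherent.
\item The kernel of $\Downarrow\cH\rightarrow R^1\Downarrow\cF$ is then quasi-coherent.
\item $\Downarrow\cG$ is an extension of this kernel by $\Downarrow\cF$. Quasi-coherent sheaves are closed under extensions in $\text{Mod}(\cO_{\ul{\cX}'_{\text{\'et}}})$ (standard for schemes and DM stacks, via affine-local vanishing of $H^1$), so $\Downarrow\cG$ is quasi-coherent.
\end{enumerate}

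The point I expect to need care is the quasi-coherence of $R^1\Downarrow_{\cX'}\cF$ in step 1. As a fallback, I would argue locally on affines using Proposition \ref{Prop-on-quasi-coherent-sheaves} \eqref{item-3-prop-quasi-coherent}: show that log \'etale \v{C}ech $H^1$ of $\cF$ on an affine $U$ dies after passing to a strict \'etale refinement of a log alteration. This would make the presheaf $U\mapsto\cG(U)$ satisfy the tensor condition of Proposition \ref{prop:nifty}.
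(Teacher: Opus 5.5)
Your proof is correct. Kernels, cokernels and colimits are handled exactly as in the paper: kernels by left exactness of $\Downarrow_{\cX'}$, and colimits by checking that the sectionwise colimit satisfies the hypothesis of Proposition \ref{prop:nifty}.

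For extensions you take a genuinely different route. The paper stays at the level of sections. For a strict morphism $U'\rightarrow U$ of affines in $\cX_{\text{l\'et}}$, it compares two rows. The first is right exact and is obtained by tensoring sectionwise with $\cO_{\ul{U}'}(\ul{U}')$; this preserves sheaves because $U'\rightarrow U$ is strict log \'etale, hence flat, and covers of affines are finite. The second is left exact and is obtained by pushing forward along $U'\rightarrow U$. A diagram chase then shows that $\cG$ satisfies criterion \eqref{item-3-prop-quasi-coherent} of Proposition \ref{Prop-on-quasi-coherent-sheaves}.

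You instead apply $R\Downarrow_{\cX'}$ and use Corollary \ref{Key corollary} \eqref{item-3-key-corollary} to identify $R^1\Downarrow_{\cX'}\cF$ with a filtered colimit of higher direct images of quasi-coherent sheaves along proper maps. You then invoke extension-closure of conventional quasi-coherent sheaves. Your step 1 is already rigorous as written, so the fallback is unnecessary. It is the same mechanism the paper uses in Proposition \ref{prop:nifty} and later in Proposition \ref{prop: derived push log} \eqref{item: derived push is qcoh}.

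The trade-off is as follows. The paper's argument is elementary and uses no cohomology. Yours relies on heavier inputs: the limit-of-topoi description underlying Corollary \ref{Key corollary}, and the fact that higher direct images along qcqs, possibly non-representable (e.g. root stack) morphisms of DM stacks preserve quasi-coherence. In exchange, it reduces the extension claim entirely to conventional facts. It also yields the stronger by-product that $R^1\Downarrow_{\cX'}$ carries log quasi-coherent sheaves to conventional quasi-coherent sheaves.
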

\begin{proof}
Stability under kernels follows from the fact that $\arrowdown_{\mathcal{X}'}$ is left exact for any log alteration $\mathcal{X}'\rightarrow \mathcal{X}$ and that $\text{QCoh}(\underline{\mathcal{X}}')\subset \text{Mod}(\mathcal{O}_{\ul{\mathcal{X}}{}'_{\text{\'et}}})$ is closed under kernels.

For stability under extensions, let $0\rightarrow \mathcal{F}\rightarrow\mathcal{G}\rightarrow\mathcal{H}\rightarrow 0$ be a short exact sequence of $\mathcal{O}_{\mathcal{X}_{\text{l\'et}}}$-modules so that $\mathcal{F}$ and $\mathcal{H}$ are log quasi-coherent. Furthermore, let $f\colon U'\rightarrow U$ be a strict morphism between affines in $\mathcal{X}_{\text{l\'et}}$. Denoting by $U^{\text{aff}}_{\text{l\'et}}\subset U_{\text{l\'et}}$ the full subsite spanned by the affine objects, we have the following diagram: 
\begin{equation*}
\begin{tikzcd}[column sep=0.75cm]
            & \mathcal{F}|_{U_{\text{l\'et}}^{\text{aff}}}\otimes_{\mathcal{O}_{\ul{U}}(\ul{U})}\mathcal{O}_{\ul{U}'}(\ul{U}') \arrow[r] \arrow[d, "\cong"] & \mathcal{G}|_{U_{\text{l\'et}}^{\text{aff}}}\otimes_{\mathcal{O}_{\ul{U}}(\ul{U})}\mathcal{O}_{\ul{U}'}(\ul{U}') \arrow[r] \arrow[d] & \mathcal{H}|_{U_{\text{l\'et}}^{\text{aff}}}\otimes_{\mathcal{O}_{\ul{U}}(\ul{U})}\mathcal{O}_{\ul{U}'}(\ul{U}') \arrow[r] \arrow[d, "\cong"] & 0 \\
0 \arrow[r] & {(f_{\text{l\'et}}^{\text{aff}})_*(\mathcal{F}|_{U_{\text{l\'et}}^{\text{aff}}})} \arrow[r]                                                       & {(f_{\text{l\'et}}^{\text{aff}})_*(\mathcal{G}|_{U_{\text{l\'et}}^{\text{aff}}})} \arrow[r]                                              & {(f_{\text{l\'et}}^{\text{aff}})_*(\mathcal{H}|_{U_{\text{l\'et}}^{\text{aff}}})},                                                                 &  
\end{tikzcd}
\end{equation*}
where the upper row is right exact since the section-wise tensor product ${-}\otimes_{\mathcal{O}_{\ul{U}}(\ul{U})}\mathcal{O}_{\ul{U}'}(\ul{U}')$ is right exact and the lower row is left exact as $(f_{\text{l\'et}}^{\text{aff}})_*$ is left exact. For this, also note that ${-}\otimes_{\mathcal{O}_{\ul{U}}(\ul{U})}\mathcal{O}_{\ul{U}'}(\ul{U}')$ sends sheaves to sheaves since $\mathcal{O}_{\ul{U}'}(\ul{U}')$ is flat over $\mathcal{O}_{\ul{U}}(\ul{U})$ and all covers in $U^\text{aff}_{\text{l\'et}}$ are finite. Since the left and right vertical morphisms are isomorphisms, it thus follows that both rows are in fact short exact sequences and hence
$$\mathcal{G}|_{U_{\text{l\'et}}^{\text{aff}}}\otimes_{\mathcal{O}_{\ul{U}}(\ul{U})}\mathcal{O}_{\ul{U}'}(\ul{U}')\longrightarrow (f_{\text{l\'et}}^{\text{aff}})_*(\mathcal{G}|_{U_{\text{l\'et}}^{\text{aff}}})$$
is an isomorphism. Thus, $\mathcal{G}$ is log quasi-coherent.

For stability under colimits (and hence also cokernels), we note that the assumption of Proposition \ref{Prop-on-quasi-coherent-sheaves}
holds for the section-wise colimit, which implies the claim.
\end{proof}
In conventional algebraic geometry, the category of quasi-coherent sheaves often coincides with the ind-completion to the category of finitely presented sheaves. The same is true of log quasi-coherent sheaves. 
\begin{theorem}\label{theorem-compacted-generateness-of-quasi-coherent}
Let $\mathcal{X}$ be a qcqs DM $k$-stack with a log structure, then we have
 $$\logqcoh{\mathcal{X}} = \operatorname{Ind}(\text{FP}(\LogO{\cX})).$$
 In particular, every log quasi-coherent sheaf on $\mathcal{X}$ can be written as a filtered colimit of finitely presented sheaves. Moreover, a log quasi-coherent sheaf is compact if and only if it is finitely presented. 
\end{theorem}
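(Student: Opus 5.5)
We prove the three claims in turn: finitely presented objects are compact, every log quasi-coherent sheaf is a filtered colimit of finitely presented ones, and together these identify $\logqcoh{\mathcal{X}}$ with $\operatorname{Ind}(\text{FP}(\LogO{\cX}))$.

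\textbf{Step 1: compactness of finitely presented sheaves.} Let $\cF$ be finitely presented. By Corollary \ref{corr: FP iff up FP} we may write $\cF=\arrowup F$ with $F$ finitely presented on a log alteration $q\colon\mathcal{X}'\rightarrow\mathcal{X}$. Since $\mathcal{X}$ is qcqs and $\ul{q}$ is proper, $\mathcal{X}'$ is qcqs as well. By adjunction, for any filtered system $\cG_j$ of log quasi-coherent sheaves we have
\[
\text{Hom}(\arrowup F,\operatorname{colim}_j\cG_j)=\text{Hom}_{\cO_{\ul{\mathcal{X}}{}'_{\text{\'et}}}}\bigl(F,\arrowdown_{\mathcal{X}'}\operatorname{colim}_j\cG_j\bigr).
\]
The key point is that $\arrowdown_{\mathcal{X}'}$ commutes with filtered colimits of sheaves. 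Its sections over a qcqs strict \'etale $\cU\rightarrow\mathcal{X}'$ are $\Gamma(\cU,-)$ on $\mathcal{X}_{\text{l\'et}}$, and $\cU$ is qcqs in the log \'etale site. Hence sections commute with filtered colimits by Corollary \ref{Key corollary} \eqref{item-2-key-corollary} with $n=0$, or by \cite[0738]{stacks}. Finally, $\text{Hom}(F,-)$ commutes with filtered colimits of quasi-coherent sheaves on the qcqs stack $\ul{\mathcal{X}}'$ because $F$ is finitely presented. Together with Theorem \ref{thm-category-of-quasi-coherent-sheaves}, which says filtered colimits in $\logqcoh{\mathcal{X}}$ are computed in $\text{Mod}(\LogO{\cX})$, this shows $\cF$ is compact.

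\textbf{Step 2: generation by filtered colimits.} Let $\cF$ be log quasi-coherent. Corollary \ref{Key corollary} \eqref{item-1-key corollary} gives $\cF=\operatorname{colim}_{\mathcal{X}'}\arrowup\arrowdown_{\mathcal{X}'}\cF$. Each $\arrowdown_{\mathcal{X}'}\cF$ is conventionally quasi-coherent on the qcqs DM stack $\ul{\mathcal{X}}'$, so it is a filtered colimit of finitely presented sheaves. This step needs the approximation result for qcqs DM stacks (Rydh, or \cite{stacks} for algebraic spaces, possibly after passing to a Noetherian or approximation setting). Since $\arrowup$ is a left adjoint, it commutes with colimits. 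Hence $\cF$ is a colimit of finitely presented log sheaves.

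To make the colimit filtered, index instead over the comma category of all pairs $(\cP,\cP\rightarrow\cF)$ with $\cP$ finitely presented. This category is filtered because $\text{FP}(\LogO{\cX})$ is closed under finite colimits (cokernels and finite sums), so the canonical map $\operatorname{colim}\cP\rightarrow\cF$ is a filtered colimit. It is surjective by the previous paragraph. For injectivity, a section of the colimit mapping to zero in $\cF$ does so locally through some finitely presented $\cP$. We then show the kernel of $\cP\rightarrow\cF$ is locally killed by a further finitely presented $\cP'$, using compactness from Step 1 together with the fact that $\cF$ itself is a filtered colimit of finitely presented sheaves.

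\textbf{Step 3: conclusion.} Steps 1 and 2 give the standard recognition criterion: $\logqcoh{\mathcal{X}}$ is cocomplete by Theorem \ref{thm-category-of-quasi-coherent-sheaves}, $\text{FP}(\LogO{\cX})$ consists of compact objects, and every object is a filtered colimit of them. Hence the functor $\operatorname{Ind}(\text{FP}(\LogO{\cX}))\rightarrow\logqcoh{\mathcal{X}}$ is an equivalence. It is fully faithful by compactness and essentially surjective by Step 2. For the converse on compactness, a compact $\cF$ written as $\operatorname{colim}\cP_i$ has its identity map factor through some $\cP_i$. So $\cF$ is a retract of a finitely presented sheaf, and retracts are finitely presented as cokernels of idempotents.

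The main obstacle I expect is the approximation input in Step 2: writing conventional quasi-coherent sheaves on the qcqs DM stacks $\ul{\mathcal{X}}'$ as filtered colimits of finitely presented ones. The commutation of $\arrowdown$ with filtered colimits in Step 1 is routine by comparison.
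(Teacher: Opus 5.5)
Your architecture is the same as the paper's. You prove that finitely presented objects are compact, generate via Corollary \ref{Key corollary} \eqref{item-1-key corollary} plus Rydh's approximation \cite[Corollary 4.11]{rydh2015Noetherian}, and then use the recognition criterion together with idempotent-closedness of $\text{FP}(\LogO{\mathcal{X}})$. Your Step 1 is a legitimate alternative to the paper's citation of \cite[0GN0, 0GMS (3)]{stacks} and \cite[Lemma 3.14 (1)]{Log-etale-cohomology-Chikara-Nakayama}. The adjunction reduces compactness to the conventional statement on the qcqs stack $\ul{\mathcal{X}}'$, and the only logarithmic input is that sections over qcqs log \'etale objects commute with filtered colimits, which Corollary \ref{Key corollary} \eqref{item-2-key-corollary} supplies. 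Step 3 is also fine.

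The gap is the injectivity half of Step 2. You rightly note that $\mathcal{F}=\operatorname{colim}_{\mathcal{X}'}\operatorname{colim}_i \arrowup F_{\mathcal{X}',i}$ is only an iterated colimit, because the transition maps $\arrowup\arrowdown_{\mathcal{X}'}\mathcal{F}\to\arrowup\arrowdown_{\mathcal{X}''}\mathcal{F}$ do not respect the chosen approximations. You pass to the comma category $\text{FP}/\mathcal{F}$ to fix this. But you then justify injectivity of $\operatorname{colim}_{\text{FP}/\mathcal{F}}\mathcal{P}\to\mathcal{F}$ by ``the fact that $\mathcal{F}$ itself is a filtered colimit of finitely presented sheaves''. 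That is exactly what Step 2 is supposed to establish, so the argument is circular as written, and nothing else in your proposal proves injectivity.

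The missing point is that a filtered colimit of filtered colimits of compact objects is again a filtered colimit of compact objects. The paper gets this for free: once finitely presented modules are compact, $\operatorname{Ind}(\text{FP}(\LogO{\mathcal{X}}))\subset\logqcoh{\mathcal{X}}$ is closed under filtered colimits by \cite[Propositions I.8.5.1, I.8.7.3, I.8.7.5 (a)]{SGA4}. So it suffices that each $\arrowup\arrowdown_{\mathcal{X}'}\mathcal{F}$ is an ind-object, and no comma category is needed.

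If you prefer to keep your route, prove injectivity on sections over a qcqs $\mathcal{U}$; this suffices, since sections of the filtered colimit over qcqs objects are computed termwise.
\begin{enumerate}
\item Let $t\in\mathcal{P}(\mathcal{U})$ with $\phi(t)=0$. Put $\mathcal{G}_{\mathcal{X}'}=\arrowup\arrowdown_{\mathcal{X}'}\mathcal{F}$, which is log quasi-coherent by Corollary \ref{key-corollary-quasi-coherent}.
\item By compactness of $\mathcal{P}$, the map $\phi$ factors through some $\mathcal{G}_{\mathcal{X}'}$.
\item Since $\mathcal{F}(\mathcal{U})=\operatorname{colim}_{\mathcal{X}''}\mathcal{G}_{\mathcal{X}''}(\mathcal{U})$, the image of $t$ already vanishes in some $\mathcal{G}_{\mathcal{X}''}(\mathcal{U})$.
\item Using compactness again, factor $\mathcal{P}\to\mathcal{G}_{\mathcal{X}''}=\operatorname{colim}_j\arrowup F''_j$ through some $\arrowup F''_j$. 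Then enlarge $j$ until $t$ dies there.
\end{enumerate}
This produces a morphism $\mathcal{P}\to\arrowup F''_j$ in $\text{FP}/\mathcal{F}$ that kills $t$, which is what injectivity requires.
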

\begin{proof}
By \cite[0GN0, 0GMS (3)]{stacks} and \cite[Lemma 3.14 (1)]{Log-etale-cohomology-Chikara-Nakayama}, it follows that all finitely presented $\mathcal{O}_{\mathcal{X}_{\text{l\'et}}}$-modules are compact objects. Hence, it follows from \cite[Propositions I.8.5.1, I.8.7.3, and I.8.7.5 (a)]{SGA4} that the ind-completion $\text{Ind}(\text{FP}(\LogO{\cX}))\subset \logqcoh{\mathcal{X}}$ is a full subcategory that is closed under filtered colimits. 

To show that every object is contained in the ind-completion, by Corollary \ref{Key corollary} \eqref{item-1-key corollary}, it suffices to consider quasi-coherent sheaves pulled back from a log alteration of $\mathcal{X}$, for which the claim follows from \cite[Corollary 4.11]{rydh2015Noetherian}. The last claim follows from the fact that $\text{FP}(\LogO{\cX})$ is idempotent closed and \cite[Exercise I. 8.7.8]{SGA4}. 
\end{proof}
\begin{remark}
If we further assume $\mathcal{X}$ is Noetherian, $\logqcoh{\mathcal{X}}$ is the ind-completion of the small abelian category $\logcoh{\mathcal{X}}$ and is therefore a Grothendieck abelian category. Adapting Gabber's argument~\cite[077K]{stacks}, we expect this result to hold for any DM $k$-stack with a log structure. As we do not need this fact, we will not elaborate on this further.
\end{remark}
\subsection{Pushforward, pullback and their derived functors}
In this section, we show that log quasi-coherent sheaves are stable and well-behaved under derived pushforward and pullback.
\subsubsection{Derived pushforward}
\begin{proposition}\label{prop: derived push log}
    Let $f\colon \mathcal{X}\rightarrow\mathcal{Y}$ be a qcqs morphism of DM $k$-stacks with log structures.
    \begin{enumerate}
        \item\label{item: derived push is qcoh} The derived pushforwards $R^nf^\text{l\'et}_*\colon \text{Mod}(\mathcal{O}_{\mathcal{X}_{\text{l\'et}}})\rightarrow \text{Mod}(\mathcal{O}_{\mathcal{Y}_{\text{l\'et}}})$ preserve log quasi-coherence.
        \item\label{item: log coherence push forward} If $\mathcal{X}$ and $\mathcal{Y}$ are locally Noetherian, $\text{char}(k)=0$, $f$ is proper and $\mathcal{F}$ is log coherent on $\mathcal{X}$, then all derived pushforwards $R^nf^\text{l\'et}_*\mathcal{F}$ are log coherent.
        \item\label{item: computing derived pushfoward} Assume that $f$ is saturated, separated and concentrated. If $F$ is a static quasi-coherent sheaf on $\underline{\mathcal{X}}$ and $R^{i}\underline{f}{}_*F$ is static for all $i\geq n$, then we have $R^{i}f^{\text{l\'et}}_*\arrowup F =\arrowup R^{i}\underline{f}{}_*F $ for all $i\geq n-1$. Moreover, for any log flat $q\colon\mathcal{Y}'\rightarrow\mathcal{Y}$ and corresponding base change diagram 
        \begin{equation}\label{diagram: cartesian log alteration shit for pushforward fuck}
            \begin{tikzcd}
                \mathcal{X}'\dar["q'"] \rar["f'"] &\mathcal{Y}'\dar["q"]\\
                \mathcal{X}\rar["f"]&\mathcal{Y}
            \end{tikzcd}
        \end{equation}
        we have $R^{i}\underline{f}{}'_* (\underline{q}')^*F = \underline{q}^*R^{i}\underline{f}{}_*F$ for all $i\geq n-1$.
    \end{enumerate}
\end{proposition}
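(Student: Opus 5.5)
We will prove the three parts in the order (1), (3), (2), since (2) will be deduced from the computation in (3) together with statification.

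\emph{Part (1).} Let $\mathcal{F}$ be log quasi-coherent. By Corollary~\ref{Key corollary}~\eqref{item-1-key corollary} and Proposition~\ref{key proposition}~\eqref{item-1-key proposition}, applied with the presheaf $\mathcal{F}$ itself, we get
\[
R^nf^{\text{l\'et}}_*\mathcal{F}=\mathrm{colim}\,\arrowup R^n\underline{f}'_*(\arrowdown_{\mathcal{X}'}\mathcal{F}),
\]
where the colimit runs over squares as in \eqref{key-proposition-equation-1}. Each $\arrowdown_{\mathcal{X}'}\mathcal{F}$ is conventionally quasi-coherent. Since $\underline{f}'$ is qcqs, each $R^n\underline{f}'_*(\arrowdown_{\mathcal{X}'}\mathcal{F})$ is quasi-coherent on $\underline{\mathcal{Y}}'$. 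Corollary~\ref{key-corollary-quasi-coherent}~\eqref{cor-2} shows that $\arrowup$ of it is log quasi-coherent, and Theorem~\ref{thm-category-of-quasi-coherent-sheaves} shows that the colimit is log quasi-coherent.

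\emph{Part (3), the base-change claim.} The question is strict \'etale local on $\mathcal{Y}$, so we may take $\mathcal{Y}$ affine with a chart. Choose a finite affine strict \'etale cover of $\mathcal{X}$; separatedness makes all intersections affine. Let $C^\bullet$ be the associated \v{C}ech complex computing $R\underline{f}_*F$. Its terms are pushforwards along affine morphisms of restrictions of the static sheaf $F$. Since $f$ is saturated, hence integral, each of these morphisms is affine and integral, so Lemma~\ref{lemma-on-pushforward-along-integral-affine-preserves-log-Tor dimensions} makes every term static. The complex is bounded, and by hypothesis $\mathcal{H}^i(C^\bullet)=R^i\underline{f}_*F$ is static for $i\ge n$. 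Proposition~\ref{proposition: log flat over static is static}, or rather Corollary~\ref{cor: static and log flat stable under log flat pullback} with the first part of Proposition~\ref{prop: static complex prop}, then gives
\[
\mathcal{H}^i(\underline{q}^*C^\bullet)=\underline{q}^*R^i\underline{f}_*F \qquad (i\ge n-1).
\]
It remains to identify $\underline{q}^*C^\bullet$ with the \v{C}ech complex of $(\underline{q}')^*F$ on $\mathcal{X}'$. This is a statement about affine base change, $\underline{q}^*\underline{g}_*=\underline{g}'_*\underline{q}'^*$ for the affine pieces. I expect this step to be the main obstacle, because $q$ is not flat. I would use saturatedness of $f$: a log flat $q$ pulls back to an underlying base change that is Tor-independent on static sheaves. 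Concretely, compute locally with charts $P\to Q$ integral and saturated; the fibre product of underlying schemes is then the underlying scheme of the fs fibre product, and $\otimes$ commutes with the affine pushforward.

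\emph{Part (3), the pushforward claim.} Apply the base-change claim to all log alterations $q\colon\mathcal{Y}'\to\mathcal{Y}$. Along the cofinal family of cartesian squares, Proposition~\ref{key proposition}~\eqref{item-1-key proposition} together with Corollary~\ref{corollary-DM-stacks-static-pullback} ($\arrowdown_{\mathcal{X}'}\arrowup F=\underline{q}'^*F$) gives
\[
R^if^{\text{l\'et}}_*\arrowup F=\mathrm{colim}\,\arrowup R^i\underline{f}'_*\underline{q}'^*F=\mathrm{colim}\,\arrowup\underline{q}^*R^i\underline{f}_*F=\arrowup R^i\underline{f}_*F .
\]
Cofinality of the cartesian squares, among all squares, holds by Proposition~\ref{proposition: basic properties of log alterations}~\eqref{item: cancellation property log alterations}.

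\emph{Part (2).} Work locally on $\mathcal{Y}$, which we may take Noetherian and qc. Write $\mathcal{F}=\arrowup F$ with $F$ coherent on a log alteration of $\mathcal{X}$ (Theorem~\ref{main theorem on log coherent}), and replace $\mathcal{X}$ by that alteration, which is still proper over $\mathcal{Y}$. Since $\mathrm{char}(k)=0$, Proposition~\ref{proposition-alteration-to-saturated-morphisms} gives an alteration $\mathcal{Y}'\to\mathcal{Y}$ making $f$ saturated; by Proposition~\ref{proposition: basic properties of log alterations}~\eqref{item: log alts is a mono log et cover} we may replace $\mathcal{Y}$ by $\mathcal{Y}'$. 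Statify $F$ using Theorem~\ref{thm-pullback-static}. The sheaves $R^i\underline{f}_*F$ are coherent and vanish for $i\gg0$, so we run downward induction on $n$. At each stage, statify $R^{n}\underline{f}_*F$ by a log modification of $\mathcal{Y}$, base change, and use the base-change claim of (3) to see that the higher $R^i$, $i>n$, remain static. The base case $n\gg0$ is immediate. This yields $R^if^{\text{l\'et}}_*\mathcal{F}=\arrowup R^i\underline{f}_*F$ for every $i$, which is log coherent by Proposition~\ref{proposition on finite presentation} and Theorem~\ref{thm: FPisCoherent}.
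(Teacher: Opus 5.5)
Part (1), the downward induction in part (2), and the overall structure of part (3) match the paper. In part (3) you reduce the pushforward formula to log flat base change via the colimit of Proposition \ref{key proposition}, then prove base change by feeding a \v{C}ech complex into Proposition \ref{prop: static complex prop}.

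\textbf{The base-change step has a genuine gap: your \v{C}ech complex is not bounded.}
\begin{itemize}
\item For a strict \'etale cover $\{U_i\to\underline{\cX}\}$ of a DM stack, or even of an algebraic space, the $p$-th term is $\prod_{(i_0,\dots,i_p)}\Gamma(U_{i_0}\times_{\underline{\cX}}\cdots\times_{\underline{\cX}}U_{i_p},F)$. This is nonzero for every $p$, because $U_i\times_{\underline{\cX}}U_i\neq U_i$ in general.
\item Restricting to strictly increasing indices, which is what makes the Zariski \v{C}ech complex bounded, does not compute cohomology for \'etale covers. For example, take $\underline{\cX}=BG$ covered by a point: you get $V$ instead of $V^G$.
\item Proposition \ref{prop: static complex prop} really needs $E^i=0$ for $i\gg0$. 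Its downward induction, showing kernels and images are static, has no starting point otherwise. Truncating the infinite complex would require knowing some $\operatorname{Ker}(d^N)$ is static, which is exactly what is unknown.
\end{itemize}
This is the obstruction the paper removes, as follows.
\begin{itemize}
\item From concentratedness it gets tame inertia. It then builds a cohomologically affine good moduli space $\underline{\pi}\colon\underline{\cX}\to\underline{X}$ with $\underline{X}$ a qc separated algebraic space.
\item It takes an affine \'etale cover $\underline{U}\to\underline{X}$ and the alternating \v{C}ech complex of \cite[0721]{stacks}, built from the spaces $W_p/S_{p+1}$. This complex is bounded because the fibres of $\underline{U}\to\underline{X}$ have bounded size, and its terms are static.
\item Three facts then identify the base-changed complex with one computing $H^i(\underline{\cX}',(\underline{q}')^*F)$: good moduli spaces are stable under base change, $\underline{\cX}'=\underline{\cX}\times_{\underline{\cY}}\underline{\cY}'$ holds because $f$ is saturated, and affine base change applies.
\end{itemize}
Affine base change holds along any $q$, so non-flatness of $q$ is not the obstacle you anticipated; the difficulty is boundedness.

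\textbf{Your cofinality claim in the pushforward step is false.} Cartesian squares are not cofinal among the squares \eqref{key-proposition-equation-1}. If $\cY$ has trivial log structure, it has no nontrivial log alterations, so the only cartesian square is $(\cX,\cY)$, while $\cX$ may have many log alterations. Proposition \ref{proposition: basic properties of log alterations}\eqref{item: cancellation property log alterations} only tells you that $g\colon\cX'\to\cX\times_{\cY}\cY'$ is a log alteration.

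The repair, which is what the paper does, is as follows.
\begin{itemize}
\item Factor $f'=\pi_{\cY'}\circ g$.
\item Use $R\underline{g}_*\underline{g}^*G=G$ for $G=\underline{\pi_{\cX}}^*F$, which is static by Corollary \ref{cor: static and log flat stable under log flat pullback}; this is Proposition \ref{proposition-on-derived-pullback-vanishing-static}\eqref{item: static push pull is identity}.
\item It follows that every term $\arrowup R^i\underline{f}'_*(\underline{q}')^*F$ of the colimit already equals the cartesian term $\arrowup \underline{q}^*R^i\underline{f}_*F=\arrowup R^i\underline{f}_*F$.
\end{itemize}
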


\begin{remark}\label{remark: how to compute higher derived push}
    \begin{enumerate}
        \item Recall that a morphism $\underline{f}\colon \underline{\mathcal{X}}\rightarrow\underline{\mathcal{Y}}$ is concentrated if it is qcqs and for any base change $\underline{f}'\colon \underline{\mathcal{X}}\times_\mathcal{\underline{Y}} \underline{Y}\rightarrow \underline{Y}$ along a morphism $\underline{Y}\rightarrow\underline{\mathcal{Y}}$ out of an affine scheme $\underline{Y}$, we have $R^{i}\underline{f}{}'_*=0$ for $i\gg 0$ \cite[Definition 2.4]{HallRydh2017}. In particular, if $\ul{f}$ is qc and separated, this last assumption holds if $\underline{f}$ is representable, $f$ is a root stack, $\text{char}(k)=0$, or $\cX$ is tame \cite[Theorem 2.1]{HallRydhWhatStacksAreConcenctrated}.
        \item Note that Proposition \ref{prop: derived push log} \eqref{item: computing derived pushfoward} gives us an explicit way to calculate the derived push-forward of a log coherent sheaf $\mathcal{F}$ along a proper morphism $f\colon \mathcal{X}\rightarrow\mathcal{Y}$ between $\mathcal{X}$ and $\mathcal{Y}$ Noetherian if $\text{char}(k)=0$. Indeed, by Proposition \ref{proposition-alteration-to-saturated-morphisms}, Theorem \ref{thm-pullback-static} and Proposition \ref{proposition on finite presentation} we may replace $\mathcal{X}$ and $\mathcal{Y}$ by log alterations so that $f$ becomes saturated and $\mathcal{F}=\arrowup F$ for some static coherent sheaf $F$ on $\underline{\mathcal{X}}$. Since $R^{i}\underline{f}{}_*F=0$ for $i>N$ for some $N\gg0$, we get $R^N f^{\text{l\'et}}_*\mathcal{F} = \arrowup R^N\underline{f}{}_*F$. Moreover, by the base change result in Proposition \ref{prop: derived push log} \eqref{item: computing derived pushfoward}, we can replace $\mathcal{Y}$ by a further log modification so that $R^{N}f^{\text{l\'et}}_*F$ becomes static, which implies $R^{N-1}f^{\text{l\'et}}_*\mathcal{F} = \arrowup R^{N-1}\underline{f}{}_*F$ and so on.
    \end{enumerate}
\end{remark}
\begin{proof}[Proof of Proposition \ref{prop: derived push log}]
    The first claim follows directly from Proposition \ref{key proposition} \eqref{item-1-key proposition} and the fact that the strict \'etale derived pushforward preserves quasi-coherence. The algorithm of Remark \ref{remark: how to compute higher derived push} also shows that \eqref{item: log coherence push forward} follows from \eqref{item: computing derived pushfoward}. To show \eqref{item: computing derived pushfoward}, it suffices to show the log flat base change claim. To see this, recall from Proposition \ref{key proposition} \eqref{item-1-key proposition} that we have 
    \begin{equation}\label{equation: shitty fuckfuck}
        R^{i}f^\text{l\'et}_*\arrowup F = \mathop{\text{colim}}_{f'\colon\mathcal{X}'\rightarrow\mathcal{Y}'} \arrowup R^{i}\underline{f}{}'_*(\underline{q}')^*F,
    \end{equation}
    where the colimit runs over all not necessarily cartesian squares
    \begin{equation*}
        \begin{tikzcd}
            \mathcal{X}'\dar["q'"]\rar["f'"] &\mathcal{Y}'\dar["q"]\\ \mathcal{X} \rar["f"]&\mathcal{Y}
        \end{tikzcd}
    \end{equation*}
    with the vertical maps being log alterations. Note that we can factor any such $f'$ into a morphism $g\colon \mathcal{X}'\rightarrow\mathcal{X}\times_\mathcal{Y}\mathcal{Y}'$, which is a log alteration by Proposition \ref{proposition: basic properties of log alterations} \eqref{item: cancellation property log alterations}, followed by a base change $\pi_{\mathcal{Y}'}\colon \mathcal{X}\times_\mathcal{Y}\mathcal{Y}'\rightarrow\mathcal{Y}'$ of $f$. Assuming that log flat base change holds in degree $i$, see that 
    \[
        R^{i}\underline{f}{}'_*(\underline{q}')^*F = \mathcal{H}^{i}(R\underline{\pi_{\mathcal{Y}'}}{}_*R\underline{g}{}_*\underline{g}^*(\underline{\pi_{\mathcal{X}}})^*F) = \mathcal{H}^{i}(R\underline{\pi_{\mathcal{Y}'}}{}_*(\underline{\pi_{\mathcal{X}}})^*F) = R^{i}\underline{\pi_{\mathcal{Y}'}}{}_*(\underline{\pi_{\mathcal{X}}})^*F = \underline{q}^*R^{i}\underline{f}{}_*F,
    \]
    where $\pi_{\mathcal{X}}\colon \mathcal{X}\times_\mathcal{Y}\mathcal{Y}'\rightarrow\mathcal{X}$ is the projection to the first factor and in the second equality we used staticity of $F$ and Proposition \ref{proposition-on-derived-pullback-vanishing-static} \eqref{item: static push pull is identity}.

    Since the log flat base change assertion is strict \'etale local in $\mathcal{Y}$, it suffices to prove it in the case that $\underline{\mathcal{Y}}= \text{Spec}(A)$ is affine and hence $\underline{\mathcal{X}}$ is qc, separated and concentrated over $\underline{\mathcal{Y}}$. We would now like to deduce the base change claim by applying Proposition \ref{prop: static complex prop} to the \v{C}ech complex associated to $F$ and an affine cover of $\ul{\cX}$. However, Proposition \ref{prop: static complex prop} does not apply as the \v{C}ech complex is not bounded on the right. Therefore, we need to use the \textit{alternating} \v{C}ech complex instead. However, since the alternating \v{C}ech complex only exists for algebraic spaces, we will first show that $\underline{\mathcal{X}}$ admits a qc and separated good moduli space on which we can carry out the desired argument. Indeed, it follows from \cite[Theorem C]{HallRydhWhatStacksAreConcenctrated} that $\underline{\mathcal{X}}$ has tame inertia. Since $\underline{\mathcal{X}}$ is concentrated, it moreover follows from \cite[Theorem D]{rydh2015Noetherian} that we can find an affine morphism $\underline{\mathcal{X}}\rightarrow\underline{\cX}{}_0$ to a DM stack $\underline{\cX}{}_0$ that is separated and of finite type over $\text{Spec}(k)$ and also has tame inertia. By \cite[Theorem 3.2]{TameStacks}, there is a coarse and good moduli space $\underline{\cX}{}_0\rightarrow \underline{X}{}_0$, where $\underline{X}{}_0$ is qc and separated by \cite[Theorem 1.1]{KeelMori}. Hence, the composite $\underline{p}\colon \underline{\cX}\rightarrow \underline{\cX}{}_0\rightarrow \underline{X}{}_0$ is cohomologically affine and $\underline{\pi}\colon \underline{\cX}\rightarrow \underline{X}$ is a good moduli space with $\underline{X}= \text{Spec}_{\underline{X}{}_0}(\underline{p}{}_*\cO_{\underline{\cX}} )$ qc and separated.
    
    Now let $F$ be a static quasi-coherent sheaf as above and  $\underline{U}\twoheadrightarrow \underline{X}$ an \'etale cover by an affine scheme $\ul{U}$. By the discussion in \cite[0721]{stacks}, we have the alternating \v{C}ech complex $\check{\mathcal{C}}^\bullet _{\text{alt}}(\underline{U}, \underline{\pi}{}_* F)$, whose components are of the shape $\check{\mathcal{C}}^{p} _{\text{alt}}(\underline{U}, \underline{\pi}{}_* F) = \Gamma(\underline{U}{}_p,\underline{\pi}{}_*F|_{\underline{U}{}_p}\otimes_{\underline{\mathbb{Z}}_{\text{\'et}}} \text{L}_p)$. Here we set $\underline{U}{}_p=\underline{W}{}_p/S_{p+1}$ with $\underline{W}{}_p \subset \underline{U}^{\times_{\underline{X}} (p+1)}$ the closed subscheme given by the complement of all diagonals and $\text{L}_p$ is an invertible $\underline{\mathbb{Z}}_{\text{\'et}}$-module on $\underline{U}{}_p$ whose construction depends only on $p$. Letting $\underline{\cU}{}_p=\underline{U}{}_p\times_{\underline{X}}\underline{\cX}$, we have that $F|_{\underline{\cU}{}_p}\otimes_{\underline{\mathbb{Z}}_{\text{\'et}}} \text{L}_p$ is static and quasi-coherent as it is locally isomorphic to $F|_{\underline{\cU}{}_p}$. By Lemma \ref{lemma-on-pushforward-along-integral-affine-preserves-log-Tor dimensions}, this implies that $\check{\mathcal{C}}^{p} _{\text{alt}}(\underline{U}, \underline{\pi}{}_* F) = \Gamma(\underline{\cU}{}_p,F|_{\underline{\cU}{}_p}\otimes_{\underline{\mathbb{Z}}_{\text{\'et}}} \text{L}_p)$ is static for all $p$ when viewed as a quasi-coherent sheaf on $\ul{\cY}$. Moreover, by \cite[03JA]{stacks}, there is a global bound on the size of the fibers of $\underline{U}\twoheadrightarrow\underline{X}$, which implies $\underline{U}{}_p = \emptyset$ for $p\gg 0$ and thus $\check{\mathcal{C}}^\bullet _{\text{alt}}(\underline{U}, \underline{\pi}{}_* F)$ is a bounded complex. This puts us in the situation of Proposition \ref{prop: static complex prop} and so it remains to show that for any log flat $\mathcal{Y'}\rightarrow\mathcal{Y}$ with $\underline{\mathcal{Y}}' = \text{Spec}(B)$
    the term-wise base change $\check{\mathcal{C}}^\bullet _{\text{alt}}(\underline{U}, \underline{\pi}{}_* F)\otimes_A B$ computes the sheaf cohomology of $(\underline{q}')^*F$ with $q'$ as in \eqref{diagram: cartesian log alteration shit for pushforward fuck}. Indeed, it follows from the construction of the alternating \v{C}ech complex and affine base change \cite[07U8]{stacks} that $\check{\mathcal{C}}^\bullet _{\text{alt}}(\underline{U}, \underline{\pi}{}_* F)\otimes_A B = \check{\mathcal{C}}^\bullet _{\text{alt}}(\underline{U}\times_{\underline{\mathcal{Y}}}\ul{\mathcal{Y}}', \pi_{\underline{X}}^*\underline{\pi}{}_* F)$, where $\pi_{\underline{X}}\colon \underline{X}\times_{\underline{\mathcal{Y}}}\underline{\cY}'\rightarrow \ul{X}$ is the natural projection. Moreover, the base change $\ul{\pi}'\colon \ul{\cX}' = 
    \underline{\mathcal{X}}\times_{\ul{\cY}} \ul{\cY}'\rightarrow \ul{X}\times_{\ul{\cY}} \ul{\cY}'$ is by \cite[Proposition 4.7]{Alper2013GoodModuli} again a good moduli space and $\pi_{\ul{X}}^*\ul{\pi}{}_*F = \ul{\pi}{}'_*(\ul{q}')^*F$. Note here that $\ul{\cX}' = \ul{\cX}\times_{\ul{\cY}}\ul{\cY}'$ holds since $f$ is saturated. Since $\underline{X}\times_{\ul{\cY}}\ul{\cY}'$ is separated and the $(\underline{U}{}\times_{\ul{\cY}}\ul{\cY}')_p$ are affine, \cite[0728]{stacks} implies that $H^{i}(\check{\mathcal{C}}^\bullet_{\text{alt}}(\underline{U}\times_{\underline{\cY}}\underline{\cY}', \underline{\pi}{}'_* (\underline{q}')^*F)) = H^{i}(\underline{X}\times_{\ul{\cY}}\ul{\cY}',\underline{\pi}{}'_* (\underline{q}')^*F) = H^{i}(\underline{\cX}',(\underline{q}')^* F)$. Here, the last equality follows from $R\underline{\pi}{}'_*(\underline{q}')^*F=\underline{\pi}{}'_*(\underline{q}')^*F$, which is true by \cite[Remark 3.5]{Alper2013GoodModuli} and the fact that $\ul{\cX}'$ and $\ul{X}\times_{\ul{\cY}}\ul{\cY}'
    $ are separated. 
\end{proof}
The following examples illustrate that the assumptions of Proposition \ref{prop: derived push log} are necessary.
\begin{example}
 Consider $f\colon X = \text{Bl}_{0}\mathbb{A}^{2}_{k}\rightarrow Y = \mathbb{A}_{k}^{2}$ with both schemes equipped with toric log structures and $F=\mathcal{O}_{E}(-2)$ with $\mathbb{P}^{1}\cong E\subset \text{Bl}_{0}\mathbb{A}^{2}_k$ the exceptional divisor. It follows by direct computation that $F$ is static and 
    \[
        R^{n}\ul{f}{}_{*}F = 
        \begin{cases}
            \cO_0 & \text{if } n =1 \\
            0 & \text{otherwise,} 
        \end{cases}
    \]
    where $\cO_0$ denotes the skyscraper sheaf supported on $0$. However, we have $R^{n}f^{\text{l\'et}}_{*}\arrowup F =0$ for $n\geq 1$ as $f^{\text{l\'et}}$ is an equivalence of ringed topoi and $\arrowdown_{X} \! f^{\text{l\'et}}_{*}\arrowup F=F\neq 0$. Thus $\arrowup R^{n}\ul{f}{}_{*}^{\text{\'et}}F\neq R^{n}f^{\text{l\'et}}_{*}\mathcal{F}$ for $n=0,1$.
\end{example}

\subsubsection{Derived pullback} Our discussion of functoriality concludes by checking that left derived pullback satisfies the expected properties. 
\begin{proposition}\label{prop: pullback}
    Let $f\colon \mathcal{X}\rightarrow\mathcal{Y}$ be a morphism of DM $k$-stacks with log structures.
    
    \begin{enumerate}
        \item\label{item: derived pull preserves qcoh} The derived pullback $L_if_\text{l\'et}^*\colon \text{Mod}(\mathcal{O}_{\mathcal{Y}_{\text{l\'et}}})\rightarrow \text{Mod}(\mathcal{O}_{\mathcal{X}_{\text{l\'et}}})$ preserves log quasi-coherence, and if $\mathcal{X}$ and $\mathcal{Y}$ are locally Noetherian, it also preserves log coherence for all $i\geq 0$.
        \item\label{item: derived pull description if static} Assume that $\mathcal{X}$ and $\mathcal{Y}$ are static and $F$ is a static quasi-coherent sheaf on $\mathcal{Y}$ so that the derived pullbacks $L_if_\text{\'et}^*F$ are static for all $i\leq n$. Then for all $i\leq n+1$ we have $\arrowup L_i \underline{f}{}_{\text{\'et}}^*F = L_i f^* \arrowup F$ and for any commutative diagram 
        \begin{equation*}
            \begin{tikzcd}
        \mathcal{X}'\dar["p_\mathcal{X}"] \rar["f'"] &\mathcal{Y}'\dar["p_\mathcal{Y}"]\\ \mathcal{X} \rar["f"]&\mathcal{Y}
            \end{tikzcd}
        \end{equation*}
        where the vertical maps are log flat, we have $ (\underline{p_\mathcal{X}})_\text{\'et}^* L_i \ul{f}{}_\text{\'et}^*F = L_i (\underline{f}')_\text{\'et}^*(\underline{p_\mathcal{Y}})_\text{\'et}^* F$.
    \end{enumerate}
\end{proposition}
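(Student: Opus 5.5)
For part \eqref{item: derived pull preserves qcoh}, I would reduce to Proposition \ref{key proposition} \eqref{item-2-key proposition}. Let $\mathcal{F}$ be log quasi-coherent. By Corollary \ref{Key corollary} \eqref{item-1-key corollary} and the remark after Definition \ref{Def:quasi-coherent}, $\mathcal{F}$ is a filtered colimit of sheaves $\arrowup F_j$, with $F_j$ conventional quasi-coherent on log alterations $\mathcal{Y}_j\rightarrow\mathcal{Y}$. Since $f_{\text{l\'et}}^{-1}$ commutes with colimits and $\mathcal{T}or$ commutes with filtered colimits (by the bar construction argument used in the proof of Proposition \ref{key proposition}), we get $L_if^*_{\text{l\'et}}\mathcal{F}=\mathrm{colim}_j L_if^*_{\text{l\'et}}\arrowup F_j$. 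Here $\arrowup F_j$ can be viewed on $\mathcal{Y}_j$, because log alterations induce equivalences of ringed topoi.

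Next apply Proposition \ref{key proposition} \eqref{item-2-key proposition} to the base change $\mathcal{X}\times_{\mathcal{Y}}\mathcal{Y}_j\rightarrow\mathcal{Y}_j$. This writes each term as a colimit of sheaves $\arrowup L_i(\underline{f}')^*_{\text{\'et}}\underline{p}^*_{\text{\'et}}F_j$. Conventional derived pullback preserves quasi-coherence, so Corollary \ref{key-corollary-quasi-coherent} \eqref{cor-2} makes each of these log quasi-coherent, and Theorem \ref{thm-category-of-quasi-coherent-sheaves} then gives closure under colimits.

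For coherence in the locally Noetherian case, the question is log \'etale local, so I may assume $\mathcal{X},\mathcal{Y}$ are Noetherian and $\mathcal{F}=\arrowup F$ with $F$ coherent on $\mathcal{Y}$ (after replacing $\mathcal{Y}$ by a log alteration, using Theorem \ref{thm: FPisCoherent} and Proposition \ref{proposition on finite presentation}). I would then choose a resolution $P_\bullet\rightarrow \mathcal{F}$ by finite free $\LogO{\mathcal{Y}}$-modules. This works locally, since $\logcoh{\mathcal{Y}}$ is abelian and each kernel is again finitely presented. Then $L_if^*_{\text{l\'et}}\mathcal{F}=\mathcal{H}_{-i}(f^*_{\text{l\'et}}P_\bullet)$ is the homology of a complex of finite free $\LogO{\mathcal{X}}$-modules, and this is log coherent because $\logcoh{\mathcal{X}}$ is abelian by Theorem \ref{thm: FPisCoherent}. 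The one check here is that $f^*_{\text{l\'et}}$ of a free module is free, which is immediate. This argument is cleaner than the colimit description and is the one I would use.

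For part \eqref{item: derived pull description if static}, I would mimic Proposition \ref{prop: static complex prop}. Locally choose a resolution $E_\bullet\rightarrow F$ by finite free (hence static, since $\mathcal{Y}$ is static) sheaves. By Corollary \ref{corollary-DM-stacks-static-pullback}, $L_i\arrowup=0$ for $i\geq1$ on $\mathcal{Y}$ because $\mathcal{Y}$ and $F$ are static. Hence $\arrowup E_\bullet$ resolves $\arrowup F$ by free $\LogO{\mathcal{Y}}$-modules, and $L_if^*\arrowup F=\mathcal{H}_{-i}(\arrowup \underline{f}^*E_\bullet)$. The complex $\underline{f}^*E_\bullet$ consists of free, hence static, sheaves on the static stack $\mathcal{X}$, and its homology $L_i\underline{f}^*F$ is static for $i\leq n$. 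Proposition \ref{prop: static complex prop}, after reindexing homological degree as cohomological degree $-i$ (so that the condition $i\leq n$ matches "$\geq$" in that proposition), then gives $\arrowup\mathcal{H}_{-i}=\mathcal{H}_{-i}\arrowup$ for $i\leq n+1$.

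The base change claim follows in the same way. The pullback $\underline{p_\mathcal{X}}^*$ commutes with homology in degrees $i\leq n+1$ by the same proposition, and $\underline{p_\mathcal{X}}^*\underline{f}^*E_\bullet=\underline{f}'^*\underline{p_\mathcal{Y}}^*E_\bullet$. It remains to see that $\underline{p_\mathcal{Y}}^*E_\bullet$ still resolves $\underline{p_\mathcal{Y}}^*F$. This holds because $L_j\underline{p_\mathcal{Y}}^*F=0$ for $j\geq1$ by Proposition \ref{proposition-on-derived-pullback-vanishing-static} \eqref{item: static all derived pulls vanish}, since $\mathcal{Y}$ is static.

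The main obstacle I expect is the index bookkeeping at the boundary degree $n+1$. There Proposition \ref{prop: static complex prop} gives only a one-step extension, and one must verify that boundedness above (resolutions are bounded on the correct side after reindexing) matches its hypothesis. Local resolutions must also be glued compatibly; since the statements concern sheaves, it suffices to work strict \'etale locally.
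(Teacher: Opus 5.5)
Your proposal is correct. For quasi-coherence in (1) you argue as the paper does, with more detail: you reduce to sheaves $\Uparrow F_j$ and apply Proposition~\ref{key proposition}\eqref{item-2-key proposition}. For the coherence claim in (1) and for (2) you take a different route.

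\textbf{Coherence in (1).} The paper works log \'etale locally with $\mathcal{X}$, $\mathcal{Y}$ and $F$ static. It then successively statifies $L_0\ul{f}^*F, L_1\ul{f}^*F,\dots$ by log modifications of $\mathcal{X}$, using the log flat base change of part (2) to see that the earlier pullbacks stay static. Each $L_if^*_{\text{l\'et}}\Uparrow F=\Uparrow L_i\ul{f}^*F$ is then visibly coherent, so this gives an explicit algorithm. Your argument is purely formal: Theorem~\ref{thm: FPisCoherent} makes both log \'etale structure sheaves coherent, so $L_if^*_{\text{l\'et}}\mathcal{F}$ is locally the homology of a complex of finite free modules. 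This is shorter but computes nothing. One caveat: in the log \'etale topology each step of the resolution may force a refinement of the cover. On a single cover you should therefore only claim a partial resolution of length $i+1$, or argue by dimension shifting; either suffices for $L_i$.

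\textbf{Part (2).} The paper uses the Grothendieck spectral sequence
$E^2_{p,q}=L_p\ul{p_{\mathcal{X}}}^*L_q\ul{f}^*F\Rightarrow L_{p+q}(\ul{f}\circ\ul{p_{\mathcal{X}}})^*F$. Its terms with $p>0$ and $q\le n$ vanish because $\mathcal{X}$ and the $L_q\ul{f}^*F$ are static. This is combined with $L_j\ul{p_{\mathcal{Y}}}^*F=0$ for $j\ge 1$, and the same argument is run for $\Uparrow$. You instead pull back a local free resolution and apply Proposition~\ref{prop: static complex prop}, exactly as the paper does for $\mathcal{E}xt$ and $\mathcal{T}or$. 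Your reindexing checks out:
\begin{itemize}
\item the pulled-back resolution is bounded above in cohomological degree;
\item its terms are static because $\mathcal{X}$ is static;
\item its cohomology is static in degrees $\ge -n$.
\end{itemize}
Hence you get the conclusion for $i\le n+1$. The spectral sequence produces canonical isomorphisms without choosing and gluing local resolutions. Your version makes the one-step extension to degree $n+1$ transparent.

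One small correction: $F$ is only quasi-coherent, so use free rather than finite free resolutions. Arbitrary free modules on a static stack are static, and $\Uparrow$ sends them to free modules, so nothing else changes.
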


\begin{proof}
    The preservation of log quasi-coherence in \eqref{item: derived pull preserves qcoh} follows from Corollary \ref{Key corollary}\eqref{item-4-key-corollary}. The claim about log coherence is that log \'etale is local in $\mathcal{Y}$ and hence follows from \eqref{item: derived pull description if static} and Theorem \ref{thm-pullback-static} by successively statifying the ordinary higher pullbacks of some static ordinary coherent sheaf that gives rise to the chosen log coherent sheaf.
    
    For \eqref{item: derived pull description if static}, we first note that Proposition \ref{proposition-on-derived-pullback-vanishing-static}\eqref{item: static all derived pulls vanish} implies $L_i(\ul{f}')_{\text{\'et}}^*(\ul{p_\mathcal{Y}})_\text{\'et}^* F = L_i(\ul{p_\mathcal{Y}}\circ \ul{f}')_\text{\'et}^* F = L_i(\ul{f}\circ \ul{p_\mathcal{X}})_\text{\'et}^* F$. Moreover, there is a Grothendieck spectral sequence $E^2_{p,q} = L_p (\ul{p_\mathcal{X}})_\text{\'et}^* L_q\ul{f}_\text{\'et}^*F\implies L_{p+q}(\ul{f}\circ \ul{p_\mathcal{X}})_{\text{\'et}}^* F$ with $E^2_{p,q}=0$ for all $p$ and $q$ with $p>0$ and $0<q\leq n$, which implies $(p_\mathcal{X})_\text{\'et}^* L_i \ul{f}_\text{\'et}^*F = L_i(\ul{f}\circ \ul{p_\mathcal{X}})_\text{\'et}^*F = L_i (\ul{f}')_\text{\'et}^*(\ul{p_\mathcal{Y}})^*_\text{\'et} F$ for all $i\leq n+1$ as desired. An analogous argument also proves the claim about $\Uparrow$.
\end{proof}

\subsection{\texorpdfstring{$\mathcal{E}xt$}{Ext} sheaves} 
In this section we study sheafy $\operatorname{\mathcal{E}xt}$: the right derived functors of $$\mathcal{H}om(\mathcal{F},-)\colon \operatorname{Mod} (\mathcal{O}_{\mathcal{X}_\text{l\'et}})\longrightarrow \operatorname{Mod} (\mathcal{O}_{\mathcal{X}_\text{l\'et}})$$ for $\mathcal{F}$ a log coherent sheaf. Since $\operatorname{Mod} (\mathcal{O}_{\mathcal{X}_\text{l\'et}})$ has enough injectives, these functors are well defined, and by \cite[TAG 06XR]{stacks} whenever there is a locally free resolution $P^\bullet \rightarrow \mathcal{F}$, there is an equality $$\operatorname{\mathcal{E}xt}^i(\mathcal{F},\mathcal{G})
=\mathcal{H}^i\!\bigl(\mathcal{H}om_{\mathcal O_X}(P^\bullet,\mathcal{G})\bigr).$$

\begin{proposition}\label{prop: ext}
    Let $\mathcal{X}$ be a locally Noetherian DM $k$-stack with log structure.
    \begin{enumerate}
        \item\label{item: sheafy Ext is qcoh} For any log coherent sheaf $\mathcal{F}$ on $\mathcal{X}$, the sheafy Ext functors $\mathcal{E}xt^{i}_{\mathcal{X}}(\mathcal{F},-)\colon \text{Mod}(\mathcal{O}_{\mathcal{X}_\text{l\'et}})\rightarrow \text{Mod}(\mathcal{O}_{\mathcal{X}_\text{l\'et}})$ preserve log (quasi-)coherence for all $i\geq 0$.
        \item\label{item: how to compute sheafy Ext} Let $\mathcal{X}$ be static, $F$ a static coherent sheaf of finite Tor dimension and $G$ a static quasi-coherent sheaf on $\mathcal{X}$ so that $\mathcal{E}xt^{i}_{\underline{\mathcal{X}}}(F,G)$ is static for all $i\geq n$. Then we have $\mathcal{E}xt^{i}_{\mathcal{X}_{\text{l\'et}}}(\arrowup F,\arrowup G) = \arrowup \mathcal{E}xt^{i}_{\underline{\mathcal{X}}}(F,G)$ and $\mathcal{E}xt^{i}_{\underline{\mathcal{X}}'}(\underline{f}^* F,\underline{f}^* G) = \underline{f}^* \mathcal{E}xt^{i}_{\underline{\mathcal{X}}}(F,G)$ for any log flat morphism $f\colon \mathcal{X}'\rightarrow\mathcal{X}$ and $i\geq n-1$.
    \end{enumerate}
\end{proposition}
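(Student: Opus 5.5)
The plan mirrors the proofs of Propositions \ref{prop: derived push log} and \ref{prop: pullback}: first compute on a static model, then obtain preservation of (quasi-)coherence by statifying the relevant objects step by step.

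\emph{Part \eqref{item: how to compute sheafy Ext}.} Since the claim is strict \'etale local, assume $\ul{\mathcal{X}}$ is affine and Noetherian. Because $F$ is coherent of finite Tor dimension, it has a finite resolution $P^\bullet \to F$ by finite free modules; after localising further we may take each $P^j$ to be free. Every $P^j$ is static, since $\mathcal{X}$ is. The surjections have static kernels, because staticity passes to kernels of surjections between static sheaves. Hence Remark \ref{remark: up is exact on statics} shows that $\arrowup P^\bullet \to \arrowup F$ is still a locally free resolution. By \cite[06XR]{stacks},
\[
\mathcal{E}xt^i_{\mathcal{X}_{\text{l\'et}}}(\arrowup F,\arrowup G)=\mathcal{H}^i\bigl(\mathcal{H}om(\arrowup P^\bullet,\arrowup G)\bigr).
\]
For finite free $P$ we have $\mathcal{H}om(\arrowup P,\arrowup G)=\arrowup \mathcal{H}om(P,G)$, which is a finite sum of copies of $\arrowup G$. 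So the complex on the right is $\arrowup E^\bullet$, where $E^\bullet=\mathcal{H}om(P^\bullet,G)$. This $E^\bullet$ is a bounded complex of static sheaves (finite sums of $G$) with $\mathcal{H}^i(E^\bullet)=\mathcal{E}xt^i_{\ul{\mathcal{X}}}(F,G)$. Proposition \ref{prop: static complex prop} then gives $\arrowup\mathcal{H}^i(E^\bullet)=\mathcal{H}^i(\arrowup E^\bullet)$ and $\ul{f}^*\mathcal{H}^i(E^\bullet)=\mathcal{H}^i(\ul{f}^*E^\bullet)$ for all $i\ge n-1$. It remains to identify $\mathcal{H}^i(\ul{f}^*E^\bullet)$ with $\mathcal{E}xt^i(\ul f^*F,\ul f^*G)$. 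This follows because $\ul f^*P^\bullet\to \ul f^*F$ is still a resolution: Proposition \ref{proposition-on-derived-pullback-vanishing-static}\eqref{item: static all derived pulls vanish} applies since $\mathcal{X}$ is static and $f$ is log flat, and $\ul{f}^*\mathcal{H}om(P,G)=\mathcal{H}om(\ul f^*P,\ul f^*G)$ for $P$ finite free.

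\emph{Part \eqref{item: sheafy Ext is qcoh}.} The question is log \'etale local, so assume $\mathcal{X}$ is Noetherian and, using Proposition \ref{proposition on finite presentation}, Theorem \ref{thm-pullback-static} and Corollary \ref{main corr:addressing Molcho Wise}, replace $\mathcal{X}$ by a log alteration so that $\mathcal{X}$ is static and $\mathcal{F}=\arrowup F$ with $F$ static and coherent. I would avoid needing $F$ to have finite Tor dimension globally: the coherent sheaf $\mathcal{F}$ has, locally, a resolution by finite free $\LogO{\mathcal{X}}$-modules $\cdots\to\arrowup P^1\to \arrowup P^0$. To build it, repeatedly take a kernel, which is log coherent by Theorem \ref{thm: FPisCoherent}, and cover it by a free module. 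Then $\mathcal{E}xt^i(\mathcal{F},\mathcal{G})=\mathcal{H}^i(\mathcal{H}om(\arrowup P^\bullet,\mathcal{G}))$, and each term of this complex is a finite direct sum of copies of $\mathcal{G}$. Log (quasi-)coherence is closed under kernels, cokernels and finite sums (Theorems \ref{thm-category-of-quasi-coherent-sheaves} and \ref{thm: FPisCoherent}), so these cohomology sheaves are log (quasi-)coherent. Since computing $\mathcal{E}xt^i$ only needs $P^0,\dots,P^{i+1}$, the unbounded length of the resolution causes no difficulty.

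The step I expect to be the main obstacle is justifying \cite[06XR]{stacks} for a resolution that exists only log \'etale locally and may be built on different alterations. I would handle it by noting that $\mathcal{E}xt$ sheaves are local, and that on a qc patch only finitely many alteration steps are involved, so a single common alteration suffices. In part \eqref{item: how to compute sheafy Ext}, the remaining care is the passage from $\mathcal{X}$ to the local affine pieces: Proposition \ref{prop: static complex prop} is stated globally, but every sheaf involved is defined canonically, so the local isomorphisms glue.
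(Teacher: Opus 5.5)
Your proposal is correct. Part (2) is essentially the paper's argument: a local bounded finite free resolution, static syzygies, and Proposition~\ref{prop: static complex prop} applied to $\mathcal{H}om(P^{-\bullet},G)$. The one difference is your gluing remark, which the paper makes precise. It first writes down the global comparison map $\underline{f}^*\mathcal{E}xt^i(F,G)\to\mathcal{H}^i(L\underline{f}^*R\mathcal{H}om(F,G))\to\mathcal{E}xt^i(\underline{f}^*F,\underline{f}^*G)$, and only then checks locally that it is an isomorphism. You should do the same rather than appeal to everything being canonical.

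Part (1) is where the routes differ. The paper uses the same reduction you make ($\mathcal{X}$ affine and static, $\mathcal{F}=\Uparrow F$ with $F$ static coherent). It then takes a resolution of $F$ by finite free modules of possibly infinite length. Its syzygies are static, and Proposition~\ref{prop: static complex prop} only requires the complex to be bounded above. So $\Uparrow$ of this resolution is already a global finite free resolution of $\mathcal{F}$ on a single patch, and \cite[06XR]{stacks} applies directly. Your concern about needing finite Tor dimension there is therefore unnecessary.

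You instead resolve $\mathcal{F}$ inside the log \'etale topos. This uses only that $\logcoh{\mathcal{X}}$ is abelian and contains $\mathcal{O}_{\mathcal{X}_{\text{l\'et}}}$ (Theorem~\ref{thm: FPisCoherent}), together with locality and kernel/cokernel closure of log (quasi-)coherence. This is purely formal and never uses staticity, so your statification step is superfluous. The price is the bookkeeping you flag, but it is harmless:
\begin{itemize}
\item Composing the finitely many covering families that produce $\mathcal{P}^0,\dots,\mathcal{P}^{i+1}$ gives a single cover with a truncated resolution on each member.
\item Dimension shifting then computes $\mathcal{E}xt^i$ from that truncated resolution, using $\mathcal{E}xt^j(\mathcal{O}^m,-)=0$ for $j>0$ and left exactness of $\mathcal{H}om$.
\end{itemize}
No common alteration is needed.
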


\begin{proof}
    The first assertion is log \'etale local in $\mathcal{X}$. Thus we may assume that $\mathcal{X}$ is affine and static and $\mathcal{F}$ comes from a static coherent sheaf on $\mathcal{X}$, any finite free resolution of which will by Proposition \ref{prop: static complex prop} give rise to a finite free resolution $\mathcal{P}^\bullet\rightarrow\mathcal{F}$ and so $\mathcal{E}xt^{i}_{\LogO{\mathcal{X}}}(\mathcal{F},-) = \mathcal{H}^{i}(\mathcal{H}om_{\LogO{\mathcal{X}}}(\mathcal{P}^{-\bullet},-))$ preserves log (quasi-)coherence.
    
    Part \eqref{item: how to compute sheafy Ext} is strict \'etale local in $\mathcal{X}$. Indeed, there are natural morphisms 
    \begin{equation}\label{equation: random equation of doom, death and destruction}
        \underline{f}^*\mathcal{E}xt^{i}_{\underline{\mathcal{X}}}(F,G)\longrightarrow \mathcal{H}^{i}(L\underline{f}^*R\mathcal{H}om_{\underline{\mathcal{X}}}(F,G))\longrightarrow \mathcal{H}^{i}(R\mathcal{H}om_{\underline{\mathcal{X}}'}(L\underline{f}^*F,L\underline{f}^*G)) =  \mathcal{E}xt^{i}_{\underline{\mathcal{X}}'}(\underline{f}^*F,\underline{f}^*G),
    \end{equation} 
    where the first arrow is the canonical morphism from the pullback of the cohomology to the cohomology of the pullback and the second arrow comes from \cite[08JF]{stacks}. The final equality uses that $L\underline{f}^*F = F$ and $L\underline{f}^*G = G$ by Proposition \ref{proposition-on-derived-pullback-vanishing-static}. To check that this is an isomorphism is now a local problem. We may thus assume that $F$ has a bounded finite free resolution $P^\bullet \rightarrow F$, which by Proposition \ref{prop: static complex prop} implies that $\underline{f}^*P^\bullet \rightarrow \underline{f}^*F$ is also a bounded finite free resolution. As a result, we have $\mathcal{E}xt^{i}_{\underline{\mathcal{X}}'}(\underline{f}^*F,\underline{f}^*G) = \mathcal{H}^{i}(\mathcal{H}om_{\underline{\mathcal{X}}'}(\underline{f}^*P^{-\bullet},\underline{f}^*G)) = \mathcal{H}^{i}(\underline{f}^*\mathcal{H}om_{\underline{\mathcal{X}}}(P^{-\bullet},G))$. One can now check that the isomorphisms $\mathcal{E}xt^{i}_{\underline{\mathcal{X}}'}(\underline{f}^* F,\underline{f}^* G) \cong \underline{f}^* \mathcal{E}xt^{i}_{\underline{\mathcal{X}}}(F,G)$ for $i\geq n-1$ due to Proposition \ref{prop: static complex prop} applied to $E^\bullet = \mathcal{H}om_{\underline{\mathcal{X}}}(P^{-\bullet},G)$  are indeed restrictions of \eqref{equation: random equation of doom, death and destruction}. The claim about $\Uparrow$ is proven analogously.
\end{proof}
\subsection{Sheafy Tor}
\begin{proposition}\label{prop: tor}
    Let $\mathcal{X}$ be a DM $k$-stack with a log structure. Then:
    \begin{enumerate}
        \item\label{item: Tor preserves quasi-coherence} For all $i\geq 0$, the Tor functor $\mathcal{T}or_i^{\cX}(-,-)\colon \text{Mod}(\LogO{\mathcal{X}})\times\text{Mod}(\LogO{\mathcal{X}})\rightarrow \text{Mod}(\LogO{\mathcal{X}})$ preserves quasi-coherence and if $\mathcal{X}$ is locally Noetherian, then it preserves coherence as well.
        \item\label{item: compute Tor} Assume that $\mathcal{X}$ is static and $F$ and $G$ are static conventional quasi-coherent sheaves so that $\mathcal{T}or_i^{\underline{\cX}}(F,G)$ is static for all $i\geq n$. Then for any log flat $f\colon \mathcal{X}'\rightarrow \mathcal{X}$ we have $\mathcal{T}or_i^{\underline{\cX}}(\underline{f}^* F,\underline{f}^* G) = \underline{f}^*\mathcal{T}or_i^{\underline{\cX}}(F,G)$ and $\mathcal{T}or_i^{\cX}(\arrowup F,\arrowup G) = \Uparrow\!\!\mathcal{T}or_i^{\underline{\cX}}(F,G)$ for all $i\geq n-1$. 
    \end{enumerate}
\end{proposition}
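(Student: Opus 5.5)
I will follow the pattern of Propositions \ref{prop: pullback} and \ref{prop: ext}. The plan is to reduce part \eqref{item: Tor preserves quasi-coherence} to part \eqref{item: compute Tor}, and to prove part \eqref{item: compute Tor} by applying Proposition \ref{prop: static complex prop} to a resolution.

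\emph{Part \eqref{item: Tor preserves quasi-coherence}.} For quasi-coherence, I would argue as in Proposition \ref{key proposition} \eqref{item-2-key proposition}. By Corollary \ref{Key corollary} \eqref{item-1-key corollary} and Proposition \ref{proposition on log etale topos}, any two log quasi-coherent $\mathcal{F},\mathcal{G}$ are filtered colimits over log alterations $\mathcal{X}'$ of the sheaves $\mathsf{rest}_{\cX,\cX'}^{-1}\Downarrow_{\cX'}\mathcal{F}$ and $\mathsf{rest}_{\cX,\cX'}^{-1}\Downarrow_{\cX'}\mathcal{G}$. Likewise, $\LogO{\cX}$ is the filtered colimit of the $\mathsf{rest}_{\cX,\cX'}^{-1}\cO_{\ul{\cX}'}$. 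Since $\mathcal{T}or$ commutes with filtered colimits (bar construction), and $\mathsf{rest}^{-1}$ is exact and commutes with $\otimes^{\mathbb{L}}$ by \cite[07A4]{stacks}, I obtain
\[
\mathcal{T}or_i^{\cX}(\mathcal{F},\mathcal{G})=\mathrm{colim}_{\cX'}\arrowup \mathcal{T}or_i^{\ul{\cX}'}(\Downarrow_{\cX'}\mathcal{F},\Downarrow_{\cX'}\mathcal{G}).
\]
Each term is conventional quasi-coherent. Hence the colimit is log quasi-coherent by Corollary \ref{key-corollary-quasi-coherent} \eqref{cor-2} and Theorem \ref{thm-category-of-quasi-coherent-sheaves}.

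For coherence, the question is log \'etale local. I would therefore pass to an affine log alteration that is static; for instance, statify $\cO$ using Theorem \ref{thm-pullback-static}. On it I write $\mathcal{F}=\arrowup F$ and $\mathcal{G}=\arrowup G$ with $F,G$ static coherent, using Proposition \ref{proposition on finite presentation} and Theorem \ref{thm-pullback-static}. Choosing a statification also of the finitely many $\mathcal{T}or_i^{\ul{\cX}}(F,G)$ would put us in the setting of part \eqref{item: compute Tor}. Then I would run downward induction on $n$, as in Remark \ref{remark: how to compute higher derived push}. To start the induction I need the local Tor amplitude of $F$ to be bounded. This holds if $\ul{\cX}$ is regular, but not in general. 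So I would instead start from a free resolution truncated at high degree: by Noetherianity, only the finitely many Tor's up to the given $i$ need to be statified, each after a further log modification. The base change statement of part \eqref{item: compute Tor} shows that statifying later terms does not spoil earlier ones.

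\emph{Part \eqref{item: compute Tor}.} The question is local, so I take a free resolution $P^\bullet\to F$ by finite free modules, placed in nonpositive degrees. Set $E^\bullet=P^\bullet\otimes G$. This complex is bounded above, its terms $G^{\oplus r}$ are static, and $\mathcal{H}^{-i}(E^\bullet)=\mathcal{T}or_i(F,G)$. The key point is that staticity of $F$, together with Proposition \ref{prop: static complex prop} applied to $P^\bullet$ (whose only cohomology is $F$ in degree $0$), shows that $\ul{f}^*P^\bullet$ and $\arrowup P^\bullet$ still resolve $\ul{f}^*F$ and $\arrowup F$. Here I use that $\cX$ is static: the relevant kernels $\ker(d)$ are then static, being kernels of surjections between static sheaves, so all syzygies of $F$ are static. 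Consequently
\[
\mathcal{T}or_i(\ul{f}^*F,\ul{f}^*G)=\mathcal{H}^{-i}(\ul{f}^*E^\bullet),
\]
and similarly
\[
\mathcal{T}or_i^{\cX}(\arrowup F,\arrowup G)=\mathcal{H}^{-i}(\arrowup E^\bullet),
\]
since $\arrowup$ commutes with $\otimes$ and with finite direct sums.

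The main obstacle is the indexing. Hypothesis \eqref{sheaf coh is static} of Proposition \ref{prop: static complex prop} concerns cohomology in degrees $\geq n$, whereas $\mathcal{T}or_i$ sits in degree $-i$ and is assumed static for $i\geq n$, i.e.\ in degrees $\leq -n$. So I must apply the proposition to the dual orientation. Concretely, I would reprove its inductive argument upward from the bottom, using the short exact sequences $0\to\ker\to E^j\to\operatorname{Im}\to 0$. Since $E^\bullet$ is unbounded below in general, this is problematic. Instead, I would replace $P^\bullet$ by a finite truncation ending in the static syzygy module $\Omega$, which is exact on applying $\ul{f}^*$ and $\arrowup$. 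Then the upward induction starts from $\mathcal{T}or$ vanishing issues controlled by the static syzygy. Combined with Proposition \ref{proposition-on-derived-pullback-vanishing-static} \eqref{item: static all derived pulls vanish} (vanishing of $L_i\ul{f}^*$ on statics), this should give the identities for $i\geq n-1$.
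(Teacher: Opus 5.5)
\textbf{The gap is in how you resolve the indexing obstacle in (2).} You are right that Proposition \ref{prop: static complex prop}, applied to $E^\bullet=P^\bullet\otimes G$ with $\mathcal{H}^{-i}(E^\bullet)=\mathcal{T}or_i(F,G)$, only controls low Tor degrees. It gives both identities for all $i\le m+1$ provided $\mathcal{T}or_i(F,G)$ is static for all $i\le m$. The paper's proof cites that proposition verbatim, after noting (as you do) that $\ul f^*P^\bullet$ and $\arrowup P^\bullet$ remain resolutions, and this reversed range is all it actually establishes.

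No truncation or ``upward induction'' can give the identities for $i\ge n-1$ from staticity for $i\ge n$, because that implication is false. Take $\cX=\mathbb{A}^2_{x,y}$ with its toric log structure, $F=\cO_{V(x)}$ and $G=\cO_{V(y)}$. Both have projective dimension one, so they are static. Moreover $\mathcal{T}or_i(F,G)=0$ for $i\ge 1$, so the printed hypothesis holds with $n=1$. Now let $f\colon \text{Bl}_0\mathbb{A}^2\to\mathbb{A}^2$ be the blowup. In the chart $y=xv$ one finds $\mathcal{T}or_1(\ul f^*F,\ul f^*G)=\mathcal{T}or_1^{k[x,v]}(k[x,v]/(x),k[x,v]/(xv))\cong (v)/(xv)\neq 0=\ul f^*\mathcal{T}or_1(F,G)$. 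All Tors of $\ul f^*F$ and $\ul f^*G$ are static on the (static) blowup, so the correctly oriented statement also gives $\mathcal{T}or_1^{\cX}(\arrowup F,\arrowup G)=\arrowup\mathcal{T}or_1(\ul f^*F,\ul f^*G)\neq 0=\arrowup\mathcal{T}or_1(F,G)$. The obstruction is $\mathcal{T}or_0(F,G)=\cO_0$, which is not static. So the inequalities in (2) must be read in the opposite direction. Your truncation paragraph should be replaced by a direct application of Proposition \ref{prop: static complex prop}, with no reorientation.

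\textbf{This also fixes your coherence argument in (1).} You need no bound on Tor amplitude, no truncation and no downward induction; induct upward instead.
\begin{enumerate}
\item $\mathcal{T}or_0=F\otimes G$ commutes with every pullback and with $\arrowup$, so statify it using Theorem \ref{thm-pullback-static}.
\item $\mathcal{T}or_1$ is then computed by pullback on every further log modification, so statify it too.
\item Continue up to the given degree. Corollary \ref{cor: static and log flat stable under log flat pullback} shows that the earlier Tors stay static under each further modification.
\end{enumerate}
Your final sentences gesture at this, but they rest on the misoriented version of (2) and on the downward induction you announce.

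Your quasi-coherence argument, by contrast, is sound and differs from the paper's. The paper reduces to $\cF=\arrowup F$ via Corollary \ref{Key corollary}, dimension-shifts along a surjection from a free module, and is left with closure under kernels and tensor products (Proposition \ref{prop:nifty}). You instead transport the colimit computation of Proposition \ref{key proposition} to get the explicit formula $\mathcal{T}or_i^{\cX}(\cF,\cG)=\mathrm{colim}_{\cX'}\arrowup\mathcal{T}or_i(\Downarrow_{\cX'}\cF,\Downarrow_{\cX'}\cG)$, which yields slightly more.
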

\begin{proof}
    Claim \eqref{item: compute Tor} is strict \'etale local, hence we may assume that $\mathcal{X}$ is affine and thus we can find a bounded free resolution $P^\bullet\rightarrow F$, which by Proposition \ref{prop: static complex prop} implies that $\underline{f}^*P^{\bullet}\rightarrow\underline{f}^*F$ is also a free resolution and hence $$\mathcal{T}or_i^{\underline{\mathcal{X}}'}(\underline{f}^* F,\underline{f}^* G) = \mathcal{H}^{-i}(\underline{f}^*P^\bullet\otimes_{\mathcal{O}_{\underline{\mathcal{X}}'}}\underline{f}^*G) = \underline{f}^*\mathcal{H}^{-i}(P^\bullet\otimes_{\mathcal{O}_{\underline{\mathcal{X}}}}G) = \underline{f}^*\mathcal{T}or_i^{\underline{X}}(F,G)$$ for all $i\geq n-1$, where the second equality used Proposition \ref{prop: static complex prop}. The same proof also works for $\Uparrow$. Together with Theorem \ref{thm-pullback-static}, this implies the coherence claim in \eqref{item: Tor preserves quasi-coherence}. For quasi-coherence, let $\mathcal{F}$ and $\mathcal{G}$ be log quasi-coherent sheaves on $\cX$. By Corollary \ref{Key corollary}\eqref{item-1-key corollary} we may assume that $\mathcal{F} = \arrowup F$ for $F$ conventional quasi-coherent on $\underline{X}$. Since the claim is strict \'etale local in $\mathcal{X}$, we may further assume that there is a surjection $\phi\colon \cO_{\underline{\cX}}^{\oplus I}\twoheadrightarrow F$ and thus $\Uparrow\!\!\phi\colon\mathcal{O}_\cX^{\oplus I}\twoheadrightarrow \mathcal{F}$ for some set $I$. It follows that $\cT or_{i+1}^\cX(\cF,\cG) = \cT or_{i}^\cX(\text{Ker}(\arrowup\phi),\cG)$ for all $i\geq 1$ and $\cT or_{1}^\cX(\cF,\cG) = \text{Ker}(\text{Ker}(\arrowup\phi)\otimes_{\LogO{\cX}}\cG\rightarrow \cG)$, which reduces us to showing that $\logqcoh{\mathcal{X}}$ is closed under $-\otimes_{\cO_\cX}-$. This follows from the fact that $\cF\otimes_{\LogO{X}}\cG$ is the sheafification of the presheaf that assigns $\cF(\cU)\otimes_{\LogO{X}(\cU)}\cG(\cU)$ to any log \'etale neighborhood $\cU\rightarrow \cX$, which satisfies the hypotheses of Proposition \ref{prop:nifty}.
\end{proof}
\section{Logarithmic Picard spaces}\label{section:log picard groups}
We study the moduli stack of locally free rank one log coherent sheaves on a proper, vertical, saturated, and log smooth family of curves $\pi \colon C\rightarrow S$ with $S$ a Noetherian log $k$-scheme where $k$ is an algebraically closed field of characteristic zero. 
After log Quot spaces \cite{kennedy2023logarithmic}, this provides the second known example of a moduli space of log coherent sheaves.
We make the following additional assumptions for ease of exposition. 
\begin{enumerate}
    \item The base $S$ admits a global chart. 
    \item The strict locus of $\pi$,
    $$\pi^\circ\colon C^\circ\hookrightarrow C \longrightarrow S$$ is a surjective map which admits a global section. 
\end{enumerate}
We note that these are weak assumptions. Indeed, since any log scheme admits charts strict \'stale locally,  Assumption (1) always holds strict stale locally. Moreover, Assumption (2) is true strict \'etale locally on $S$ after replacing $S$ by the image of $\pi$ \cite[055U]{stacks}.

Unlike the Quot scheme, the moduli of locally free log coherent sheaves of rank one is non-separated\footnote{In the sense of \cite[Example 2.2.5]{MolchoWise}} and has everywhere positive dimensional gerbe structure. The main result of this section explains how the combinatorics of chip firing, and thus the log Picard groups studied elsewhere \cite{MolchoWise, AaronLogPic}, arise from the structure of log coherent sheaves. Whilst our discussion assumes the vertical case, we expect that related statements hold in the case studied by Slipper \cite{AaronLogPic}. 

\begin{remark}
    The hypothesis that $C$ has relative dimension one over $S$ does not play an essential role in our discussion. The higher dimensional theory of log Picard groups remains work in progress \cite{LogPicHigherDim}. 
\end{remark}

\subsection{Three flavours of logarithmic Picard} 
    In conventional algebraic geometry, the \textit{Picard stack} is the moduli stack of invertible (conventional) coherent sheaves on a curve, and it is a $\mathbb{G}_m$-gerbe over its sheaf of isomorphism classes, which we call the \textit{Picard group}.

    In the log situation, we will study three related moduli stacks. See Remark~\ref{rem:OGfunctor} for a way to think about the connection between these moduli problems. 

\subsubsection{The moduli stack of log invertible sheaves} Parallel to conventional algebraic geometry, a \emph{log invertible sheaf} is a log coherent sheaf which is locally free of rank one. 
For an $S$ log scheme $T\rightarrow S$, we will write $C_T$ for the fibre product $C\times_S T$.
\begin{definition}
    Let $\mathfrak{M}_1(C/S)$ be the CFG over the big log \'etale site of $S$ whose fibre over $T\rightarrow S$ is the groupoid of log invertible sheaves on $C_T$. We call $\mathfrak{M}_1(C/S)$ the \emph{moduli stack of logarithmic invertible sheaves} on $C$.
\end{definition}

\begin{remark}
    Let $B\mathbb{G}_m$ be the classifying stack for strict \'etale $\mathbb{G}_m$-torsors. A log invertible sheaf on $C$ is the data of a map to the log \'etale sheafification of $B\mathbb{G}_m$. Indeed, it is a line bundle on a log alteration of $C$.
\end{remark}

\begin{lemma}\label{lem:MSatisfiesDescent}
    The CFG of log invertible sheaves $\mathfrak{M}_1(C/S)$ satisfies descent on the big full log \'etale site of $S$.
\end{lemma}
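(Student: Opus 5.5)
The plan is to reduce descent for $\mathfrak{M}_1(C/S)$ to descent of log \'etale $\LogO{}$-modules on the small log \'etale sites $(C_T)_{\text{l\'et}}$. Objects of $\mathfrak{M}_1(C/S)$ over $T$ are log \'etale sheaves of modules on $C_T$ satisfying local conditions. Take a log \'etale cover $\{T_i\rightarrow T\}$ in the big site of $S$. Its base change $\{C_{T_i}\rightarrow C_T\}$ is again a log \'etale cover. Strict \'etale covers and log alterations are both stable under base change by Proposition~\ref{proposition: basic properties of log alterations}\eqref{item: log alts stable under base change}, and being a cover in the sense of the Conventions is universal surjectivity, so it is also stable under base change. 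Each $C_{T_i}$ and each fibre product $C_{T_i}\times_{C_T}C_{T_j}=C_{T_i\times_T T_j}$ is an object of $(C_T)_{\text{l\'et}}$. The small log \'etale topos of $C_{T_i}$ is the slice topos $\text{Sh}((C_T)_{\text{l\'et}})/h_{C_{T_i}}$, and $\LogO{C_{T_i}}$ is the restriction of $\LogO{C_T}$. This follows directly from Definition~\ref{definition of log structure sheaf}, since the presheaf $\LogO{}^{\text{pre}}$ is defined by the same formula.

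The key steps, in order, are as follows.

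\begin{enumerate}
\item Check that pullback in the CFG $\mathfrak{M}_1(C/S)$ along $T'\rightarrow T$ is $g_{\text{l\'et}}^*$ for $g\colon C_{T'}\rightarrow C_T$. When $g$ is log \'etale this is restriction to the slice, since $g^{-1}_{\text{l\'et}}\LogO{C_T}=\LogO{C_{T'}}$.
\item Deduce full faithfulness, i.e.\ that $\mathcal{H}om$ between log invertible sheaves is a sheaf on the big site. For a log \'etale $T'\rightarrow T$, morphisms over $T'$ are sections of $\mathcal{H}om_{\LogO{C_T}}(\cF,\cG)$ over $C_{T'}$. The sheaf property of this $\mathcal{H}om$ on $(C_T)_{\text{l\'et}}$, applied to the cover $\{C_{T_i}\}$, gives the gluing of morphisms.
\item Prove effectivity of descent data. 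Given $\cF_i$ on $C_{T_i}$ with isomorphisms on overlaps satisfying the cocycle condition, these are sheaves on the slice sites over the covering family $\{C_{T_i}\rightarrow C_T\}$. The standard gluing of sheaves of modules in a topos \cite[04TP]{stacks} produces an $\LogO{C_T}$-module $\cF$ with $\cF|_{C_{T_i}}\cong\cF_i$.
\item Verify that $\cF$ is log invertible. Locally free of rank one is a local condition, and $\cF$ is locally isomorphic to $\LogO{}$ on a refinement of a cover. Log coherence then follows automatically, because $\LogO{}$ has finite presentation.
\end{enumerate}

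The main obstacle is a set-theoretic and site-theoretic subtlety. The big log \'etale site over $S$ includes covers by log alterations of $T$, which are monomorphisms but not flat. One must confirm that $C_{T'}$ for $T'\rightarrow T$ a log alteration is indeed an object of $(C_T)_{\text{l\'et}}$ that covers $C_T$. Here the saturatedness of $\pi$ is used, so that fs fibre products agree with the expected ones and base change of a log alteration of $T$ is a log alteration of $C_T$. One must also confirm that restriction to it is an equivalence, by Proposition~\ref{proposition: basic properties of log alterations}\eqref{item: log alts is a mono log et cover}.

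I would first handle the case of a single log alteration $T'\rightarrow T$. There descent is trivial, because the restriction functor is an equivalence of ringed topoi. I would then handle the case of a strict \'etale cover, which is ordinary topos gluing. General covers factor, after refinement, as a log alteration followed by a strict \'etale cover, by Proposition~\ref{proposition-alteration-to-saturated-morphisms}\eqref{item: strictify log etale via log alteration} applied quasi-compactly. So descent along general covers follows by composing the two cases, using that descent for a refined cover implies descent for the original one in a stack with sheaf Hom.
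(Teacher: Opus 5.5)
Your proposal is correct and follows the paper's route: the paper's proof is the one-line observation that $\mathcal{O}$-modules satisfy log \'etale descent (your gluing in the topos of $(C_T)_{\text{l\'et}}$, steps 1--3) and that being invertible is a local condition (your step 4). Several of your extra remarks are unnecessary. Topos gluing already handles arbitrary covering families, so the final reduction to log alterations followed by strict \'etale covers is not needed. Saturatedness of $\pi$ plays no role, because base change of a log alteration is a log alteration by Proposition~\ref{proposition: basic properties of log alterations}\eqref{item: log alts stable under base change}. Finally, log coherence of the glued sheaf follows more cleanly from locality of coherence than from finite presentation of $\mathcal{O}$, which only yields coherence in the locally Noetherian case.
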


\begin{proof}
    This follows from descent of $\cO$-modules, and the fact that being invertible is a local condition.
\end{proof}

\subsubsection{The logarithmic Picard stack as moduli of $\mathbb{G}_m^\mathsf{log}$-torsors}\label{sec:defnGlog} 

For $T$ a scheme with a log structure we write $M_T$ for its associated sheaf of monoids. Recall (for example, from \cite[Definition~2.2.7.1]{MolchoWise}) that the log multiplicative group, denoted $\mathbb{G}_m^\mathsf{log}$, is defined through its functor of points on the category of log schemes: $$\operatorname{Hom}(T,\mathbb{G}_m^\mathsf{log}) = \Gamma(T,M_T^\mathsf{gp}).$$
Write $H^1\bigl(T, \mathbb{G}_m^\mathsf{log}\bigr)^{\dagger}\subset H^1\bigl(T, \mathbb{G}_m^\mathsf{log}\bigr)$ for the subgroup consisting of strict \'etale $\mathbb{G}_m^\mathsf{log}$-torsors which have \textit{bounded monodromy} \cite[Section 3.5]{MolchoWise}.

\begin{remark}
    There is a canonical inclusion $$\mathbb{G}_m \hookrightarrow \mathbb{G}_m^\mathsf{log} \quad x\mapsto \alpha^{-1}(x).$$
    Here $\alpha \colon M_X\rightarrow \mathcal{O}_X$ is the data of a log structure on $X$.
\end{remark}

\begin{remark}\label{remark: bounded monodromy}
We recall from \cite{MolchoWise} two ways of thinking about the bounded monodromy condition.
\begin{enumerate}
    \item The moduli of $\mathbb{G}_m^\mathsf{log}$ torsors with bounded monodromy forms a log abelian variety, see \cite[Example 3.6.4]{MolchoWise}. The bounded monodromy condition can be expressed in the language of tropical geometry \cite[Definition 3.5.3]{MolchoWise}.
    \item\label{item: M1 to log Pic is surjective kind of} A $\mathbb{G}_m^\mathsf{log}$-torsor $\tau$ on $C/S$ satisfies bounded monodromy if and only if log \'etale locally on $S$ there exists a log alteration $C'/S'$ such that $\tau$ pulls back to a $\mathbb{G}_m$-torsor. See \cite[Corollary~4.3.1]{MolchoWise}.
\end{enumerate}
The second perspective above is built into the stack of log invertible sheaves.
\end{remark}
Recall that a logarithmic line bundle on $C_T$ is defined to be a $\mathbb{G}_m^\mathsf{log}$-torsor with bounded monodromy \cite{MolchoWise}.
\begin{definition}
    Set $\mathbf{LogPic}^\mathsf{pre}({C/S})$ the CFG over the category of logarithmic $S$-schemes whose fibre over $T/ S$ is the groupoid of logarithmic line bundles on $C_T$. The \textit{logarithmic Picard stack} $\mathbf{LogPic}({C/S})$ the associated log \'etale stackification, and the \textit{log Picard group} $\mathsf{LogPic}(C/S)$ is its corresponding (log \'etale) sheaf of isomorphism classes.
\end{definition}

We restate a result of Molcho and Wise in the language of this paper.
\begin{theorem}[Molcho--Wise]\label{thm:MolchoWise}
    Let $T\rightarrow S$ be a morphism of log schemes with $T$ static. The sheafification morphism induces an isomorphism of groupoids 
    $$\operatorname{Hom}(T/S,\mathbf{LogPic}^\mathsf{pre}({C/S})) \longrightarrow \operatorname{Hom}(T/S,\mathbf{LogPic}({C/S}))$$
    Moreover, the log Picard group is locally of finite presentation over $S$, and its functor of points can be obtained as the log \'etale sheafification of the functor of points to a $S$-scheme with log structure.
\end{theorem}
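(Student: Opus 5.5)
The statement has two parts, and I treat them separately. Part one says that for static $T$ the comparison map from $\mathbf{LogPic}^\mathsf{pre}$ to its log \'etale stackification is an equivalence on $T$-points. Part two says that $\mathsf{LogPic}(C/S)$ is locally of finite presentation and is the log \'etale sheafification of a log scheme's functor of points. Part two is essentially \cite{MolchoWise}, so I would only translate it. The real content is part one, which I would deduce from the fact that on a static base, log \'etale descent reduces to strict \'etale descent.

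\textbf{Step 1: identify the stackification.} The prestack $\mathbf{LogPic}^\mathsf{pre}$ is already a stack for the strict \'etale topology, since $\mathbb{G}_m^\mathsf{log}$-torsors glue strict \'etale locally and bounded monodromy is strict \'etale local. Log \'etale covers are generated by strict \'etale covers and log alterations (Proposition \ref{proposition-alteration-to-saturated-morphisms}). So, as in Proposition \ref{proposition on log etale topos}, an object of $\mathbf{LogPic}(C/S)$ over $T$ is represented by an object over some log alteration $T'\to T$, together with descent data along $T''=T'\times_T T'$. Because log alterations are monomorphisms (Proposition \ref{proposition: basic properties of log alterations}), we have $T''=T'$, and the descent datum is trivial.

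It therefore suffices to prove that pullback along any log alteration $q\colon T'\to T$ induces an equivalence of groupoids
\[\operatorname{Hom}(T,\mathbf{LogPic}^\mathsf{pre})\longrightarrow \operatorname{Hom}(T',\mathbf{LogPic}^\mathsf{pre}).\]
Given this, the colimit defining stackification is constant, and the claim follows.

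\textbf{Step 2: full faithfulness.} Automorphisms and isomorphisms of $\mathbb{G}_m^\mathsf{log}$-torsors on $C_T$ are sections of $M^{gp}_{C_T}$ and of its torsors. I would use the exact sequence $0\to\mathbb{G}_m\to M^{gp}\to\overline{M}^{gp}\to0$ on $C_T$.

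On the $\overline{M}^{gp}$ part, sections are sheaves of lattices pulled back from the base and from the tropical curve. These are insensitive to log alterations, since the cone complexes are refined compatibly and piecewise-linear data is unchanged.

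On the $\mathbb{G}_m$ part, the key input is $\underline{q}_*\mathcal{O}_{\underline{C_{T'}}}=\mathcal{O}_{\underline{C_T}}$. Because $\pi$ is saturated, $C_T$ is log flat over $T$. Hence $C_T$ is static by Proposition \ref{proposition: log flat over static is static}, and Proposition \ref{proposition-on-derived-pullback-vanishing-static}\eqref{item: static push pull is identity} gives $R\underline{q}_*\mathcal{O}=\mathcal{O}$. It follows that units and $H^1(\mathbb{G}_m)$-type data agree, by the exponential-free argument comparing $\underline{q}_*\mathcal{O}^\times=\mathcal{O}^\times$.

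\textbf{Step 3: essential surjectivity.} This is the main obstacle. Given a log line bundle $\tau'$ on $C_{T'}$, we must descend it to $C_T$.

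First I would use the torsor sequence to reduce to descending its image in $H^1(\overline{M}^{gp})$ and then a $\mathbb{G}_m$-torsor. The $\overline{M}^{gp}$-torsor descends by the combinatorial invariance noted in Step 2.

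The obstruction to lifting the remaining class is measured by $H^1$ and $H^2$ of $\mathcal{O}^\times$, compared between $C_T$ and $C_{T'}$. Here I would again use staticity, through the equality $R\underline{q}_*\mathcal{O}=\mathcal{O}$, which gives $R\underline{q}_*\mathbb{G}_m=\mathbb{G}_m$ in low degrees on the proper-fibred $\underline{q}$. I expect this step to require care with the units, proving $R^1\underline{q}_*\mathbb{G}_m=0$ using the fibres of log modifications being trees of rational curves or toric. For root stacks I would instead use Proposition \ref{proposition: root stacks are a thing}. The bounded monodromy condition is preserved in both directions by Remark \ref{remark: bounded monodromy}.

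\textbf{Step 4: the remaining claims.} Local finite presentation and the description as the sheafification of a scheme's functor of points I would quote directly from \cite[Theorem 4.13.1 and Section 4]{MolchoWise}, which constructs $\mathsf{LogPic}$ as a quotient of a log scheme (the tropical-Jacobian model) by a lattice.
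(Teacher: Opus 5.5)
You have the right key input and the right global reduction. The paper's proof of the first claim is also just this: staticity gives $R\underline{q}_*\mathcal{O}=\mathcal{O}$ along log alterations (Proposition~\ref{proposition-on-derived-pullback-vanishing-static}\eqref{item: static push pull is identity}), which verifies the hypothesis of \cite[Corollary 8.2]{molcho2023remarks}, and that corollary does the passage from descent for $\mathcal{O}$ to descent for log line bundles. Your Steps~2--3 try to carry out that passage by hand, and this is where there is a genuine gap.

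You split along $0\to\mathcal{O}^\times\to M^{gp}\to\overline{M}^{gp}\to 0$ and claim each outer term is separately invariant under a log alteration $r\colon C_{T'}\to C_T$. Neither is. Take $S=T=\mathbb{A}^2$ with its toric log structure (log flat, hence static), $C=\mathbb{P}^1\times T$ with log structure pulled back from $T$, and $T'=\mathrm{Bl}_0\mathbb{A}^2$.
\begin{itemize}
\item At a point of $\mathbb{P}^1\times\{0\}$, the stalk of $\underline{r}_*\overline{M}^{gp}_{C_{T'}}$ is the group of piecewise linear functions on the subdivided cone, of rank $3$. The stalk of $\overline{M}^{gp}_{C_T}$ has rank $2$. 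So the $\overline{M}^{gp}$-data is not insensitive to the alteration, contrary to Step~2.
\item $R^1\underline{r}_*\mathbb{G}_m\neq 0$: the pullback of $\mathcal{O}(E)$ has degree $-1$ on the exceptional curves, so it is nontrivial on every neighbourhood of them. This contradicts Step~3.
\item Vanishing of $R^1\underline{r}_*\mathcal{O}$ does not give vanishing of $R^1\underline{r}_*\mathcal{O}^\times$, since there is no exponential sequence.
\item Proposition~\ref{proposition: root stacks are a thing} says nothing about $\mathbb{G}_m$. On a root stack, $\mathcal{O}(D/n)$ gives nonzero classes in $R^1\underline{\rho}_*\mathbb{G}_m$.
\end{itemize}
Concretely, the $\mathbb{G}_m$-torsor $\mathcal{O}(E)$ on $C_{T'}$ does not descend to $C_T$. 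Its associated $\mathbb{G}_m^\mathsf{log}$-torsor is trivial and so does descend. The freedom to change the $\mathbb{G}_m$-lift by the extra piecewise linear functions is exactly what your d\'evissage throws away.

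The two failures cancel. The coboundary $\underline{r}_*\overline{M}^{gp}_{C_{T'}}\to R^1\underline{r}_*\mathcal{O}^\times_{C_{T'}}$ sends the extra piecewise linear functions to the extra line bundles: the function with slope one along the exceptional ray goes to $\mathcal{O}(\pm E)$. So only $M^{gp}$ as a whole is invariant.

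A correct argument must therefore compare $M^{gp}$ directly. You need $M^{gp}_{C_T}\cong\underline{r}_*M^{gp}_{C_{T'}}$, and you need $R^1\underline{r}_*M^{gp}_{C_{T'}}$ to create no new bounded-monodromy torsors. Via the long exact sequence, this amounts to showing that the coboundary induces an isomorphism $\underline{r}_*\overline{M}^{gp}_{C_{T'}}/\overline{M}^{gp}_{C_T}\cong R^1\underline{r}_*\mathcal{O}^\times_{C_{T'}}$, together with control of $R^1\underline{r}_*\overline{M}^{gp}_{C_{T'}}$. The equality $\underline{r}_*\mathcal{O}^\times=\mathcal{O}^\times$ that you do obtain is only one ingredient. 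The rest is what the paper imports from \cite[Corollary 8.2]{molcho2023remarks}; either cite it or prove this comparison.

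For the last assertion, the paper combines three inputs, so make sure your single reference really covers all degrees:
\begin{itemize}
\item the degree-zero part is a log abelian variety \cite[Corollary B]{MolchoWise};
\item such a variety has a modification that is a log scheme \cite[Theorem 1.11]{KajiwaraKatoNakayama2018};
\item the section $x$ of $C^\circ\to S$ translates the degree-$d$ components to degree $0$ via $\mathcal{O}(dx)$.
\end{itemize}
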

\begin{proof}
    Sheafification induces an isomorphism because Proposition \ref{proposition-on-derived-pullback-vanishing-static} (\ref{item: static push pull is identity}) ensures that $S$ satisfies the hypothesis of \cite[Corollary 8.2]{molcho2023remarks}. Local finite presentation is \cite[Proposition~4.2.2]{MolchoWise}.

    We now show that the log Picard group admits a log modification (in the sense of \cite[Definition 2.2.6.1]{MolchoWise}) which is a scheme with log structure. It suffices to work connected component by connected component. The subscheme of the logarithmic Picard group parameterising log line bundles of total degree zero is a (polariseable) log abelian variety \cite[Corollary B]{MolchoWise}. Existence of a modification which is a scheme with log structure is \cite[Theorem 1.11]{KajiwaraKatoNakayama2018}. 
    
    It remains to handle connected components of the log Picard group parameterising log line bundles of total degree $d$ for $d$ non-zero. The morphism $C^\circ \rightarrow S$ is assumed to admit a section $x$. Replacing $S$ with such an \'etale neighbourhood, tensoring with $\mathcal{O}(xd)$ gives an isomorphism from the connected component of the identity in the log Picard group to the total degree $d$ part of the log Picard group. 
\end{proof}
Recall that a site is said to be \textit{sub-canonical} if the functor of points of its objects form sheaves. The following fact is well known to experts, see for example \cite{MolchoWise,molcho2023remarks}.

\begin{example}[The big log \'etale site is not sub-canonical]
    If the big log \'etale site were sub-canonical then the functor of points of $\mathbb{A}^1$, and thus the structure sheaf of any log scheme, would form a log \'etale sheaf. This would contradict Remark \ref{remark: first instance of Max's favorite counterexample}.
\end{example}

Since the moduli problems $\mathfrak{M}_1(C/S)$ and $\operatorname{LogPic}(C/S)$ satisfy log \'etale descent, one cannot expect that they are represented by a log scheme, or even algebraic stack with log structure. The fact that the log Picard group is not represented by an algebraic space with log structure is discussed in detail in \cite{MolchoWise}. A related phenomenon occurs for logarithmic Quot spaces \cite{kennedy2023logarithmic}.

\begin{definition}\label{defn:logalgspace}
    A \emph{descending logarithmic algebraic space} is a sheaf on the big full log \'etale site of $S$ which, possibly after replacing $S$ with a strict \'etale cover, can be obtained by taking the log \'etale sheafification to the functor of points of an algebraic space with log structure.
\end{definition}

Note that descending log algebraic spaces are in particular logarithmic spaces in the second sense \cite[Section 10.1]{kajiwaraKatoNakayamaVII} whose functor of points satisfies log \'etale descent. 
Our definition of the log Picard group as a log \'etale sheafification and Theorem~\ref{thm:MolchoWise} together establish the following statement.

\begin{corollary}
    The log Picard group $\mathsf{LogPic}(C/S)$ is a descending log algebraic space of local finite presentation over $S$. 
\end{corollary}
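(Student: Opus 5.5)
The corollary asks for two things: that $\mathsf{LogPic}(C/S)$ is a descending log algebraic space in the sense of Definition~\ref{defn:logalgspace}, and that it is locally of finite presentation over $S$. Both should follow by assembling Theorem~\ref{thm:MolchoWise} with the definition of $\mathsf{LogPic}(C/S)$ as a log \'etale sheaf of isomorphism classes.

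\emph{Step 1: the sheaf condition.} By construction, $\mathbf{LogPic}(C/S)$ is the log \'etale stackification of $\mathbf{LogPic}^\mathsf{pre}(C/S)$. Its sheaf of isomorphism classes is therefore a sheaf on the big full log \'etale site of $S$, and I take $\mathsf{LogPic}(C/S)$ to mean this sheaf.

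\emph{Step 2: representability up to sheafification.} Theorem~\ref{thm:MolchoWise} says that, after replacing $S$ by a strict \'etale cover, $\mathsf{LogPic}(C/S)$ is the log \'etale sheafification of the functor of points of an $S$-scheme with log structure. The strict \'etale replacement is needed to arrange the standing assumptions (a global chart and a section of $C^\circ\to S$), which are only available strict \'etale locally. Connected component by connected component, this scheme is the log modification of a log abelian variety supplied by \cite{KajiwaraKatoNakayama2018}, transported to degree $d$ by tensoring with $\mathcal{O}(xd)$. In particular it is an algebraic space with log structure, so $\mathsf{LogPic}(C/S)$ satisfies exactly the condition of Definition~\ref{defn:logalgspace}.

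\emph{Step 3: local finite presentation.} This is \cite[Proposition~4.2.2]{MolchoWise}, as already quoted in Theorem~\ref{thm:MolchoWise}. The only check is that it matches the meaning of local finite presentation for a descending log algebraic space. I would verify this by noting that the representing scheme in Step 2 is locally of finite presentation over $S$: a log modification of a log abelian variety over Noetherian $S$ is of finite type. Log \'etale sheafification along the filtered system of log alterations then preserves the condition of commuting with filtered limits of affine log schemes.

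The only potential obstacle is this last compatibility, namely that local finite presentation passes through sheafification and through the strict \'etale localization on $S$. I would treat it by citing \cite{MolchoWise} directly for the sheaf, rather than reproving it.
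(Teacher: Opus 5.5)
Your proposal is correct and takes the same approach as the paper, which deduces the corollary directly from the definition of $\mathsf{LogPic}(C/S)$ as a log \'etale sheaf of isomorphism classes together with Theorem~\ref{thm:MolchoWise}. The heuristic in your Step 3 is unnecessary, and as written it is not fully justified: you claim the representing scheme is of finite type and that sheafification along log alterations preserves commuting with filtered limits. Citing \cite[Proposition~4.2.2]{MolchoWise} directly, as you ultimately propose, is exactly what the paper relies on.
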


\begin{remark}
    In the future, it may be desirable to define descending log algebraic spaces as a sheaf of sets on the big log \'etale site which admits a log \'etale cover by schemes with log structure. We expect that the proof of \cite[Theorem 6.6]{Alper2013GoodModuli} could be adapted to establish Theorem \ref{mainthm:LogPic} with this alternative definition once the theory is developed further. 
\end{remark}

\subsection{The canonical morphism} We construct the morphism $\varpi$ of Theorem \ref{mainthm:LogPic}

\subsubsection{Defining the canonical morphism}
Let $T$ be a Noetherian $S$-scheme. The data of a $T\rightarrow S$ valued point $$T\longrightarrow \mathfrak{M}_1(C/S)$$
is a log invertible sheaf, which Corollary \ref{corr: FP iff up FP} allows us to express as $\arrowup F$ for a conventional line bundle $F$ on a log alteration $C'_T$ of $C_T$.
By Proposition \ref{proposition-alteration-to-saturated-morphisms}, it is possible to replace both $C'_T$ and $T=T'$ by log alterations such that the morphism $C'_T\rightarrow T'$ is saturated.

Following \cite[Section 4.11]{MolchoWise}, the inclusion $\mathbb{G}_m\hookrightarrow \mathbb{G}_m^\mathsf{log}$ defines a map $$\mathbf{Pic}(C'_T/T')\longrightarrow \mathbf{LogPic}(C'_T/T')\cong \mathbf{LogPic}(C_T/T).$$ The isomorphism holds because $\mathbf{LogPic}(C_T/T)$ forms a sheaf in the big full log \'etale site of $S$. 

\begin{lemma}
    The assignment of the previous paragraph is well defined independent of the choice of log alteration $C'_T/T'$. Sheafifying this assignment thus defines a morphism $h$ of stacks over the big log \'etale site of $S$, $$\mathfrak{M}_1(C/S)\xlongrightarrow{h} \mathbf{LogPic}(C/S)\longrightarrow \ensuremath{\text{\sffamily\upshape LogPic}}(C/S).$$
    We write $\varpi$ for this composition.
\end{lemma}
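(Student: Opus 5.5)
We must check that the class in $\mathsf{LogPic}(C/S)(T)$ attached to a log invertible sheaf $\mathcal{F}$ on $C_T$ does not depend on the choices made, and that the assignment is compatible with pullback along $T$. Two choices are involved: the presentation $\mathcal{F}\cong\arrowup F$ with $F$ a line bundle on a log alteration $C'_T$, and the alteration $T'\to T$ used to make $C'_T\to T'$ saturated. Since log alterations of $C_T$ form a filtered poset, the plan is to compare any two choices by passing to a common refinement. It then suffices to show that the class does not change when a choice is replaced by a refinement. Once this is done, the identification $\mathbf{LogPic}(C'_T/T')\cong\mathbf{LogPic}(C_T/T)$ (which holds because $\mathbf{LogPic}$ is a log \'etale stack and log alterations are monomorphic log \'etale covers, Proposition~\ref{proposition: basic properties of log alterations}) gives a well-defined point. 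Sheafification then produces $h$, because $\mathfrak{M}_1(C/S)$ satisfies descent (Lemma~\ref{lem:MSatisfiesDescent}) and the target is a stack. Composing with the map to isomorphism classes gives $\varpi$.

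\textbf{Step 1 (refinement along $C$).} Take two presentations $\mathcal{F}\cong\arrowup F_1\cong\arrowup F_2$ with $F_j$ a line bundle on $C^{(j)}_T$. By Corollary~\ref{corr: FP iff up FP}, the isomorphism $\arrowup F_1\cong\arrowup F_2$ and its inverse are both induced by morphisms on some common log alteration $C''_T$. Enlarging $C''_T$ further, we may assume that these morphisms compose to the identity, so $q_1^*F_1\cong q_2^*F_2$ as line bundles on $C''_T$. It therefore suffices to show that for a log alteration $q\colon C''_T\to C'_T$ and a line bundle $F$ on $C'_T$, the $\mathbb{G}_m^\mathsf{log}$-torsors induced by $F$ and by $q^*F$ agree under $\mathbf{LogPic}(C''_T/T')\cong\mathbf{LogPic}(C'_T/T')$. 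This holds because the map $\mathbf{Pic}\to\mathbf{LogPic}$ is given by pushing out torsors along $\mathbb{G}_m\hookrightarrow\mathbb{G}_m^\mathsf{log}$ as in \cite[Section 4.11]{MolchoWise}, and pushout of torsors commutes with pullback along $q$. An isomorphism of line bundles likewise gives an isomorphism of the associated torsors.

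\textbf{Step 2 (refinement along $T$).} If $T''\to T'$ is a further log alteration, base change keeps $C'_{T''}\to T''$ saturated. The same compatibility of pushout with pullback shows that the torsor attached to the pullback of $F$ is the pullback of the torsor attached to $F$. This pullback is identified with the original point by the isomorphism $\mathbf{LogPic}(-/T'')\cong\mathbf{LogPic}(-/T')$, which is again a monomorphic-cover argument. Compatibility with an arbitrary base change $T_2\to T_1$ follows in the same way: a presentation over $T_1$ pulls back to a valid presentation over $T_2$, possibly after a further alteration to restore saturation, and both constructions commute with pullback.

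\textbf{Main obstacle.} I expect the main difficulty to be making the identification $\mathbf{LogPic}(C'_T/T')\cong\mathbf{LogPic}(C_T/T)$ precise as a 2-categorical isomorphism that is canonical. It must be compatible with composition of alterations, so that the comparison isomorphisms in Steps 1 and 2 satisfy a cocycle condition; without this, $h$ would not be a morphism of stacks. I would derive this from the fact that $\mathbf{LogPic}(C/S)$, as a stack on the big log \'etale site, takes monomorphic covers to equivalences. The restriction maps along a filtered system of such covers are then canonically coherent, and this provides the cocycle condition. At the level of $\mathsf{LogPic}$, which is a sheaf of sets, no coherence issue arises, so $\varpi$ is well defined in any case.
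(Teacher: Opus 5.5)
Your proposal is correct and follows the paper's route. The paper likewise reduces everything to compatibility of the assignment with pullback along a log alteration, which holds because $\mathbb{G}_m\to\mathbb{G}_m^\mathsf{log}$ is a morphism of sheaves on the big strict \'etale site, so pushing out torsors commutes with pullback. Your extra bookkeeping is left implicit in the paper but is consistent with it: comparing two presentations on a common refinement via Corollary~\ref{corr: FP iff up FP}, handling the auxiliary alteration $T'\to T$, and checking that the identifications are coherent.
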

\begin{proof}
    The only claim that must be checked is that the assignment is compatible with pullback along a log alteration $C'\rightarrow C$. This is true because $\mathbb{G}_m\rightarrow \mathbb{G}_m^\mathsf{log}$ is a morphism of (strict \'etale) sheaves on the big strict \'etale site.
\end{proof}

The following remarks orient our perspective.
\begin{remark}\label{rem:OGfunctor}
    While the fibre over $T\rightarrow S$ of the (conventional) Picard stack associated to a curve $\underline{C}/\underline{S}$ is the groupoid of line bundles on $C_T/T$, computing fibres of the picard group requires a sheafification step. This step identifies points which differ by a line bundle pulled back from $T$.   

    We compare to the logarithmic situation: The functor of points of the log Picard stack assigns to a Noetherian test object $T\rightarrow S$ the collection of equivalence classes of log invertible sheaves on $C_T$. The equivalence relation identifies log invertible sheaves that differ by (the up functor applied to) a line bundle pulled back from a subdivision to the Artin fan of $C_T$. See Lemma~\ref{lem:KernelChipFiring} for a precise statement. 
\end{remark}

\subsubsection{Fibres of the canonical morphism} We briefly recall the theory of Artin fans, and their line bundles.
An \emph{Artin fan} is an algebraic stack with a log structure which is strict \'etale over Olsson's stack of log structures.
The construction of \cite{ModStckTropCurve} defines a fully faithful embedding of the category $\mathsf{RPC}$ of rational polyhedral cones into the category of Artin fans $$a^\ast \colon \mathsf{RPC}\longrightarrow \text{Artin Fan}.$$
The functor $a^\ast$ sends a cone $\sigma$ to the stack quotient of the associated affine toric variety (with its toric log structure) by the dense torus.

For $X$ a Noetherian k-scheme with a log structure, there is an initial strict map from $X$ to an Artin fan, which we denote $\mathcal{A}_X$ and call \textit{the} Artin fan of $X$ \cite[Proposition 3.2.1]{abramovich2017boundedness}.

Our next lemma is well known to experts, see \cite[Remark 7.3]{ModStckTropCurve} and \cite[§2.5]{HolmesMolchoPandharipandePixtonSchmitt2025}, but we were unable to identify a reference which works in the necessary generality. This lemma interpolates two ways of articulating the theory of conewise (i.e. strict piecewise) linear functions; see \cite{HolmesSchwarz2022}, \cite{MolchoPandharipandeSchmitt2023}, and \cite{MolchoRanganathan2024}. 

\begin{lemma}\label{Lem:LBAfanMXgp}
    Let $X$ be a Noetherian k-scheme with a log structure with Artin fan $\mathcal{A}_X$. There is a canonical isomorphism between the following groups.
    \begin{enumerate}
        \item Isomorphism classes of line bundles on $\mathcal{A}_X$.
        \item Global sections to the groupification of the sheaf of monoids $\overline{M}_X = M_X/\mathcal{O}_X^\ast$.
    \end{enumerate}
\end{lemma}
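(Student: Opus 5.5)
The plan is to compare both groups with $H^1(\mathcal{A}_X,\mathcal{O}^\ast_{\mathcal{A}_X})$ and $H^0(X,\overline{M}{}_X^{gp})$ through a local-to-global argument on the Artin fan. The starting point is the local model. For a fine saturated sharp monoid $P$, the Artin cone is $\mathcal{A}_P=[\operatorname{Spec}k[P]/\operatorname{Spec}k[P^{gp}]]$, and its line bundles are $T=\operatorname{Spec}k[P^{gp}]$-equivariant line bundles on the affine toric variety $\operatorname{Spec}k[P]$. Since $k[P]$ is a normal domain whose homogeneous units are only the constants, every such bundle is trivial as a bundle. It is therefore determined by a character of $T$, up to twisting by homogeneous units, which act trivially. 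This gives
\[\operatorname{Pic}(\mathcal{A}_P)\cong \operatorname{Hom}(T,\mathbb{G}_m)=P^{gp}=\Gamma(\mathcal{A}_P,\overline{M}{}^{gp}_{\mathcal{A}_P}),\]
where the last equality holds because $\overline{M}$ on $\mathcal{A}_P$ has global sections $P$ at the closed point. The same computation shows that $\Gamma(\mathcal{A}_P,\mathcal{O}^\ast)=k^\ast$ and that automorphisms of line bundles on $\mathcal{A}_P$ are constant.

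Next I globalize. On an Artin fan there is the exact sequence $1\to\mathcal{O}^\ast\to M^{gp}\to\overline{M}{}^{gp}\to 0$ in the strict \'etale topology, which yields a boundary map $\delta\colon H^0(\mathcal{A}_X,\overline{M}{}^{gp})\to H^1(\mathcal{A}_X,\mathcal{O}^\ast)=\operatorname{Pic}(\mathcal{A}_X)$. Concretely, $\delta$ sends a section to the $\mathcal{O}^\ast$-torsor of its lifts to $M^{gp}$, which is the line bundle $\mathcal{O}(-\bar m)$. I will show that $\delta$ is an isomorphism by checking it strict \'etale locally on $\mathcal{A}_X$, where it reduces to the local model above, and then gluing. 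Two facts make the gluing work. First, both sides are strict \'etale sheaves on $\mathcal{A}_X$: for line bundles this is the stackification of $U\mapsto \operatorname{Pic}(U)$, which is a sheaf because of the triviality of $\operatorname{Pic}$ on each Artin cone together with rigidity. Second, automorphisms of line bundles on connected Artin cones are just $k^\ast$, so the gluing data form a sheaf of groups rather than a genuine gerbe obstruction. It is cleaner to phrase this as the vanishing of $H^1(\mathcal{A}_X,M^{gp})$ on Artin fans together with the fact that $\Gamma(\mathcal{A}_X,M^{gp})\to\Gamma(\mathcal{A}_X,\overline{M}{}^{gp})$ has image exactly the sections giving trivial bundles. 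Finally, $\mathcal{A}_X$ has the same $\overline{M}$ as $X$ because $X\to\mathcal{A}_X$ is strict and the initiality of \cite[Proposition 3.2.1]{abramovich2017boundedness} makes $\Gamma(X,\overline{M}{}^{gp}_X)=\Gamma(\mathcal{A}_X,\overline{M}{}^{gp})$. Here one uses that points of $\mathcal{A}_X$ correspond to strata of $X$, with the connectedness of fibres built into the construction of the Artin fan.

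I expect the main obstacle to be this last identification together with injectivity of $\delta$ globally. Injectivity amounts to showing that $\Gamma(\mathcal{A}_X,M^{gp})\to\Gamma(\mathcal{A}_X,\overline{M}{}^{gp})$ hits only $0$ among classes of trivial bundles, equivalently that global units of $\mathcal{A}_X$ on each connected component are $k^\ast$ and that a nowhere-vanishing lift forces $\bar m=0$. I would attack this using the cone complex description: a global section of $\overline{M}{}^{gp}$ is a conewise linear function on the tropicalization, and the trivial bundle corresponds only to the zero function because weights of $T$-equivariant trivializations are read off cone by cone. Surjectivity is handled by the local argument and gluing.
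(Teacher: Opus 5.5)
Your architecture is a reasonable variant of the paper's route. You use the connecting map $\delta\colon\Gamma(\mathcal{A}_X,\overline{M}^{gp})\to H^1(\mathcal{A}_X,\mathcal{O}^\ast)$, the local computation $\operatorname{Pic}(\mathcal{A}_P)=P^{gp}$, and then a comparison of $\overline{M}^{gp}$ on $\mathcal{A}_X$ and on $X$. The paper instead identifies line bundles on $\mathcal{A}_X$ with global conewise linear functions by a Klyachko-type argument, and matches that sheaf with $\overline{M}^{gp}_X$ on atomic neighbourhoods. \textbf{The gap is the globalization step, which carries all the content: the reasons you give for it do not work.}

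The presheaf $U\mapsto\operatorname{Pic}(U)$ is not a strict \'etale sheaf. On scheme charts it sheafifies to $0$, and on Artin-cone charts its sheaf property is exactly what has to be proved. Nor does the fact that line bundles on charts have only $k^\ast$ as automorphisms remove the gluing obstruction; it is the source of it. After twisting away the local weights, a line bundle that is trivial on every chart $U_i$ is a \v{C}ech cocycle with values in $\Gamma(U_i\times_{\mathcal{A}_X}U_j,\mathcal{O}^\ast)$. Such a cocycle can be nontrivial as soon as overlaps are disconnected.

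For instance, let $\mathcal{V}=[\mathcal{A}_{\mathbb{N}^2}/(\mathbb{Z}/2)]$, with $\mathbb{Z}/2$ swapping the coordinates. It is strict \'etale covered by an Artin cone and has every property your gluing uses. Yet $\operatorname{Pic}(\mathcal{V})$ is the character group of the semidirect product of $\mathbb{G}_m^2$ by $\mathbb{Z}/2$, namely $\mathbb{Z}\oplus\mathbb{Z}/2$, while $\Gamma(\mathcal{V},\overline{M}^{gp})=\mathbb{Z}$. The sign character has weight $0$ on the chart and descent datum $-1$ on the second component of $\mathcal{A}_{\mathbb{N}^2}\times_{\mathcal{V}}\mathcal{A}_{\mathbb{N}^2}\cong\mathcal{A}_{\mathbb{N}^2}\times\mathbb{Z}/2$. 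So some property specific to Artin fans must enter, and your sketch never uses one. Your asserted vanishing of $H^1(\mathcal{A}_X,M^{gp})$ is also unargued, and it is more than you need: it suffices that $H^1(\mathcal{O}^\ast)\to H^1(M^{gp})$ is zero.

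The missing input is representability of $\mathcal{A}_X\to\mathcal{L}og_k$.
\begin{enumerate}
\item The trivial log structure has no automorphisms, so the locus of trivial log structure in $\mathcal{A}_X$ is an algebraic space \'etale over $\operatorname{Spec}k$.
\item On each connected component this locus is a single point. It lies in every Artin-cone chart, being the image of each chart's open point, and overlapping charts share it.
\item Every connected component of an iterated fibre product $U_{i_0}\times_{\mathcal{A}_X}\cdots\times_{\mathcal{A}_X}U_{i_p}$ is \'etale over an Artin cone, so it contains a point with trivial log structure. Hence these fibre products are nonempty and connected.
\item On each of them $\Gamma(-,\mathcal{O}^\ast)=k^\ast$ with identity restriction maps. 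The \v{C}ech complex above is therefore the constant $k^\ast$-complex on a simplex, which is acyclic. In $\mathcal{V}$ this fails precisely because the open point is $B(\mathbb{Z}/2)$.
\end{enumerate}
Surjectivity of $\delta$ then follows. The local weights $m_i\in P_i^{gp}$ of a line bundle $L$ are unique, so they glue to some $m\in\Gamma(\mathcal{A}_X,\overline{M}^{gp})$. Then $L\otimes\mathcal{O}(m)$ is trivial on every chart, hence globally.

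You placed the difficulty on the wrong side. Injectivity of $\delta$ is immediate: $\Gamma(\mathcal{A}_P,M^{gp})=k^\ast$ (the torus invariants of $k^\ast\times P^{gp}$), so $\Gamma(M^{gp})\to\Gamma(\overline{M}^{gp})$ vanishes locally and hence globally.

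For $\Gamma(\mathcal{A}_X,\overline{M}^{gp})=\Gamma(X,\overline{M}^{gp}_X)$, the slogan that points of $\mathcal{A}_X$ correspond to strata of $X$ is inaccurate. $\mathcal{A}_X$ can have points not hit by $X$, e.g.\ the open point of $\mathcal{A}_{\mathbb{N}}$ for the standard log point. Argue instead on atomic neighbourhoods $U$ with $\mathcal{A}_U=a^\ast\sigma$, where both sides are $\operatorname{Hom}(\sigma,\mathbb{Z})$, as the paper does.

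Once repaired, your $\delta$ has one advantage over the paper's route: it is visibly compatible with the connecting map used in Lemma~\ref{lem:KernelChipFiring}.
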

\begin{proof}
    Define a sheaf $\mathsf{PL}$ of conewise linear functions on the small strict \'etale site of $X$ which assigns to a strict \'etale map $U\rightarrow X$ the collection of conewise linear functions on the cone complex associated to \textit{the} Artin fan $\mathcal{A}_U$. Recall that line bundles on $\cA_X$ correspond to global sections of $\mathsf{PL}$ by an argument generalising \cite[Theorem 0.1]{klyachko1989equivariant}. 
    
    To define an isomorphism of sheaves on the small strict \'etale site of $X$, $$\mathsf{PL}\longrightarrow \overline{M}_X^\mathsf{gp},$$ it will suffice to specify the morphism on a basis for the strict \'etale site. Since $X$ is Noetherian, the collection of atomic log schemes (in the sense of \cite[Definition 2.2.2.2]{MolchoWise}) form a basis for this topology \cite[Lemma 2.2.5]{AWbirational}. 

    The Artin fan of an atomic scheme with a log structure $U_i$ is of the form $a^\ast \sigma$ where $\sigma$ is the dual monoid to $\Gamma(U_i,\overline{M}_{U_i})$. By the argument of \cite[Lemma 2.2.5]{AWbirational}, it follows that $\operatorname{Hom}(\sigma,\mathbb{Z}) = \Gamma(U_i,\overline{\cM}_{U_i}^{gp})$ and the result is proved. 
\end{proof}

Lemma \ref{lem:KernelChipFiring}, which is again well known to experts, describes fibers of the canonical morphism. 

\begin{lemma}\label{lem:KernelChipFiring}
    Assume that $T$ is Noetherian, static, and atomic. Let $\mathcal{L}_1$, $\mathcal{L}_2$ be log invertible sheaves on $C_T$, specifying maps $$f_1,f_2 \in \operatorname{Hom}(T,\mathfrak{M}_1(C/S)).$$
    The following are equivalent:
    \begin{enumerate}
        \item The log line bundles $\mathcal{L}_1$ and $\mathcal{L}_2$ associated to the morphisms $f_1$ and $f_2$ are \textit{chip firing equivalent}, by which we mean the following: There exists a log alteration $C_T'\rightarrow C_T$ with Artin fan $\mathcal{A}_{C_T'}$ and conventional line bundles $L_1,L_2$ on $C_T'$ such that $$\mathcal{L}_1 = \arrowup L_1 \quad \mathcal{L}_2 = \arrowup L_2$$
        and such that $L_1\otimes L_2^{-1}$ is pulled back from a line bundle on $\mathcal{A}_{C'_T}$.
        \item The compositions
        $$\varpi\circ f_1 = \varpi\circ f_2\colon T \longrightarrow \mathsf{LogPic}(C/S)$$ coincide.
    \end{enumerate}
\end{lemma}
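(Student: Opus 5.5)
Since $\varpi$ is a group homomorphism (tensor product of log invertible sheaves maps to product of $\mathbb{G}_m^\mathsf{log}$-torsors), I will reduce to the case $\mathcal{L}_2=\LogO{C_T}$. Then the statement is that $\varpi(\mathcal{L})$ is trivial in $\mathsf{LogPic}(C/S)(T)$ if and only if $\mathcal{L}=\arrowup L$ with $L$ pulled back from $\mathcal{A}_{C'_T}$ on some log alteration $C'_T$. First I fix representatives. By Corollary \ref{corr: FP iff up FP} and Proposition \ref{proposition-alteration-to-saturated-morphisms}, I choose a log alteration $C'_T\rightarrow C_T$, after possibly altering $T$ (harmless, since both sides are log \'etale sheaves on $T$), such that $C'_T\rightarrow T'$ is saturated and $\mathcal{L}_i=\arrowup L_i$ for line bundles $L_i$. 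Set $L=L_1\otimes L_2^{-1}$. Since line bundles are static over a static base, Corollary \ref{corollary-DM-stacks-static-pullback} shows that $\arrowup$ is compatible with pullback along further log alterations.

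For (1)$\Rightarrow$(2), I use Lemma \ref{Lem:LBAfanMXgp}: a line bundle on $\mathcal{A}_{C'_T}$ corresponds to a global section $\alpha\in\Gamma(C'_T,\overline{M}^{gp}_{C'_T})$. Its pullback $L$ is the $\mathcal{O}^\ast$-torsor of lifts of $\alpha$ to $M^{gp}_{C'_T}$. Pushing $L$ out along $\mathbb{G}_m\hookrightarrow\mathbb{G}_m^\mathsf{log}$ therefore gives the $\mathbb{G}_m^\mathsf{log}$-torsor of lifts of $\alpha$ to $M^{gp}$. This torsor is trivialised by the tautological section, since $M^{gp}\rightarrow\overline{M}^{gp}$ has kernel $\mathcal{O}^\ast$ and $\mathbb{G}_m^\mathsf{log}$ acts through $M^{gp}$. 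I will make this explicit via the exact sequence $0\rightarrow\mathbb{G}_m\rightarrow\mathbb{G}_m^\mathsf{log}\rightarrow\overline{M}^{gp}\rightarrow 0$: the connecting map $\Gamma(\overline{M}^{gp})\rightarrow H^1(\mathbb{G}_m)$ has image exactly the line bundles pulled back from the Artin fan, and these die in $H^1(\mathbb{G}_m^\mathsf{log})$. Hence $\varpi(f_1)=\varpi(f_2)$ on $C'_T$, and so on $C_T$ by invariance of $\mathbf{LogPic}$ under log alteration.

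For (2)$\Rightarrow$(1), equality in $\mathsf{LogPic}$ means that the pushout of $L$ to a $\mathbb{G}_m^\mathsf{log}$-torsor is trivial after a log \'etale cover of $T$. Such a cover can be refined to a strict \'etale cover followed by a log alteration (Proposition \ref{proposition-alteration-to-saturated-morphisms}). Because $T$ is atomic and strict \'etale covers preserve the required data, I replace $T$ by a log alteration and $C'_T$ accordingly. I must also check that the two notions of triviality agree: Theorem \ref{thm:MolchoWise}, applied using staticity of $T$, says the $\mathbf{LogPic}^\mathsf{pre}$-groupoid equals the sheafified one, so triviality holds on the nose over $C'_T$ after a further alteration. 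Then the long exact sequence above shows that $[L]\in H^1(C'_T,\mathbb{G}_m)$ lies in the image of $\Gamma(C'_T,\overline{M}^{gp})$. By Lemma \ref{Lem:LBAfanMXgp}, $L$ is therefore pulled back from a line bundle on $\mathcal{A}_{C'_T}$.

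The main obstacle is this descent step. I need to show that triviality log \'etale locally on $T$, which a priori also requires altering $C_T$ further in ways not pulled back from $T$, can be realised on a single alteration $C'_T$ over the original atomic $T$. To handle this, I will try to use bounded monodromy (Remark \ref{remark: bounded monodromy}) together with the fact that log alterations of $T$ are monomorphic covers (Proposition \ref{proposition: basic properties of log alterations}). These allow replacing $T$ by an alteration without changing the groupoid. Atomicity then lets me avoid the strict \'etale part, since a strict \'etale cover of an atomic base descends the section $\alpha$ because $\overline{M}^{gp}$ is constant there.
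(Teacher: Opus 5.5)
Your core argument, the long exact sequence of $1\to\mathcal{O}^\ast\to M^{\mathsf{gp}}\to\overline{M}^{\mathsf{gp}}\to 1$ on $C'_T$ combined with Lemma \ref{Lem:LBAfanMXgp}, is exactly the paper's proof, and your direction (1)$\Rightarrow$(2) is fine. The gap is the descent step in (2)$\Rightarrow$(1). You rightly flag it, but your plan cannot close it.

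Since $\mathsf{LogPic}(C/S)$ is the sheaf of \emph{isomorphism classes} of $\mathbf{LogPic}(C/S)$, the equality $\varpi\circ f_1=\varpi\circ f_2$ only says that the two $\mathbb{G}_m^{\mathsf{log}}$-torsors become isomorphic log \'etale locally on $T$. Theorem \ref{thm:MolchoWise} compares $\mathbf{LogPic}^{\mathsf{pre}}$ with its stackification. It says nothing about the further passage to isomorphism classes, so it does not give triviality on the nose over $C'_T$.

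Atomicity does not rescue this either. It constrains $\overline{M}_T$, not $\overline{M}^{\mathsf{gp}}_{C'_T}$, which is where $\alpha$ lives. Even if the local $\alpha_i$ glue to some $\alpha$, the line bundle $L\otimes\delta(\alpha)^{-1}$ (with $\delta$ the connecting map) is only trivial strict \'etale locally on $T$. It is therefore essentially pulled back from $\mathrm{Pic}(T)$, which atomicity does not kill. Moreover, once you replace $T$ by a log alteration, it is no longer atomic.

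In fact, no argument can close this step for the statement read literally. Here is a counterexample:
\begin{itemize}
\item Take $S=\operatorname{Spec}k$, $C$ a smooth projective curve, and $T$ a connected smooth affine curve with $\mathrm{Pic}(T)\neq 0$, all with trivial log structure. Then $T$ is Noetherian, static and atomic.
\item For a nontrivial $N\in\mathrm{Pic}(T)$, set $\mathcal{L}_1=\arrowup\mathrm{pr}_T^\ast N$ and $\mathcal{L}_2=\LogO{C_T}$. These agree Zariski locally on $T$, so (2) holds.
\item Every log alteration of $C_T$ is an isomorphism, $\arrowup$ is an equivalence here, and $\mathcal{A}_{C'_T}=\operatorname{Spec}k$. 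So (1) would force $\mathrm{pr}_T^\ast N\cong\mathcal{O}_{C_T}$, which is false because $C$ has a $k$-point.
\end{itemize}

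The paper's proof implicitly reads (2) as equality of the induced classes in $H^1(C'_T,M^{\mathsf{gp}}_{C'_T})$, that is, at the level of $\mathbf{LogPic}$ rather than $\mathsf{LogPic}$. With that reading, your argument without the descent paragraph is complete. Alternatively, you can conclude (1) only after a log \'etale localisation on $T$. That is all Proposition \ref{prop:LogVBtoLogPic} actually uses, since the claim there is local on $T$.
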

\begin{proof}
    Let $C_T'$ be any logarithmic alteration of $C_T$ upon which it is possible to express both $\mathcal{L}_1,\mathcal{L}_2$ as the up functor applied to a conventional invertible sheaf. Such a log alteration exists by Corollary~\ref{corr: FP iff up FP}. Consider the short exact sequence,
    $$1\longrightarrow \mathcal{O}_{C_T'}^\ast \longrightarrow M_{C_T'}^\mathsf{gp}\longrightarrow \overline{M}_{C_T'}^\mathsf{gp}\longrightarrow 1.$$ The associated long exact sequence in cohomology yields an exact sequence $$H^0(C_T',\overline{M}_{C_T'}^\mathsf{gp})\longrightarrow H^1(C_T',\mathcal{O}_{C_T'}^\ast ) \longrightarrow H^1(C_T', M_{C_T'}^\mathsf{gp}).$$ Thus two $\mathcal{O}_{C_T'}^\ast$-torsors induce the same ${M}_{C_T'}^\mathsf{gp}$-torsor if and only if their difference lies in the image of $H^0({C_T'},\overline{M}_{C_T'}^\mathsf{gp})$. Consequently Lemma \ref{Lem:LBAfanMXgp} asserts that the difference gives rise to a line bundle pulled back from $\mathcal{A}_{C_T'}$.
\end{proof}

\subsection{Universal property of the canonical morphism} In conventional algebraic geometry, the Picard stack admits a good moduli space (the Picard scheme). Whilst a logarithmic version of good moduli spaces in the sense of \cite{Alper2013GoodModuli,Alper2014Adequate,DeligneMumford1969} is not yet available, we prove that $\varpi$ satisfies a version of the universal mapping property satisfied by conventional good moduli spaces. In the vernacular of \cite{VBOlssonFan}, $\operatorname{LogPic}$ is a categorical moduli space for $\textbf{LogPic}$ in an appropriate category.

\begin{proposition}\label{prop:LogVBtoLogPic}
    Let $Y$ be a locally Noetherian\footnote{We call a descending log algebraic space locally Noetherian if its functor of points is the sheafification to the functor of points of a locally Noetherian log algebraic space with log structure.} descending log algebraic space over $S$. Every morphism from $\mathfrak{M}_1(C/S)$ to $Y$ admits a unique factorization through the canonical morphism $$\mathfrak{M}_1(C/S)\xlongrightarrow{\varpi} \mathsf{LogPic}(C/S)\longrightarrow Y.$$ Equivalently, the morphism $\varpi$ is universal for maps from $\mathfrak{M}_1(C/S)$ to locally Noetherian descending log algebraic spaces over $S$.
\end{proposition}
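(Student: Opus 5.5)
We plan to show that $\varpi$ is a categorical quotient: it is an epimorphism of log \'etale sheaves, and any map to $Y$ is constant along its fibres. Uniqueness of the factorization then follows from the epimorphism statement, and existence from constancy on fibres together with the fact that $Y$ is a log \'etale sheaf.

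\textbf{Step 1 (epimorphism).} We first show that $\varpi$ is log \'etale locally surjective. Let $\tau$ be a $T$-point of $\mathsf{LogPic}(C/S)$. After a log \'etale cover of $T$ we may assume $\tau$ comes from an honest $\mathbb{G}_m^\mathsf{log}$-torsor with bounded monodromy on $C_T$. By Remark~\ref{remark: bounded monodromy}\eqref{item: M1 to log Pic is surjective kind of}, after further refinement of $T$ there is a log alteration $C_T'\to C_T$ on which $\tau$ is induced by a $\mathbb{G}_m$-torsor $L$. Then $\arrowup L$ is a $T$-point of $\mathfrak{M}_1(C/S)$ mapping to $\tau$. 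Since morphisms into the sheaf $Y$ are determined log \'etale locally, uniqueness of the factorization follows.

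\textbf{Step 2 (constancy on fibres).} Let $g\colon \mathfrak{M}_1(C/S)\to Y$ be a morphism. We need $g\circ f_1=g\circ f_2$ whenever $\varpi\circ f_1=\varpi\circ f_2$. Both sides are sections of sheaves, so we may check this log \'etale locally on $T$. Static atomic Noetherian $T$ form a basis: use Theorem~\ref{thm-pullback-static} to statify, then restrict strict \'etale locally to atomic neighbourhoods. By Lemma~\ref{lem:KernelChipFiring}, we may then write $\mathcal{L}_2=\mathcal{L}_1\otimes \arrowup P$, where $P$ is pulled back from a line bundle on $\mathcal{A}_{C_T'}$, equivalently by Lemma~\ref{Lem:LBAfanMXgp} from a section $\alpha\in H^0(\overline{M}^{\mathsf{gp}}_{C_T'})$.

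The key construction is an interpolating family. Form the log alteration of $T\times \mathbb{A}^1$ (with its toric log structure) obtained by adjoining $t\alpha$ as a piecewise linear function, i.e.\ using the interval $T_I$ of Proposition~\ref{prop:TheInterval}. On it, the line bundle $\mathcal{L}_1\otimes \arrowup\mathcal{O}(t\alpha)$ restricts to $\mathcal{L}_1$ over one end and to $\mathcal{L}_2$ over the other. This yields a map $T_I\to\mathfrak{M}_1(C/S)$ interpolating $f_1$ and $f_2$. Composing with $g$ gives a map $T_I\to Y$.

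\textbf{Step 3 (the hardest step).} We must show that every map from the interval $T_I$ to a descending log algebraic space $Y$ is constant, i.e.\ factors through $T$. This is where the Noetherian algebraic space hypothesis is used. The plan is to use that $Y$ is, strict \'etale locally, the log \'etale sheafification of a log algebraic space $Y_0$. After a log alteration of $T_I$, the map lifts to a genuine map to $Y_0$, using local finite presentation to pass to a single alteration. The underlying space of $T_I$ consists of copies of $[\mathbb{A}^1/\mathbb{G}_m]$ glued at the closed point, and every map from $[\mathbb{A}^1/\mathbb{G}_m]$ to an algebraic space is constant, since the open point specializes to the closed point and the map is $\mathbb{G}_m$-invariant. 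Alterations of $T_I$ remain connected chains of such pieces, so the map to $Y_0$ is constant on underlying spaces. The log structure data must then be compared via a monodromy/connectedness argument.

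I expect this last comparison, showing that the two endpoint restrictions agree as log points rather than merely on underlying points, to be the main obstacle. I would first attempt it by showing that $\Gamma(\overline{M}^{\mathsf{gp}})$ of the interval agrees with that of $T$ modulo the chip-firing direction $\alpha$, which maps to zero in $Y$.

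Finally, existence follows by descent. The maps $g\circ s$, for local sections $s$ of $\varpi$ obtained in Step 1, agree on overlaps by Step 2, so they glue to a map $\mathsf{LogPic}(C/S)\to Y$.
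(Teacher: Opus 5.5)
Your Steps 1 and 2 and the closing descent argument coincide with the paper's proof. Remark~\ref{remark: bounded monodromy}\eqref{item: M1 to log Pic is surjective kind of} reduces the statement to showing $g\circ f_1=g\circ f_2$, and one localizes to Noetherian, static, atomic $T$. You also need $T$ globally charted with locally free log structure for $T_I$ to be defined; refining the statification by a smooth subdivision keeps $T$ static. Lemma~\ref{lem:KernelChipFiring} then gives chip firing equivalence, and Proposition~\ref{prop:TheInterval}\eqref{prop: the interval property II} produces $h\colon T_I\to\mathfrak{M}_1(C/S)$ with $h\circ\iota_i=f_i$.

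Drop the ``$T\times\mathbb{A}^1$ with $t\alpha$'' description. $T_I=T\times_{a^\ast\mathbb{N}}a^\ast\mathbb{N}^2$ is not an alteration of $T\times\mathbb{A}^1$. The interpolating sheaf comes from a conewise linear function on a subdivision of $\Sigma\times_\sigma\sigma_I$ restricting to $h$ on one copy of $\Sigma$ and to $0$ on the other (Lemma~\ref{lem:Part2KeyProp}).

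The genuine gap is Step~3, which is exactly Proposition~\ref{prop:TheInterval}\eqref{prop:The interval property I}. Citing it would finish the argument as in the paper, but your own sketch of it does not go through.

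Three ingredients are missing.

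\emph{First, the chain decomposition requires altering $T$.} A log alteration $T_I'$ of $T_I$ over a fixed $T$ need not be a chain of copies of $[\mathbb{A}^1/\mathbb{G}_m]$. Even after arranging (Lemma~\ref{lem:SubdivideConeCX}) that $T_I'$ is pulled back from a subdivision $\sigma_I'$ of $\sigma_I$, the map $\sigma_I'\to\sigma$ need not be integral. When $\dim\sigma\geq 2$ a maximal cone can map onto a proper subcone of $\sigma$. The corresponding piece of $T_I'$ is then a quotient over a log modification of $T$ rather than over $T$, so $f^{\mathrm{pre}}$ cannot be expected to factor through $T$ itself. The paper applies toric semistable reduction to $\sigma_I'\to\sigma$, which forces altering $T$ as well. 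This is harmless, since $Y$ is a log \'etale sheaf and log alterations are monomorphic covers. Only then is $T_I'$ a Zariski-glued chain of elementary $T$-Olsson fans in which adjacent pieces share a section, linking $\iota_1$ to $\iota_2$ (Proposition~\ref{prop:IntervalFactors}).

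\emph{Second, pointwise constancy is not enough on underlying spaces.} ``The open point specializes to the closed point'' gives only set-theoretic constancy. Over a positive-dimensional $T$ each piece is the family $[W/\mathbb{G}_m]$ with $W=T\times_{\mathbb{A}^1}\mathbb{A}^2$. What you need is that each elementary Olsson fan $T^\rho\to T$ is a good moduli space. Alper's universal property then factors the underlying morphism to $\underline{Y^{\mathrm{pre}}}$ through $\underline{T}$.

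\emph{Third, your log-structure comparison aims at the wrong object.} The class $\alpha$ lives in $\overline{M}^{\mathrm{gp}}$ of the curve, not of $T_I$, and nothing ``maps to zero in $Y$''. What must be shown is that $f^{-1}M_{Y^{\mathrm{pre}}}\to M_{T^\rho}$ lands in $M_T$. The paper does this by $\mathbb{G}_m$-invariance (Lemma~\ref{lem:ElementaryFanFactors}). With $T$ atomic and charted by $Q$, the chart splits $\Gamma(W,M_W)=Q\oplus_{\mathbb{N}}\mathbb{N}^2\oplus\Gamma(W,\mathcal{O}_W^\times)$. A monomial $(a,b)\in\mathbb{N}^2$ has weight $a-b$ under the antidiagonal $\mathbb{G}_m$. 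Hence the equalizer of $m^\ast$ and $\pi_2^\ast$ is $Q\oplus\Gamma(T,\mathcal{O}_T^\times)=\Gamma(T,M_T)$.
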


We will deduce Proposition \ref{prop:LogVBtoLogPic} from our next result, whose proof consists of Proposition \ref{prop:IntervalFactors} and Lemma~\ref{lem:Part2KeyProp}.

\begin{proposition}\label{prop:TheInterval}
    Let $T$ be an atomic globally charted Noetherian log scheme with locally free log structure. Then there exists an algebraic stack with a log structure $\pi\colon T_I\rightarrow T$, and two sections $\iota_1,\iota_2$ of $\pi$ which satisfy the following properties.
    \begin{enumerate}
        \item\label{prop:The interval property I} Fix any morphism $f\colon T_I\rightarrow Y$ where $Y$ is a locally Noetherian descending log algebraic space over $S$. Then $f\circ \iota_1 = f \circ \iota_2$.
        \item \label{prop: the interval property II} Let $C\rightarrow T$ be a vertical log smooth curve and let $\mathcal{L}_1$, $\mathcal{L}_2$ be chip firing equivalent log invertible sheaves. 
        Write $C_I$ for the base change of $C$ along $T_I\rightarrow T$ and denote the base changes of $\iota_1,\iota_2$ by $s_1,s_2$. There are log alterations of $C_I$ and $T_I$ such that there exists a line bundle $L'$ on $C_I'$ with the property that $$\arrowup s_1^\ast L' = \mathcal{L}_1, \quad \quad \arrowup s_2^\ast L' = \mathcal{L}_2.$$
    \end{enumerate}
\end{proposition}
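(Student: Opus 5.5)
The plan is to build $T_I$ explicitly as a "doubled" Olsson fan over $T$, prove property \eqref{prop:The interval property I} from the descent hypotheses on $Y$, and prove \eqref{prop: the interval property II} by interpolating the chip firing twist with a universal piecewise linear function.

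\textbf{Construction of $T_I$.} Since $T$ is atomic, globally charted, and has locally free log structure, $\overline{M}_T$ is constant with stalk $\mathbb{N}^r$, and $T$ is strict over $\mathcal{A}_{\mathbb{N}^r}$. I would set $T_I = T\times \mathcal{A}_I$. Here $\mathcal{A}_I$ is the Artin fan (an Olsson fan in the sense of \cite{VBOlssonFan}) whose cone complex consists of two copies of $\mathbb{R}_{\geq 0}$ glued along their common vertex. Its underlying stack over a point is two copies of $[\mathbb{A}^1/\mathbb{G}_m]$ identified at the closed point, as described in the introduction. The sections $\iota_1,\iota_2$ are the inclusions of the two open points $\text{pt}\hookrightarrow[\mathbb{A}^1/\mathbb{G}_m]$, taken in the two charts.

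\textbf{Property \eqref{prop:The interval property I}.} First I use the locally Noetherian hypothesis to reduce to the case where $Y$ is, strict étale locally, the log étale sheafification of a locally Noetherian log algebraic space $Y_0$. Next I replace $T_I$ by a log alteration on which $f$ lifts to $Y_0$; this is possible because log alterations are monomorphic log étale covers (Proposition~\ref{proposition: basic properties of log alterations}), so maps to a sheafification factor after alteration. On each chart, a map from the algebraic space $[\mathbb{A}^1/\mathbb{G}_m]\times T$ (or from its alteration) to an algebraic space factors through the coarse quotient, which is $T$. Hence $f\circ\iota_j$ equals $f$ restricted to the closed point for $j=1,2$, and both agree. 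The main obstacle is the following: the alteration may introduce root-stack or blown-up strata, and I must check that these still contract to $T$. I would handle this using that $\mathcal{A}_I$ has only constant functions to algebraic spaces after any alteration, via properness of $\ul{q}$ and $\ul{q}_*\mathcal{O}=\mathcal{O}$ (Proposition~\ref{proposition: root stacks are a thing} together with Proposition~\ref{proposition-on-derived-pullback-vanishing-static}).

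\textbf{Property \eqref{prop: the interval property II}.} By Lemma~\ref{lem:KernelChipFiring}, I choose an alteration $C_T'$ and line bundles $L_1,L_2$ on it such that $L_2=L_1\otimes N$, where $N$ is pulled back from $\mathcal{A}_{C_T'}$. By Lemma~\ref{Lem:LBAfanMXgp}, $N$ corresponds to a piecewise linear function $\phi$ on the tropicalization. On $C_I$ the tropicalization gains an extra interval coordinate $t$. I would then define a piecewise linear function $\Phi$ on the product cone complex that restricts to $0$ along the first ray and to $\phi$ along the second; for instance $\Phi=\phi\cdot t_2$, made genuinely piecewise linear after subdivision. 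I then subdivide, which gives log modifications $C_I'\to C_I$ and $T_I'\to T_I$, so that $\Phi$ becomes linear on cones. Let $\mathcal{O}(\Phi)$ be the corresponding line bundle from $\mathcal{A}_{C_I'}$ and set $L'=\mathrm{pr}^*L_1\otimes\mathcal{O}(\Phi)$. Restricting along $s_1$ and $s_2$ then gives $L_1$ and $L_1\otimes N=L_2$ up to $\Uparrow$, using the compatibility of $\Uparrow$ with pullback along alterations.
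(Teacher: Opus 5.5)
\textbf{There are genuine gaps in all three parts: the construction of $T_I$, the interpolating function for property (2), and the handling of the alteration in property (1).}

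\textbf{The construction of $T_I$, and property (2).} Your $T_I$ does not work as stated, and this breaks property (2).

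The Artin fan of two rays glued at their common vertex is obtained by gluing two copies of $[\mathbb{A}^1/\mathbb{G}_m]$ along the \emph{open} point, since the vertex is the face corresponding to the open orbit. So it is $[\mathbb{P}^1/\mathbb{G}_m]$, and in it the two ``open points'' coincide. Then $\iota_1=\iota_2$, and (2) would force $\mathcal{L}_1=\mathcal{L}_2$.

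Under the other reading, two copies of $[\mathbb{A}^1/\mathbb{G}_m]$ glued at the closed point is not an Artin fan over $\operatorname{Spec} k$. Its open points carry rank-one log structure, so a log section $T\to T\times\mathcal{A}_I$ through them cannot exist over the trivial-log locus of $T$. That locus is typically nonempty: atomic does not make $\overline{M}_T$ constant, as $\mathbb{A}^r$ with its toric log structure shows. This is why the paper builds $T_I$ relatively, as $T_I=T\times_{a^\ast\mathbb{N}}a^\ast\mathbb{N}^2$, with $T\to a^\ast\mathbb{N}$ the sum of the coordinates and $\mathbb{N}^2\to\mathbb{N}$ the summation.

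Tropically the sections are then $x\mapsto(x,(|x|,0))$ and $x\mapsto(x,(0,|x|))$. In $\Sigma\times_\sigma\sigma_I$ they cut out two copies $\Sigma_1,\Sigma_2$ of $\Sigma$ that meet only at the origin. This is exactly where verticality of $C$ is used, a hypothesis your argument never touches. That disjointness is the mechanism for (2): after subdividing to a semistable complex, you extend the function equal to $h$ on $\Sigma_1$ and $0$ on $\Sigma_2$ by choosing integer values on the remaining rays.

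Your $\Phi=\phi\cdot t_2$ cannot replace this. On each cone it is a product of two linear forms, hence homogeneous of degree $2$. It is therefore linear on a subcone only where it vanishes, and no subdivision makes it piecewise linear.

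\textbf{Property (1).} Lifting $f$ to $Y^{\mathrm{pre}}$ on a log alteration $T_I'\to T_I$ matches the paper, but your way of disposing of the alteration does not work, for three reasons.
\begin{itemize}
\item Properness of $\underline{q}$ together with $\underline{q}_*\mathcal{O}=\mathcal{O}$ only controls maps to affine targets. For example, $\mathrm{Bl}_0\mathbb{A}^2\to\mathbb{A}^2$ has both properties, yet the projection $\mathrm{Bl}_0\mathbb{A}^2\to\mathbb{P}^1$ does not factor through $\mathbb{A}^2$.
\item The good moduli space argument only factors the \emph{underlying} map. It says nothing about descending the log structure of $Y^{\mathrm{pre}}$.
\item The sections $\iota_1,\iota_2$ only lift to $T_I'$ after altering $T$ as well.
\end{itemize}

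The paper instead applies toric semistable reduction so that $\sigma_I'\to\sigma$ is integral and saturated of relative dimension one. Then $T_I'$ is a chain of elementary $T$-Olsson fans $T^{\rho_1},\dots,T^{\rho_k}$, glued along sections $s_0,\dots,s_k$ with $s_0$ and $s_k$ induced by $\iota_1$ and $\iota_2$. Each $T^{\rho}=[W/\mathbb{G}_m]\to T$ is a good moduli space, so Alper's universal property gives the factorization on underlying spaces. An explicit computation shows that $\Gamma(T,\mathcal{M}_T)$ is the equalizer of $m^\ast,\pi_2^\ast\colon\Gamma(W,\mathcal{M}_W)\to\Gamma(\mathbb{G}_m\times W,\mathcal{M}_{\mathbb{G}_m\times W})$, so the log structure descends as well. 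Chaining $f\circ s_i=f\circ s_{i+1}$ along the chain then gives $f\circ\iota_1=f\circ\iota_2$.
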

In the vernacular of \cite{VBOlssonFan}, the $T_I$ we construct will be an Olsson fan over $T$. We prove (and further discuss) Proposition \ref{prop:TheInterval} in Section \ref{sec:theInterval}. 

\begin{proof}[Proof of Proposition \ref{prop:LogVBtoLogPic}]
    Let $Y$ be a locally Noetherian descending log algebraic space over $S$. In light of Remark \ref{remark: bounded monodromy} \eqref{item: M1 to log Pic is surjective kind of}, a morphism $$\mathfrak{M}_1(C/S) \longrightarrow Y$$ factors through $\mathsf{LogPic}(C/S)$ precisely when the following property holds for all test schemes $T$. Given any two elements $f_1,f_2$ of $\mathsf{Hom}(T,\mathfrak{M}_1(C/S))$ whose images in $\mathsf{Hom}(T,\mathsf{LogPic}(C/S))$ are the same, then also their images under $\varpi$ in $\mathsf{Hom}(T,Y)$ are the same.

    Since both $Y$ and $\mathsf{LogPic}({C/S})$ are locally Noetherian, it will suffice to test on $T$ valued points for $T$ a Noetherian log $S$-scheme.
    The claim is log \'etale local on $T$.
    As every Noetherian log scheme admits a cover by atomic log schemes, we are free to assume that $T$ is atomic. Since the functor of points of the same spaces satisfy log \'etale descent, and since also every Noetherian scheme with a log structure admits a static log modification, it suffices to handle the case that $T$ is Noetherian with a locally free log structure, and is both globally charted and static, and atomic. 

    Suppose $f_1,f_2$ correspond to log invertible sheaves $\arrowup L_1,\arrowup L_2$ on $C_T$. Passing to a common refinement if necessary, we may assume $L_1,L_2$ are conventional line bundles on the same log alteration $C_T'\rightarrow C_T$. 
    By Lemma \ref{lem:KernelChipFiring}, $L_1\otimes L_2^{-1}$ is a line bundle pulled back from the Artin fan of $C_T'$.
    
    Proposition \ref{prop:TheInterval} \eqref{prop: the interval property II} furnishes a map $h\colon T_I\rightarrow \mathfrak{M}_1(C/S)$ such that $h\circ \iota_1 =f_1$ and $f_2= h\circ \iota_2$.  By Proposition \ref{prop:TheInterval} \eqref{prop:The interval property I}, we deduce that $g\circ f_1=g\circ f_2$ and the result is proved.
\end{proof}

\subsection{The interval over $T$}\label{sec:theInterval} 
We call two $T$-valued points of $\mathfrak{M}_{1}(C/S)$ \textit{$\ell$-equivalent} if they are identified under $\varpi$. This section details how one can detect $\ell$-equivalence using the \textit{interval} $T_I$ over a log scheme $T$, see Proposition \ref{prop:TheInterval}. For $T$ a geometric point, $T_I$ is a replacement for the stack $\Theta = [\mathbb{A}^1/\mathbb{G}_m]$ which is used in the conventional theory to detect $S$-equivalence.


\subsubsection{Defining the interval} Let $T$ be a Noetherian globally charted atomic $k$-scheme with a locally free log structure. Thus $T$ admits a smooth Artin fan $a^\ast \sigma$, which is in particular an Artin cone.
Summing projection maps defines a morphism 
$$T\longrightarrow \mathcal{A}_T = a^\ast \mathbb{N}^k = a^\ast \sigma\longrightarrow a^\ast \mathbb{N}.$$ 
\begin{definition}
    The \textit{interval over $T$} is the $T$-stack $$\pi_I\colon T_I = T\times_{a^\ast \mathbb{N}}a^\ast \mathbb{N}^2\longrightarrow T,$$
    where the second map is the (saturated) morphism induced by summation $$\mathbb{N}^2\longrightarrow \mathbb{N}.$$ Write $\sigma_I = \sigma\times_\mathbb{N}\mathbb{N}^2$.
\end{definition} 

There are two canonical strict sections of the projection map $\pi_I\colon T_I\rightarrow T$ denoted $$\iota_1\colon T \longrightarrow T_I \quad \iota_2\colon T \longrightarrow T_I,$$ and corresponding to coordinate inclusions of $\mathbb{N}$ into $\mathbb{N}^2$. 
Proposition \ref{prop:TheInterval} shows that $T_I$ detects $\ell$-equivalence just as the stack $[\mathbb{A}^1/\mathbb{G}_m]$ detects S-equivalence in the theory of good moduli spaces.

\subsubsection{Chip firing revisited} The connection between the combinatorial operation of chip firing and families over the interval $T_I$ over $T$ is provided by our next result.

\begin{lemma}\label{lem:Part2KeyProp}
    The interval $T_I$ satisfies property \eqref{prop: the interval property II} of Proposition \ref{prop:TheInterval}.
\end{lemma}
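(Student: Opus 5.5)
Property \eqref{prop: the interval property II} says: given chip firing equivalent $\mathcal{L}_1,\mathcal{L}_2$, find log alterations $C_I'\to C_I$ and $T_I'\to T_I$ and a line bundle $L'$ on $C_I'$ with $\arrowup s_j^\ast L' = \mathcal{L}_j$ for $j=1,2$. The plan is to interpolate on the Artin fan level, using the extra coordinate of $\sigma_I=\sigma\times_{\mathbb{N}}\mathbb{N}^2$.

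First, by Lemma \ref{lem:KernelChipFiring} (or directly from the definition of chip firing equivalence), choose a log alteration $C_T'\to C_T$ and line bundles $L_1,L_2$ on $C_T'$ with $\mathcal{L}_j=\arrowup L_j$ and $L_1\otimes L_2^{-1}\cong \ul{\alpha}^\ast P$, where $\alpha\colon C_T'\to \mathcal{A}_{C_T'}$ is the strict map to the Artin fan. By Lemma \ref{Lem:LBAfanMXgp}, $P$ corresponds to a global section $\phi\in\Gamma(C_T',\overline{M}^{gp}_{C_T'})$, i.e. a conewise linear function on the tropicalization $\Sigma'$ of $C_T'$, which is a family of subdivided tropical curves over $\sigma$.

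Next, construct the interpolating function. The tropicalization of $C_I=C\times_T T_I$ is $\Sigma\times_\sigma\sigma_I$, and $\sigma_I\to\sigma$ comes with the two coordinate functions $u_1,u_2\colon\sigma_I\to\mathbb{R}_{\ge0}$ (pulled back from $\mathbb{N}^2$) with $u_1+u_2$ equal to the pulled back summation. The face $\iota_j$ corresponds to $u_{3-j}=0$ (up to labelling). Define the piecewise linear function $\Phi$ on $\Sigma'\times_\sigma\sigma_I$ by interpolating between $0$ and $\phi$, for instance $\Phi=\min(\phi,\,\phi+ c\,u_2)$-type expressions, or more simply a function which equals $\phi$ on the face $u_2=0$ and $0$ on the face $u_1=0$, linear on cones after subdivision. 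Since such a $\Phi$ is only conewise linear after refining $\Sigma'\times_\sigma\sigma_I$ (and possibly requires integral rescaling, hence root stacks), pass to a subdivision and a root, giving log alterations $C_I'\to C_I$ and $T_I'\to T_I$ such that $\Phi\in\Gamma(C_I',\overline{M}^{gp}_{C_I'})$. Then $\Phi$ determines a line bundle $\mathcal{O}(\Phi)$ on $\mathcal{A}_{C_I'}$ by Lemma \ref{Lem:LBAfanMXgp}; set $L'=p^\ast L_2\otimes \mathcal{O}(\Phi)$ where $p\colon C_I'\to C_T'$ is the induced projection (after further alteration so that $p$ exists).

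Finally, check the restrictions: $s_1^\ast L'=L_2\otimes\mathcal{O}(\Phi|_{\text{face}_1})$ and $s_2^\ast L'=L_2\otimes\mathcal{O}(\Phi|_{\text{face}_2})$, which are $L_2\otimes\alpha^\ast P=L_1$ and $L_2$ (up to relabelling), computed on the pullbacks of $C_I'$ along the sections, which are log alterations of $C_T'$; since $\arrowup$ is compatible with pullback along log alterations, applying $\arrowup$ recovers $\mathcal{L}_1,\mathcal{L}_2$. The main obstacle is the second step: making $\Phi$ genuinely conewise linear and integral with values in $\overline{M}^{gp}$ on an honest log alteration, and ensuring the sections $s_j$ lift to $C_I'$ so the restrictions land on alterations of $C_T'$; I would handle this by first subdividing $\sigma_I$ along $u_1=u_2$-type walls dictated by the slopes of $\phi$ and then taking the common refinement of $\Sigma'\times_\sigma\sigma_I$, using that the faces $u_1=0$, $u_2=0$ are untouched by the subdivision so the sections persist.
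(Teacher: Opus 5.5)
Your overall plan is the paper's: write $\mathcal{L}_1=\mathcal{L}_2\otimes\Uparrow\! L$ with $L$ coming from a conewise linear function $\phi$ on the tropicalization $\Sigma'$ of a log alteration $C_T'$. Then build a conewise linear $\Phi$ on a subdivision of $\Xi=\Sigma'\times_\sigma\sigma_I$ that equals $\phi$ on one copy of $\Sigma'$ and $0$ on the other, and tensor $\mathcal{O}(\Phi)$ with the pullback of $L_2$.

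\textbf{The gap.} The one step with content, the existence of $\Phi$, is asserted rather than proved, and the input that makes it possible is missing.

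\begin{itemize}
\item The two boundary conditions can only hold simultaneously if the faces $\Xi_1=\{u_2=0\}$ and $\Xi_2=\{u_1=0\}$ meet only where $\phi=0$. They meet in the fibre of $\Sigma'\to\sigma$ over the origin. That fibre is the cone point precisely because $C\to T$ is vertical (no legs).
\item You never use verticality. Without it, for instance with markings, the two faces share the legs and $\phi$ can have nonzero slope along them, so no such $\Phi$ exists.
\item Your concrete candidate fails: $\min(\phi,\phi+cu_2)=\phi+\min(0,cu_2)$ restricts on $\{u_1=0\}$ to $\phi$ or to $\phi+cu_2$. That is $0$ only in the degenerate case where $\phi$ is a multiple of the pulled-back summation function.
\item Subdividing only the base $\sigma_I$ along walls does not help in general. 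Crossing such walls only changes the function by terms pulled back from the base, while the break locus of a genuine interpolant depends on the fibre coordinate in $\Sigma'$. The subdivision must be of $\Xi$ itself.
\item The obstacle is not integrality. On a unimodular subdivision, integer values on rays already give integral functions, so no roots are needed.
\end{itemize}

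\textbf{How the paper closes it.} Using $\Xi_1\cap\Xi_2=\{0\}$, subdivide $\Xi$ (the paper passes to a semistable subdivision) so that integral conewise linear functions correspond to arbitrary integer assignments on rays. Prescribe the values of $\phi$ on the rays of the subdivided $\Xi_1$, $0$ on the rays of $\Xi_2$ (no ray lies in both), and anything on the remaining rays. The restriction to $\Xi_1$ equals $\phi$, because both are conewise linear on the same simplicial subdivision with the same ray values.

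\textbf{Alternative repair.} If you prefer a $\min$-type formula, take
\[
\Phi=\min(\phi^+,Au_1)-\min(\phi^-,Au_1),\qquad \phi^{\pm}=\max(\pm\phi,0),\quad A\gg 0 .
\]
This works once $|\phi|\le A\,(u_1+u_2)$ on $\Sigma'$. Such an $A$ exists because $u_1+u_2$, the pulled-back summation function, is positive away from the cone point, which is again verticality.

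With either fix, your remaining steps go through: forming $L'=p^\ast L_2\otimes\mathcal{O}(\Phi)$ after refining so that $p$ exists, and restricting along the strict sections to get log alterations of $C_T'$.
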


Since $T$ is globally charted, its log structure is monodromy free and for any morphism $X\rightarrow T$ there is a commutative diagram of Artin fans,
    $$
    \begin{tikzcd}
X \arrow[r] \arrow[d] & T \arrow[d] \\
\mathcal{A}_X \arrow[r] & \mathcal{A}_T.
\end{tikzcd}
    $$

\begin{proof}[Proof of Lemma \ref{lem:Part2KeyProp}]
    Since $\mathcal{L}_1$ and $\mathcal{L}_2$ are chip firing equivalent we may make the following assumption after replacing $C$ by a log alteration $C'\rightarrow C$ whose Artin fan $\mathcal{A}_{C'}$ is smooth and of the form $a^\ast \Sigma$ with $\Sigma$ a cone complex.
    There exists a conventional line bundle $L$ pulled back from a line bundle on $\mathcal{A}_{C'} = a^\ast \Sigma$ associated to a piecewise linear function $h$ on $\Sigma$ 
    such that $\mathcal{L}_1 = \mathcal{L}_2 \otimes\arrowup L$.

    Note that $C'\times_T T_I \cong C'\times_C(C\times_{T} T_I)$ is both a log alteration of $C\times_T T_I$, and admits a strict map to $a^\ast(\Sigma \times_\sigma \sigma_I)$. Since $C$ is vertical, the cone complex $\Sigma \times_{\sigma}\sigma_I$ contains two copies $\Sigma_1,\Sigma_2$ of $\Sigma$ as the preimage to the two rays of $\mathbb{N}^2$. These copies are disjoint except at the origin. It follows that it is possible to find a conewise linear function $k$ on a subdivision of $\Sigma \times_{\sigma}\sigma_I$ whose restriction to $\Sigma_1$ is $h$ and whose restriction to $\Sigma_2$ is the zero function. Indeed, one can subdivide $\Sigma \times_{\sigma}\sigma_I$ to form a semistable cone complex, and conewise linear functions on semistable cone complexes biject with assignments of an integer to each ray. Write $C_I''$ for the resulting log alteration of $C\times_T T_I$. Let $E$ be the pullback of the line bundle corresponding to the piecewise linear function $k$ to $C_I$.

    The desired log invertible sheaf $\mathcal{L}$ is obtained by tensoring $\Uparrow\!\! E$ with the pullback of $\mathcal{L}_2$ along the map $C_I\rightarrow C$.
\end{proof}

\subsubsection{The tropical meaning of $\ell$-equivalence}\label{sec:TropicalVBmeaning}
We offer a tropical perspective on $\ell$-equivalence. More precisely, assume that $S$ admits a global chart and an associated Artin fan $a^\ast \kappa$. In \cite{Kennedy-Hunt-Poiret-Song:Log-Vector-bundle} a commutative diagram of the form 
$$
\begin{tikzcd}
\mathfrak{M}_1(C/S) \arrow[r, "\varpi"] \arrow[d] & \operatorname{LogPic}(C/S) \arrow[d] \\
a^\ast\left( \mathsf{PL}(\operatorname{Trop}(C)/\kappa)\right) \arrow[r,"\varpi^{\operatorname{trop}}"] & a^\ast \left(\operatorname{TroPic}(\operatorname{Trop}(C)/\kappa)\right)
\end{tikzcd}
$$
is constructed. Here, the space $\operatorname{TroPic}(\operatorname{Trop}(C))/\kappa$ is the tropical Picard group in the sense of \cite{MolchoWise}, and $\mathsf{PL}(\operatorname{Trop}(C)/\kappa)$ can be thought of as either the moduli space of piecewise linear functions, or the moduli space of (decorated) tropical supports: the undecorated space of tropical supports was introduced in \cite{kennedy2023logarithmic}. The situation for higher rank vector bundles is more subtle.

The morphism $\varpi^{\operatorname{trop}}$ suggests a perspective on two differing approaches to higher rank tropical vector bundles which have been explored in recent years. 
On the one hand, a notion of tropical vector bundle, closely related to the theory of matroids but unrelated to chip firing, has recently been defined \cite{KavehManonTropVB,KhanMaclagan2026} motivated by connections between buildings and toric vector bundles \cite{KavehManon2022}. We expect that the rank one case of the latter construction is related to piecewise linear functions on $\operatorname{Trop}(C)$, and thus the tropicalization of $\mathfrak{M}_1$. 

On the other hand, $\operatorname{TroPic}(\operatorname{Trop}(C)/\kappa)$ is a moduli space reflecting the combinatorial structure of chips on a graph up to chip firing, see \cite{Baker2008,BakerNorine2007,GathmannKerber2008,MikhalkinZharkov2008}. Authors have suggested studying (a notion of) vector bundles on a graph up to (an appropriately generalized) notion of chip firing \cite{Allermann2012,GrossKaurWernerTropical,GrossZakharov2022}. More generally, $G$-torsors for $G$ a reductive group can be studied in this context \cite{GrossKaurWernerTropical}.

The results of \cite{GrossKaurWernerTropical,GrossZakharov2022} support our interpretation of chip firing as a version of S-equivalence in the following sense. Recall that the moduli stack of semi-homogeneous vector bundles on an abelian variety admits a good moduli space, whereas the moduli stack of vector bundles on a scheme typically does not admit a good moduli space (even though the locus picked out by a stability condition may admit a good moduli space). When the moduli stack of vector bundles does not admit a good moduli space, the connection between algebraic geometry and the geometry of higher rank vector bundles up to chip firing is less clear. 

Finally, we imagine a map from $\sigma_I$ to some tropical moduli problem $\mathcal{T}$ is a kind of homotopy between two maps from $\sigma$ to $\mathcal{T}$.

\subsubsection{Elementary $T$-Olsson fans}\label{sec:elemolsson}
It remains to prove that $T_I$ has the first property in Proposition~\ref{prop:TheInterval}. The proof works log \'etale locally on $T_I$: the log \'etale local model for the map $T_I\rightarrow T$ is captured by the definition of an elementary $T$-Olsson fan.

Let $T$ be a globally charted log $k$-scheme which admits an Artin fan $a^*\sigma \cong a^\ast\mathbb{N}^k$.
Choose a (coordinate) ray $\rho$ in the cone $\sigma\cong \mathbb{N}^k$.
There is a strict map $$T\longrightarrow a^\ast \sigma.$$
We write $f_\rho\colon a^\ast \sigma\longrightarrow a^\ast \mathbb{N}$ for the coordinate projection to the ray $\rho$.

\begin{definition}
    The \textit{elementary $T$-Olsson fan at $\rho$} is the fiber product $$T^\rho = T\times_{a^\ast \mathbb{N}} a^\ast \mathbb{N}^2$$ where the morphism from $T$ to $a^\ast \mathbb{N}$ is obtained by composing $$T\longrightarrow a^\ast \sigma \xrightarrow{f_{\rho}} a^\ast \mathbb{N}$$
    and the morphism $\mathbb{N}^2\rightarrow \mathbb{N}$ is induced by the summation map.
\end{definition}
\begin{remark}
    Let $V$ be the quotient stack of $V(XY)$ in $\mathbb{A}^2$ by the diagonal torus action, considered as a log scheme with log structure pulled back from $\mathbb{A}^2$. If $T$ is a scheme with a divisorial log structure from a smooth divisor $D$, then $T_I\rightarrow T$ is an isomorphism away from $D$. Over $D$, the stack $T_I$ is a $V$-bundle.
\end{remark}
The morphism $T^\rho\rightarrow T$ admits two canonical sections, denoted $\iota_a,\iota_b$, which are induced by the coordinate sections to the summation map $\mathbb{N}^2\longrightarrow \mathbb{N}.$

\begin{lemma}\label{lem:ElementaryFanFactors}
    Given $Y^\text{pre}$ an algebraic space with log structure, $T$ a noetherian scheme with locally free and globally charted log structure, and a solid commutative diagram $$\begin{tikzcd}
T^\rho \arrow[r] \arrow[rr, bend left=20,"f"] & T \arrow[r, dashed] & Y^\text{pre}
\end{tikzcd}.
$$
Then there is a dotted arrow as above making the whole diagram commute. It follows that the dotted arrow agrees with $f\circ \iota_a$ and $f\circ \iota_b$, which are therefore equal. 
\end{lemma}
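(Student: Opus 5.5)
The plan is to show that $f\colon T^\rho\to Y^{\text{pre}}$ is constant along the fibres of $\pi\colon T^\rho\to T$, by exploiting that $\pi$ is a ``categorical quotient'' for maps into algebraic spaces with log structure. First, the underlying map. Write $\mathbb{N}^2\to\mathbb{N}$ for summation. The underlying stack of $a^\ast\mathbb{N}^2\times_{a^\ast\mathbb{N}}T$ is, strict étale locally on $T$, of the form $[\operatorname{Spec}(A[x,y]/(xy-t))/\mathbb{G}_m]$. Here $t$ is the coordinate attached to $\rho$ in the chart, and $\mathbb{G}_m$ acts with weights $(1,-1)$. The invariants are $A[x,y]/(xy-t)$ in degree zero, namely $A$. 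Hence $\underline{\pi}$ is a good moduli space morphism, and indeed $\underline{\pi}_\ast\mathcal{O}=\mathcal{O}_{\underline T}$ and $\underline\pi$ is cohomologically affine; I would verify this from the weight decomposition directly. By Alper's universal property \cite{Alper2013GoodModuli}, the underlying morphism $\underline f$ factors uniquely as $\underline g\circ\underline\pi$ for some $\underline g\colon\underline T\to\underline{Y}^{\text{pre}}$, since $\underline Y^{\text{pre}}$ is an algebraic space and good moduli spaces are universal for maps to algebraic spaces.

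Second, the log structure. I need a morphism $g^\flat\colon \underline g^{-1}M_{Y^{\text{pre}}}\to M_T$ whose pullback along $\pi$ gives $f^\flat$. The map $f^\flat\colon \underline f^{-1}M_{Y}\to M_{T^\rho}$ together with $\underline f=\underline g\underline\pi$ gives a map $\underline\pi^{-1}\underline g^{-1}M_Y\to M_{T^\rho}$. By adjunction this is a map $\underline g^{-1}M_Y\to\underline\pi_\ast M_{T^\rho}$. So the key claim is $\underline\pi_\ast M_{T^\rho}=M_T$, compatibly with $\underline\pi_\ast\mathcal{O}=\mathcal O_T$ and the structure maps $\alpha$. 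I would check this in two pieces. On units it follows from $\underline\pi_\ast\mathcal O^\ast=\mathcal O_T^\ast$. On characteristic monoids, the global sections of $\overline M$ over the fibre of $\pi$ above a point with chart $\mathbb{N}^k$ are computed from the cone complex $\sigma\times_{\mathbb{N}}\mathbb{N}^2$. This complex is two copies of $\sigma$ glued along the face $\rho^\perp$. A section is then a compatible pair of elements, which the gluing along the closed orbit forces to coincide with an element of $\mathbb{N}^k$. One then uses the five lemma on $0\to\mathcal O^\ast\to M\to\overline M\to0$, where $R^1\underline\pi_\ast\mathcal O^\ast$ is controlled by cohomological affineness, to conclude.

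Third, uniqueness and the final claim. Since $\pi\circ\iota_a=\pi\circ\iota_b=\mathrm{id}_T$, any factorization $g$ satisfies $g=g\circ\pi\circ\iota_a=f\circ\iota_a$, and likewise $g=f\circ\iota_b$. This gives the equality of $f\circ\iota_a$ and $f\circ\iota_b$.

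The main obstacle is the second step, that $\underline\pi_\ast M_{T^\rho}=M_T$. Concretely, on the closed fibre the characteristic monoid jumps from rank $k$ to rank $k+1$, so I must show that no extra global sections survive. I would argue this stalkwise using the explicit chart: the extra generators $x$ and $y$ have nonzero $\mathbb{G}_m$-weight and so do not descend. I would also check that the units-versus-characteristic extension does not introduce new invariant sections.
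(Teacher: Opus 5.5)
Your overall architecture matches the paper's. Alper's universal property for the good moduli space $\underline{\pi}\colon T^\rho\to T$ gives the underlying map. The log structure descends because $\underline{\pi}_*M_{T^\rho}=M_T$; this is the sheafified form of the paper's claim that $\Gamma(T,\mathcal{M}_T)$ is the equalizer of the two pullbacks $\Gamma(W,\mathcal{M}_W)\to\Gamma(\mathbb{G}_m\times W,\mathcal{M})$. Uniqueness then yields $f\circ\iota_a=f\circ\iota_b$.

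The gap is in your proof of that key claim: the characteristic monoid does \emph{not} descend, so $\underline{\pi}_*\overline{M}_{T^\rho}\neq\overline{M}_T$. Take an atomic neighbourhood $U=\operatorname{Spec}A$ of a point where all $k$ chart coordinates vanish. Then $W_U=\operatorname{Spec}A[x,y]/(xy-t)$ has chart $\mathbb{N}^{k-1}\oplus\mathbb{N}^2$. The classes $\bar x,\bar y$ are $\mathbb{G}_m$-invariant sections of $\overline{M}$, because the action only rescales $x$ and $y$ by units. Hence $\Gamma(\pi^{-1}U,\overline{M})\supseteq\mathbb{N}^{k-1}\oplus\mathbb{N}^2$, which is strictly larger than $\Gamma(U,\overline{M}_T)=\mathbb{N}^k$; in particular $\bar x$ is not in the image.

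Your cone picture is also off. $\sigma\times_{\mathbb{N}}\mathbb{N}^2$ is a single $(k+1)$-dimensional cone. The two copies of $\sigma$ are its facets over $\iota_a,\iota_b$, meeting along $\rho^\perp$. Even on their union the two $\rho$-slopes are independent, so compatible pairs have rank $k+1$, not $k$.

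Likewise, cohomological affineness only concerns quasi-coherent sheaves and says nothing about $R^1\underline{\pi}_*\mathcal{O}^*$. That sheaf is nonzero: it contains the line bundles attached to characters of $\mathbb{G}_m$, which are nontrivial on the closed orbit $B\mathbb{G}_m$ of every fibre-neighbourhood.

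Your closing remark about weights is the right mechanism. But weights are invisible in $\overline{M}$ and appear only in $M$, so they must enter through the connecting map. From $0\to\mathcal{O}^*\to M\to\overline{M}\to 0$, the sheaf $\underline{\pi}_*M_{T^\rho}$ is an $\mathcal{O}_T^*$-torsor over $\ker\bigl(\delta\colon\underline{\pi}_*\overline{M}\to R^1\underline{\pi}_*\mathcal{O}^*\bigr)$. The map $\delta$ sends the class of $e^wx^ay^b$ to its torsor of lifts. An invariant lift would have the form $e^wx^ay^bu$ with $u$ a unit of weight $b-a$, since the action multiplies $x^ay^b$ by $\lambda^{a-b}$. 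Such a unit would be homogeneous, hence a multiple of a power of $x$ or $y$, and these vanish on the closed orbit. So no such unit exists unless $a=b$.

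Hence $\ker\delta=\overline{M}_T$, using $\bar x+\bar y=\bar t$, and only then does your five-lemma argument give $\underline{\pi}_*M_{T^\rho}=M_T$. The paper performs exactly this computation on global sections. It splits $\Gamma(W,\mathcal{M}_W)=Q\oplus_{\mathbb{N}}\mathbb{N}^2\oplus\Gamma(W,\mathcal{O}_W^\times)$ via the chart and uses the valuation in the $\mathbb{G}_m$-coordinate to force $a=b$ for invariant sections.
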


\begin{proof}
    Note that $T^\rho$ is an Olsson cone over $T$, and thus the map $T^\rho \rightarrow T$ is a good moduli space \cite[Proposition 5.6]{VBOlssonFan}.
    The existence of the dotted morphism on the level of underlying algebraic spaces is now \cite[Theorem 6.6]{Alper2013GoodModuli}. 

    It now remains to verify that there is also a corresponding morphism of log structures, which (by uniqueness) is a strict \'etale local question in both $Y^\text{pre}$ and $T$. Hence, we may assume that $T = \operatorname{Spec}(Q\rightarrow A)$ and $Y^\text{pre} = \operatorname{Spec}(P \rightarrow B)$ are affine and globally charted. By \cite[Lemma 2.2.5]{AWbirational} we may further assume that $Y^\text{pre}$ and $T$ are atomic with $Q = \overline{\cM}_{T,t_0}$ for some $t_0\in T$. By op.cit., this implies $Q = \Gamma(T,\overline{\cM}_T)$.

    Write $W$ for the affine scheme with a log structure $$W\colon = T\times_{a^\ast\mathbb{N}}[\mathbb{A}^2/\mathbb{G}_m] \cong T\times_{\mathbb{A}^1}\mathbb{A}^2\cong \mathsf{Spec}(A\otimes_{k[t]}k[u,v]),$$ where the morphism $\mathbb{A}^2\rightarrow \mathbb{A}^1$ sends $(u,v)$ to $uv$ and in the first expression $\mathbb{G}_m$ acts on $\mathbb{A}^2$ via the character $(1,0)$. 
    
    It is possible to present $T^\rho$ as a global quotient stack
    $$T^\rho \cong [W/\mathbb{G}_m],$$ where $\mathbb{G}_m$ acts via the cocharacter $(1,-1)$ in $\mathbb{A}^2$. 
    
    We now claim that it suffices to show that
    \begin{equation}\label{diagram: equalizer}
    \begin{tikzcd}
        \Gamma(T,\cM_T)\rar &\Gamma(W,\cM_W)\rar["m^*"'
        , shift right] \rar["\pi_2^*", shift left] &\Gamma(\mathbb{G}_m\times W,\cM_{\mathbb{G}_m\times W})
    \end{tikzcd}
    \end{equation}
    is an equalizer diagram. Indeed, we already know that $W\rightarrow Y^{\text{pre}}$ factors through $T$ on the level of underlying algebraic spaces. To show that it does so on the level of log structures, it suffices to show that the resulting $P\rightarrow \Gamma(W,\cM_W)$ factors through $\Gamma(T,\cM_T)$ in a way that is compatible with the morphism of rings $B\rightarrow A$. But since $W\rightarrow T$ is $\mathbb{G}_m$-invariant, it follows that $P$ maps into the equalizer of $m^*$ and $\pi_2^*$ and the compatibility with $B\rightarrow A$ follows from the fact that $A\rightarrow A\otimes_{k[t]}k[u,v]$ is injective.

    To show that \eqref{diagram: equalizer} is an equalizer, consider the left exact sequence\footnote{By this we mean that any two sections of the monoid $\Gamma(T,\cM_T)$ have the same image in $\Gamma(T,\overline{\cM}_T)$ if and only if they differ by a unique element in $\Gamma(T,\cO_T^\times)$.} $0\rightarrow \Gamma(T,\cO_T^\times)\rightarrow \Gamma(T,\cM_T)\rightarrow \Gamma(T,\overline{\cM}_T) = Q$. Since the chart $Q\rightarrow \Gamma(T,\cM_T)$ provides a splitting for the right hand map, we get $\Gamma(T,\cM_T) = Q\oplus \Gamma(T,\cO^\times_T)$ and by the same argument $\Gamma(W,\cM_W) = Q\oplus_{\mathbb{N}}\mathbb{N}^2\oplus \Gamma(W,\cO^\times_W)$ and $\Gamma(\mathbb{G}_m\times W,\cM_{\mathbb{G}_m\times W}) = Q\oplus_{\mathbb{N}}\mathbb{N}^2\oplus \Gamma(\mathbb{G}_m\times W,\cO^\times_{\mathbb{G}_m\times W})$. Note that there is a morphism $\phi\colon \Gamma(\mathbb{G}_m\times W,\cO^\times_{\mathbb{G}_m\times W}) = ((A\otimes_{k[t]}k[u,v])[T^\pm])^\times\rightarrow \mathbb{Z}$ given by the $T$-valuation and $\pi_2^*$ maps into the kernel of $\phi$, while $\phi\circ m^*$ restricted to $\mathbb{N}^2$ is given by $(a,b)\rightarrow a-b$. As a result, we have $\text{Eq}(m^*,\pi_2^*) \subset Q\oplus \Gamma(\mathbb{G}_m\times W,\cO^\times_{\mathbb{G}_m\times W})$. Since the diagram \eqref{diagram: equalizer} is an equalizer when we replace $\cM$ by $\cO^\times$, we further get $\text{Eq}(m^*,\pi_2^*)\subset Q\oplus \Gamma(T,\cO^\times_T) = \Gamma(T,\cM_T)$ as desired.
 \end{proof}

\subsubsection{Key property of the interval}
Given a morphism of algebraic spaces with log structure $g\colon U\rightarrow V$, we say log alterations $U',V'$ of $U,V$ respectively are compatible with $g$ if $U'$ is a log alteration of $U\times_{V} V'$. We will abuse notation and identify $g$ with the induced morphism $U'\rightarrow V'$.
\begin{proposition}\label{prop:IntervalFactors}
Let $Y$ be a Noetherian descending log algebraic space, and $T$ a globally charted atomic Noetherian log scheme. For any solid commutative diagram $$\begin{tikzcd}
T_I \arrow[r] \arrow[rr, bend left=20,"f"] & T \arrow[r, dashed] & Y
\end{tikzcd}
$$
there is a dotted arrow as above making the entire diagram commute. It follows that the dotted arrow agrees with $f\circ \iota_1$ and $f\circ \iota_2$, which are therefore equal. 
\end{proposition}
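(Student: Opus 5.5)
Since $Y$ is a log \'etale sheaf that is, possibly after a strict \'etale cover of $S$, the log \'etale sheafification of a locally Noetherian algebraic space with log structure $Y^{\text{pre}}$, we will reduce the factorization problem for $f\colon T_I\rightarrow Y$ to Lemma \ref{lem:ElementaryFanFactors}. We will not work with $T_I$ directly. Instead we compare it log \'etale locally with elementary Olsson fans and then descend.

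\textbf{Step 1: lift $f$ to $Y^{\text{pre}}$.} Because $T_I$ is quasi-compact and $Y$ is a sheafification, the morphism $f$ is represented, log \'etale locally on $T_I$, by morphisms to $Y^{\text{pre}}$. By Proposition \ref{proposition-alteration-to-saturated-morphisms} we may refine any such log \'etale cover to a strict \'etale cover of a log alteration $T_I'\rightarrow T_I$. We then choose a log alteration $T'\rightarrow T$ compatible with $T_I\rightarrow T$ in the sense just defined. Since $f$ lands in a sheaf and log alterations are monomorphic covers (Proposition \ref{proposition: basic properties of log alterations}), we may replace $T$ by a strict \'etale cover of $T'$, and we may assume the alteration is a subdivision of $\sigma$ into smooth cones. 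Under this subdivision, the fibre product defining $T_I$ becomes, locally on the new base $T'$, a copy of the interval over $T'$. Here the function "sum of coordinates" is replaced by a linear function on the smooth cone of $T'$.

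\textbf{Step 2: make the local model elementary.} We further subdivide $\Sigma\times_\sigma\sigma_I$, and then pass to a root stack to kill multiplicities. After this, every maximal cone looks like the elementary model $\sigma'\times_{\mathbb{N}}\mathbb{N}^2$, where $\sigma'\rightarrow\mathbb{N}$ is a coordinate projection $f_\rho$. Hence, strict \'etale locally on the alteration $T'$, the alteration of $T_I$ is a union of elementary $T'$-Olsson fans $(T')^\rho$, glued along the pieces lying over the rays where the sum map vanishes. Over cones of $\sigma'$ not meeting the locus where the summation is nontrivial, $T_I\rightarrow T$ is an isomorphism, so those pieces pose no problem. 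After refining the cover once more so that $f$ lifts to $Y^{\text{pre}}$ on each $(T')^\rho$, Lemma \ref{lem:ElementaryFanFactors} gives a factorization of $f|_{(T')^\rho}$ through $T'$.

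\textbf{Step 3: glue and descend.} The local factorizations are unique: $(T')^\rho\rightarrow T'$ has the two sections $\iota_a,\iota_b$, so each dotted arrow equals $f\circ\iota_a$. Uniqueness makes them agree on overlaps, and they therefore glue to a morphism $T'\rightarrow Y$. Since $Y$ is a log \'etale sheaf and $T'\rightarrow T$ is a monomorphic log \'etale cover, this descends to $T\rightarrow Y$. The final equality $f\circ\iota_1=f\circ\iota_2$ follows by precomposing the factorization with the sections.

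\textbf{Main obstacle.} The hardest point is Step 2: showing that, after a suitable log alteration of $T$ compatible with $T_I$, the interval is log \'etale locally isomorphic to elementary Olsson fans, with the sections $\iota_1,\iota_2$ corresponding to $\iota_a,\iota_b$. The summation map $\sigma\rightarrow\mathbb{N}$ is not a coordinate projection, so it must be made one on each cone, for instance by a barycentric-type subdivision of $\sigma$ in which the sum is a coordinate on each smooth cone. The first thing to check is that this subdivision pulls $T_I$ back, up to further alteration, to the correct fibre products. A secondary issue is that the factorizations through $Y^{\text{pre}}$ a priori depend on the choice of lift; uniqueness via the sections handles this.
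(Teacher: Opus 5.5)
\textbf{Gap in Step 3.} Each elementary piece $(T')^{\rho_i}\subset T_I'$ surjects onto the base $T'$, since it has sections. So Lemma \ref{lem:ElementaryFanFactors} gives, for every $i$, a map $g_i\colon T'\to Y$ with $g_i\circ\pi=f|_{(T')^{\rho_i}}$. These are not restrictions of one map to different opens of $T'$, so there is nothing to glue. They are competing maps on the same base, and what must be shown is $g_i=g_j$.

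Uniqueness of each factorization says nothing about this. If $(T')^{\rho_i}$ contains the image of $\iota_1$ and $(T')^{\rho_j}$ contains that of $\iota_2$, then $g_i=g_j$ is exactly the claim $f\circ\iota_1=f\circ\iota_2$ you are trying to prove. Agreement on an overlap only forces $g_i=g_j$ if that overlap maps onto $T'$. All pieces do meet over the locus where the log structure of $T'$ is trivial, but agreement on that dense open does not imply equality of maps to $Y$ in general: nothing makes $Y^{\text{pre}}$ separated or $T'$ reduced.

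\textbf{Missing idea: the pieces form a chain linked by shared sections.} This is the core of the paper's proof. After semistable reduction, $\sigma_I'\to\sigma'$ is integral of relative dimension one. The segment $\ell$ from the $\iota_1$-facet to the $\iota_2$-facet crosses the maximal cones in an order $\sigma_1,\dots,\sigma_k$. Consecutive cones share a facet that maps isomorphically onto $\sigma'$. The open stratum of $T_I'$ corresponding to that facet is the image of a section $s_i$, which is simultaneously the $\iota_b$ of the $i$-th elementary fan and the $\iota_a$ of the $(i+1)$-st.

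Lemma \ref{lem:ElementaryFanFactors} then gives $f\circ s_{i-1}=f\circ s_i$ for every $i$. Hence $f\circ\iota_1=f\circ s_0=\dots=f\circ s_k=f\circ\iota_2$, all $g_i$ coincide, and the common map factors $f$ on $T_I'$. It then descends to $T$ because $Y$ is a log \'etale sheaf. Your phrase \emph{glued along the pieces lying over the rays where the sum map vanishes} picks out the wrong walls: the relevant ones are the facets crossed by $\ell$, and these are sections.

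\textbf{Main-obstacle paragraph.} The remedy proposed there cannot work. The function $x_1+\dots+x_k$ is positive on every nonzero lattice point of $\sigma$. For $k\geq 2$ it therefore cannot be a coordinate on any full-dimensional smooth subcone, since that would require it to vanish on all but one generator. You must subdivide $\sigma_I$ itself and accept that different maximal cones are elementary with respect to different rays $\rho_i$, as your Step 2 actually says. The real work is then the chaining above, not the local model.

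\textbf{Lifting $f$.} \emph{Refining the cover} goes the wrong way for applying Lemma \ref{lem:ElementaryFanFactors}, which needs a lift on an entire elementary fan. Instead, use that maps to $Y^{\text{pre}}$ already form a strict \'etale sheaf. Then, as in Corollary \ref{Key corollary}\eqref{item-2-key-corollary}, $f$ comes from a global $f^{\text{pre}}\colon T_I'\to Y^{\text{pre}}$ on some log alteration.
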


\begin{proof}
The proof proceeds by reducing to Lemma~\ref{lem:ElementaryFanFactors} in the following three steps. 

\medskip
\noindent\textbf{Step 1: strictifying a log \'etale cover.}
Since $Y$ is a Noetherian descending log algebraic space, its functor of points is the log \'etale sheafification to a Noetherian algebraic space with log structure $Y^\text{pre}$. By the same proof as Corollary \ref{Key corollary} \eqref{item-2-key-corollary}, it follows that $f$ arises from a morphism $f^{\text{pre}}\colon T'_I\rightarrow Y^{\text{pre}}$ with $T'_I\rightarrow T_I$ a log alteration.
Whilst we do not name the Artin fan of $T_I'$, certainly $T_I$ admits a strict map to $a^\ast \sigma_I$. By Lemma~\ref{lem:SubdivideConeCX}, after replacing $T_I'$ with a log alteration if necessary, we may assume that $T_I'$ admits a strict map to the image under $a^\ast$ of a subdivision $\sigma_I'$ of $\sigma_I$.

\medskip
\noindent\textbf{Step 2: compatibility with log structure on $T$.} Applying (not weak) toric semistable reduction \cite{AbramovichKaru,SemistabRedChar0,molcho2019universal} to the composition $\sigma_I'\rightarrow \sigma_I \rightarrow \sigma$, we can replace $T_I'$ and $T$ with log alterations such that $f^\text{pre}$ is saturated, and the log structure of $T_I'$ is locally free.
Our statement remains strict \'etale local on $T$, and thus we may continue to assume that $T$ is an atomic log scheme.

We have just arranged that $\sigma_I'$ is integral and saturated over $\sigma$. 
Write $\iota_0^\mathsf{trop}$ and $\iota_k^\mathsf{trop}$ for the canonical sections to the map $\sigma_I'\longrightarrow \sigma$ induced by sections $\iota_1,\iota_2$ to  
$\sigma_I = \sigma\times_\mathbb{N}\mathbb{N}^2\longrightarrow \sigma.$ Here $k$ is a positive integer whose value we assign in the next paragraph.

\medskip
\noindent\textbf{Step 3: reducing to elementary $T$-Olsson fans.} Before explaining Step 3 we illustrate the case that $\sigma$ is a two dimensional cone (i.e. a cone over an interval). Consider the following diagram which is explained by reading the following two paragraphs.
\[
\begin{tikzpicture}[scale=1, line cap=round, line join=round]
  \begin{scope}
    \coordinate (A) at (0,0);
    \coordinate (B) at (5,0);
    \coordinate (C) at (5,2);
    \coordinate (D) at (0,2);

    \draw[thick] (A)--(B)--(C)--(D)--cycle;

    \def\w{5/3}
    \def\epsone{0.35}
    \def\epstwo{-0.25}

    \coordinate (P1b) at (\w,0);
    \coordinate (P2b) at (2*\w,0);
    \coordinate (P1t) at (\w+\epsone,2);
    \coordinate (P2t) at (2*\w+\epstwo,2);

    \draw[thick] (P1b)--(P1t);
    \draw[thick] (P2b)--(P2t);

    \draw[thick] (A)--(P1t);
    \draw[thick] (P1b)--(P2t);
    \draw[thick] (P2b)--(C);

    \draw[red, very thick] (0,1)--(5,1)
      node[right, black] {\small $\ell$};

    \node at (3.9,1.4) {\small $\sigma_2$};
    \node at (4.5,0.6) {\small $\sigma_1$};

    \node at (2.6,0.6) {\small $\sigma_3$};
    \node at (2.4,1.6) {\small $\sigma_4$};

    \node at (1.2,0.6) {\small $\sigma_5$};
    \node at (0.7,1.3) {\small $\sigma_6$};

    \node at (2.5,-0.5) {\small $\sigma_I'$};
  \end{scope}

  \draw[->, very thick] (6.2,1)--(8.1,1)
    node[midway, above] {\small $\pi_I$};

  \begin{scope}[xshift=9cm]
    \coordinate (E) at (0,0);
    \coordinate (F) at (0,2);
    \draw[thick] (E)--(F);
    \node at (0,-0.5) {\small $\sigma$};
  \end{scope}
\end{tikzpicture}
\]
Since the composed morphism of cone complexes $$\sigma_I'\longrightarrow \sigma_I = \sigma\times_\mathbb{N}\mathbb{N}^2\longrightarrow \sigma$$ is integral and of relative dimension one, we can describe its maximal cones $\sigma_i$ somewhat explicitly. Let $p_0$ be the sum of primitive vectors along the rays in the image of $\iota_0^\mathsf{trop}$, and similarly define $p_k$ in the image of $\iota_k^\mathsf{trop}$. Thus $p_0$ and $p_k$ are points in the rational polyhedral cone $\sigma_I.$ The convex hull of $p_0$ and $p_k$ in $\sigma_I$ defines a line. The preimage of this line along the alteration $\sigma'_I\rightarrow \sigma_I$ defines a line $\ell$ in the cone complex $\sigma_I'$. 

Since the projection map to $\sigma$ is integral, the line $\ell$ passes through the interior of every maximal cone in $\sigma_I'$. We define an order $$\{\sigma_1,...,\sigma_k\}$$ on the maximal cones in $\sigma_I'$ according to the order in which $\ell$ intersects them, with $\sigma_1$ the cone containing $p_0$ considered first and the cone containing $\sigma_k$ containing $p_k$ considered to be the last.

The morphism $\sigma_I'\rightarrow \sigma$ admits $k+1$ sections $s_0,...,s_k$ corresponding to the $k+1$ facets meeting the line $\ell$. We adopt the convention that $s_0$ is induced by $\iota_0^\mathsf{trop}$, $s_k$ is induced by $\iota_k^\mathsf{trop}$, and the order of the other $s_i$ reflects the order in which facets meet $\ell$. 
The maximal cones $\sigma_i$ define $k$ elementary $T$-Olsson fans $T^{\rho_1}_1,...,T^{\rho_k}_k$ which jointly form a Zariski open cover of $T_I'$. Note that the tropical sections $s_{i}$ and $s_{i+1}$ induce algebro--geometric sections $s_i^{+}$ and $s_{i+1}^-$ of the morphism $T^{\rho_k}_k\rightarrow T.$ The sections $s_i^{+}$ and $s_{i+1}^-$ are $\iota_a,\iota_b$ in the notation of Section \ref{sec:elemolsson}.

Since $s_1^-$ coincides with $\iota_1$ and $s_k^+$ coincides with $\iota_2$, the result follows from Lemma \ref{lem:ElementaryFanFactors}.
\end{proof}

Finally, we establish the required technical lemma.
\begin{lemma}\label{lem:SubdivideConeCX}
    Let $X'\rightarrow X$ be a log modification of qcqs log schemes, and assume that $X$ admits a strict map to a (qc) Artin fan $a^\ast \Sigma$ with $\Sigma$ a cone stack.
    Then there exists a subdivision $\Sigma''\rightarrow \Sigma$ such that there is a factorization of log alterations,
    $$X'' = X\times_{a^\ast \Sigma} {a^\ast \Sigma''}\longrightarrow X'\longrightarrow X.$$
\end{lemma}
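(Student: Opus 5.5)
The plan is to reduce to the case of a single toric modification pulled back strict \'etale locally, and then use quasi-compactness to assemble a global subdivision. By Definition \ref{definition-log-modification}, $X'\rightarrow X$ is, strict \'etale locally on $X$, pulled back from a toric modification. Since $X$ is qc, we choose a finite strict \'etale cover $\{U_j\rightarrow X\}$ by affine atomic log schemes. On each $U_j$ we have $X'\times_X U_j \cong U_j\times_{\mathcal{A}_{\sigma_j}} \mathcal{A}_{\sigma_j'}$, where $\sigma_j$ is the cone of $U_j$ and $\sigma'_j\rightarrow \sigma_j$ is a subdivision. Here we use that a log modification of an atomic log scheme is determined by a subdivision of its cone, by the argument of \cite[Lemma 2.2.5]{AWbirational}. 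The strict map $X\rightarrow a^\ast\Sigma$ sends each $U_j$ through $a^\ast\sigma_j\rightarrow a^\ast\Sigma$, so each $\sigma_j$ maps to a cone $\tau_j$ of $\Sigma$, though not necessarily isomorphically.

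The combinatorial step is to produce a single subdivision $\Sigma''$ of $\Sigma$ whose pullback to each $\sigma_j$ refines $\sigma'_j$. For each $j$, the image of the cones of $\sigma'_j$ in $\tau_j$ gives a finite collection of rational polyhedral cones. The approach is first to take a common refinement of these images together with the faces of $\Sigma$. Because $\Sigma$ is qc, there are finitely many cones, and a cone stack admits a common refinement of finitely many conical subdivisions compatibly with face maps and automorphisms: intersect all the pullbacks along face maps and symmetrize over the finite automorphism groups. Next, this refinement must pull back along $\sigma_j\rightarrow\tau_j$ to a subdivision refining $\sigma'_j$. Where $\sigma_j\rightarrow \tau_j$ is not injective, as happens when $X$ has log structure not pulled back faithfully, I would instead use that the log modification $X'\rightarrow X$ is itself pulled back from the Artin fan $\mathcal{A}_X$ and that $X\rightarrow a^\ast\Sigma$ factors through $\mathcal{A}_X$ by the initial property. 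A subdivision of $\mathcal{A}_X$ corresponds to a subdivision of the cone complex of $X$, which maps to $\Sigma$. The required $\Sigma''$ is then any subdivision of $\Sigma$ whose preimage refines this subdivision, which can be obtained by taking images of cones and refining as above.

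Given $\Sigma''$, set $X''=X\times_{a^\ast\Sigma}a^\ast\Sigma''$, which is a log modification of $X$. To obtain the factorization $X''\rightarrow X'$, it suffices to construct it strict \'etale locally on each $U_j$ and glue, using that log modifications are monomorphisms (Proposition \ref{proposition: basic properties of log alterations}). Since the maps are monomorphisms, local factorizations are unique and automatically agree on overlaps. Locally, the factorization exists because $\sigma_j\times_{\tau_j}\Sigma''$ refines $\sigma'_j$, so $a^\ast(\sigma_j\times_{\tau_j}\Sigma'')\rightarrow a^\ast\sigma_j$ factors through $a^\ast\sigma'_j$. Finally, $X''\rightarrow X'$ is a log alteration by Proposition \ref{proposition: basic properties of log alterations} \eqref{item: cancellation property log alterations}.

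I expect the main obstacle to be the combinatorial step, specifically the case where the map from the local cones to $\Sigma$ is not injective. There, cones of $\sigma'_j$ need not descend to cones in $\Sigma$, and one must check that images of cones of $\sigma'_j$ can be refined to a subdivision of $\tau_j$ whose preimage still refines $\sigma'_j$. To handle this, I would subdivide $\tau_j$ by all images of faces of cones of $\sigma'_j$ and argue that each cone of the preimage lies inside a single cone of $\sigma'_j$. If this argument fails, I would fall back on performing a further root stack or using the Artin fan $\mathcal{A}_X$ directly as the target.
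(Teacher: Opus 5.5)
Your argument works and takes a different route from the paper's. However, it works only once you drop the case you single out as the main obstacle, because the hypotheses exclude it.

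Let $x$ lie in the closed stratum of the atomic chart $U_j$, with image $y\in a^\ast\Sigma$ in the stratum of the cone $\tau_j$. Strictness of $X\to a^\ast\Sigma$ makes $\overline{M}_{a^\ast\Sigma,y}\to\overline{M}_{X,x}$ an isomorphism. Hence $\sigma_j\to\tau_j$ is an isomorphism and $\sigma_j\to\Sigma$ is a strict map of cone stacks. The only non-injectivity is among proper faces and via $\mathrm{Aut}_\Sigma(\tau_j)$. Your assertion that $\sigma_j$ maps to $\tau_j$ ``not necessarily isomorphically'' is therefore wrong.

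This must be settled by strictness, not by your fallbacks. If $\sigma_j\to\tau_j$ could collapse, no subdivision of $\Sigma$ could pull back to refine a nontrivial $\sigma_j'$. An example is summation $\mathbb{R}^2_{\geq 0}\to\mathbb{R}_{\geq 0}$, as for $\mathbb{A}^2\to a^\ast\mathbb{N}$ via $xy$ with $X'=\mathrm{Bl}_0\mathbb{A}^2$. The fallbacks all fail in such a case:
\begin{itemize}
\item subdividing by images cannot produce the needed refinement;
\item a root stack does not give an $X''$ of the form $X\times_{a^\ast\Sigma}a^\ast\Sigma''$;
\item appealing to $\mathcal{A}_X$ presupposes the global version of the statement.
\end{itemize}

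With $\sigma_j\cong\tau_j$, the combinatorial step concerns only the qc cone stack $\Sigma$. It needs \emph{extension}, not just intersection of restrictions along face maps. Build $\Sigma''$ by induction on dimension. At each lower-dimensional cone, include the restrictions of the $\sigma_j'$ along all face morphisms, so that boundaries match. On each cone $\tau$, star the already-built boundary subdivision from an $\mathrm{Aut}(\tau)$-invariant interior ray. When $\tau\cong\tau_j$, also take the common refinement with all $\mathrm{Aut}(\tau)$-translates of $\sigma_j'$.

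The paper instead replaces $\Sigma$ by a smooth fan, so that the toric variety $Y_\Sigma$ covers $a^\ast\Sigma$. It pulls $X'\to X\to a^\ast\Sigma$ back to $Y_\Sigma$ and obtains $\Sigma''$ in one stroke from Proposition \ref{proposition-alteration-to-saturated-morphisms}. It then checks strict \'etale locally, where $X'=X\times_{a^\ast\Sigma}a^\ast\hat{\Sigma}$, that $\Sigma''$ refines $\hat{\Sigma}$. Hence the monomorphism $X'\times_{a^\ast\Sigma}a^\ast\Sigma''\to X\times_{a^\ast\Sigma}a^\ast\Sigma''$ is an isomorphism.

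Both proofs use the same local input: a log modification is locally pulled back from a subdivision. Your gluing of unique local factorizations via the monomorphism property is equivalent to the paper's isomorphism check. The difference is how the global $\Sigma''$ is found. Your route is elementary and self-contained, and shows exactly where quasi-compactness and strictness are used. It also avoids needing the strictifying alteration of $Y_\Sigma$ to be a torus-equivariant subdivision with no root stacks, a point the paper leaves implicit. The paper's route spares you the cone-stack combinatorics by outsourcing it to the strictification result.
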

\begin{proof}
    After subdivision and base change the problem, we may replace $\Sigma$ by a smooth cone complex, which we may identify with the underlying cone complex of a fan with associated toric variety $Y_\Sigma$. We now base change the morphism $$X'\longrightarrow X \longrightarrow a^\ast \Sigma$$ along the log smooth cover $Y_\Sigma\rightarrow a^\ast \Sigma$ and apply Proposition \ref{proposition-alteration-to-saturated-morphisms} to the resulting composition. This defines the required subdivision $\Sigma''$ of $\Sigma$. We obtain a log alteration $X'' = X'\times_{a^\ast \Sigma} a^\ast \Sigma' \rightarrow X'$. We claim that the monomorphism $$X'\times_{a^\ast \Sigma} a^\ast \Sigma'\hookrightarrow X\times_{a^\ast \Sigma} a^\ast \Sigma'$$ is an isomorphism. The claim is strict \'etale local on $X$, and so by definition of log modification we can assume that we can write $X' = X\times_{a^\ast \Sigma} a^\ast \hat{\Sigma}$ since it is locally the (global) base change of a torus equivariant map of toric varieties. Such a map of toric varieties specifies the required subdivision of cone complexes $\hat{\Sigma}\rightarrow \Sigma$. Since $X'\rightarrow a^\ast \Sigma'$ is strict, we know that the subdivision $\Sigma''$ refines $\hat{\Sigma}$ and thus $$X'\times_{a^\ast \Sigma} a^\ast \Sigma'\cong X\times_{a^\ast \Sigma} \left(a^\ast \hat{\Sigma}\times_{a^\ast \Sigma}a^\ast \Sigma'\right)\cong X\times_{a^\ast \Sigma} a^\ast \Sigma'.$$
\end{proof}

\bibliographystyle{alpha}
\bibliography{log.bib}

\end{document}